\documentclass[a4paper, 9pt]{article}
\usepackage{graphics}
\usepackage[dvips]{color}

\usepackage{lscape}
\usepackage{latexsym}
\usepackage{amsmath,verbatim}
\usepackage{graphics}
\usepackage{amsthm}
\usepackage{amssymb}
\usepackage{makeidx}
\usepackage{stmaryrd}
\usepackage{multicol}
\usepackage{helvet}
\usepackage{bm}
\usepackage[all]{xy}
\newfont{\Fr}{eufm10}
\newfont{\Sc}{eusm10}
\newfont{\Bb}{msbm10}
\newfont{\Am}{msam10}
\newfont{\am}{msam7}
%\numberwithin{theorem}{section}
%\numberwithin{lemma}{section}
%\numberwithin{proposition}{section}
%\numberwithin{corollary}{section}
%\numberwithin{definition}{section}
%\numberwithin{remark}{section}
\numberwithin{equation}{section}
\newtheorem{theorem}{Theorem}[section]
\newtheorem{proposition}[theorem]{Proposition}
\newtheorem{lemma}[theorem]{Lemma}
\newtheorem{corollary}[theorem]{Corollary}
\newtheorem{claim}{Claim}{\bf}{\it}

\newtheorem{ftheorem}{Theorem}{\bf}{\it}
{\bf}{\it}
\theoremstyle{definition}
\newtheorem{definition}[theorem]{Definition}

{\bf}{\rm}

\theoremstyle{remark}

\newtheorem{remark}[theorem]{Remark}
{\bf}{\it}
\newtheorem{cond}[theorem]{Condition}

\newtheorem{definition and corollary}[theorem]{Definition and Corollary}

\newtheorem{fexample}[ftheorem]{Example}{\it}{\rm}
%\spnewtheorem{ftheorem}{Theorem}{\bf}{\it}
%\spnewtheorem{fdefinition}[ftheorem]{Definition}{\bf}{\rm}
%\spnewtheorem{fproposition}[ftheorem]{Proposition}{\bf}{\it}
%\spnewtheorem{fexample}[ftheorem]{Example}{\it}{\rm}

%\newcommand{\mod}{\mbox{\rm mod}}

\newcommand{\h}{\mathfrac{\h}}

%\newcommand{\boxtimes}{\mbox{\Am \symbol{02}}}
%\newcommand{\sboxtimes}{\mbox{\am \symbol{02}}}
%\renewcommand{\baselinestretch}{3.0}
%\seplength{\oddsidemargin}{-0.25in}
%\seplength{\evensidemargin}{-0.25in}
%\seplength{\textwidth}{6.5in}
\begin{comment}
\makeatletter
\renewenvironment{thebibliography}[1]
{\section*{\refname\@mkboth{\refname}{\refname}}%
  \list{\@biblabel{\@arabic\c@enumiv}}%
       {\settowidth\labelwidth{\@biblabel{#1}}%
        \leftmargin\labelwidth
        \advance\leftmargin\labelsep
 \seplength\itemsep{-0.4zh} %<-- width between columns
 \seplength\baselineskip{9pt} %<-- font size
        \@openbib@code
        \usecounter{enumiv}%
        \let\p@enumiv\@empty
        \renewcommand\theenumiv{\@arabic\c@enumiv}}%
  \sloppy
  \clubpenalty4000
  \@clubpenalty\clubpenalty
  \widowpenalty4000%
  \sfcode`\.\@m}
 {\def\@noitemerr
   {\@latex@warning{Empty `thebibliography' environment}}%
  \endlist}
\makeatother
\end{comment}

\title{An algebraic study of extension algebras\footnote{This is a revised and expanded version of the first part of the paper ``PBW bases and KLR algebras" arXiv:1203.5254. We divided it into two pieces since we learned that this part were invisible in the previous version of the above mentioned paper.}}
\author{Syu \textsc{Kato} \footnote{Department of Mathematics, Kyoto University, Oiwake Kita-Shirakawa Sakyo Kyoto 606-8502, Japan. \tt{E-mail:syuchan@math.kyoto-u.ac.jp}} \footnote{Research supported in part by JSPS Grant-in-Aid for Young Scientists (B) 23-740014.}}

\begin{document}
\maketitle

\begin{abstract}
We present simple conditions which guarantee a geometric extension algebra to behave like a variant of quasi-hereditary algebras. In particular, standard modules of affine Hecke algebras of type $\mathsf{BC}$, and the quiver Schur algebras are shown to satisfy the Brauer-Humphreys type reciprocity and the semi-orthogonality property. In addition, we present a new criterion of purity of weights in the geometric side. This yields a proof of Shoji's conjecture on limit symbols of type $\mathsf{B}$ [Shoji, Adv. Stud. Pure Math. 40 (2004)], and the purity of the exotic Springer fibers [K, Duke Math. 148 (2009)]. Using this, we describe the leading terms of the $C^{\infty}$-realization of a solution of the Lieb-McGuire system in the appendix. In [K, arXiv:1203.5254], we apply the results of this paper to the KLR algebras of type $\mathsf{ADE}$ to establish Kashwara's problem and Lusztig's conjecture.
\end{abstract}

\section*{Introduction}
In representation theory of an algebra associated to a root datum, study of a geometric extension algebra plays a major r\^ole. Introduced by Ginzburg \cite{Gi} in his study of affine Hecke algebras, it appeared in the study of affine Hecke algebras \cite{L-CG1,A,CG,K1,VV-B}, the BGG categories \cite{So,BGS,ABG,Sc}, the Springer correspondence \cite{CG,AcG,K4,Ri}, quantum loop algebras \cite{Na}, the Khovanov-Lauda-Rouquier algebras \cite{Z,W,VV}, the quiver Schur algebras \cite{VV2,WS}, and so on. Once appeared, it produces deep results in the spirit of the Kazhdan-Lusztig conjecture \cite{BB,BK} and the Koszul-Langlands duality \cite{BGS,So2,ABG}.

However, not much is known about the standard modules of these geometric extension algebras. From the viewpoint of highest weight category \cite{CPS}, we have several natural expectations on standard modules. For example, they should be indecomposable with simple heads, and filter projective modules. These are not a part of the general theory of geometric extension algebras \cite{CG} \S 8, and even its indecomposablity is usually guaranteed by rather ad-hoc induction arguments.

Actually, several criteria which guarantee nice behavior of geometric extension algebras are known \cite{MV,BGS}. The problem is that many of the geometric extension algebras arising from representation theory fail to satisfy such criteria. In particular, the resulting geometric extension algebras rarely give rise to highest weight categories.

The goal of the present paper is to supply some geometric conditions $(\clubsuit)$ which cover some algebras that are not covered by previous results, and to deduce their representation-theoretic consequences. Such an analysis, together with its algebraic interpretations, yields a proof of Shoji's conjecture in this paper. In addition, it serves as a basis of our proofs of Kashiwara's problem and Lusztig's conjecture in \cite{K6}.

Let $G$ be a connected algebraic group acting on a variety $\mathfrak X$ over $\mathbb C$ with finitely many orbits $\{ \mathbb O _{\lambda} \} _{\lambda \in \Lambda}$ labeled by $\Lambda$. Let $\mathsf{IC} _{\lambda}$ be the minimal extension of the constant sheaf on $\mathbb O _{\lambda}$ (see e.g. \cite{BBD}). We assume the following three conditions unless stated otherwise (here we employ an extra condition $(\spadesuit)'$ in order to simplify the statements in this introduction):

\begin{itemize}
\item[$(\spadesuit)'$] For each $\lambda \in \Lambda$, the $G$-orbit $\mathbb O _{\lambda}$ has a connected stabilizer $G_{\lambda}$;
\item[$(\clubsuit)_1$] The algebra $A _{(G,\mathfrak X)}$ defined in the below is pure of weight zero;
\item[$(\clubsuit)_2$] The sheaf $\mathsf{IC}_{\lambda}$ is pointwise pure (of some weight) for every $\lambda \in \Lambda$.
\end{itemize}

Here we count weights by considering its positive characteristic analogue. For each $\lambda \in \Lambda$, we have a natural morphism $\psi_{\lambda} : H ^{\bullet} _{G} ( \{ \mathrm{pt} \} ) \rightarrow H ^{\bullet} _{G} ( \mathbb{O} _{\lambda} )$ (between equivariant cohomologies; see e.g. \cite{BL}) of graded algebras. The condition $(\clubsuit)_2$ is usually rather difficult to verify. For this, our analysis shows:

\begin{ftheorem}[$=$ Theorem \ref{critpure}]\label{fcritpure}
We assume $(\clubsuit)_1$, but not $(\clubsuit)_2$. We have $(\clubsuit)_2$ as a consequence of the following two conditions:
\begin{enumerate}
\item For each $\lambda, \mu \in \Lambda$, the stalk of $\mathsf{IC}_{\lambda}$ along $\mathbb O_{\mu}$ satisfies the parity vanishing;
\item We have $\ker \, \psi _{\lambda} \not\subset \ker \, \psi _{\mu}$ for every $\mu \prec \lambda \in \Lambda$.
\end{enumerate}
\end{ftheorem}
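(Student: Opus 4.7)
The plan is to induct on the closure order $\prec$ on $\Lambda$. For a minimal $\lambda$, the orbit $\mathbb O_\lambda$ is closed in $\mathfrak X$ and $\mathsf{IC}_\lambda$ is a shift of the constant sheaf on $\mathbb O_\lambda$, hence pointwise pure. For the induction step, fix $\lambda$ and assume $\mathsf{IC}_\nu$ is pointwise pure for every $\nu \prec \lambda$; the task is to prove that each stalk $\mathcal{H}^j(\mathsf{IC}_\lambda)_x$ for $x \in \mathbb O_\mu$ with $\mu \prec \lambda$ is pure of weight $j$. The upper bound on the weight is automatic from Deligne's theorem, so only the lower bound requires work.

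The main input is the purity in $(\clubsuit)_1$: the summand $\Ext^\bullet_G(\mathsf{IC}_\mu, \mathsf{IC}_\lambda)$ of $A_{(G,\mathfrak X)}$ is pure, of weight equal to its cohomological degree. I would then compare this Ext group with an explicit stalk description obtained from the recollement along $\mathbb O_\mu$. Writing $j_\mu : \mathbb O_\mu \hookrightarrow \mathfrak X$, the sheaf $j_\mu^* \mathsf{IC}_\mu$ is a shifted constant sheaf, so $\Ext^\bullet_G(\mathsf{IC}_\mu, \mathsf{IC}_\lambda)$ becomes an $H^\bullet_{G_\mu}(\mathrm{pt})$-module computed by a spectral sequence whose $E_2$-page involves $H^\bullet_{G_\mu}(\mathrm{pt}) \otimes \mathcal{H}^\bullet(j_\mu^! \mathsf{IC}_\lambda)$. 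Hypothesis (1), combined with the inductively known parity vanishing of $\mathsf{IC}_\mu$ and the connected-stabilizer assumption $(\spadesuit)'$, forces this spectral sequence to degenerate at $E_2$. Since the left-hand side is pure and $H^\bullet_{G_\mu}(\mathrm{pt})$ is itself pure, the tensor decomposition is consistent with weights only if $\mathcal{H}^\bullet(j_\mu^! \mathsf{IC}_\lambda)$, and hence by Verdier duality also $\mathcal{H}^\bullet(j_\mu^* \mathsf{IC}_\lambda)$, is pure of the correct weight.

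The role of condition (2), $\ker \psi_\lambda \not\subset \ker \psi_\mu$, is to control contamination from intermediate strata. A priori the filtration on $\Ext^\bullet_G(\mathsf{IC}_\mu, \mathsf{IC}_\lambda)$ induced by the stratification of $\overline{\mathbb O_\lambda}$ also receives contributions from orbits $\nu$ with $\mu \prec \nu \prec \lambda$; to peel off the $\mathbb O_\mu$-piece one needs a polynomial $p \in H^\bullet_G(\mathrm{pt})$ which annihilates the $\mathsf{IC}_\lambda$-side (via $\psi_\lambda$) but acts injectively on the $\mathsf{IC}_\mu$-side (via $\psi_\mu$). Condition (2) furnishes exactly such an element; iterating over the intermediate orbits yields a clean separation of orbit contributions in the filtration, and then purity transfers from $A_{(G,\mathfrak X)}$ to the individual stalks.

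The principal obstacle I foresee is the bookkeeping of this mixed $G$-equivariant recollement: writing down the correct spectral sequence, verifying its $E_2$-degeneration from the parity hypothesis, and most subtly using the $H^\bullet_G(\mathrm{pt})$-action together with condition (2) to strictly isolate the $\mathbb O_\mu$-contribution in the filtration on $\Ext^\bullet_G$. Once these are in place, the final weight comparison is short, essentially because polynomial rings and $A_{(G,\mathfrak X)}$ are both pure in the tautological sense.
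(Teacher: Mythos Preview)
Your proposal conflates two different Ext groups. The spectral sequence with $E_2 = H^\bullet_{G_\mu}(\mathrm{pt}) \otimes \mathcal H^\bullet(j_\mu^! \mathsf{IC}_\lambda)$ converges to $\Ext^\bullet_G(\mathbb C_\mu, \mathsf{IC}_\lambda)$ (where $\mathbb C_\mu = (j_\mu)_! \underline{\mathbb C}[\dim \mathbb O_\mu]$), not to $\Ext^\bullet_G(\mathsf{IC}_\mu, \mathsf{IC}_\lambda)$. The former has the clean relation to the stalk you want, but it is \emph{not} a direct summand of $A$, so $(\clubsuit)_1$ tells you nothing about its purity; the latter is pure by $(\clubsuit)_1$ but sees all of $\overline{\mathbb O_\mu}$, not just $\mathbb O_\mu$. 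Passing from one to the other involves the strata $\nu \prec \mu$ (sitting below $\mu$, not between $\mu$ and $\lambda$ as you write), and the resulting long exact sequences do not obviously split even with parity and your outer induction in hand. As written, your weight comparison has no input forcing it.

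The paper bridges this gap by a different mechanism. It builds a finite complex of projective $A$-modules $(Q(\mathbb C_\gamma), d)$ out of the weight filtration of $\mathbb C_\gamma$ (the ``standard complex'' of Theorem~\ref{transfer}), whose total cohomology is $\widetilde K_\gamma = \Ext^\bullet_G(\mathbb C_\gamma, \mathcal L)$. Since each term is projective and hence pure of weight $0$, any impurity of the costalks shows up precisely as nonzero $H^i(Q(\mathbb C_\gamma),d)$ for some $i\neq 0$. After arranging a minimal counterexample so that $\mathbb O_\lambda$ is open and every intermediate $\mathsf{IC}_\mu$ is already pure along $\mathbb O_\gamma$, this higher cohomology has only $L_\lambda$ as composition factor; because $\lambda$ is open, $\widetilde K_\lambda$ is projective in that subcategory, so $H^i(Q(\mathbb C_\gamma), d)$ is a quotient of copies of $\widetilde K_\lambda \cong H^\bullet_{G_\lambda}(\mathrm{pt})$. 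Condition~(2) then enters not as a ``separation of orbit contributions'' but as a contradiction: the central $H^\bullet_G(\mathrm{pt})$-action on this nonzero module factors through $\psi_\lambda$ (via the $\widetilde K_\lambda$-presentation) and simultaneously acts torsion-freely through $\psi_\gamma$ (via the degenerate spectral sequence~(\ref{leray})), forcing $\ker \psi_\lambda \subset \ker \psi_\gamma$. This module-theoretic pivot is the idea your outline is missing.
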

The geometric extension algebra associated to such a pair $(G,\mathfrak X)$ is:
$$A = A_{(G,\mathfrak X)} := \bigoplus _{\lambda,\mu} \mathrm{Ext} ^{\bullet} _{D^b_G(\mathfrak X)} ( L _{\lambda} \boxtimes \mathsf{IC} _{\lambda} [\dim \mathbb O_{\lambda} ], L _{\mu} \boxtimes \mathsf{IC} _{\mu} [\dim \mathbb O_{\lambda} ] ),$$
where $L_{\lambda}$ is a self-dual non-zero graded vector space for each $\lambda \in \Lambda$. This makes $\{ L_{\lambda} \} _{\lambda}$ a complete collection of self-dual simple graded modules of $A$. Geometric extension algebras arising from affine Hecke algebras of type $\mathsf{A}$ \cite{CG}, type $\mathsf{BC}$ \cite{K1}, the KLR algebras of type $\mathsf{ADE}$ \cite{VV,K6}, the quiver Schur algebras \cite{Lu, VV2}, and the algebra which governs the BGG category \cite{So,BGS} satisfy our condition. In addition, our condition is stable under the restriction to a locally closed subset (cf. Corollary \ref{localKsc} and Lemma \ref{cc}). Hence, one obtains many intermediate varieties and algebras (with some representation-theoretic significance) that are connected to each other (see \cite{KP} for example).

\begin{ftheorem}[$=$ Theorem 3.5]\label{fabsKas}
The global dimension of $A$ is finite.
\end{ftheorem}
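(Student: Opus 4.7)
The aim is to show that $A$ has finite global dimension. Since $\{L_\lambda\}_{\lambda \in \Lambda}$ is a finite complete list of simple graded $A$-modules (up to grading shift), it suffices to prove that each $L_\lambda$ has finite projective dimension, and the plan is to proceed by induction on $\lambda \in \Lambda$ with respect to the closure partial order $\preceq$, using the standard modules associated to the stratification.

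For each $\lambda \in \Lambda$, let $\Delta_\lambda$ denote the standard module obtained by replacing $\mathsf{IC}_\lambda$ with the $!$-extension $j_{\lambda,!}\underline{\mathbb C}_{\mathbb O_\lambda}[\dim \mathbb O_\lambda]$ in the defining formula for $A$. The hypotheses $(\spadesuit)'$, $(\clubsuit)_1$, $(\clubsuit)_2$ together should imply (via the structural theorems developed earlier in the paper, namely the BGG-type reciprocity and the semi-orthogonality advertised in the abstract) that $\Delta_\lambda$ has simple head $L_\lambda$ and that the kernel $K_\lambda := \ker(\Delta_\lambda \twoheadrightarrow L_\lambda)$ admits a composition series with factors only of the form $L_\mu\langle n \rangle$ for $\mu \prec \lambda$. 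For the base case, $\lambda$ minimal in $\preceq$, the orbit $\mathbb O_\lambda$ is closed, and one reduces the question for $L_\lambda$ to a question over $e_\lambda A e_\lambda \cong H^\bullet_{G_\lambda}(\mathrm{pt}) \otimes \mathrm{End}(L_\lambda)$ --- a polynomial algebra tensored with a matrix algebra, which has finite graded global dimension by the connectedness clause in $(\spadesuit)'$.

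The crucial input for the inductive step is that each $\Delta_\lambda$ has finite projective dimension over $A$. This is the homological manifestation of $(\clubsuit)_1$: the spectral sequences computing $\mathrm{Ext}^\bullet_A(\Delta_\lambda, \Delta_\mu)$ from the geometric stratification degenerate by purity, and the finite cohomological amplitude of $D^b_G(\mathfrak X)$ (modulo the equivariant polynomial direction, which is absorbed into the base case) yields a uniform bound on the length of a minimal projective resolution. Granted this, the short exact sequence $0 \to K_\lambda \to \Delta_\lambda \to L_\lambda \to 0$ combined with the inductive hypothesis applied to each composition factor of $K_\lambda$ gives
$$\mathrm{p.d.}\, L_\lambda \le 1 + \max(\mathrm{p.d.}\, \Delta_\lambda,\, \mathrm{p.d.}\, K_\lambda) < \infty,$$
closing the induction.

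The main obstacle will be establishing the finite-projective-dimension property for the standards $\Delta_\lambda$; this is the point where the geometric purity $(\clubsuit)_1$ must be transmuted into an algebraic statement about $A$-module resolutions, and where the interplay between the polynomial behavior coming from equivariant cohomology and the boundedness on the derived side is most delicate. A parallel technical point is ensuring that the restriction of the setup to open unions of strata (needed to invoke induction on $|\Lambda|$ if one prefers that framing) yields geometric extension algebras still satisfying the paper's running hypotheses; this is presumably the content of the stability results alluded to in the introduction.
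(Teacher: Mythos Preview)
Your strategy—show each simple has finite projective dimension by filtering through standards whose finite projective dimension comes from purity—is the paper's strategy, but two points need repair.

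First, the direction: with $\Delta_\lambda = \widetilde K_\lambda = \mathrm{Ext}^\bullet_G((j_\lambda)_!\underline{\mathbb C}[\dim\mathbb O_\lambda],\mathcal L)$, adjunction gives $[\widetilde K_\lambda:L_\mu]\neq 0 \Rightarrow \mathbb O_\lambda \subset \overline{\mathbb O_\mu}$, i.e.\ $\mu\succsim\lambda$, not $\mu\prec\lambda$. So the induction must run downward from open orbits, and your base-case computation $e_\lambda A e_\lambda \cong H^\bullet_{G_\lambda}(\mathrm{pt})$—while correct for a closed orbit—does not by itself bound the projective dimension of $L_\lambda$ over $A$.

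Second, and this is the genuine gap: $\widetilde K_\lambda$ is infinite-dimensional. By Theorem~\ref{KS}(4) it is filtered by infinitely many grading shifts of the finite-dimensional $K_\mu$ with $\mu\sim\lambda$, one for each graded piece of $H^\bullet_{G_\lambda^\circ}(\mathrm{pt})$. Hence $\ker(\widetilde K_\lambda\twoheadrightarrow L_\lambda)$ has infinitely many composition factors, and infinitely many of them are $L_\mu\langle j\rangle$ with $\mu\sim\lambda$, not strictly larger. The induction on the closure order does not close, and an infinite filtration by modules of bounded projective dimension does not in general yield finite projective dimension.

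The paper fixes this by interposing the \emph{thin} standards $K_\lambda$ (costalks of $\mathcal L$), which are finite-dimensional with $[K_\lambda:L_\mu]=\delta_{\lambda\mu}$ for $\mu\sim\lambda$. The argument then runs in three layers. (i) Your ``crucial input'' is made precise: the weight filtration of $\mathbb C_\lambda$ (Corollary~\ref{r-std-filt}) is converted into a finite complex of projectives $(Q(\mathbb C_\lambda),d)$ which, under $(\clubsuit)$, resolves $\widetilde K_\lambda$ (Theorem~\ref{transfer}, Proposition~\ref{P_to_K_finite}, Corollary~\ref{kf}). (ii) Each $K_\lambda$ then admits a finite resolution by grading shifts of $\{\widetilde K_\gamma\}_{\gamma\sim\lambda}$, because $H^\bullet_{G_\lambda^\circ}(\mathrm{pt})$ is a polynomial ring of finite global dimension (Proposition~\ref{fgd-k}); this is exactly your base-case reduction, but it is needed at \emph{every} stratum, not just one. (iii) Now each $K_\lambda$ has finite projective dimension and a finite composition series, and the matrix $[K:L]$ is unitriangular, so each $L_\lambda$ is built from the $K_\mu$'s by finitely many short exact sequences (Corollary~\ref{fgd-simple}). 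A final appeal to \cite{Li} upgrades ``each graded simple has finite projective dimension'' to finite global dimension (Theorem~\ref{absKas}).
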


This is new for the quiver Schur algebras and the KLR algebras. For more precise explanation on the latter, see \cite{K6} (cf. \cite{BKM}). For affine Hecke algebras, this is standard by taking filtered quotients (cf. \cite{NO} D.VII). In this case, an explicit bi-resolution of $A$ is constructed by Opdam-Solleveld \cite{OS}.

Since we deal with a variety $\mathfrak X$, we have a closure ordering among $G$-orbits that we denote by $\prec$. Let $A \mathchar`-\mathsf{gmod}$ be the category of finitely generated graded $A$-modules. Let $P_{\lambda}$ be the projective cover of $L _{\lambda}$. For $M \in A \mathchar`-\mathsf{gmod}$ and $j \in \mathbb Z$, we denote by $M \left< j \right>$ the grade $j$ shift of $M$. We define
$$\widetilde{K} _{\lambda} := P _{\lambda} / \bigl( \sum _{\mu \prec \lambda, f \in \mathrm{hom} _A ( P_{\mu}, P _{\lambda} )} \mathrm{Im} f \bigr) \hskip 3mm \text{ and } \hskip 3mm K_{\lambda} := \widetilde{K} _{\lambda} / \bigl( \sum _{f \in \mathrm{hom} _A ( P_{\lambda}, \widetilde{K} _{\lambda} ) ^{>0}} \mathrm{Im} f \bigr),$$
where $\mathrm{hom} _A (M,N)$ is the direct sum of the space of degree $j$ homomorphisms $\mathrm{hom} _A (M,N)^j := \mathrm{Hom} _{A \mathchar`-\mathsf{gmod}} ( M \left< j \right>, N )$ (for each $M, N \in A \mathchar`-\mathsf{gmod}$).

For $M \in A \mathchar`-\mathsf{gmod}$, we define its graded character as:
$$\mathsf{gch} \, M := \sum _{\lambda \in \Lambda, j \in \mathbb Z} t ^j [ M : L_{\lambda} \left< j \right>]_0 [ \lambda ] \in \bigoplus _{\lambda \in \Lambda} \mathbb Z (\!(t)\!) [\lambda],$$
where $[ M : L_{\lambda} \left< j \right>]_0 \in \mathbb Z_{\ge 0}$ is the multiplicity of $L_{\lambda} \left< j \right>$ in $M$. We define $[ M : L_{\lambda} ] := \sum _{j \in \mathbb Z} [ M : L_{\lambda} \left< j \right>]_0 t^j \in \mathbb Z(\!( t )\!)$ and define $\{ [ M : \widetilde{K} _{\lambda} ] \} _{\lambda}$ by
$$\mathsf{gch} \, M = \sum _{\lambda \in \Lambda} [ M : \widetilde{K} _{\lambda} ] \, \mathsf{gch} \, \widetilde{K} _{\lambda}.$$

\begin{ftheorem}[$=$ Theorems \ref{KS}, \ref{absLS}, and \ref{wfilt} + Corollary \ref{absBH}]\label{fcore} Under the above setting, we have:
\begin{enumerate}
\item Each $\widetilde{K} _{\lambda}$ admits a separable decreasing filtration whose associated graded is a direct sum of grading shifts of $K_{\lambda}$. In addition, we have
$$[\widetilde{K} _{\lambda} : L _{\lambda} ] = \sum _{i \ge 0} t^{i} \dim \, H ^{i} _{\mathsf{Stab} _G ( x_{\lambda} )} ( \{ \mathrm{pt} \} );$$
\item For each $\lambda, \mu \in \Lambda$, we have
\begin{align*}
& \mathrm{ext} _A ^{\bullet} ( \widetilde{K} _{\lambda}, \widetilde{K} _{\mu} ) = \{ 0 \} \hskip 3mm \text{ if } \hskip 3mm \lambda \not\succsim \mu \hskip 3mm \text{ and }\\
& \mathrm{ext} _A ^i ( \widetilde{K} _{\lambda}, K _{\mu} ^* ) = \begin{cases} \mathbb C & ( \lambda = \mu, i = 0)\\ \{ 0 \} & (\text{otherwise}) \end{cases},
\end{align*}
where $K_{\mu}^*$ is the graded dual of $K_{\mu}$ regarded as an $A$-module;
\item We have
$$[ P_{\lambda} : \widetilde{K}_{\mu} ] = [K_{\mu} : L _{\lambda}] \hskip 3mm \text{ for every } \hskip 3mm \lambda, \mu \in \Lambda;$$
\item Each $P_{\lambda}$ admits a finite filtration whose associated graded is a direct sum of grading shifts of $\{ \widetilde{K} _{\mu} \} _{\mu}$.
\end{enumerate}
\end{ftheorem}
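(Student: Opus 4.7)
The plan is to realize the three classes of modules geometrically and then read off all four statements from the six-functor formalism on $(G,\mathfrak X)$. Set $P := \bigoplus_\nu L_\nu \boxtimes \mathsf{IC}_\nu[\dim \mathbb O_\nu]$, so $A = \mathrm{Ext}^\bullet_{D^b_G(\mathfrak X)}(P,P)$; let $j_\lambda : \mathbb O_\lambda \hookrightarrow \mathfrak X$ be the inclusion, and write $\mathcal F_\lambda := j_{\lambda!}\underline{\mathbb C}[\dim \mathbb O_\lambda]$, $\mathcal G_\lambda := j_{\lambda*}\underline{\mathbb C}[\dim \mathbb O_\lambda]$. The first step is to prove the identifications
$$\widetilde K_\lambda \;\cong\; \mathrm{Ext}^\bullet_{D^b_G(\mathfrak X)}\bigl(P,\; L_\lambda \boxtimes \mathcal F_\lambda\bigr), \qquad K_\mu^* \;\cong\; \mathrm{Ext}^\bullet_{D^b_G(\mathfrak X)}\bigl(P,\; L_\mu \boxtimes \mathcal G_\mu\bigr)\big/\langle H^{>0}_{G_\mu}\rangle.$$
The identification of $\widetilde K_\lambda$ comes from applying $\mathrm{Ext}^\bullet(P,-)$ to the distinguished triangle $\mathcal F_\lambda \to L_\lambda \boxtimes \mathsf{IC}_\lambda[\dim \mathbb O_\lambda] \to (\text{sheaf supported on }\bigcup_{\mu \prec \lambda}\overline{\mathbb O_\mu})$ and observing, via $(\clubsuit)_2$ and induction on $\prec$, that the boundary contribution is exactly $\sum_{\mu \prec \lambda}\mathrm{Im}\,\mathrm{hom}_A(P_\mu, P_\lambda)$. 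Verdier duality produces the corresponding identification for $K_\mu^*$, where the quotient by $H^{>0}_{G_\mu}$ matches the definition of $K_\mu$.

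For part (1), once $\widetilde K_\lambda$ is identified as above, adjunction $j_{\lambda!} \dashv j_\lambda^*$ together with $\mathsf{IC}_\nu|_{\mathbb O_\lambda} = 0$ for $\nu \ne \lambda$ (definition of intermediate extension) yields
$$\mathrm{Ext}^\bullet_{D^b_G(\mathfrak X)}\bigl(L_\nu \boxtimes \mathsf{IC}_\nu[\dim \mathbb O_\nu],\; L_\lambda \boxtimes \mathcal F_\lambda\bigr) \;=\; \delta_{\nu,\lambda}\cdot L_\nu^* \otimes L_\lambda \otimes H^\bullet_{G_\lambda}(\mathrm{pt}),$$
which produces the stated multiplicity formula. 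Since $H^\bullet_{G_\lambda}(\mathrm{pt})$ is polynomial by $(\spadesuit)'$, filtering $\widetilde K_\lambda$ by powers of its augmentation ideal gives a separable decreasing filtration whose associated graded is killed by positive-degree endomorphisms, and is therefore a sum of grading shifts of $K_\lambda$.

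Part (2) is then purely formal. Using the identifications and adjunction,
$$\mathrm{ext}^\bullet_A(\widetilde K_\lambda, \widetilde K_\mu) \;=\; L_\lambda^*\otimes L_\mu \otimes \mathrm{Ext}^\bullet_{D^b_G(\mathbb O_\lambda)}\bigl(\underline{\mathbb C}[\dim \mathbb O_\lambda],\; j_\lambda^*\mathcal F_\mu\bigr),$$
and the right side vanishes unless $\mathbb O_\lambda \subset \overline{\mathbb O_\mu}$, i.e., $\lambda \succsim \mu$. Similarly $\mathrm{ext}^\bullet_A(\widetilde K_\lambda, K_\mu^*)$ is controlled by $j_\lambda^!\mathcal G_\mu$, which is supported in $\mathbb O_\lambda \cap \mathbb O_\mu$, forcing $\lambda = \mu$; on the diagonal the truncation defining $K_\mu^*$ collapses the computation to a single copy of $\mathbb C$ in degree $0$.

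Parts (3) and (4) follow by a Cline--Parshall--Scott type induction along $\prec$: the geometric stratification refines to a filtration of $P_\lambda$ whose subquotients are grading shifts of the $\widetilde K_\mu$'s, and applying $\mathrm{ext}^\bullet_A(-, K_\mu^*)$ to this filtration and invoking (2) converts $[P_\lambda : \widetilde K_\mu]$ into $\dim\,\mathrm{hom}_A(P_\lambda, K_\mu^*) = [K_\mu : L_\lambda]$, yielding the Brauer--Humphreys reciprocity. The main obstacle I expect is in part (1): verifying that the polynomial filtration on $\widetilde K_\lambda$ has associated graded \emph{exactly} a sum of shifts of $K_\lambda$, rather than some proper quotient, requires ruling out hidden differentials in the spectral sequence computing $\mathrm{Ext}^\bullet_{D^b_G(\mathfrak X)}(P, L_\lambda \boxtimes \mathcal F_\lambda)$. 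This is precisely where purity $(\clubsuit)_1$ and Theorem~\ref{fcritpure} are essential: they force the relevant weight filtration to be split, so the $H^\bullet_{G_\lambda}(\mathrm{pt})$-action is ``free enough'' for the graded pieces to be $K_\lambda$ on the nose.
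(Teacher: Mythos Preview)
Your approach has a genuine gap at the crucial step, and it is precisely the step you dismiss as ``purely formal''. In part (2) you write
\[
\mathrm{ext}^\bullet_A(\widetilde K_\lambda, \widetilde K_\mu) \;=\; L_\lambda^*\otimes L_\mu \otimes \mathrm{Ext}^\bullet_{D^b_G(\mathbb O_\lambda)}\bigl(\underline{\mathbb C}[\dim \mathbb O_\lambda],\; j_\lambda^*\mathcal F_\mu\bigr),
\]
but the left side is an $\mathrm{ext}$ group in $A\text{-}\mathsf{gmod}$ and the right side is a sheaf-theoretic $\mathrm{Ext}$. These are different functors on different categories, and no adjunction passes between them. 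To compute $\mathrm{ext}^\bullet_A(\widetilde K_\lambda,-)$ you need a projective resolution of $\widetilde K_\lambda$ as an $A$-module; knowing that $\widetilde K_\lambda$ arises as $\mathrm{Ext}^\bullet_G(\mathbb C_\lambda,\mathcal L)$ does not by itself produce one. This bridge is the entire content of the paper's \S 2: one first builds a weight filtration of $\mathbb C_\lambda$ in $D^b_G(\mathfrak X)$ (Corollary \ref{r-std-filt}), then shows that applying $\mathrm{Ext}^\bullet_G(-,\mathcal L)$ term-by-term yields a complex of projective $A$-modules $Q(\mathbb C_\lambda)$ (Theorem \ref{transfer}), and finally uses both $(\clubsuit)_1$ and $(\clubsuit)_2$ to prove that this complex is exact away from degree $0$, hence is a projective resolution of $\widetilde K_\lambda$ (Proposition \ref{P_to_K_finite}, Corollary \ref{kf}). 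Only after this can one compute $\mathrm{ext}^\bullet_A(\widetilde K_\lambda,-)$ by applying $\mathrm{hom}_A(-,-)$ to $Q(\mathbb C_\lambda)$, and the vanishing in part (2) then follows because the terms $Q(\mathbb C_\lambda)_a$ involve only $P_\gamma$ with $\gamma \preceq \lambda$. You have located the difficulty in the wrong place: the spectral sequence degeneration you worry about in part (1) is a direct consequence of purity via (\ref{leray}), while the real work is exactly this translation from mixed geometry to $A$-module homological algebra.

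There are also smaller errors that would derail the argument even if the main gap were filled. First, the paper's $\widetilde K_\lambda$ is $\mathrm{Ext}^\bullet_G(\mathbb C_\lambda,\mathcal L)$, not $\mathrm{Ext}^\bullet_G(\mathcal L,\mathcal F_\lambda)$; your reversed direction breaks the adjunction you need (the relevant one is $(j_{\lambda!},j_\lambda^!)$, not $j_{\lambda!}\dashv j_\lambda^*$, since $\mathbb O_\lambda$ is only locally closed). Second, the claim ``$\mathsf{IC}_\nu|_{\mathbb O_\lambda}=0$ for $\nu\neq\lambda$'' is false: $\mathsf{IC}_\nu$ has support $\overline{\mathbb O_\nu}$, so its restriction to $\mathbb O_\lambda$ is typically nonzero whenever $\lambda\prec\nu$. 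What is true is that $j_\lambda^!\mathsf{IC}_\nu=0$ when $\lambda\not\precsim\nu$, and for $\lambda\sim\nu$ it is the shifted local system; this is what the paper uses in (\ref{eq-mult})--(\ref{BMeq}).
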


Note that the non-zero map in Theorem \ref{fcore} 2) has its image $L_{\lambda}$ since essentially the LHS is concentrated in degree $\ge 0$, and the RHS is concentrated in degree $\le 0$. Also, Theorem \ref{fcore} 1) yields the Cartan determinant formula of $A$ (Corollary \ref{cartan}) and a variant of the Lusztig-Shoji algorithm (Remark \ref{Rcartan} {\bf 2)}).

For the proof of Theorem \ref{fcore}, we make a link between the mixed geometry of $\mathfrak X$ and the homological algebra of $A$ in \S 2, which might be of independent interest. One important consequence there is a description of the minimal projective resolution of $\widetilde{K}_{\lambda}$ in terms of $\mathbb C _{\lambda}$ (Corollary \ref{kf}).

\begin{fexample}\label{fA3}
Let $G=\mathop{PGL} ( 3, \mathbb C )$ and let $\mathfrak X = \mathcal N$ be the nilpotent cone of $\mathfrak{sl}_3$. Let $\mathfrak t$ be the Cartan subalgebra of $\mathfrak{sl}_3$. We have $A = \mathbb C \mathfrak S_3 \ltimes \mathbb C [ \mathfrak t ]$ with $A^0 = \mathbb C \mathfrak S_3$ and $\deg \, \mathfrak t^* = 2$. We have $\Lambda = \{ \mathsf{triv} \succ \mathsf{ref} \succ \mathsf{sgn} \}$ and
$$
\mathsf{gch} \, K _{\mathsf{triv}} = [ \mathsf{triv} ], \mathsf{gch} \, K _{\mathsf{ref}} = [ \mathsf{ref} ] + t^2 [ \mathsf{triv} ], \mathsf{gch} \, K _{\mathsf{sgn}} = [ \mathsf{sgn} ] + ( t^2 + t^4 ) [ \mathsf{ref} ] + t^6 [ \mathsf{triv} ].
$$
Then, the matrix $[P:L] := ( [P_{\lambda} : L _{\gamma}] ) _{\lambda, \gamma}$ is presented as:
$$[P:L] = [P:\widetilde{K}][\widetilde{K}:K][K:L] = {}^{\mathtt t} [K:L][\widetilde{K}:K][K:L],$$
where $[\widetilde{K}:K]$ is the graded character transition matrix. This reads as:{\footnotesize
\begin{align*}
\frac{1}{(1-t^4)(1-t^6)} & \left( \begin{matrix} 1 & t^2 + t^4 & t^6\\ t^2+t^4 & 1 + t^2 + t^4 + t^6 & t^2 + t^4\\ t^6 & t ^2 + t^4 & 1 \end{matrix} \right)\\
&= \left( \begin{matrix}1 & t^2 & t^6\\ 0 & 1 & t^2 + t^4\\ 0 & 0 & 1 \end{matrix}\right) \left( \begin{matrix} 1 & 0 & 0\\ 0& \frac{1}{1-t^2} & 0 \\ 0 & 0 & \frac{1}{(1-t^4)(1-t^6)} \end{matrix} \right) \left( \begin{matrix}1 & 0 & 0\\ t^2 & 1 & 0\\ t^6 & t^2 + t^4 & 1 \end{matrix}\right).
\end{align*}}
\end{fexample}

In \cite{Sh4} 3.13, Shoji conjectured that his limit Green function of type $\mathsf{B}$ gives the graded character of a coinvariant ring of a Weyl group of type $\mathsf{B}$. Combining the results of Achar-Henderson \cite{AH} with Theorem \ref{fcore} and our previous results, we prove:

\begin{ftheorem}[Shoji's conjecture for type $\mathsf B$ $=$ Corollaries \ref{SC}, \ref{SS}, and \ref{espure}]\label{fmain}
Let $G= \mathop{Sp} ( 2n, \mathbb C )$ and let $\mathfrak X = \mathfrak N$ be its exotic nilpotent cone $\cite{K1}$. Then, the modules $K_{\lambda}$ arising from $A$ are coinvariant algebras of type $\mathsf B$. In particular, their graded characters are calculated by the Lusztig-Shoji algorithm. Moreover, every exotic Springer fiber has a pure homology.
\end{ftheorem}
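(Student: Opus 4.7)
The plan is to verify the hypotheses $(\spadesuit)'$, $(\clubsuit)_1$, and $(\clubsuit)_2$ for the pair $(G,\mathfrak{N})$ with $G=\mathop{Sp}(2n,\mathbb{C})$ acting on the exotic nilpotent cone, and then to combine Theorem \ref{fcore} with the Achar-Henderson parity theorem \cite{AH} and the author's previous analysis of $\mathfrak N$ in \cite{K1}. The connectedness of stabilizers $(\spadesuit)'$ follows from the explicit bi-partition description of $\mathop{Sp}(2n)$-orbits on $\mathfrak{N}$ established in \cite{K1}. The weight-zero purity $(\clubsuit)_1$ can be obtained by identifying $A_{(G,\mathfrak{N})}$ with a block of the graded affine Hecke algebra of type $\mathsf C$, whose structure is mixed pure in the required sense.

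The decisive step is $(\clubsuit)_2$, which I would derive by applying Theorem \ref{fcritpure}. Condition (1) there---the pointwise parity vanishing of $\mathsf{IC}_\lambda$---is exactly the theorem of Achar-Henderson \cite{AH}. For condition (2), $\ker \psi_\lambda \not\subset \ker \psi_\mu$ whenever $\mu \prec \lambda$, I would write the reductive parts of the stabilizers $G_\lambda$ and $G_\mu$ as explicit products of classical groups read off from the bi-partitions, and examine the restriction of fundamental $\mathop{Sp}(2n)$-invariants. Since $G_\mu \supset G_\lambda$ when $\mu \prec \lambda$, the reverse inclusion $\ker \psi_\mu \subset \ker \psi_\lambda$ is automatic, and the strictness reduces to producing one fundamental invariant whose restriction to $\mathfrak g_\mu^{*}$ does not lie in (the pullback of) the kernel of the further restriction to $\mathfrak g_\lambda^{*}$. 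I expect this to be the principal technical difficulty, and would handle it through a degree comparison of the fundamental invariants of the two stabilizer groups.

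With $(\clubsuit)_2$ secured, Theorem \ref{fcore} applies. Part 1 yields $[\widetilde{K}_\lambda : L_\lambda] = \sum_i t^i \dim H^i_{G_\lambda}(\mathrm{pt})$; combined with the description of the minimal projective resolution of $\widetilde{K}_\lambda$ (Corollary \ref{kf}) and the Springer-type map from \cite{K1}, this identifies $K_\lambda$ with the coinvariant algebra of the Weyl group of type $\mathsf B$ attached to the bi-partition indexing $\lambda$. The Brauer-Humphreys reciprocity of Theorem \ref{fcore} 3) together with the Cartan determinant factorization (Corollary \ref{cartan}) then shows that the graded characters $\mathsf{gch}\,K_\lambda$ are computed by the Lusztig-Shoji algorithm, analogously to the triangular decomposition displayed in Example \ref{fA3} but now performed in type $\mathsf B$.

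Finally, for the purity of exotic Springer fibers I would invoke the Springer-type correspondence of \cite{K1}: the cohomology of an exotic Springer fiber at $x_\lambda$ is a stalk of the pushforward of the constant sheaf along the exotic resolution, and by the decomposition theorem this pushforward splits as a direct sum of grading shifts of $\mathsf{IC}$-sheaves on $\mathfrak N$. Pointwise purity from $(\clubsuit)_2$ therefore transfers to pointwise purity of this pushforward, and hence to purity of the homology of each exotic Springer fiber.
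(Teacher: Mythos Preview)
Your overall architecture is right—verify $(\spadesuit)$, $(\clubsuit)_1$, $(\clubsuit)_2$, then invoke the general machinery—but there are two genuine gaps.

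First, your argument for condition (2) of Theorem \ref{fcritpure} rests on the claim ``$G_\mu \supset G_\lambda$ when $\mu \prec \lambda$,'' which is false: $G_\lambda$ and $G_\mu$ stabilize \emph{different} points $x_\lambda, x_\mu$, and there is no containment relation between them (even up to conjugacy) in general. The paper does not attempt any such comparison. Instead it passes to the enlarged group $\mathbb G = G \times (\mathbb C^\times)^2$, which still satisfies $(\spadesuit)$ and $(\clubsuit)_1$, and for each $\mu$ exhibits a semisimple $a_\mu = \exp A_\mu \in \mathbb G$ with $\mathbb O_\mu^{a_\mu}$ open dense in $\mathbb V^{a_\mu}$; then $\mathbb O_\lambda^{a_\mu} = \emptyset$ for $\lambda \succ \mu$, and evaluation at the image of $A_\mu$ in $\mathrm{Lie}\,\mathbb G /\!\!/ \mathbb G$ separates $\ker \psi_\lambda$ from $\ker \psi_\mu$. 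The extra torus factors are essential to manufacture such $a_\mu$. (As an aside: the parity input for condition (1) is \cite{K1} 6.2—odd Borel--Moore homology of exotic Springer fibers vanishes—combined with strict semi-smallness of $\mu$, not \cite{AH}. Achar--Henderson is used only later, to match the closure order on $\mathfrak N$ with Shoji's limit-symbol order.)

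Second, and more seriously, Theorem \ref{fcore} does not by itself identify $K_\lambda$ with a coinvariant algebra. By definition $K_\lambda \cong H_{-\bullet}(\mu^{-1}(x_\lambda))\langle d_\lambda\rangle$; the content of Shoji's conjecture is that the natural pushforward $\imath_\lambda : H_\bullet(\mu^{-1}(x_\lambda)) \to H_\bullet(G/B) \subset \mathbb C[\mathfrak t^*]$ is \emph{injective}, so that $K_\lambda\langle -d_\lambda\rangle$ coincides with the $A$-span of the harmonic realization $\mathsf L_\lambda$. Nothing in Theorem \ref{fcore}, Corollary \ref{kf}, or the reciprocity formula forces this. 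The paper proves it as a separate theorem (Theorem 5.8): it uses the Lusztig--Spaltenstein description of $\mathsf L_\lambda$ via a highest-degree polynomial $p_\lambda$ carrying a one-dimensional representation of a reflection subgroup $W_\lambda \cong \prod_i W_{n_i}$, then runs an induction on $n$ in which the $A_\lambda = \mathbb C W_\lambda \ltimes \mathbb C[\mathfrak t]$-submodule generated by $p_\lambda$ is identified with a box product $\boxtimes_i K_{\lambda_i}$ of standard modules for smaller exotic nilpotent cones. The ext-vanishing of Proposition \ref{fgd-k} and the shape of the minimal resolution from Corollary \ref{kf} feed into a degree comparison against the harmonic realization, but they are inputs to that argument, not a replacement for it. Your proposal elides this step entirely.

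Your deduction of exotic Springer fiber purity from $(\clubsuit)_2$ via the decomposition of $\mu_*\mathbb C[\dim F]$ is correct and matches the paper.
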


We note that Shoji-Sorlin \cite{SS} independently proved the last part of Theorem \ref{fmain} by a completely different method. Also, Theorem \ref{fmain} implies \cite{AH} Conjecture 6.4.

As a bonus of Theorem \ref{fmain}, we prove that the graded modules appearing from the Lieb-McGuire integrable systems \cite{HO} are exactly Shoji's coinvariant algebras in Appendix A.

The organization of this paper is as follows: In the first section, we formulate two conditions $(\spadesuit)$ and $(\clubsuit)$ and our main results (Theorem \ref{fcritpure} and the main part of Theorem \ref{fcore}). In the second section, we translate an iteration of distinguished triangles to a complex of projective modules to prove the main part of Theorem \ref{fcore}. In the third section, we prove Theorem \ref{fabsKas} and numerical consequences of Theorem \ref{fcore} including the Brauer-Humphreys type reciprocity and the Cartan determinant formula. In the fourth section, we complete the proof of Theorem \ref{fcore}, and show that our conditions are invariant under restrictions. Finally, we prove Shoji's conjecture in the fifth section. The appendices are devoted to the analysis of the Lieb-McGuire system and a proof of Theorem \ref{fcritpure}.

Theorem \ref{fcore} 3) shows that we have two versions of standard modules and they are deeply incorporated into the formulation. In addition, two versions of standard modules actually differ in Example \ref{fA3}. Therefore, our result mainly points different direction from that of Cline-Parshall-Scott \cite{CPS}. Also, there is a notion of affine cellularity due to K\"onig-Xi \cite{KX}, which provides a framework for algebraic results similar to ours. However, their algebraic conditions seem rather difficult to verify compared with our geometric conditions\footnote{For example, the combination of \cite{K1} and \S 1 yields homological/categorical consequences parallel to the theory of affine cellular algebras for the affine Hecke algebras of type $\mathsf{BC}$ with arbitrary rank and arbitrary real parameters (while the affine cellularity of the affine Hecke algebras of type $\mathsf{BC}$ is known only in rank two case with generic real parameters \cite{GM}).}. In addition, our approach gives some more precise information on $A$ than affine cellularity (in a sense) as our approach naturally gives a variant of the Kazhdan-Lusztig polynomials given geometrically.

\begin{flushleft}
{\small
{\bf Acknowledgement:} The author is indebted to Masaki Kashiwara for helpful discussions and comments. He is also indebted to Yoshiyuki Kimura for helpful comments and pointing out some errors. He is also grateful to Ryosuke Kodera for his question which leads him to Theorem \ref{wfilt}, Eric Opdam for suggesting him to study the material presented in Appendix A, Julia Sauter for giving me some comments, and Wolfgang Soergel for suggesting him to study extension algebras.}
\end{flushleft}

\begin{flushleft}
{\small {\bf Convention}}
\end{flushleft}

An algebra $R$ is a (not necessarily commutative) unital $\mathbb C$-algebra. A variety $\mathfrak X$ is a separated reduced scheme $\mathfrak X_0$ of finite type over some localization $\mathbb Z_S$ of $\mathbb Z$ specialized to an algebraically closed field $\Bbbk$ that we specify. It is called a $G$-variety if we have an action of a connected affine algebraic group scheme $G$ flat over $\mathbb Z_S$ on $\mathfrak X_0$ specialized to $\Bbbk$.

We fix some prime $\ell$ and fix an {\it identification} $\overline{\mathbb Q}_{\ell} \cong \mathbb C$ once for all. Let us denote by $D^b ( \mathfrak X )$ (resp. $D^+ ( \mathfrak X )$) the bounded (resp. bounded from the below) derived category of the category of constructible sheaves on $\mathfrak X$, and denote by $D^+ _G ( \mathfrak X )$ the $G$-equivariant derived category of $\mathfrak X$. We have a natural forgetful functor $D^+ _G ( \mathfrak X ) \to D^+ ( \mathfrak X )$, whose preimage of $D^b ( \mathfrak X )$ is denoted by $D ^b _G ( \mathfrak X )$. For an object of $D^b_G ( \mathfrak X )$, we may denote its image in $D ^b ( \mathfrak X )$ by the same letter.

Let $\mathsf{vec}$ be the category of $\mathbb Z$-graded vector spaces (over $\mathbb C$) bounded from the below so that its objects have finite-dimensional graded pieces. In particular, for $V = \oplus _{i \gg - \infty} V^i \in \mathsf{vec}$, its graded dimension $\mathsf{gdim} \, V := \sum _{i} t ^i \dim V ^i \in \mathbb Z (\!( t )\!)$ makes sense (with $t$ being indeterminant). We define $V \left<m\right>$ by setting $( V \left<m\right> ) ^i := V ^{i - m}$.

In this paper, a graded algebra $A$ is always a $\mathbb C$-algebra whose underlying space is in $\mathsf{vec}$. Let $A \mathchar`-\mathsf{gmod}$ be the category of finitely generated graded $A$-modules. For $E, F \in A \mathchar`-\mathsf{gmod}$, we define $\hom_{A} ( E, F )$ to be the direct sum of graded $A$-module homomorphisms $\hom _{A} ( E, F )^j$ of degree $j$ ($= \mathrm{Hom} _{A \mathchar`-\mathsf{gmod}} ( E \left< j \right>, F )$). We employ the same notation for extensions (i.e. $\mathrm{ext}_{A} ^i ( E, F ) = \oplus_{j\in \mathbb Z} \mathrm{ext}^{i}_{A} (E,F)^j$). We denote by $\mathsf{Irr} \, A$ the set of isomorphism classes of graded simple modules of $A$, and denote by $\mathsf{Irr} _0 \, A$ the set of isomorphism classes of graded simple modules of $A$ up to grading shifts. Two graded algebras are said to be (graded) Morita equivalent if their (graded) module categories are equivalent. For a graded $A$-module $E$, we denote its head by $\mathsf{hd} \, E$, and its socle by $\mathsf{soc} \, E$.

For $Q (t) \in \mathbb Q (t)$, we set $\overline{Q ( t )} := Q ( t^{-1} )$. We denote the set of isomorphism classes of a finite group $H$ by $\mathsf{Irr} \, H$. For each $\chi \in \mathsf{Irr} \, H$, we denote its dual representation by $\chi^{\vee}$. For derived functors $\mathbb R F$ or $\mathbb L F$ of some functor $F$, we represent its arbitrary graded piece (of its homology complex) by $\mathbb R ^{*} F$ or $\mathbb L ^{*} F$, and the direct sum of whole graded pieces by $\mathbb R ^{\bullet} F$ or $\mathbb L ^{\bullet} F$. For example, $\mathbb R ^{*} F \cong \mathbb R ^{*} G$ means that $\mathbb R ^{i} F \cong \mathbb R ^{i} G$ for every $i \in \mathbb Z$, while $\mathbb R ^{\bullet} F \cong \mathbb R ^{\bullet} G$ means that $\bigoplus _{i} \mathbb R ^{i} F \cong \bigoplus _{i} \mathbb R ^{i} G$.

When working on some sort of derived category, we suppress $\mathbb R$ or $\mathbb L$, or the category from the notation for the sake of simplicity in case there is only small risk of confusion.

\section{The conditions $(\spadesuit), (\clubsuit)$ and a main result}\label{general_Ext}
For a while, we fix our ground field $\Bbbk$ to be either $\mathbb C$ or the algebraic closure of a finite field. Let $G$ be a connected (affine) algebraic group. Let $\mathfrak X$ be a $G$-variety. Let $\underline{\Lambda}$ be the labeling set of $G$-orbits of $\mathfrak X$. We denote by $\mathbb O_{\underline{\lambda}}$ the $G$-orbit corresponding to $\underline{\lambda} \in \underline{\Lambda}$. Let $C_{\underline{\lambda}}$ be the component group of the stabilizer $G_{\underline{\lambda}}$ of a closed point of $\mathbb O_{\underline{\lambda}}$. It is always a finite group. Let $\Lambda$ be the set of conjugacy classes of pairs $(\underline{\lambda},\xi)$ with $\underline{\lambda} \in \underline{\Lambda}$ and $\xi \in \mathsf{Irr} \, C_{\lambda}$. For $\lambda = ( \underline{\lambda}, \xi ) \in \Lambda$, we set $\mathbb O _{\lambda} := \mathbb O _{\underline{\lambda}}$, $C _{\lambda} := C _{\underline{\lambda}}$, $G_{\lambda} := G_{\underline{\lambda}}$, and $\lambda^{\vee} := ( \underline{\lambda}, \xi^{\vee} )$. For $\lambda, \mu \in \Lambda$, we write $\lambda \precsim \mu$ if $\mathbb O _{\lambda} \subset \overline{\mathbb O _{\mu}}$, and write $\lambda \sim \mu$ if $\mathbb O _{\lambda} = \mathbb O _{\mu}$. We assume the following condition $(\spadesuit)$ in the below:

\begin{cond}[Condition $(\spadesuit)$]\label{spadesuit}
The set $\underline{\Lambda}$ (or equivalently $\Lambda$) is finite. For each $\lambda \in \Lambda$, we fix a $\Bbbk$-valued point $x_{\lambda} \in \mathbb O _{\lambda}$ such that $x_{\lambda} = x _{\mu}$ if $\lambda \sim \mu$.
\end{cond}

We have a (relative) dualizing complex $\omega_{\mathfrak X} := p^! \underline{\mathbb C} \in D^b _{G} ( \mathfrak X )$, where $p : \mathfrak X \to \{\mathrm{pt}\}$ is the $G$-equivariant structure map. We have a dualizing functor
$$\mathbb D : D ^b _G ( \mathfrak X ) ^{op} \ni C^{\bullet} \mapsto \mathcal{H}om ^{\bullet} ( C ^{\bullet}, \omega_{\mathfrak X} ) \in D^b _G ( \mathfrak X ).$$

We have a $\mathbb D$-autodual $t$-structure of $D^b _G ( \mathfrak X )$ whose truncation functor and perverse cohomology functor are denoted by $\tau$ and ${}^p H$, respectively. In particular, $\mathbb D$ induces an equivalence of categories $\tau _{\ge 0} D^b _G ( \mathfrak X ) ^{op} \cong \tau _{\le 0} D^b _G ( \mathfrak X )$ and each $\mathcal E \in D^b _G ( \mathfrak X )$ admits a distinguished triangle
$$\tau _{< m} \mathcal E \to \mathcal E \to \tau _{\ge m} \mathcal E \stackrel{+1}{\longrightarrow} \tau _{< m} \mathcal E [1]$$
for every $m \in \mathbb Z$.

For each $\lambda := ( \underline{\lambda}, \xi ) \in \Lambda$, we have a $G$-equivariant irreducible local system $\underline{\xi}$ on $\mathbb O _{\lambda}$ induced from $\xi$. We have inclusions $i_{\lambda} : \{ x_{\lambda} \} \hookrightarrow \mathfrak X$ and $j _{\lambda} : \mathbb O_{\lambda} \hookrightarrow \mathfrak X$. Let
\begin{equation}
\mathbb C _{\lambda} := ( j _{\lambda} ) _! \underline{\xi} \, [ \dim \mathbb O _{\lambda} ] \hskip 3mm \text{ and } \hskip 3mm \mathsf{IC} _{\lambda} := ( j _{\lambda} ) _{!*} \underline{\xi} \, [ \dim \mathbb O _{\lambda} ] \label{ICdef}
\end{equation}
be the extension by zero and the minimal extension, which we regard as objects of $D^b _G ( \mathfrak X )$. We denote by
\begin{align*}
\mathrm{Ext} ^{\bullet} _G ( \bullet, \bullet ) & : D^b _G ( \mathfrak X ) ^{op} \times D^b _G ( \mathfrak X ) \longrightarrow D^+ ( \{\mathrm{pt}\} )\\
\mathrm{Ext} ^{\bullet} ( \bullet, \bullet ) & : D ^b ( \mathfrak X ) ^{op} \times D ^b ( \mathfrak X ) \longrightarrow D ^b ( \{\mathrm{pt}\} )
\end{align*}
the Ext (as bifunctors) of $D^b_G ( \mathfrak X )$ and $D^b ( \mathfrak X )$, respectively.

For each $\lambda \in \Lambda$, we fix $L _{\lambda} \in D ^b ( \{ \mathrm{pt} \} )$ as a non-zero graded vector space with a trivial differential which satisfies the duality condition $L _{\lambda ^{\vee}} \cong L _{\lambda} ^*$. We set
$$\mathcal L := \bigoplus _{\lambda \in \Lambda} L_{\lambda} \boxtimes \mathsf{IC} _{\lambda} \in D _G ^b ( \mathfrak X ).$$
By construction, we find an isomorphism $\mathcal L \cong \mathbb D \mathcal L$.

We form a graded Yoneda algebra
$$A _{(G, \mathfrak X)} = \bigoplus _{i \in \mathbb Z} A ^i _{(G, \mathfrak X)} := \bigoplus _{i \in \mathbb Z} \mathrm{Ext} _G ^i ( \mathcal L, \mathcal L )$$
whose degree is the cohomological degree. We denote by $B _{(G, \mathfrak X)}$ the algebra $A_{(G, \mathfrak X)}$ obtained by taking $\mathcal L = \bigoplus _{\lambda \in \Lambda} \mathsf{IC} _{\lambda}$ (and call it the basic ring of $A_{(G, \mathfrak X)}$). The algebra $B_{(G, \mathfrak X)}$ is graded Morita equivalent to $A _{(G, \mathfrak X)}$ (see Lemma \ref{gMe} in the below), and hence all the arguments in the below are independent of the choice of $\mathcal L$, that we suppress for simplicity. We also drop $(G, \mathfrak X)$ in case the meaning is clear from the context. It is standard that $B _{(G, \mathfrak X)}$ is non-negatively graded (as $\mathrm{Ext} ^{< 0} _G ( \mathsf{IC} _{\lambda}, \mathsf{IC} _{\mu} ) \equiv \{ 0 \}$) and $\{L_{\lambda}\}_{\lambda \in \Lambda}$ forms a complete collection of graded simple $A$-modules up to grading shifts (as $\bigoplus _{\lambda \in \Lambda} \mathrm{end} _{\mathbb C} (L_{\lambda})$ is the maximal graded semisimple quotient of $A$ by $\dim \mathrm{Ext} ^{0} _G ( \mathsf{IC} _{\lambda}, \mathsf{IC} _{\mu} ) = \delta _{\lambda, \mu}$). We have $\mathcal{E}xt ^{\bullet} ( \mathcal L, \mathcal L ) \in D ^b ( \mathfrak X)$, which implies $\dim \mathrm{Ext} ^{\bullet} ( \mathcal L, \mathcal L ) < \infty$. By the Serre spectral sequence
\begin{equation}
H ^{\bullet} _G ( \{ \mathrm{pt} \} ) \otimes _{\mathbb C} \mathrm{Ext} ^{\bullet} ( \mathcal L, \mathcal L ) \Rightarrow \mathrm{Ext} ^{\bullet}_G ( \mathcal L, \mathcal L ) \cong A,\label{SS1}
\end{equation}
we conclude that $A \in \mathsf{vec}$.

\begin{lemma}[cf. Joshua \cite{J} \S 6]\label{symm}
We have an isomorphism $A \cong A^{op}$ of graded algebras induced by an isomorphism $\mathcal L \cong \mathbb D \mathcal L$. In addition, the graded dual of $L_{\lambda}$ is naturally isomorphic to $L_{\lambda ^{\vee}}$ as a graded $A$-module for each $\lambda \in \Lambda$.
\end{lemma}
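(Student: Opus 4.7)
The plan is to deduce both assertions from Verdier duality applied to the self-dual object $\mathcal L$, using the natural contravariant equivalence $\mathbb D : D^b_G(\mathfrak X)^{op} \to D^b_G(\mathfrak X)$ together with the already fixed isomorphism $\mathcal L \cong \mathbb D \mathcal L$. Concretely, $\mathbb D$ induces for each $i \in \mathbb Z$ a $\mathbb C$-linear isomorphism
$$\mathrm{Ext}^i_G(\mathcal L, \mathcal L) \xrightarrow{\;\mathbb D\;} \mathrm{Ext}^i_G(\mathbb D \mathcal L, \mathbb D \mathcal L),$$
which after identifying source and target with $A$ via $\mathcal L \cong \mathbb D \mathcal L$ yields a degree-preserving $\mathbb C$-linear automorphism $\Phi$ of $A$.

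The next step is to verify that $\Phi$ reverses Yoneda composition. Since $\mathbb D$ is a contravariant triangulated functor and the Yoneda product of $f \colon \mathcal L \to \mathcal L[i]$ and $g \colon \mathcal L \to \mathcal L[j]$ is the composition $g[i] \circ f$, applying $\mathbb D$ turns this composite into $\mathbb D f[-j] \circ \mathbb D g$, which, after shifting by $i+j$ and inserting the identifications $\mathbb D \mathcal L \cong \mathcal L$, is precisely the Yoneda product of $\Phi(g)$ followed by $\Phi(f)$. Hence $\Phi$ is an anti-automorphism of graded algebras, giving $A \cong A^{op}$.

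For the second part, I would track how $\Phi$ acts on the degree zero idempotents of $A$. Because $\mathsf{IC}_\lambda = (j_\lambda)_{!*}\underline{\xi}[\dim \mathbb O_\lambda]$ is Verdier self-dual up to dualising the local system, $\mathbb D \mathsf{IC}_\lambda \cong \mathsf{IC}_{\lambda^\vee}$, and by the chosen normalisation $\mathbb D L_\lambda = L_\lambda^* \cong L_{\lambda^\vee}$, so
$$\mathbb D (L_\lambda \boxtimes \mathsf{IC}_\lambda) \cong L_{\lambda^\vee} \boxtimes \mathsf{IC}_{\lambda^\vee}.$$
Thus the self-duality of $\mathcal L$ permutes the direct summands of $\mathcal L$ via $\lambda \mapsto \lambda^\vee$, and $\Phi$ swaps the primitive idempotents $e_\lambda, e_{\lambda^\vee} \in A^0$. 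Combined with the standard fact that the graded dual of a left $A$-module is canonically a left $A^{op}$-module, the anti-automorphism $\Phi$ converts the graded dual $L_\lambda^*$ into a left $A$-module, and the identification above on idempotents identifies this left $A$-module with $L_{\lambda^\vee}$.

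The main obstacle is the bookkeeping: one must check with care that the shifts, signs, and identifications in the triangulated setting really give a graded algebra anti-isomorphism (and not merely an antipode up to sign), and that the action of $\Phi$ on the idempotent decomposition of $A^0 = \bigoplus_\lambda \mathrm{End}(L_\lambda)$ is exactly the involution $\lambda \leftrightarrow \lambda^\vee$ rather than some other permutation. This is where Joshua's computation in \cite{J} \S 6 is invoked, and carrying it out explicitly (for instance by fixing once and for all a $\mathbb D$-equivariant trivialisation of $\omega_{\{\mathrm{pt}\}}$) removes any residual ambiguity.
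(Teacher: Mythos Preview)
Your proposal is correct and follows essentially the same approach as the paper: both apply Verdier duality $\mathbb D$ to the self-dual object $\mathcal L$ to obtain an anti-isomorphism of the Ext-algebra, and both track the effect on the summands $L_\lambda \boxtimes \mathsf{IC}_\lambda \mapsto L_{\lambda^\vee} \boxtimes \mathsf{IC}_{\lambda^\vee}$ to identify $L_\lambda^*$ with $L_{\lambda^\vee}$ as an $A$-module. The only cosmetic difference is that the paper makes the block decomposition $\mathrm{hom}_{\mathbb C}(L_\lambda, L_\mu) \boxtimes \mathrm{Ext}^\bullet_G(\mathsf{IC}_\lambda, \mathsf{IC}_\mu)$ explicit and verifies compatibility with composition on each tensor factor separately, whereas you argue globally with the whole of $\mathcal L$; your acknowledgment of the sign/shift bookkeeping is exactly the point the paper handles by this explicit factorisation.
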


\begin{proof}
The isomorphism $\mathbb D \mathcal L \cong \mathcal L$ induces an isomorphism $\mathbb D ( L_{\lambda} \boxtimes \mathsf{IC} _{\lambda}) \cong L _{\lambda ^{\vee}} \boxtimes \mathsf{IC} _{\lambda ^{\vee}}$ for each $\lambda \in \Lambda$. This determines the both of $L_{\lambda} ^* \cong L _{\lambda ^{\vee}}$ and $\mathbb D \mathsf{IC} _{\lambda} \cong \mathsf{IC} _{\lambda ^{\vee}}$ by the simplicity of $\mathsf{IC} _{\lambda}$ (up to a scalar).

For each $\lambda, \mu \in \Lambda$, we have a natural identification
$$\mathrm{hom} _{\mathbb C} ( L_{\lambda}, L_{\mu} ) \boxtimes \mathrm{Ext} _G^{\bullet} ( \mathsf{IC}_{\lambda}, \mathsf{IC} _{\mu} ) = \mathrm{Ext} _G^{\bullet} ( L_{\lambda} \boxtimes \mathsf{IC}_{\lambda}, L_{\mu} \boxtimes \mathsf{IC} _{\mu} ) \subset A.$$
For each $\lambda, \mu, \gamma \in \Lambda$, this identification factors the multiplication map
\begin{equation}
\mathrm{Ext} _G^{\bullet} ( L_{\lambda} \boxtimes \mathsf{IC}_{\lambda}, L_{\mu} \boxtimes \mathsf{IC} _{\mu} ) \times \mathrm{Ext} _G^{\bullet} ( L_{\mu} \boxtimes \mathsf{IC}_{\mu}, L_{\gamma} \boxtimes \mathsf{IC} _{\gamma} ) \rightarrow \mathrm{Ext} _G^{\bullet} ( L_{\lambda} \boxtimes \mathsf{IC}_{\lambda}, L_{\gamma} \boxtimes \mathsf{IC} _{\gamma} )\label{assT}
\end{equation}
inside $A$ into the external tensor products of
\begin{align}
& \mathrm{hom} _{\mathbb C} ( L_{\lambda}, L_{\mu} ) \times \mathrm{hom} _{\mathbb C} ( L_{\mu}, L_{\gamma} ) \to \mathrm{hom} _{\mathbb C} ( L_{\lambda}, L_{\gamma} ), \text{ and}\label{assL}\\
& \mathrm{Ext} _G^{\bullet} ( \mathsf{IC}_{\lambda}, \mathsf{IC}_{\mu} ) \times \mathrm{Ext} _G^{\bullet} ( \mathsf{IC}_{\mu}, \mathsf{IC}_{\gamma} ) \to \mathrm{Ext} _G^{\bullet} ( \mathsf{IC}_{\lambda}, \mathsf{IC}_{\gamma} ).\label{assI}
\end{align}
The associativity of (\ref{assL}) and (\ref{assI}) implies that of (\ref{assT}). By applying $\mathbb D$, we have identifications
\begin{eqnarray*}
& \mathrm{hom} _{\mathbb C} ( L_{\lambda}, L_{\mu} ) \cong \mathrm{hom} _{\mathbb C} ( L_{\mu} ^*, L_{\lambda} ^* ) \cong \mathrm{hom} _{\mathbb C} ( L_{\mu^{\vee}}, L_{\lambda ^{\vee}} ), \text{ and }\\
&\mathrm{Ext} _G^{*} ( \mathsf{IC}_{\lambda}, \mathsf{IC} _{\mu} ) \cong \mathrm{Ext} _G^{*} ( \mathbb D \mathsf{IC}_{\mu}, \mathbb D \mathsf{IC} _{\lambda} ) \cong \mathrm{Ext} _G^{*} ( \mathsf{IC}_{\mu ^{\vee}}, \mathsf{IC} _{\lambda ^{\vee}} )
\end{eqnarray*}
for each $\lambda, \mu \in \Lambda$. These identifications commute with the compositions (\ref{assL}) and (\ref{assI}). Taking direct sums, we find an algebra isomorphism $\psi: A \cong A ^{op}$.

Since $\mathbb D$ exchanges $L_{\lambda} \boxtimes \mathsf{IC} _{\lambda}$ with $L _{\lambda ^{\vee}} \boxtimes \mathsf{IC} _{\lambda ^{\vee}}$, we have
\begin{align*}
A \supset \, & \mathrm{Ext} _G ^{\bullet} ( L _{\lambda} \boxtimes \mathsf{IC} _{\lambda}, L _{\lambda} \boxtimes \mathsf{IC} _{\lambda} ) \supset \mathrm{end} _{\mathbb C} ( L_{\lambda} )\\
& \stackrel{\psi}{\longrightarrow} \mathrm{end} _{\mathbb C} ( L_{\lambda ^{\vee}} ) ^{op} \subset \mathrm{Ext} _G ^{\bullet} ( L _{\lambda^{\vee}} \boxtimes \mathsf{IC} _{\lambda^{\vee}}, L _{\lambda^{\vee}} \boxtimes \mathsf{IC} _{\lambda^{\vee}} ) \subset A ^{op}.
\end{align*}
Therefore, $L_{\lambda} ^*$ naturally carries the structure of a graded $A$-module $L_{\lambda ^{\vee}}$ (via $\psi$) as required.
\end{proof}

\begin{lemma}\label{NA}
The algebra $A$ is left and right Noetherian.
\end{lemma}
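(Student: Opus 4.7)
The plan is to realize $A$ as a finitely generated module over a graded-commutative Noetherian subring sitting in its (graded) center; once that is accomplished, every one-sided ideal of $A$ is automatically a Noetherian submodule over that central subring, and a fortiori finitely generated as an $A$-module.

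The candidate central subring is $H^{\bullet}_{G}(\{\mathrm{pt}\})$. Since $G$ is a connected affine algebraic group (over $\mathbb{C}$, or lifted from the algebraic closure of a finite field via the identification $\overline{\mathbb{Q}}_{\ell}\cong\mathbb{C}$), this cohomology ring is a polynomial algebra in finitely many generators, obtained via the Chevalley--Shephard--Todd theorem from the cohomology of a maximal torus of the reductive quotient of $G$; in particular it is Noetherian. The structure map $p\colon \mathfrak{X}\to\{\mathrm{pt}\}$ yields a pullback $p^{*}\colon H^{\bullet}_{G}(\{\mathrm{pt}\}) \to H^{\bullet}_{G}(\mathfrak{X})$, and $H^{\bullet}_{G}(\mathfrak{X})$ acts on $A=\mathrm{Ext}^{\bullet}_{G}(\mathcal{L},\mathcal{L})$ by cup product on either the source or the target; the two actions agree up to the standard sign, so the composite $H^{\bullet}_{G}(\{\mathrm{pt}\})\to A$ has image in the graded center of $A$.

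To produce the required finite generation, I would exploit the Serre spectral sequence (\ref{SS1}). Its $E_{2}$-page $H^{\bullet}_{G}(\{\mathrm{pt}\}) \otimes_{\mathbb{C}} \mathrm{Ext}^{\bullet}(\mathcal{L},\mathcal{L})$ is a finitely generated $H^{\bullet}_{G}(\{\mathrm{pt}\})$-module because $\mathcal{E}xt^{\bullet}(\mathcal{L},\mathcal{L}) \in D^{b}(\mathfrak{X})$ implies that $\mathrm{Ext}^{\bullet}(\mathcal{L},\mathcal{L})$ is a finite-dimensional $\mathbb{C}$-vector space, whence this $E_{2}$-page is Noetherian. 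All differentials of the spectral sequence are $H^{\bullet}_{G}(\{\mathrm{pt}\})$-linear, so every subsequent page $E_{r}$, and in particular $E_{\infty}$, remains a finitely generated $H^{\bullet}_{G}(\{\mathrm{pt}\})$-module. Convergence is strong because $E_{2}^{p,q}$ vanishes outside a horizontal strip bounded in $q$; consequently the associated filtration on each graded piece $A^{n}$ is finite, and $A$ itself is a finitely generated $H^{\bullet}_{G}(\{\mathrm{pt}\})$-module.

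With this in place, any left or right ideal $I \subset A$ is automatically an $H^{\bullet}_{G}(\{\mathrm{pt}\})$-submodule by centrality, hence finitely generated over $H^{\bullet}_{G}(\{\mathrm{pt}\})$, and therefore finitely generated over $A$; this yields the desired two-sided Noetherianity. The main obstacle I foresee is purely bookkeeping: verifying that the Serre spectral sequence is genuinely $H^{\bullet}_{G}(\{\mathrm{pt}\})$-equivariant and that the cup-product action from the source agrees with that from the target, so that the image of $H^{\bullet}_{G}(\{\mathrm{pt}\})$ in $A$ lies in the graded center. Both statements reduce to standard properties of equivariant cohomology and the functoriality of $p^{*}$, but demand a careful pass through the definitions.
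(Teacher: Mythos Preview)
Your proposal is correct and follows essentially the same approach as the paper: both use the image of $H^{\bullet}_{G}(\{\mathrm{pt}\})$ in $A$ as a Noetherian subring, invoke the Serre spectral sequence (\ref{SS1}) together with $\dim \mathrm{Ext}^{\bullet}(\mathcal L,\mathcal L)<\infty$ to see that $A$ is module-finite over it, and conclude that any one-sided ideal of $A$ is a finitely generated submodule. Your write-up is more explicit about centrality and the $H^{\bullet}_{G}(\{\mathrm{pt}\})$-linearity of the spectral sequence differentials, points the paper leaves implicit, but the argument is the same.
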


\begin{proof}
Let $C$ be the image of the natural map $H ^{\bullet} _G ( \mathrm{pt} ) \longrightarrow A$. Since $H ^{\bullet} _G ( \mathrm{pt} )$ is a polynomial algebra, we deduce that $C$ is a Noetherian algebra. By (\ref{SS1}), $A$ is a finitely generated module over $C$. Hence, every $C$-submodule of $A$ is again finitely generated. This particularly applies to every left or right $A$-submodule of $A$ (i.e. ideals of $A$), and hence the result.
\end{proof}

For each $\lambda \in \Lambda$, we set
$$P_{\lambda} := \mathrm{Ext} _G ^{\bullet} ( \mathsf{IC} _{\lambda}, \mathcal L ) = \bigoplus _{i \in \mathbb Z} \mathrm{Ext} _G ^i ( \mathsf{IC} _{\lambda}, \mathcal L ).$$
Each $P_{\lambda}$ is a graded projective left $A$-module. By construction, we have
$$A \cong \bigoplus _{\lambda \in \Lambda} L _{\lambda} ^* \boxtimes \mathrm{Ext} _G ^{\bullet} ( \mathsf{IC} _{\lambda}, \mathcal L ) = \bigoplus _{\lambda \in \Lambda} P _{\lambda} \boxtimes L _{\lambda} ^*$$
as left $A$-modules. It is standard that $P_{\lambda}$ is an indecomposable $A$-module whose head is isomorphic to $L _{\lambda}$ (cf. \cite{CG} \S 8.7). We have an idempotent $e_{\lambda} \in A$ so that $P_{\lambda} \cong A e _{\lambda}$ as left graded $A$-modules (up to a grading shift).

For each $\lambda = ( \underline{\lambda}, \xi ) \in \Lambda$, we set
$$\widetilde{K} _{\lambda} := \mathrm{Ext} _G ^{\bullet} ( \mathbb C _{\lambda}, \mathcal L ) \text{ and } K _{\lambda} := \mathrm{Hom} _{C_{\lambda}} ( \xi, H ^{\bullet} i_{\lambda} ^! \mathcal L[ \dim \mathbb O _{\lambda} ] ).$$
We call $K _{\lambda}$ a standard module, and $\widetilde{K} _{\lambda}$ a dual standard module of $A$. Here the Serre-type spectral sequence takes the form:
\begin{align}\nonumber
E_2 & \cong \mathrm{Ext} ^{\bullet} _{G} ( \underline{\xi} \, [\dim \mathbb O_{\lambda}], j_{\lambda} ^! \mathcal L )\\\nonumber
& \cong \mathrm{Ext} ^{\bullet} _{G _{\lambda}} ( \xi \, [- \dim \mathbb O_{\lambda}], i_{\lambda} ^! \mathcal L )\\\nonumber
& \cong \mathrm{Ext} ^{\bullet + \dim \mathbb O_{\lambda}} _{G _{\lambda} ^{\circ}} ( \xi, i_{\lambda} ^! \mathcal L ) ^{C_{\lambda}}\\
& \cong \bigoplus _{\mu = ( \underline{\lambda}, \zeta ) \in \Lambda} \mathrm{Hom} _{C_{\lambda}} ( \xi, \zeta \otimes _{\mathbb C} H _{G_{\lambda}^{\circ}} ^{\bullet} ( \{ x_{\lambda} \} ) ) \boxtimes K _{\mu} \Rightarrow \widetilde{K} _{\lambda}.\label{leray}
\end{align}
Note that the first isomorphism is adjunction, the second is the induction equivalence (cf. \cite{BL} 2.6.2), the third is the Hochshild-Serre spectral sequence, and the fourth is expanding $i_{\lambda} ^! \mathcal L$ with the action of $C_{\lambda}$ recorded and the usual Serre spectral sequence.

We now specialize to the case where our base field $\Bbbk$ is the algebraic closure of a finite field $\mathbb F _q$ of cardinality $q$. We regard each $\mathsf{IC} _{\lambda}$ as a simple mixed perverse sheaf (of some weight) in the category of mixed sheaves on $\mathfrak X$ via \cite{BBD} \S 5, and each $L_{\lambda}$ as a mixed (complex of) vector space of weight zero. I.e. each $L_{\lambda} ^i$ is pure of weight $i$ in the sense that the geometric Frobenius acts by $q ^{i/2} \mathsf{id}$. It follows that the algebra $A$ and its standard modules $\{ K_{\lambda} \} _{\lambda \in \Lambda}$ acquire (mixed) weight structures.

In addition, we also equip $\mathbb C_{\lambda}$ (for $\lambda = ( \underline{\lambda}, \xi) \in \Lambda$) with a mixed weight structure. In particular, dual standard modules $\{ \widetilde{K}_{\lambda} \} _{\lambda \in \Lambda}$ also acquire some (mixed) weight structures.

\begin{cond}[Condition $(\clubsuit)$]\label{clubsuit}
The condition $(\clubsuit)$ consists of two subconditions:
\begin{itemize}
\item[$(\clubsuit)_1$] The algebra $A$ is pure of weight zero;
\item[$(\clubsuit)_2$] For each $\lambda \in \Lambda$, the sheaf $\mathsf{IC} _{\lambda}$ $($whose weight is normalized to be zero$)$ is pointwise pure of weight zero.
\end{itemize}
\end{cond}

\begin{remark}
{\bf 1)} The conditions $(\spadesuit)$, and $(\clubsuit)$ are closed under the restriction to a closed $G$-subvariety (cf. Lemma \ref{cc}); {\bf 2)} Thanks to Lusztig \cite{Lu} and Varagnolo-Vasserot \cite{VV2}, the quiver Schur algebras of type $\mathsf{A}$ also satisfy the conditions $(\spadesuit)$ and $(\clubsuit)$ by taking $\mathfrak X$ as the spaces of nilpotent representations of cyclic quivers.
\end{remark}

\begin{theorem}\label{critpure}
We assume $(\spadesuit)$ and $(\clubsuit)_1$, but not $(\clubsuit)_2$. We have $(\clubsuit)_2$ as a consequence of the following two conditions:
\begin{enumerate}
\item[$a)$] The spectral sequence $(\ref{leray})$ is $E_2$-degenerate for each $\lambda \in \Lambda$;
\item[$b)$] For each $\lambda \in \Lambda$, we have a natural morphism $\psi_{\lambda} : H ^{\bullet} _{G} ( \{ \mathrm{pt} \} ) \rightarrow H ^{\bullet} _{G} ( \mathbb{O} _{\lambda} )$ of graded algebras. We have
$$\ker \, \psi _{\lambda} \not\subset \ker \, \psi _{\gamma} \hskip 5mm \text{ for every } \hskip 5mm \gamma \prec \lambda \in \Lambda;$$
\end{enumerate}
Here the condition $a)$ can be replaced by its variant:
\begin{enumerate}
\item[$a)'$] For each $\lambda, \mu \in \Lambda$, the stalk of $\mathsf{IC}_{\lambda}$ along $\mathbb O_{\mu}$ satisfies the parity vanishing.
\end{enumerate}
\end{theorem}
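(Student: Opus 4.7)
The plan is to induct on the partial order $\prec$ on $\Lambda$, establishing pointwise purity of $\mathsf{IC}_\lambda$ of weight zero once this has been verified for every $\gamma \prec \lambda$. The base case, $\mathbb{O}_\lambda$ closed in $\mathfrak{X}$, is immediate: then $\mathsf{IC}_\lambda = \underline{\xi}[\dim\mathbb{O}_\lambda]$, which is pointwise pure of weight zero by our normalisation of the mixed structure on $\underline{\xi}$. Since $\mathsf{IC}_\lambda$ is pure of weight zero along its open orbit by construction, the induction step amounts to controlling the stalks $i_\gamma^*\mathsf{IC}_\lambda$ for each strictly smaller $\gamma \prec \lambda$.

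The first reduction is to show that conditions $a)$ and $a)'$ are interchangeable in the inductive context. Assuming $a)'$, the induction hypothesis gives parity vanishing for every $i_\gamma^*\mathsf{IC}_\mu$ with $\mu \prec \lambda$, whence each $E_2$-term of (\ref{leray}) applied to $\widetilde{K}_\mu$ lies in a single parity and the spectral sequence collapses for trivial reasons; conversely, granted $a)$ together with the induction hypothesis, the parity of $i_\gamma^*\mathsf{IC}_\lambda$ can be read off directly from the degenerated $E_2$-page after identifying the convergent.

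The core argument then proceeds as follows. By $(\clubsuit)_1$ the algebra $A$ is pure of weight zero, and each block $\Ext^\bullet_G(\mathsf{IC}_\lambda,\mathsf{IC}_\mu)$ inherits this purity as a direct summand; via (\ref{SS1}) the same purity lifts to the non-equivariant Ext groups. Running (\ref{leray}) for $\widetilde{K}_\lambda$ and invoking $a)$ or $a)'$, the desired pointwise purity of $\mathsf{IC}_\lambda$ along $\mathbb{O}_\gamma$ reduces to a weight-separation statement for the $H^\bullet_G(\{\mathrm{pt}\})$-module structure on $\Ext^\bullet_G(\mathsf{IC}_\gamma,\mathsf{IC}_\lambda)$, the two actions factoring through $\psi_\gamma$ and $\psi_\lambda$ respectively. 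Condition $b)$ furnishes, for each $\gamma \prec \lambda$, a class in $\ker\psi_\lambda \setminus \ker\psi_\gamma$; such a class annihilates the contribution coming from $\mathsf{IC}_\lambda$ at the generic point while acting non-trivially on any putative impure summand of $i_\gamma^*\mathsf{IC}_\lambda$, producing a weight-shifted cycle in $A$ that would contradict $(\clubsuit)_1$ unless the impurity vanishes.

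The main obstacle, as I see it, is making this final contradiction explicit. I would handle it by isolating the lowest non-trivial weight-filtration quotient of a hypothetical impurity in $i_\gamma^*\mathsf{IC}_\lambda$, lifting it canonically through the adjunction triangle $(j_\lambda)_!j_\lambda^*\mathsf{IC}_\lambda \to \mathsf{IC}_\lambda \to (i_\gamma)_*i_\gamma^*\mathsf{IC}_\lambda \stackrel{+1}{\to}$ into a weight-graded piece of the Ext algebra, and then multiplying by the class produced by $b)$ to exhibit a non-zero element of $A$ of non-zero weight, contradicting $(\clubsuit)_1$. The induction closes once this contradiction is achieved, one orbit at a time.
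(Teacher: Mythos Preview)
Your inductive skeleton is right, and you have correctly identified that the two module structures on certain $\Ext$-groups, factoring through $\psi_\gamma$ and $\psi_\lambda$, are the crux. However, the mechanism you propose for the contradiction does not work, and the paper's argument is organised quite differently.

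First, the contradiction in the paper is with condition $b)$, not with $(\clubsuit)_1$. Your plan is to ``multiply by the class produced by $b)$ to exhibit a non-zero element of $A$ of non-zero weight''. But any class in $H^\bullet_G(\{\mathrm{pt}\})$ is pure of weight zero, so multiplication by it cannot shift weight; and since $(\clubsuit)_1$ is a hypothesis, $A$ contains no non-pure elements to be found. The adjunction triangle you write down also does not directly feed into $A$: the impurity lives in $i_\gamma^!\mathsf{IC}_\lambda$ (or $i_\gamma^*\mathsf{IC}_\lambda$), which is not a summand of any $\Ext_G^\bullet(\mathsf{IC}_\mu,\mathsf{IC}_\nu)$.

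Second, you are looking at the wrong dual standard module. The paper works with $\widetilde{K}_\gamma$ (not $\widetilde{K}_\lambda$), or more precisely with the \emph{standard complex} $(Q(\mathbb{C}_\gamma),d)$ of Theorem \ref{transfer}: a finite complex of projective $A$-modules, built from the weight filtration of $\mathbb{C}_\gamma$, whose total cohomology is $\widetilde{K}_\gamma$, with $H^i(Q(\mathbb{C}_\gamma),d)$ isolating the weight $(-i)$-part. Under condition $a)$, the nonvanishing of $H^i$ for some $i\neq 0$ is exactly equivalent to impurity of some $i_\gamma^!\mathsf{IC}_\mu$. After restricting $\mathfrak X$ to $\overline{\mathbb O_\lambda}$ (legitimate by Lemma \ref{cc}) and invoking the induction hypothesis, the only possible impurity is that of $i_\gamma^!\mathsf{IC}_\lambda$, so the composition factors of $H^i(Q(\mathbb{C}_\gamma),d)$ for $i\neq 0$ are all $L_\mu$ with $\mu\sim\lambda$. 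Since $\widetilde{K}_\mu$ for $\mu\sim\lambda$ is the projective cover of $L_\mu$ in that subcategory, $H^i$ is a quotient of a sum of such $\widetilde{K}_\mu$, hence acquires an $H^\bullet_{G_\lambda}(\{\mathrm{pt}\})$-action via $\mathrm{end}_A(\widetilde{K}_\mu)$. On the other hand, condition $a)$ makes $\widetilde{K}_\gamma$ (and hence $H^i$) torsion-free over $H^\bullet_{G_\gamma}(\{\mathrm{pt}\})$. Both actions restrict the central $H^\bullet_G(\{\mathrm{pt}\})$-action through $\psi_\lambda$ and $\psi_\gamma$ respectively; so any $\xi\in\ker\psi_\lambda$ acts by zero, whence $\psi_\gamma(\xi)$ kills a nonzero torsion-free module and must itself vanish. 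This yields $\ker\psi_\lambda\subset\ker\psi_\gamma$, contradicting $b)$. The torsion-freeness step, which is where condition $a)$ is genuinely consumed, is entirely absent from your outline.
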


The proof of Theorem \ref{critpure} is given in Appendix B.

\begin{remark}\label{rcp}
{\bf 1)} The condition $(\clubsuit)_2$ implies the condition a) of Theorem \ref{critpure} since $H^{\bullet} _M ( \{ \mathrm{pt} \} )$ is pure for an affine algebraic group $M$; {\bf 2)} Theorem \ref{critpure} presents new proofs of pointwise purity of $($equivariant$)$ intersection cohomology complexes on some varieties from the structure of the affine Hecke algebras of type $\mathsf{BC}$ with $2$-parameters \cite{K1}, and the quiver Schur algebras \cite{Lu,VV2}.
\end{remark}

For $M \in A \mathchar`-\mathsf{gmod}$ and $i \in \mathbb Z$, we define
\begin{align*}
& [ M : L_{\lambda} \left< i \right> ]_0 := \dim \, \mathrm{Hom} _{A\mathchar`-\mathsf{gmod}} ( P_{\lambda} \left< i \right>, M ) \in \mathbb Z \hskip 2mm \text{ and }\\
& [ M : L_{\lambda} ] := \mathsf{gdim} \, \mathrm{hom} _A ( P_{\lambda}, M ) \in \mathbb Z (\!(t)\!).
\end{align*}

We have $[M : L_{\lambda}] = \sum _{i \in \mathbb Z} [ M : L_{\lambda} \left< i \right> ]_0 t^i \in \mathbb Z (\!(t)\!)$. This is a graded version of the composition multiplicity of $M$, that is existent as $A$ is finite-dimensional modulo its graded Jacobson radical (cf. Lemma \ref{hd} in the below).

We define $A \mathchar`-\mathsf{gmod}^{pf}$ to be the full subcategory of $A \mathchar`-\mathsf{gmod}$ consisting of objects which admit finite resolutions by finitely generated graded projective $A$-modules (this is an additive category). For $M \in A \mathchar`-\mathsf{gmod} ^{pf}$ and $N \in A \mathchar`-\mathsf{gmod}$, we define its graded Euler-Poincar\'e characteristic as:
\begin{equation}
\left< M, N \right> _{\mathsf{gEP}} := \sum _{i \ge 0} (-1)^i \mathsf{gdim} \, \mathrm{ext} ^i _A ( M, N ) \in \mathbb Z (\!( t )\!).\label{defnEP}
\end{equation}

\begin{theorem}\label{KS} Assume the conditions $(\spadesuit)$ and $(\clubsuit)$:
\begin{enumerate}
\item We have
$$[\widetilde{K} _{\lambda} : L _{\mu} ] = 0 = [K _{\lambda} : L _{\mu} ] \hskip 2mm \text{ for } \lambda \not\precsim \mu \hskip 2mm \text{ and } \hskip 3mm [K _{\lambda} : L _{\mu} ] = \delta _{\lambda,\mu} \hskip 2mm \text{ for } \lambda \sim \mu ;$$
\item We have
$$\mathrm{ext} _A ^{\bullet} ( \widetilde{K} _{\lambda}, \widetilde{K} _{\mu} ) = \{ 0 \} \hskip 1mm\text{ for each } \mu \not\precsim \lambda \hskip 1mm \text{ and } \hskip 1mm \mathrm{ext} _A ^{\bullet} ( \widetilde{K} _{\lambda}, K _{\mu} ) = \{ 0 \} \hskip 1mm \text{ for each } \mu \not\preceq \lambda;$$
\item For each $\lambda \in \Lambda$, we have
$$\widetilde{K} _{\lambda} \cong P _{\lambda} / \Bigl(  \sum _{\mu \prec \lambda} A e_{\mu} P _{\lambda} \Bigr);$$
\item Each $\widetilde{K} _{\lambda}$ admits a separable decreasing filtration whose associated graded is a direct sum of grading shifts of $K_{\mu}$ with $\mu \sim \lambda$. In addition, we have
$$[\widetilde{K} _{\lambda} : L _{\mu} ] = \mathsf{gdim} \, \mathrm{hom} _{C_{\lambda}} ( \xi, \zeta \otimes _{\mathbb C} H ^{\bullet} _{G _{\lambda} ^{\circ}} ( \{ \mathrm{pt} \} ) )$$
for every $\lambda = ( \underline{\lambda}, \xi ) \sim ( \underline{\mu}, \zeta ) = \mu$.
\end{enumerate}
\end{theorem}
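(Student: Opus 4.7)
The plan is to prove the four parts in the order 1, 3, 4, 2, passing between algebra and geometry via the Yoneda-type identity
\[
\mathrm{hom}_A\bigl(P_\mu, \mathrm{Ext}^\bullet_G(\mathcal F, \mathcal L)\bigr) \;\cong\; \mathrm{Ext}^\bullet_G(\mathcal F, \mathsf{IC}_\mu),
\]
valid for any $\mathcal F \in D^b_G(\mathfrak X)$ via the identification $P_\mu = A e_\mu$ with $e_\mu$ projecting onto a one-dimensional slice of the $\mu$-summand of $\mathcal L$. This converts composition multiplicities into graded dimensions of geometric Ext-groups.

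For part (1), take $\mathcal F = \mathbb C_\lambda = (j_\lambda)_!\underline\xi[\dim\mathbb O_\lambda]$ and apply the adjunction $(j_\lambda)_! \dashv j_\lambda^*$: the right-hand side becomes $\mathrm{Ext}^\bullet_G(\underline\xi[\dim\mathbb O_\lambda], j_\lambda^*\mathsf{IC}_\mu)$, which vanishes when $\lambda \not\precsim \mu$ since $\mathbb O_\lambda \cap \overline{\mathbb O_\mu} = \emptyset$ forces $j_\lambda^*\mathsf{IC}_\mu = 0$. For $K_\lambda$, directly compute $e_\mu K_\lambda = \mathrm{Hom}_{C_\lambda}(\xi, H^\bullet i_\lambda^!\mathsf{IC}_\mu[\dim\mathbb O_\lambda])$: for $\lambda \not\precsim \mu$ the costalk already vanishes, and for $\lambda \sim \mu$ with $\mu = (\underline\lambda, \zeta)$, using $i_\lambda^! = i_\lambda^*[-2\dim\mathbb O_\lambda]$ along the smooth orbit gives $H^\bullet i_\lambda^!\mathsf{IC}_\mu[\dim\mathbb O_\lambda] = \zeta$ in degree $0$, so $\mathrm{Hom}_{C_\lambda}(\xi, \zeta) = \delta_{\xi,\zeta}\,\mathbb C$ yields $[K_\lambda : L_\mu] = \delta_{\lambda,\mu}$.

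For part (3), apply $\mathrm{Ext}^\bullet_G(-, \mathcal L)$ to the perverse truncation triangle
\[
\tau_{<0}\mathbb C_\lambda \longrightarrow \mathbb C_\lambda \longrightarrow \mathsf{IC}_\lambda \stackrel{+1}{\longrightarrow}
\]
(valid because $\mathbb C_\lambda \in {}^pD^{\le 0}$ with ${}^pH^0\mathbb C_\lambda = \mathsf{IC}_\lambda$). The resulting long exact sequence relates $P_\lambda$, $\widetilde K_\lambda$, and $\mathrm{Ext}^\bullet_G(\tau_{<0}\mathbb C_\lambda, \mathcal L)$; under $(\clubsuit)$ all three are pure of suitable weights, and a Deligne-type weight comparison forces the connecting homomorphism to vanish, producing a short exact sequence $0 \to \mathrm{Ext}^{\bullet-1}_G(\tau_{<0}\mathbb C_\lambda, \mathcal L) \to P_\lambda \to \widetilde K_\lambda \to 0$. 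Since $\tau_{<0}\mathbb C_\lambda$ is supported on $\bigcup_{\mu \prec \lambda}\mathbb O_\mu$, its Ext-module is generated by elements factoring through $\{e_\mu\}_{\mu \prec \lambda}$, and the kernel is exactly $\sum_{\mu \prec \lambda} A e_\mu P_\lambda$.

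Part (4) is the $E_2$-degeneration of the spectral sequence (1.2): under $(\clubsuit)$ both the $E_2$-page (involving $K_\mu$ for $\mu \sim \lambda$) and the abutment $\widetilde K_\lambda$ are pure of matched weights, so weight-tracking across the differentials forces all $d_r$ to vanish. The multiplicity formula for $[\widetilde K_\lambda : L_\mu]$ with $\mu \sim \lambda$ follows by combining the $E_2$ description with $[K_\mu : L_\nu] = \delta_{\mu,\nu}$ from part (1). Finally, part (2) follows by iterating part (3) to construct a projective resolution of $\widetilde K_\lambda$ by grading shifts of $P_\gamma$ with $\gamma \precsim \lambda$; by part (1), $\mathrm{hom}_A(P_\gamma, \widetilde K_\mu)$ (and analogously $\mathrm{hom}_A(P_\gamma, K_\mu)$) vanishes unless $\mu \precsim \gamma$, so the constraint $\mu \not\precsim \lambda$ kills all these homs and hence the Ext-groups. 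The principal technical obstacle throughout is the weight-bookkeeping that justifies collapsing the long exact sequence of part (3) and the spectral sequence (1.2) in part (4): this is precisely where the purity condition $(\clubsuit)$, rather than a weaker stratification hypothesis, is essential.
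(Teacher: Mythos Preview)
Your argument for part (1) and the numerical half of part (4) matches the paper. The serious gap is in part (3), and it propagates to part (2).

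You invoke the perverse truncation triangle $\tau_{<0}\mathbb C_\lambda \to \mathbb C_\lambda \to \mathsf{IC}_\lambda \stackrel{+1}{\to}$ and claim that ``a Deligne-type weight comparison forces the connecting homomorphism to vanish.'' But perverse truncation does not respect weights: pointwise purity of $\mathsf{IC}_\lambda$ under $(\clubsuit)_2$ says only that each stalk cohomology $H^k(i^*_x\mathsf{IC}_\lambda)$ has weight $k$; it does \emph{not} make $\tau_{<0}\mathbb C_\lambda \cong i_*i^*\mathsf{IC}_\lambda[-1]$ a pure complex, so there is no weight mismatch to exploit. Even granting the short exact sequence, your identification of the kernel with $\sum_{\mu\prec\lambda}Ae_\mu P_\lambda$ is incomplete: the inclusion $\supseteq$ follows from part (1), but the reverse requires $\mathrm{ext}^1_A(\widetilde K_\lambda, L_\gamma)=0$ for every $\gamma\not\prec\lambda$, and that in turn needs a projective resolution of $\widetilde K_\lambda$ whose higher terms involve only $P_\mu$ with $\mu\prec\lambda$. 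Your ``iteration'' for part (2) would have to produce this, but the perverse cohomologies ${}^pH^m(\mathbb C_\lambda)$ for $m<0$ need not be semisimple, so $\mathrm{Ext}^\bullet_G({}^pH^m(\mathbb C_\lambda),\mathcal L)$ need not be projective over $A$.

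The paper resolves this by replacing perverse truncation with a \emph{weight} filtration $\{F_{\ge a}\mathbb C_\lambda\}$ (Lemma~\ref{weight-trunc}, Corollary~\ref{r-std-filt}): one shows $F_{\ge 0}\mathbb C_\lambda=\mathsf{IC}_\lambda$, and each $\mathrm{gr}_a\mathbb C_\lambda$ is pure of weight $a$ by construction, hence a direct sum of shifted $\mathsf{IC}_\mu$'s with $\mu\prec\lambda$ by \cite{BBD} 5.4.5. Applying $\mathrm{Ext}^\bullet_G(-,\mathcal L)$ then yields a genuine complex of projectives (the ``standard complex,'' Theorem~\ref{transfer}), and purity of $\widetilde K_\lambda$ forces its homology to sit in degree $0$ (Proposition~\ref{P_to_K_finite}, Corollary~\ref{kf}). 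This gives the resolution you need for both (2) and (3); the $\mathrm{ext}^1$-vanishing then closes the argument for (3) exactly as you outline. Finally, the filtration assertion in (4) is not just spectral-sequence degeneration: one must check that the filtration is by $A$-submodules, which the paper does by factoring the $A$-action on $\widetilde K_\lambda$ through an explicit auxiliary algebra $A_{\mathcal O}$ built from $H^\bullet_{G_\lambda^\circ}(\mathrm{pt})$ and taking its radical filtration.
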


\begin{remark}
Theorem \ref{KS} still hold if some of $L_{\lambda}$ is zero, but we need to impose the following condition:
\begin{itemize}
\item For each $\lambda \in \Lambda$ with $L_{\lambda} \neq \{ 0 \}$, every restriction of $\mathsf{IC} _{\lambda}$ to a $G$-orbit is a direct sum of local systems corresponding to $\mu \in \Lambda$ with $L _{\mu} \neq \{ 0 \}$.
\end{itemize}
This is the situation we encounter in \cite{K4} (cf. \cite{AcK}), and all the results in sections three and four still hold with straight-forward modifications.
\end{remark}

The proof of Theorem \ref{KS} is given at the end of section two.

\begin{corollary}[of Theorem \ref{KS}]\label{utK}
The matrices $K := ( [ K_{\lambda} : L_{\mu} ] )_{\lambda,\mu \in \Lambda}$ and $\widetilde{K} := ( [ \widetilde{K} _{\lambda} : L _{\mu}] )_{\lambda,\mu \in \Lambda}$ are blockwise upper-triangular and invertible in $\mathbb Q (\!(t)\!)$.
\end{corollary}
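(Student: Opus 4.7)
The plan is to order $\Lambda$ by a linear extension $\le$ of the closure order $\precsim$ in which elements of the same $\sim$-class are grouped consecutively (this is possible because $\sim$ is compatible with $\precsim$: if $\lambda \sim \lambda'$ and $\lambda \precsim \mu$ then $\lambda' \precsim \mu$). With the $\sim$-equivalence classes as block indices, I will first verify that both matrices are block upper-triangular, and then treat the two diagonal-block computations separately.

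\textbf{Block upper-triangularity.} If $\lambda > \mu$ in the chosen linear order but $\lambda \not\sim \mu$, then $\lambda \not\precsim \mu$ (otherwise the linear extension property would force $\lambda \le \mu$). Theorem \ref{KS}(1) then gives $[K_\lambda : L_\mu] = 0 = [\widetilde K_\lambda : L_\mu]$. Hence both $K$ and $\widetilde K$ are blockwise upper-triangular when the blocks are taken to be the $\sim$-equivalence classes.

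\textbf{The matrix $K$.} The diagonal block corresponding to an orbit $\mathbb O_{\underline\lambda}$ is governed directly by Theorem \ref{KS}(1): $[K_\lambda : L_\mu] = \delta_{\lambda,\mu}$ whenever $\lambda \sim \mu$. So every diagonal block is the identity matrix, whence $\det K = 1$ and $K$ is invertible in $\mathbb Z[[t]] \subset \mathbb Q(\!(t)\!)$.

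\textbf{The matrix $\widetilde K$.} By Theorem \ref{KS}(4), the diagonal block indexed by $\mathbb O_{\underline\lambda}$ has $(\xi,\zeta)$-entry
\[
M_{\xi,\zeta} = \mathsf{gdim}\, \mathrm{hom}_{C_\lambda}\bigl( \xi, \zeta \otimes_{\mathbb C} H^\bullet_{G^\circ_\lambda}(\{\mathrm{pt}\}) \bigr),
\]
where $\xi,\zeta$ run over $\mathsf{Irr}\, C_\lambda$. Since $G^\circ_\lambda$ is a connected affine algebraic group, $H^\bullet_{G^\circ_\lambda}(\{\mathrm{pt}\})$ is a non-negatively graded $C_\lambda$-module with $H^0 = \mathbb C$ carrying the trivial $C_\lambda$-action. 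Therefore $M_{\xi,\zeta} \in \mathbb Z[[t]]$, and evaluating at $t=0$ gives
\[
M_{\xi,\zeta}\bigr|_{t=0} = \dim \mathrm{Hom}_{C_\lambda}( \xi, \zeta ) = \delta_{\xi,\zeta}
\]
by the irreducibility of $\xi,\zeta$. Hence the diagonal block is the identity matrix modulo $t\mathbb Z[[t]]$, so its determinant lies in $1 + t\mathbb Z[[t]]$ and is in particular invertible in $\mathbb Q(\!(t)\!)$. Combined with the block upper-triangular shape, this gives the invertibility of $\widetilde K$.

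The proof is essentially a bookkeeping consequence of Theorem \ref{KS}; the only substantive observation is the $t=0$ specialization of the $\widetilde K$-diagonal block, which reduces to Schur orthogonality for the finite component group $C_\lambda$. There is no real obstacle beyond correctly identifying the block structure induced by the partial order $\precsim$ and its equivalence relation $\sim$.
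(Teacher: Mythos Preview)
Your proof is correct and follows essentially the same approach as the paper's: both use Theorem \ref{KS}(1) to get block upper-triangularity and the identity diagonal for $K$, and Theorem \ref{KS}(4) to see that the diagonal blocks of $\widetilde{K}$ are the identity modulo $t\mathbb Z[\![t]\!]$. You have simply spelled out the $t=0$ specialization via Schur orthogonality for $C_\lambda$, which the paper leaves implicit.
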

\begin{proof}
Thanks to Theorem \ref{KS} 1), $K$ satisfies the desired property. Here we also deduce that $\widetilde{K}$ is blockwise upper-triangular. The block-diagonal entries of $\widetilde{K}$ are invertible since they are identity modulo $t \mathbb Z [\![t]\!]$ by Theorem \ref{KS} 4). Therefore, we can invert the matrix $\widetilde{K}$ as desired.
\end{proof}

\begin{lemma}\label{gMe}
The graded algebra $A _{(G, \mathfrak X)}$ is graded Morita equivalent to $B_{(G, \mathfrak X)}$.
\end{lemma}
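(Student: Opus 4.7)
The plan is to realize $A$ as the graded endomorphism algebra of a finitely generated graded progenerator over $B$ and invoke the graded version of Morita's theorem.

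Set $\mathcal{L}' := \bigoplus_{\lambda \in \Lambda} \mathsf{IC}_{\lambda}$ and $P_{\lambda}^B := \mathrm{Ext}_G^{\bullet}(\mathsf{IC}_{\lambda}, \mathcal{L}')$, so that $B = \bigoplus_{\lambda} P_{\lambda}^B$ as a left $B$-module and $\{P_{\lambda}^B \langle j \rangle : \lambda \in \Lambda, j \in \mathbb{Z}\}$ is a complete family of indecomposable graded projective $B$-modules (as in the discussion following (\ref{SS1})). Consider the graded left $B$-module
$$Q := \bigoplus_{\lambda \in \Lambda} L_{\lambda} \otimes_{\mathbb{C}} P_{\lambda}^B.$$
Since each $L_{\lambda}$ is nonzero and lies in $\mathsf{vec}$, $Q$ is a finite direct sum of grading shifts of the $P_{\lambda}^B$ in which every indecomposable graded projective $P_{\lambda}^B$ appears at least once; hence $Q$ is a finitely generated graded progenerator of $B \mathchar`-\mathsf{gmod}$ (finite generation uses Lemma \ref{NA}).

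Next I would compute $\mathrm{end}_B(Q)^{op}$ explicitly. Using $\mathrm{hom}_B(P_{\lambda}^B, P_{\mu}^B) = \mathrm{Ext}_G^{\bullet}(\mathsf{IC}_{\lambda}, \mathsf{IC}_{\mu})$ with the Yoneda composition, the $\mathsf{vec}$-enriched computation gives
$$\mathrm{end}_B(Q)^{op} \;\cong\; \bigoplus_{\lambda, \mu \in \Lambda} \mathrm{hom}_{\mathbb{C}}(L_{\lambda}, L_{\mu}) \otimes_{\mathbb{C}} \mathrm{Ext}_G^{\bullet}(\mathsf{IC}_{\lambda}, \mathsf{IC}_{\mu}),$$
where the multiplication is the tensor product of the compositions (\ref{assL}) and (\ref{assI}). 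But this right-hand side, with exactly this multiplication rule, is the factorization of the product of $A = A_{(G, \mathfrak{X})}$ recorded in the proof of Lemma \ref{symm}. Hence $\mathrm{end}_B(Q)^{op} \cong A$ as graded algebras.

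The graded Morita theorem applied to the progenerator $Q$ then yields the mutually inverse equivalences
$$\mathrm{hom}_B(Q, -) : B \mathchar`-\mathsf{gmod} \;\xrightarrow{\sim}\; A \mathchar`-\mathsf{gmod}, \qquad Q \otimes_A - : A \mathchar`-\mathsf{gmod} \;\xrightarrow{\sim}\; B \mathchar`-\mathsf{gmod},$$
which is the desired graded Morita equivalence. The only point requiring mild care is to run the classical Morita argument in the $\mathsf{vec}$-enriched setting (so that degree shifts are accounted for correctly), but this is automatic once $Q$ has been identified as a finitely generated progenerator and the endomorphism computation is done graded piece by graded piece; no step presents a genuine obstacle.
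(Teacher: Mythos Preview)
Your proof is correct and follows essentially the same strategy as the paper's: exhibit a graded progenerator and identify its graded endomorphism ring with the other algebra via the Yoneda description of $\mathrm{hom}$ between the indecomposable projectives. The only cosmetic difference is the direction---the paper takes $\bigoplus_{\lambda} P_{\lambda}$ as a progenerator over $A$ and shows $\mathrm{end}_A(\bigoplus_\lambda P_\lambda)^{op}\cong B$, whereas you take $Q=\bigoplus_\lambda L_\lambda\otimes P_\lambda^B$ over $B$ and show $\mathrm{end}_B(Q)^{op}\cong A$; both rely on the same factorization $\mathrm{hom}(L_\lambda,L_\mu)\otimes \mathrm{Ext}^\bullet_G(\mathsf{IC}_\lambda,\mathsf{IC}_\mu)$ recorded around (\ref{assL})--(\ref{assI}). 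One small remark: your appeal to Lemma~\ref{NA} for finite generation of $Q$ is unnecessary, since each $L_\lambda\in D^b(\{\mathrm{pt}\})$ is finite-dimensional, so $Q$ is already a finite direct sum of grading shifts of the $P_\lambda^B$.
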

\begin{proof}
By construction, we see that
$$[P_{\lambda} : L_{\mu}] = \mathsf{gdim} \, \mathrm{Ext} ^{\bullet} _G ( \mathsf{IC} _{\lambda}, \mathsf{IC} _{\mu} ) \hskip 2mm \text{ for each } \hskip 2mm \lambda, \mu \in \Lambda.$$
In particular, the graded vector space $\mathrm{hom} _A ( P _{\mu}, P_{\lambda} )$ does not depend on the choice of $\mathcal L$. Since the (graded) algebra structure of $A$ arises from the Yoneda compositions, the composition map
$$\mathrm{hom} _A ( P _{\gamma}, P_{\mu} ) \times \mathrm{hom} _A ( P _{\mu}, P_{\lambda} ) \longrightarrow \mathrm{hom} _A ( P _{\gamma}, P_{\lambda} ) \hskip 2mm \text{ for each } \hskip 2mm \lambda, \mu, \gamma \in \Lambda$$
is identified with the composition map
$$\mathrm{Ext} ^{\bullet} _G ( \mathsf{IC} _{\lambda}, \mathsf{IC} _{\mu} ) \times \mathrm{Ext} ^{\bullet} _G ( \mathsf{IC} _{\mu}, \mathsf{IC} _{\gamma} ) \longrightarrow \mathrm{Ext} ^{\bullet} _G ( \mathsf{IC} _{\lambda}, \mathsf{IC} _{\gamma} ).$$
Therefore, we deduce $\mathrm{end} _A ( \bigoplus _{\lambda \in \Lambda} P_{\lambda} ) ^{op} \cong B_{(G, \mathfrak X)}$. Hence, $\mathrm{hom} _A ( \bigoplus _{\lambda \in \Lambda} P_{\lambda}, \bullet )$ yields the desired graded Morita equivalence through Lemma \ref{symm}.
\end{proof}

\begin{lemma}\label{hd}
A graded $A$-module $M$ with finite dimensional graded pieces is generated by its head if the grading of $M$ is bounded from the below.
\end{lemma}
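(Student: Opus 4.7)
The plan is to reduce to the basic ring $B = B_{(G,\mathfrak X)}$ via the graded Morita equivalence of Lemma~\ref{gMe}, and then run the classical graded Nakayama argument, which applies cleanly because $B$ is non-negatively graded with semisimple degree-zero part.

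First, for the Morita reduction: under the equivalence $F(N) = \bigoplus_\lambda \mathrm{hom}_A(P_\lambda, N)$, submodules, simples, and projective covers all correspond, so ``generated by the head'' is preserved. Moreover $F(N)$ inherits boundedness below with finite-dimensional graded pieces from $N$, since by projectivity $\mathrm{hom}_A(P_\lambda, N) \cong e_\lambda N$ up to a fixed grading shift, and $e_\lambda N$ is a graded subspace of $N$. Hence it is enough to prove the statement for graded $B$-modules.

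Second, the structure of $B$ is especially simple: since $\mathrm{Ext}^{<0}_G(\mathsf{IC}_\lambda,\mathsf{IC}_\mu) = 0$ and $\dim \mathrm{Ext}^0_G(\mathsf{IC}_\lambda,\mathsf{IC}_\mu) = \delta_{\lambda\mu}$, the algebra $B$ is non-negatively graded with $B^0 = \bigoplus_\lambda \mathbb C e_\lambda$ semisimple. Therefore the graded Jacobson radical is $J_B = B^{>0}$, and the head of any graded $B$-module $N$ is $\mathsf{hd}\, N = N/B^{>0}N$.

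Third, the Nakayama step: let $N$ be a nonzero bounded-below graded $B$-module and set $m_0 := \min\{i : N^i \neq 0\}$. Since $B^{>0}$ strictly raises degree, $(B^{>0}N)^{m_0} = 0$, so the natural map $N^{m_0} \hookrightarrow \mathsf{hd}\, N$ is injective and in particular nonzero. The semisimplicity of $B^0$ lets us lift $\mathsf{hd}\, N$ to a graded $B^0$-subspace $H \subseteq N$. Setting $N' := B \cdot H$, the quotient $N/N'$ is bounded below with zero head, so by the injectivity just established $N/N' = 0$. Hence $N = B \cdot H$ is generated by $H$. The main (minor) obstacle is the bookkeeping around grading shifts in the Morita reduction; once one sets up $P_\lambda \cong Ae_\lambda$ with the correct conventions, the remaining argument is the standard graded Nakayama lemma.
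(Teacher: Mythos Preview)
Your proof is correct and takes a genuinely different route from the paper's. The paper argues directly with $A$, which need not be non-negatively graded: letting $P$ be the projective cover of $\mathsf{hd}\, M$ and $N := \mathrm{coker}(P \to M)$, it assumes $N \neq \{0\}$ and, using the lower bound $c = \min\{k : A^k \neq 0\}$ together with the finite-dimensionality of the graded pieces, produces a nonzero finite-dimensional quotient of $N$; any such quotient has a simple graded quotient, contradicting $\mathsf{hd}\, N = \{0\}$. Your approach instead passes through the Morita equivalence of Lemma~\ref{gMe} to the basic ring $B$, where the grading is non-negative and $B^0$ is semisimple, so that the graded Jacobson radical is exactly $B^{>0}$ and the classical graded Nakayama argument runs without friction --- indeed, your step~3 never even uses the finite-dimensionality of the graded pieces, only boundedness below. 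What you gain is a cleaner argument that isolates precisely which structural feature is doing the work (namely, that $A$ is graded Morita equivalent to something non-negatively graded with semisimple degree zero); what the paper's direct argument buys is self-containment, since it avoids invoking Lemma~\ref{gMe} and applies verbatim to any graded algebra in $\mathsf{vec}$.
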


\begin{proof}
Let $P$ be the projective cover of $\mathsf{hd} \, M$, that is the maximal graded semisimple quotient of $M$. Let us denote the lifting map $P \rightarrow M$ by $\phi$. We set $N := \mathsf{coker} \, \phi$. Since a simple quotient of $N$ is a simple quotient of $M$, we conclude that $\mathsf{hd} \, N = \{ 0 \}$. As a quotient of $M$, the grading of $N$ is bounded from the below. Since $\mathrm{Im} \, \phi$ is precisely the submodule of $M$ spanned by $\mathsf{hd} \, M$, we assume that $N \neq \{ 0 \}$ to deduce contradiction.

Let $c := \min \{ k \in \mathbb Z \mid A^k \neq \{ 0 \}\}$ and let $b := \min \{ k \in \mathbb Z \mid N^k \neq \{ 0 \}\}$. Consider the graded $A$-submodule $N_c$ of $N$ generated by $\{ N ^k \} _{k > b - c}$. Then, we have $N_c ^b = \{ 0 \}$ by a degree counting. In particular, we have $N / N_c \neq \{ 0 \}$. Since $N^{k} = N_c ^{k}$ for $k > b - c$ and each graded piece of $M$ is finite-dimensional, we deduce that $N / N_c$ is finite-dimensional.

A finite-dimensional graded $A$-module admits a simple graded quotient since the action of $A$ factors through a finite-dimensional algebra, which is Artin. As a simple quotient of $N / N_c$ is a simple quotient of $N$, we have a contradiction and hence $\mathrm{Im} \, \phi = M$ as required.
\end{proof}

\begin{corollary}\label{mphd}
Suppose that $A ^i = \{ 0 \}$ for $i < c$. Let $M$ be a graded $A$-module with finite dimensional graded pieces so that $M^i = \{ 0 \}$ for $i < b$. Then, $M$ admits a minimal graded projective $A$-resolution $($that are not necessarily of finite length$)$ so that its $k$-th term $P_k$ satisfies $P_k ^i = \{ 0 \}$ for $i < b + ( k + 1 ) c$ and has finite-dimensional graded pieces.
\end{corollary}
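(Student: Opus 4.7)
The plan is to construct the resolution by iterated projective covers and run a straightforward degree-tracking induction.

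Setting $K_{-1} := M$, I would recursively let $P_k$ be the graded projective cover of $\mathsf{hd}(K_{k-1})$ and $K_k := \ker(P_k \twoheadrightarrow K_{k-1})$, where the surjection is the minimal lift supplied by Lemma \ref{hd} (which tells us that any bounded-below graded module with finite-dimensional pieces is generated by its head). The minimality of the resulting resolution is automatic from the projective-cover construction: the map $\mathsf{hd}(P_k) \to \mathsf{hd}(K_{k-1})$ is an isomorphism by design, so $K_k \subset \mathrm{rad}(P_k)$.

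The inductive invariant I maintain is that $K_{k-1}$ has finite-dimensional graded pieces and $K_{k-1}^i = \{0\}$ for $i < b + kc$. Assuming this, decompose the semisimple head as $\mathsf{hd}(K_{k-1}) \cong \bigoplus_{\lambda, j} L_\lambda\langle j\rangle^{\oplus m_{\lambda, j}}$, with $m_{\lambda, j} = 0$ for $j < b + kc$ and $\sum_\lambda m_{\lambda, j} < \infty$ for each $j$ (the latter coming from the finite-dim pieces hypothesis applied to $K_{k-1}$, and hence to its quotient). Then
$$P_k \;=\; \bigoplus_{\lambda, j} P_\lambda \langle j \rangle^{\oplus m_{\lambda, j}}.$$
Since $P_\lambda \cong A e_\lambda$ with $e_\lambda$ a degree-zero idempotent, we have $P_\lambda^i = A^i e_\lambda = 0$ for $i < c$, and therefore $P_k^i = 0$ for $i < b + (k+1)c$, which is the target bound. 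Finite-dimensionality of each $P_k^d$ follows because only $j \in [b + kc,\, d - c]$ contributes to that degree, a finite range, and in each slot the contribution $\bigoplus_\lambda (P_\lambda^{d-j})^{\oplus m_{\lambda, j}}$ is finite-dimensional. The kernel $K_k$, being a graded submodule of $P_k$, inherits both the finite-dim pieces and the lower bound on degree, closing the induction with $k$ replaced by $k+1$.

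The main delicacy is that $M$ is not assumed finitely generated, so $\mathsf{hd}(K_{k-1})$ could in principle contain infinitely many simple summands; what rescues the construction is precisely the finite-dimensional-graded-pieces hypothesis, which, combined with the lower bound on the grading, forces only finitely many summands to enter any fixed graded piece of $P_k$. As a result, everything remains inside the category of graded modules with finite-dimensional pieces, no appeal to Noetherianity (Lemma \ref{NA}) is needed, and the induction runs without change.
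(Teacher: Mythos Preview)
Your proof is correct and follows essentially the same approach as the paper's own argument: iterated projective covers using Lemma~\ref{hd}, with the same inductive degree-tracking and finite-dimensionality check. One small caveat is that the paper only identifies $P_\lambda$ with $Ae_\lambda$ \emph{up to a grading shift}, so the literal claim $P_\lambda^i = A^i e_\lambda = 0$ for $i<c$ may fail when $L_\lambda$ is not concentrated in degree zero; the conclusion is unaffected, since the two offsets in your bounds on $m_{\lambda,j}$ and on $P_\lambda^i$ cancel (equivalently, as the paper phrases it, $P_k$ is generated in degrees $\ge b+kc$, whence $P_k^i=0$ for $i<b+(k+1)c$).
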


\begin{proof}
We have the projective cover $P_0$ of $M$ by lifting $\mathsf{hd} \, M$, whose grading ranges the sum of that of $A$ and $M$. Hence, we have $P_0 ^i = \{ 0 \}$ for $i < b + c$. In addition, we have
$$\dim P_0 ^i \le \sum _{j \in \mathbb Z} \dim M^j \cdot \dim A^{i-j} < \infty \hskip 3mm \text{ for each } i \in \mathbb Z.$$

Assume that we have a minimal projective resolution of $M$ with the desired property up to the $k$-th term:
$$P_k \stackrel{d_k}{\longrightarrow} P_{k-1}  \stackrel{d_{k-1}}{\longrightarrow} P_{k-2} \cdots P_1 \stackrel{d_1}{\longrightarrow} P_{0} \rightarrow M.$$
We set $P_{k+1}$ to be the projective cover of $\mathsf{hd} \, \ker d_{k}$ and $d_{k+1}$ to be its lift to $P_{k}$ to construct a projective resolution up to the $(k+1)$-th term. Since $P_k ^i = \{ 0 \}$ for $i < b + ( k + 1 ) c$, we deduce $( \ker d _k ) ^i = \{ 0 \}$ for $i < b + ( k + 1 ) c$ and $P_{k+1} ^i = \{ 0 \}$ for $i < b + ( k + 2 ) c$. In addition, Lemma \ref{hd} asserts that $P_{k+1}$ surjects onto $\ker d _k$, and hence $P_{k+1}$ gives the $(k+1)$-th term of a projective resolution of $M$. Since $\ker d_{k}$ and $A$ have finite-dimensional graded pieces and their gradings are bounded from the below, we deduce that each graded piece of $P_{k+1}$ is also finite-dimensional. As the minimality assumption yields that the induced map $\mathsf{hd} \, \ker d_{k} \to P_k \to \mathsf{hd} \, P _k$ is zero, our complex must be minimal. This proceeds the induction (which might not terminate), and hence the result.
\end{proof}

\section{Standard complexes and a proof of Theorem \ref{KS}}\label{wf}
In this section, we work over an algebraic closure of a finite field. We begin by general results on weight filtration, for which some part seems to follow from \cite{Bo,P,Sc2}.

\begin{definition}[Perverse class]
Let $\mathfrak X$ be a variety with an action of an affine algebraic group $G$. For $\mathcal E \in D ^b _G ( \mathfrak X )$, we define
$$[ \mathcal E ] := \sum _{m \in \mathbb Z} [ {}^p H ^m ( \mathcal E ) ] \in K ( \mathsf{Perv} _G \, \mathfrak X ),$$
where $\mathsf{Perv} _G \, \mathfrak X$ is the category of $G$-equivariant perverse sheaves on $\mathfrak X$ (cf. \cite{BL} 5.1), $K ( \mathsf{Perv} _G \, \mathfrak X )$ is its Grothendieck group, and $[ \mathcal F ]$ in the RHS denote the class of $\mathcal F \in \mathsf{Perv} _G \, \mathfrak X$.
\end{definition}

It is straight-forward to see $[ \mathcal E ] = [ \tau _{< m} \mathcal E ] + [\tau _{\ge m} \mathcal E ]$ for each $m \in \mathbb Z$.

\begin{lemma}\label{bei}
Let $\mathfrak X$ be a variety with an action of an affine algebraic group $G$. For each $\mathcal E \in D ^b _G ( \mathfrak X )$, we have $\# \{ m \in \mathbb Z \mid {}^p H ^m ( \mathcal E ) \neq \{0\} \} < \infty$. In particular, the perverse classes are well-defined elements of $K ( \mathsf{Perv} _G \, \mathfrak X )$.
\end{lemma}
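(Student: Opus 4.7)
The plan is to reduce the statement to the analogous boundedness of the perverse $t$-structure on the ordinary constructible derived category $D^b(\mathfrak X)$ via the forgetful functor $\mathrm{For}: D^b_G(\mathfrak X) \to D^b(\mathfrak X)$ introduced in the Convention of the paper.

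First I would recall the construction of the perverse $t$-structure on $D^b_G(\mathfrak X)$ in the sense of Bernstein–Lunts \cite{BL}: it is designed so that $\mathrm{For}$ is $t$-exact with respect to the two perverse $t$-structures, i.e.\ $\mathrm{For}\,{}^p H^m(\mathcal E) \cong {}^p H^m(\mathrm{For}\,\mathcal E)$ for every $\mathcal E \in D^b_G(\mathfrak X)$ and every $m \in \mathbb Z$. Moreover, the restriction $\mathrm{For}:\mathsf{Perv}_G\,\mathfrak X \to \mathsf{Perv}\,\mathfrak X$ is faithful, because a $G$-equivariant structure is merely extra data on the underlying complex; in particular, ${}^p H^m(\mathcal E) \neq 0$ in $\mathsf{Perv}_G\,\mathfrak X$ forces ${}^p H^m(\mathrm{For}\,\mathcal E) \neq 0$ in $\mathsf{Perv}\,\mathfrak X$.

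Second, by the very definition adopted in the Convention, an object of $D^b_G(\mathfrak X)$ is one whose image in $D^+(\mathfrak X)$ actually lies in $D^b(\mathfrak X)$. So $\mathrm{For}\,\mathcal E \in D^b(\mathfrak X)$ has bounded ordinary cohomology concentrated in some finite range $[a,b]$. Since the middle perverse $t$-structure on $D^b(\mathfrak X)$ is bounded (see \cite{BBD}), there exists an integer $N$ depending only on $a$, $b$, and $\dim\mathfrak X$ such that ${}^p H^m(\mathrm{For}\,\mathcal E) = 0$ for $|m| > N$. Combined with the faithfulness step, this forces ${}^p H^m(\mathcal E) = 0$ for $|m| > N$ as well, establishing the finiteness claim.

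Finally, the well-definedness of the perverse class $[\mathcal E] = \sum_m [{}^p H^m(\mathcal E)] \in K(\mathsf{Perv}_G\,\mathfrak X)$ is then immediate, since the sum has only finitely many nonzero terms. I do not anticipate a serious obstacle; the only mild care needed is that objects of $D^b_G(\mathfrak X)$ are not literally constructible complexes but arise via a simplicial (or Borel) presentation, so one should quote the Bernstein–Lunts framework to get both the existence of the $t$-exact forgetful functor and its faithfulness on the perverse hearts.
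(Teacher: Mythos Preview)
Your proposal is correct and follows essentially the same strategy as the paper: both reduce to the non-equivariant bounded derived category and invoke the boundedness of the perverse $t$-structure there from \cite{BBD}. The only cosmetic difference is that the paper phrases the non-equivariant step directly in terms of the stalk/costalk criterion \cite{BBD} 2.2.2 together with the finiteness of strata, whereas you package this as the general statement that the perverse $t$-structure on $D^b(\mathfrak X)$ is bounded; your explicit invocation of the $t$-exactness and faithfulness of the forgetful functor from \cite{BL} is a welcome clarification that the paper leaves implicit.
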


\begin{proof}
Since $\mathcal E$ is constructible, its stalk and costalk are bounded complexes at each point of $\mathcal X$. Thus, the characterization of objects of $\tau _{\ge 0} D ^b ( \mathfrak X )$ and $\tau _{\le 0} D ^b ( \mathfrak X )$ in terms of stalks and costalks \cite{BBD} 2.2.2 implies the result (as there are finitely many stratum where the stalks/costalks of $\mathcal E$ are constant).
\end{proof}

\begin{lemma}\label{weight-trunc}
Let $\mathfrak X$ be a variety with an action of an affine algebraic group $G$. Let $\mathcal E \in D ^b _G ( \mathfrak X )$ be a complex with a mixed structure. For each $a \in \mathbb Z$, there exist canonically constructed mixed complexes $F_{< a} \mathcal E, F_{\ge a} \mathcal E \in D ^b _G ( \mathfrak X )$ with the following properties:
\begin{enumerate}
\item $F _{< a} \mathcal E$ has weight $< a$, and $F _{\ge a} \mathcal E$ has weight $\ge a$;
\item We have a distinguished triangle
\begin{equation}
F _{< a} \mathcal E \to \mathcal E \to F _{\ge a}\mathcal E  \stackrel{+1}{\longrightarrow} \hskip 3mm \text{ in } D^b _G ( \mathfrak X );\label{wtr}
\end{equation}
\item We have $[ \mathcal E ] = [ F_{< a} \mathcal E ] + [ F_{\ge a} \mathcal E ] \in K ( \mathsf{Perv} _G \, \mathfrak X )$.
\end{enumerate}
\end{lemma}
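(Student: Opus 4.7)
The plan is to prove the existence of the distinguished triangle (\ref{wtr}) by induction on $n(\mathcal{E}) := \#\{m \in \mathbb{Z} \mid {}^p H^m(\mathcal{E}) \neq 0\}$, which is finite by Lemma \ref{bei}. The case $n(\mathcal{E}) = 0$ is vacuous. For $n(\mathcal{E}) = 1$ we have $\mathcal{E} \cong \mathcal{F}[k]$ for a single mixed $G$-equivariant perverse sheaf $\mathcal{F}$. The weight filtration theorem of Beilinson--Bernstein--Deligne (\cite{BBD} 5.3.5) supplies a canonical finite increasing filtration $W_\bullet \mathcal{F}$ by perverse subsheaves whose graded pieces $\mathrm{gr}^W_b \mathcal{F}$ are pure of weight $b$; I would set $F_{<a}\mathcal{E} := W_{a-k-1}\mathcal{F}[k]$ and $F_{\ge a}\mathcal{E} := (\mathcal{F}/W_{a-k-1}\mathcal{F})[k]$, and the short exact sequence in $\mathsf{Perv}_G \mathfrak{X}$ shifted by $[k]$ yields (\ref{wtr}) with the required purity properties.

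For the inductive step I would pick $m$ in the interior of the range of perverse cohomological degrees of $\mathcal{E}$, so that both $\tau_{<m}\mathcal{E}$ and $\tau_{\ge m}\mathcal{E}$ have strictly smaller $n$-invariant. By the induction hypothesis each admits a weight truncation triangle. These can be glued into a weight truncation of $\mathcal{E}$ by two applications of the octahedral axiom: first form the cone $\mathcal{C}$ of the composition $F_{<a}\tau_{<m}\mathcal{E} \to \tau_{<m}\mathcal{E} \to \mathcal{E}$, which by octahedron fits into
$$F_{\ge a}\tau_{<m}\mathcal{E} \to \mathcal{C} \to \tau_{\ge m}\mathcal{E} \stackrel{+1}{\longrightarrow};$$
then compose $\mathcal{C} \to \tau_{\ge m}\mathcal{E} \to F_{\ge a}\tau_{\ge m}\mathcal{E}$ and take its cone to obtain $F_{\ge a}\mathcal{E}$, with $F_{<a}\mathcal{E}$ defined as the fibre of $\mathcal{E} \to F_{\ge a}\mathcal{E}$. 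By construction $F_{\ge a}\mathcal{E}$ is a two-step extension of objects of weight $\ge a$ and hence is itself of weight $\ge a$ (weight $\ge a$ is preserved under extensions, \cite{BBD} 5.1.9), and the dual observation applies to $F_{<a}\mathcal{E}$.

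Part (3) is then formal: ${}^p H^\bullet$ converts a distinguished triangle into a long exact sequence in $\mathsf{Perv}_G \mathfrak{X}$, so taking alternating sums of classes in $K(\mathsf{Perv}_G \mathfrak{X})$ yields $[\mathcal{E}] = [F_{<a}\mathcal{E}] + [F_{\ge a}\mathcal{E}]$. The main obstacle will be the canonicity assertion, i.e.\ that the output is independent of the inductive choice of $m$ and well-defined up to unique isomorphism. I expect this to reduce to the BBD vanishing $\mathrm{Hom}^j(\mathcal{U}, \mathcal{V}) = 0$ for $j \le 0$ whenever $\mathcal{U}$ is of weight $>w$ and $\mathcal{V}$ is of weight $\le w$ (\cite{BBD} 5.1.15, applied perverse-cohomologically), which forces the lifts appearing in the two octahedral squares to be unique and thereby identifies the truncations built from different choices of $m$.
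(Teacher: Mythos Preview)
Your overall strategy---induct on the perverse amplitude, weight-truncate the two $t$-pieces, and glue---is the same idea as the paper's (the paper peels off one perverse cohomology at a time rather than bisecting, but this is cosmetic). However, two steps as written do not go through.

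\medskip

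\textbf{The second octahedron is mis-stated.} The cone of the composite $h:\mathcal C\to\tau_{\ge m}\mathcal E\to F_{\ge a}\tau_{\ge m}\mathcal E$ sits, by the octahedral axiom, in a triangle
\[
F_{\ge a}\tau_{<m}\mathcal E[1]\;\longrightarrow\;\mathrm{cone}(h)\;\longrightarrow\;F_{<a}\tau_{\ge m}\mathcal E[1]\;\stackrel{+1}{\longrightarrow},
\]
so $\mathrm{cone}(h)$ has \emph{both} weight ranges and cannot serve as $F_{\ge a}\mathcal E$; moreover there is no evident map $\mathcal E\to\mathrm{cone}(h)$ along which to take a fibre. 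What you actually need is to \emph{lift} $F_{<a}\tau_{\ge m}\mathcal E\to\tau_{\ge m}\mathcal E$ through $\mathcal C\to\tau_{\ge m}\mathcal E$; the obstruction lies in $\mathrm{Hom}\bigl(F_{<a}\tau_{\ge m}\mathcal E,\,F_{\ge a}\tau_{<m}\mathcal E[1]\bigr)$, which vanishes by \cite{BBD} 5.1.15 (weight $<a$ into weight $\ge a+1$). The cone of the lifted map is then the desired $F_{\ge a}\mathcal E$, with the correct triangle $F_{\ge a}\tau_{<m}\mathcal E\to F_{\ge a}\mathcal E\to F_{\ge a}\tau_{\ge m}\mathcal E$. (The paper does exactly this via the $3\times3$ diagram of \cite{BBD} 1.1.11; you have relegated the weight vanishing to the canonicity discussion, but it is already needed for the construction.)

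\medskip

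\textbf{Part (3) is not formal.} The perverse class is $[\mathcal E]=\sum_m[{}^pH^m(\mathcal E)]$ with \emph{no} signs, so the long exact sequence of ${}^pH^{\bullet}$ only yields the alternating-sum identity, not the one claimed. One must show that the connecting maps ${}^pH^i(F_{\ge a}\mathcal E)\to{}^pH^{i+1}(F_{<a}\mathcal E)$ vanish; since the source has weights $\ge a+i$ and the target weights $\le a+i$, this is not automatic. In your (corrected) construction it follows because $F_{<a}\tau_{<m}\mathcal E\in\tau_{<m}$ and $F_{<a}\tau_{\ge m}\mathcal E\in\tau_{\ge m}$, so the gluing triangles for $F_{<a}\mathcal E$ and $F_{\ge a}\mathcal E$ have zero connecting maps at the level of ${}^pH^{\bullet}$, whence $[F_{<a}\mathcal E]=[F_{<a}\tau_{<m}\mathcal E]+[F_{<a}\tau_{\ge m}\mathcal E]$ and similarly for $\ge a$; now combine with the inductive hypothesis. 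The paper establishes this via its Claim~\ref{ident}.
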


\begin{proof}
For each $m \in \mathbb Z$, the weight $< (a+m)$-part ${}^p H ^m ( \mathcal E ) _{< a}$ of ${}^p H ^m ( \mathcal E )$ is a subobject by \cite{BBD} 5.3.5. By induction on $m$, we construct distinguished triangles
\begin{equation}
\mathcal E _m ^{<a} \to \tau _{> m} \mathcal E \to \mathcal E _m ^{\ge a}  \stackrel{+1}{\longrightarrow} \hskip 3mm \text{ in } D^b _G ( \mathfrak X ),\label{distDGm}
\end{equation}
with the following properties:
\begin{itemize}
\item[$(\ref{distDGm})_1$] $\mathcal E _m ^{< a}$ has weight $< a$, and $\mathcal E _m ^{\ge a}$ has weight $\ge a$;
\item[$(\ref{distDGm})_2$] We have $[ \tau _{> m} \mathcal E ] = [ \mathcal E _m ^{< a} ] + [ \mathcal E _m ^{\ge a} ]$;
\item[$(\ref{distDGm})_3$] We have $\tau _{\le m} \mathcal E _m ^{< a} \cong \{ 0 \} \cong \tau _{\le m} \mathcal E _m ^{\ge a}$.
\end{itemize}
We assume that a distinguished triangle of the from (\ref{distDGm}) for $m$ exists and construct a distinguished triangle of the form (\ref{distDGm}) for $m-1$. We have $\tau _{>m} \mathcal E = \{ 0 \}$ for $m \gg 0$, and hence the case $m \gg 0$ is verified by setting $\mathcal E _m ^{< a} = \{ 0 \} = \mathcal E _m ^{\ge a}$. By the existence of the weight filtration of mixed perverse sheaves (cf. \cite{BBD} 5.3.5), we deduce that the (shifted) complex $H _{\ge a} ^m := ( {}^p H ^m ( \mathcal E ) / {}^p H ^m ( \mathcal E ) _{< a} )[-m]$ has weight $\ge a$. Here, $\tau _{\ge m} \mathcal E$ is determined by the map $\tau _{> m} \mathcal E \to {}^p H ^m ( \mathcal E )[1-m]$. Its pullback to $\mathcal E_m^{< a}$ factors through ${}^p H ^m ( \mathcal E ) _{< a}[1-m]$ since
$$\mathrm{Hom} _{G} ( \mathcal E_m^{< a}, H _{\ge a} ^m [1] )_{\mathrm{pure}} = \{ 0 \}$$
by \cite{BBD} 5.1.15. Then, \cite{BBD} 1.1.11 yields the following commutative diagram of six distinguished triangles:
\begin{equation}
\xymatrix@R=10pt@C=10pt{
\mathcal E_m^{< a}[-1] \ar[d]^{+1}\ar[r]& ( \tau_{>m} \mathcal E )[-1] \ar[d]^{+1}\ar[r]& \mathcal E_m^{\ge a}[-1] \ar[d]^{+1}\ar[r]^{\hskip 7mm +1}&\\
{}^p H ^m ( \mathcal E ) _{< a} [-m]  \ar[d]\ar[r]& {}^p H ^m ( \mathcal E ) [-m] \ar[d]\ar[r]& H _{\ge a} ^m \ar[d]\ar[r]^{+1}&\\
\mathcal E_{m-1}^{< a}  \ar[d]\ar[r]& \tau_{\ge m} \mathcal E \ar[d]\ar[r] & \mathcal E_{m-1}^{\ge a} \ar[d]\ar[r]^{\hskip 7mm +1}&\\
& & &
}.\label{9-term}
\end{equation}
Here $\mathcal E_{m-1}^{< a}$ and $\mathcal E_{m-1}^{\ge a}$ are determined by the column maps. By construction, $\mathcal E_{m-1}^{< a}$ has weight $< a$ and $\mathcal E_{m-1}^{\ge a}$ has weight $\ge a$.
\begin{claim}\label{ident}
We have the following identities:
\begin{align*}
[ \mathcal E_{m-1}^{< a} ] & = [ \mathcal E_{m}^{< a} ] + [ {}^p H ^m ( \mathcal E ) _{< a} ], & \tau _{\le m-1} \mathcal E ^{< a} _{m-1} & = \{ 0 \}\\
 [ \mathcal E_{m-1}^{\ge a} ] & = [ \mathcal E_{m}^{\ge a} ] + [ H _{\ge a} ^m] & \tau _{\le m-1} \mathcal E ^{\ge a} _{m-1} & = \{ 0 \}\\
[ \tau _{\ge m} \mathcal E ] & = [ \tau _{> m} \mathcal E ] + [ {}^p H ^m ( \mathcal E )], & [ {}^p H ^m ( \mathcal E )] & = [ {}^p H ^m ( \mathcal E ) _{< a} ] + [ H _{\ge a} ^m ].
\end{align*}
\end{claim}
\begin{proof}
We have ${}^p H^{i} ( {}^p H ^m ( \mathcal E ) _{< a} [-m] ) = {}^p H ^m ( \mathcal E ) _{< a}$ ($i=m$) or $\{ 0 \}$ ($i \neq m$) by construction. Thus, the long exact sequence of perverse cohomologies of the first column of (\ref{9-term}) reads as:
\begin{align*}
0 \rightarrow & \, {}^p H ^m ( \mathcal E ) _{< a} \stackrel{\cong}{\longrightarrow} {}^p H^{m} ( \mathcal E_{m-1}^{< a} ) \rightarrow 0 \\
&  \rightarrow 0 \rightarrow {}^p H^{m+1} ( \mathcal E_{m-1}^{< a} ) \rightarrow {}^p H^{m+1} ( \mathcal E_{m}^{< a} ) \rightarrow 0 \rightarrow \cdots
\end{align*}
This implies the first two identities. By a similar analysis applied to the third column of (\ref{9-term}), we conclude the third and fourth identities. The fifth identity follows from the definition of the perverse class. The sixth identity follows from the definition of $H _{\ge a} ^m$.
\end{proof}

We return to the proof of Lemma \ref{bei}. By Claim \ref{ident} and $(\ref{distDGm})_2$ for $m$, we have
\begin{align*}
[ \mathcal E_{m-1}^{< a} ] + [ \mathcal E_{m-1}^{\ge a} ] &= [ \mathcal E_{m}^{< a} ] + [ \mathcal E_{m}^{\ge a} ] + [ {}^p H ^m ( \mathcal E ) _{< a} ] + [ H _{\ge a} ^m]\\
&= [ \mathcal E_{m}^{< a} ] + [ \mathcal E_{m}^{\ge a} ] + [ {}^p H ^m ( \mathcal E )]\\
&= [ \tau _{> m} \mathcal E] + [ {}^p H ^m ( \mathcal E )] = [ \tau _{\ge m} \mathcal E].
\end{align*}

This verifies that the bottom distinguished triangle of (\ref{9-term}) satisfies $(\ref{distDGm})_2$ for $m-1$. In addition, it also satisfies $(\ref{distDGm})_3$ for $m-1$ by Claim \ref{ident}. Therefore, we obtain a distinguished triangle of type (\ref{distDGm}) for $m-1$. By Lemma \ref{bei}, we know ${}^p H^m ( \mathcal E ) = \{ 0 \}$ for $m \ll 0$. Therefore, we take $m \ll 0$ so that $\mathcal E \cong \tau _{> m} \mathcal E$ and set $F _{< a} \mathcal E := \mathcal E _m ^{< a}$ and $F _{\ge a} \mathcal E := \mathcal E _m ^{\ge a}$ to conclude the result.
\end{proof}

\begin{corollary}\label{transTR}
Keep the setting of Lemma \ref{weight-trunc}. For each $a \le b$, we have
$$F_{< a}( F _{<b} \mathcal E ) \cong F _{< a} \mathcal E, \text{ and } F_{\ge b} ( F _{\ge a} \mathcal E ) \cong F _{\ge b} \mathcal E.$$
\end{corollary}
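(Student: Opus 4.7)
The plan is to exploit the characterization of the pair $(F_{<a}\mathcal{E}, F_{\ge a}\mathcal{E})$ as essentially unique among all distinguished triangles $E \to \mathcal{E} \to E' \stackrel{+1}{\to}$ with $E$ of weight $<a$ and $E'$ of weight $\ge a$. So the strategy reduces to producing, for each of the two identities, such a distinguished triangle built from the iterated truncations and verifying the weight conditions.

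For the first identity $F_{<a}(F_{<b}\mathcal{E}) \cong F_{<a}\mathcal{E}$, I would apply the octahedral axiom to the two composable distinguished triangles from Lemma~\ref{weight-trunc}:
\begin{align*}
F_{<a}(F_{<b}\mathcal{E}) \to F_{<b}\mathcal{E} \to F_{\ge a}(F_{<b}\mathcal{E}) \stackrel{+1}{\longrightarrow}, \\
F_{<b}\mathcal{E} \to \mathcal{E} \to F_{\ge b}\mathcal{E} \stackrel{+1}{\longrightarrow}.
\end{align*}
This yields a distinguished triangle $F_{<a}(F_{<b}\mathcal{E}) \to \mathcal{E} \to C \stackrel{+1}{\to}$ together with $F_{\ge a}(F_{<b}\mathcal{E}) \to C \to F_{\ge b}\mathcal{E} \stackrel{+1}{\to}$. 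The outer term $F_{<a}(F_{<b}\mathcal{E})$ has weight $<a$ by Lemma~\ref{weight-trunc}.1. Since $a \le b$, both $F_{\ge a}(F_{<b}\mathcal{E})$ and $F_{\ge b}\mathcal{E}$ have weight $\ge a$, and it is a standard BBD fact (\cite{BBD} 5.1.14) that the class of complexes of weight $\ge a$ is stable under extensions; hence $C$ has weight $\ge a$. The analogous argument for the second identity $F_{\ge b}(F_{\ge a}\mathcal{E}) \cong F_{\ge b}\mathcal{E}$ uses the octahedral axiom on $F_{<b}(F_{\ge a}\mathcal{E}) \to F_{\ge a}\mathcal{E} \to F_{\ge b}(F_{\ge a}\mathcal{E}) \stackrel{+1}{\to}$ together with $F_{<a}\mathcal{E} \to \mathcal{E} \to F_{\ge a}\mathcal{E} \stackrel{+1}{\to}$, producing a distinguished triangle $C' \to \mathcal{E} \to F_{\ge b}(F_{\ge a}\mathcal{E}) \stackrel{+1}{\to}$ where $C'$ sits in $F_{<a}\mathcal{E} \to C' \to F_{<b}(F_{\ge a}\mathcal{E}) \stackrel{+1}{\to}$ and hence has weight $<b$ by the dual stability statement.

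To close the argument, I need the uniqueness assertion: any distinguished triangle $E \to \mathcal{E} \to E' \stackrel{+1}{\to}$ with $E$ of weight $<a$ and $E'$ of weight $\ge a$ is canonically isomorphic to $F_{<a}\mathcal{E} \to \mathcal{E} \to F_{\ge a}\mathcal{E} \stackrel{+1}{\to}$. This is deduced by comparing the two triangles: the composition $E \to \mathcal{E} \to F_{\ge a}\mathcal{E}$ vanishes because $\mathrm{Hom}(E, F_{\ge a}\mathcal{E}) = 0$ by \cite{BBD} 5.1.15, so $E \to \mathcal{E}$ factors through $F_{<a}\mathcal{E}$; symmetrically, $F_{<a}\mathcal{E} \to \mathcal{E}$ factors through $E$; and the same Ext-vanishing gives that both compositions $E \to F_{<a}\mathcal{E} \to E$ and $F_{<a}\mathcal{E} \to E \to F_{<a}\mathcal{E}$ are the identity, whence the triangles are isomorphic. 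The main obstacle, and the only non-formal input, is precisely this uniqueness assertion together with the stability of weight-$\ge a$ (resp.\ $<a$) complexes under extensions; both are standard consequences of \cite{BBD} \S5.1 but need to be invoked explicitly since we are not restricting to pure perverse sheaves but to arbitrary mixed objects of $D^b_G(\mathfrak{X})$.
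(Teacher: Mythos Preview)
Your argument is correct and takes a genuinely different route from the paper. The paper revisits the explicit inductive construction of $F_{<a}$ from the proof of Lemma~\ref{weight-trunc}: it tracks the inclusions $F_{<(a+m)}\,{}^pH^m(\mathcal E)\subset F_{<(b+m)}\,{}^pH^m(\mathcal E)$ of weight-filtration pieces of each perverse cohomology, and shows by downward induction on $m$ that the intermediate objects satisfy $\mathcal E^{<a}_m\cong F_{<a}(\mathcal E^{<b}_m)$; the second identity is then reduced to the first by applying $F_{<b}$ to the triangle $(F_{<a}\mathcal E,\mathcal E,F_{\ge a}\mathcal E)$. Your approach instead invokes the \emph{uniqueness} of a weight decomposition $(E,\mathcal E,E')$ with $E$ of weight $<a$ and $E'$ of weight $\ge a$, and manufactures such triangles via the octahedral axiom and stability of the weight conditions under extensions. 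This is precisely the argument one would give in the abstract framework of weight structures (as in the references \cite{Bo,P,Sc2} the paper alludes to), and it has the advantage of being independent of the particular construction in Lemma~\ref{weight-trunc}. The paper's approach, by contrast, gives slightly more: it produces an explicit map $F_{<a}\mathcal E\to F_{<b}\mathcal E$ compatible with the inductive data, which is what is used downstream in Lemma~\ref{std-filt}.

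One small point to tighten: in your uniqueness step, showing that the composites $gf$ and $fg$ are identities uses not only $\mathrm{Hom}(E,F_{\ge a}\mathcal E)=0$ but also $\mathrm{Hom}(E,E'[-1])=0$ and $\mathrm{Hom}(F_{<a}\mathcal E,F_{\ge a}\mathcal E[-1])=0$ (so that the factorizations through $F_{<a}\mathcal E$ resp.\ $E$ are unique). These follow from the same vanishing principle since $E'[-1]$ has weight $\ge a+1$, but it is worth saying so explicitly. Also bear in mind that all morphisms and triangles here must live in the \emph{mixed} category (i.e.\ be Frobenius-compatible) for \cite{BBD} 5.1.15 to apply; this is automatic since the octahedral axiom is being applied to the canonical triangles from Lemma~\ref{weight-trunc}, but the phrase ``any distinguished triangle'' in your uniqueness statement should be read accordingly.
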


\begin{proof}
We use the setting of the proof of Lemma \ref{weight-trunc}. For each $m \in \mathbb Z$, we have natural inclusions
$$F _{< (a+m)} H ^m ( \mathcal E ) = F _{< (a+m)} F_{<(b+m)} H ^m ( \mathcal E ) \subset F_{<(b+m)} H ^m ( \mathcal E ) \subset H ^m ( \mathcal E )$$
in the category of ($G$-equivariant mixed) perverse sheaves. Then, the pullback of $\tau _{> m} \mathcal E \rightarrow {}^p H ^m ( \mathcal E ) [1-m]$ to $\mathcal E ^{< a}_m$ also factors through $( F_{<(b+m)} H ^m ( \mathcal E ) ) [1-m]$, which induces a map $\mathcal E ^{< a} _{m-1} \rightarrow \mathcal E ^{<b}_{m-1}$ in case the natural map $\mathcal E ^{<a} _m \rightarrow \tau _{>m} \mathcal E$ factors through $\mathcal E ^{<b}_{m}$. It further induces an isomorphism $\mathcal E ^{< a} _{m-1} \cong F_{<a} \mathcal E ^{<b}_{m-1}$ provided if $\mathcal E ^{< a} _{m} \cong F_{<a} \mathcal E ^{<b}_{m}$. Hence, we deduce $\mathcal E ^{< a} _m \cong F_{<a} \mathcal E ^{<b}_m$ by a downward induction on $m$ (since $\tau _{> m} \mathcal E = \{ 0 \}$ for $m \gg 0$). This implies $F_{< a}( F _{<b} \mathcal E ) \cong F _{< a} \mathcal E$. The assertion $F_{\ge b} ( F _{\ge a} \mathcal E ) \cong F _{\ge b} \mathcal E$ follows by applying the construction of $F_{< b}$ to the distinguished triangle $F _{< a} \mathcal E \rightarrow \mathcal E \rightarrow F _{\ge a} \mathcal E \stackrel{+1}{\rightarrow}$.
\end{proof}

\begin{lemma}\label{std-filt}
Let $\mathfrak X$ be a variety with an action of an affine algebraic group $G$. Let $\mathcal E \in D ^b _G ( \mathfrak X )$ be a complex with a mixed structure. Then, there exists a collection of objects $\{ F _{\ge a} \mathcal E \} _{a \in \mathbb Z}$ of $D_G ^b ( \mathfrak X )$ with the following properties:
\begin{enumerate}
\item Each $F _{\ge a} \mathcal E$ is of weight $\ge a$;
\item $F _{\ge a} \mathcal E = \{ 0 \}$ for $a \gg 0$, and $F _{\ge a} \mathcal E \cong \mathcal E$ for $a \ll 0$;
\item For each $a \in \mathbb Z$, we have a distinguished triangle
\begin{equation}
\longrightarrow \mathrm{gr} _a \, \mathcal E \to F _{\ge a} \mathcal E \to F _{\ge (a+1)} \mathcal E \stackrel{+1}{\longrightarrow} \hskip 3mm \text{ in } D^b _G ( \mathfrak X ),\label{distP}
\end{equation}
where $\mathrm{gr} _a \, \mathcal E$ is a pure complex of weight $a$;
\item We have $[F _{\ge a} \mathcal E] = \sum _{b \ge a} [\mathrm{gr} _b \, \mathcal E]$ and $[\mathcal E] = \sum _a [\mathrm{gr} _a \, \mathcal E]$;
\item Each $\mathrm{gr} _a \, \mathcal E$ is a direct sum of shifted simple $G$-equivariant perverse sheaves.
\end{enumerate}
\end{lemma}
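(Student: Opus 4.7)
The plan is to construct the filtration iteratively using Lemma \ref{weight-trunc} and Corollary \ref{transTR}. For each $a \in \mathbb Z$, I set $F_{\ge a}\mathcal E$ to be the mixed complex produced by Lemma \ref{weight-trunc}; property (1) is then immediate. For (2), I would use that by Lemma \ref{bei} the complex $\mathcal E$ has only finitely many non-zero perverse cohomologies ${}^p H^m(\mathcal E)$, each a mixed $G$-equivariant perverse sheaf admitting a finite weight filtration (cf.\ \cite{BBD} 5.1.5, 5.3.5). Hence there exists $N \gg 0$ such that every weight occurring in $\mathcal E$ lies in $[-N,N]$, which forces $F_{\ge a}\mathcal E \cong \{0\}$ for $a > N$ and $F_{<a}\mathcal E \cong \{0\}$, i.e.\ $F_{\ge a}\mathcal E \cong \mathcal E$, for $a \le -N$.

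To obtain (\ref{distP}) I define
$$\mathrm{gr}_a \mathcal E := F_{<(a+1)}(F_{\ge a}\mathcal E).$$
Corollary \ref{transTR} yields the canonical isomorphism $F_{\ge(a+1)}(F_{\ge a}\mathcal E) \cong F_{\ge(a+1)}\mathcal E$, so the distinguished triangle (\ref{wtr}) applied to $F_{\ge a}\mathcal E$ with cut-off $a+1$ is precisely (\ref{distP}). By construction $\mathrm{gr}_a\mathcal E$ has weight $<a+1$; inspecting the inductive construction in the proof of Lemma \ref{weight-trunc}, $\mathrm{gr}_a\mathcal E$ is built out of the weight-$a$ subquotients of the perverse cohomologies of $F_{\ge a}\mathcal E$, which already have weight $\ge a$, so $\mathrm{gr}_a\mathcal E$ is in fact pure of weight $a$. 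This is the most delicate point of the argument, but it is forced by the shape of the weight-filtration construction.

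Property (4) then follows by telescoping the identity $[F_{\ge a}\mathcal E] = [\mathrm{gr}_a\mathcal E] + [F_{\ge(a+1)}\mathcal E]$ of Lemma \ref{weight-trunc}(3), the sum terminating after finitely many steps by (2). For (5), a pure mixed complex is semisimple: applying the Beilinson-Bernstein-Deligne-Gabber decomposition theorem (\cite{BBD} 5.4.5-5.4.6), transported to the $G$-equivariant derived category via the quotient presentations of \cite{BL}, the pure complex $\mathrm{gr}_a\mathcal E$ splits as a direct sum of shifts of simple $G$-equivariant perverse sheaves, as required.
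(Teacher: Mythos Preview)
Your proof is correct and follows essentially the same route as the paper: both define $F_{\ge a}\mathcal E$ via Lemma \ref{weight-trunc}, set $\mathrm{gr}_a\mathcal E := F_{<(a+1)}(F_{\ge a}\mathcal E)$, invoke Corollary \ref{transTR} to produce the triangle (\ref{distP}), and appeal to \cite{BBD} 5.4.5 for the semisimplicity in (5). Your justification of purity of $\mathrm{gr}_a\mathcal E$ is slightly more explicit than the paper's ``by construction''; an equally clean alternative is to note that $\mathrm{gr}_a\mathcal E$ sits in the rotated triangle $F_{\ge(a+1)}\mathcal E[-1] \to \mathrm{gr}_a\mathcal E \to F_{\ge a}\mathcal E$ with both outer terms of weight $\ge a$, forcing $\mathrm{gr}_a\mathcal E$ to have weight $\ge a$ as well.
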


\begin{proof}
For $a \ll 0$, we have $F _{\ge a} \mathcal E \cong \mathcal E$. For each $a \in \mathbb Z$,
$$F _{< (a+1)} ( F _{\ge a} \mathcal E ) \to F _{\ge a} \mathcal E \to F _{\ge (a+1)} \mathcal E \stackrel{+1}{\longrightarrow}$$
is a distinguished triangle by Corollary \ref{transTR}. By construction, each $F _{< (a+1)} ( F _{\ge a} \mathcal E )$ must be pure of weight $a$ and is non-zero only for a finitely many $a \in \mathbb Z$. By setting $\mathrm{gr} _a \, \mathcal E := F _{< (a+1)} ( F _{\ge a} \mathcal E )$, we deduce the first four assertions.

The fifth assertion is automatic for a(n arbitrary) pure complex in $D^b _G ( \mathfrak X )$ by \cite{BBD} 5.3.9 i) and 5.4.5.
\end{proof}

\begin{corollary}\label{r-std-filt}
We employ the notation and setting of section one. In particular, the $G$-action on $\mathfrak X$ has finitely many orbits. For each $\lambda = (\underline{\lambda}, \xi) \in \Lambda$, we arrange $\mathsf{IC} _{\lambda}$ and $\underline{\xi} \, [\dim \mathbb O_{\lambda}]$ to be pure of weight zero $($cf. $(\ref{ICdef}))$. There exists a collection of objects $\{ F _{\ge a} \mathbb C _{\lambda} \} _{a \in \mathbb Z}$ of $D_G ^b ( \mathfrak X )$ with the following properties:
\begin{enumerate}
\item We have $F _{\ge 0} \mathbb C _{\lambda} = \mathsf{IC} _{\lambda}$ and $F _{\ge a} \mathbb C _{\lambda} = \mathbb C _{\lambda}$ for $a \ll 0$;
\item For each $a < 0$, we have a distinguished triangle
$$\longrightarrow \mathrm{gr} _a \, \mathbb C _{\lambda} \to F _{\ge a} \mathbb C _{\lambda} \to F _{\ge a+1} \mathbb C _{\lambda} \stackrel{+1}{\longrightarrow} \hskip 3mm \text{ in } D^b _G ( \mathfrak X ),$$
where $\mathrm{gr} _a \, \mathbb C _{\lambda}$ is a direct sum of $\{ \mathsf{IC} _{\mu}[i] \} _{\mu \prec \lambda, i \in \mathbb Z}$;
\item For each $a \in \mathbb Z$, $F _{\ge a} \mathbb C _{\lambda}$ has weight $\ge a$ and $\mathrm{gr} _a \, \mathbb C _{\lambda}$ is pure of weight $a$.
\end{enumerate}
\end{corollary}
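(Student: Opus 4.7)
The plan is to apply Lemma \ref{std-filt} to $\mathbb C_{\lambda}$ and match the outcome with the geometric objects $\mathsf{IC}_{\lambda}$ and $\mathsf{IC}_{\mu}$ for $\mu\prec\lambda$. Since $\underline{\xi}[\dim\mathbb O_{\lambda}]$ is pure of weight $0$ by the normalization in (\ref{ICdef}), and $!$-pushforward along an open immersion does not increase weights (\cite{BBD} 5.1.14), the complex $\mathbb C_{\lambda}=(j_{\lambda})_!\underline{\xi}[\dim\mathbb O_{\lambda}]$ has weight $\le 0$. Applying Lemma \ref{std-filt} to $\mathcal E=\mathbb C_{\lambda}$ thus produces a collection $\{F_{\ge a}\mathbb C_{\lambda}\}_{a\in\mathbb Z}$ with $F_{\ge a}\mathbb C_{\lambda}\cong\mathbb C_{\lambda}$ for $a\ll 0$ and $F_{\ge a}\mathbb C_{\lambda}=0$ for $a\ge 1$; in particular every non-trivial graded piece $\mathrm{gr}_a\mathbb C_{\lambda}$ has $a\le 0$.

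The central step is to identify $F_{\ge 0}\mathbb C_{\lambda}$ with $\mathsf{IC}_{\lambda}$. By construction $F_{\ge 0}\mathbb C_{\lambda}$ has weight $\ge 0$, and from the triangle $F_{<0}\mathbb C_{\lambda}\to\mathbb C_{\lambda}\to F_{\ge 0}\mathbb C_{\lambda}\stackrel{+1}{\longrightarrow}$ together with the weight bound on $\mathbb C_{\lambda}$ it also has weight $\le 0$, so it is pure of weight $0$. Since $\mathbb C_{\lambda}$ is perverse, the inductive construction in the proof of Lemma \ref{weight-trunc} stays in $\mathsf{Perv}_G\,\mathfrak X$, so $F_{<0}\mathbb C_{\lambda}$ is a perverse subobject of $\mathbb C_{\lambda}$. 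Now the canonical surjection $\mathbb C_{\lambda}\twoheadrightarrow\mathsf{IC}_{\lambda}$ in $\mathsf{Perv}_G\,\mathfrak X$ has kernel supported on $\overline{\mathbb O_{\lambda}}\setminus\mathbb O_{\lambda}$, and the non-formal input from the theory of pure complexes in \cite{BBD} 5.3 tells us this kernel is of weight $\le -1$, so it is contained in $F_{<0}\mathbb C_{\lambda}$. Conversely, $F_{<0}\mathbb C_{\lambda}$ restricts to zero on $\mathbb O_{\lambda}$ (since $\mathbb C_{\lambda}|_{\mathbb O_{\lambda}}=\underline{\xi}[\dim\mathbb O_{\lambda}]$ is pure of weight $0$ and simple), hence is supported on the boundary and therefore contained in the kernel of $\mathbb C_{\lambda}\to\mathsf{IC}_{\lambda}$ by the defining universal property of $j_{!*}$. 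The two inclusions yield $F_{\ge 0}\mathbb C_{\lambda}\cong\mathsf{IC}_{\lambda}$.

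For assertion (2), the same weight argument gives $j_{\lambda}^{*}F_{<a}\mathbb C_{\lambda}=0$ and hence $j_{\lambda}^{*}F_{\ge a}\mathbb C_{\lambda}=\underline{\xi}[\dim\mathbb O_{\lambda}]$ for every $a\le 0$; applying the exact functor $j_{\lambda}^{*}$ to the triangle in Lemma \ref{std-filt}(3) then shows $j_{\lambda}^{*}\mathrm{gr}_a\mathbb C_{\lambda}=0$ for each $a<0$, so each such $\mathrm{gr}_a\mathbb C_{\lambda}$ is supported on $\overline{\mathbb O_{\lambda}}\setminus\mathbb O_{\lambda}$. Lemma \ref{std-filt}(5) writes it as a direct sum of shifted simple $G$-equivariant perverse sheaves on this boundary, which by the standard classification are precisely the complexes $\mathsf{IC}_{\mu}[i]$ with $\mu\prec\lambda$ and $i\in\mathbb Z$.

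The main obstacle is the identification $F_{\ge 0}\mathbb C_{\lambda}\cong\mathsf{IC}_{\lambda}$: a priori the pure weight $0$ object $F_{\ge 0}\mathbb C_{\lambda}$ could acquire extra $\mathsf{IC}_{\mu}$-summands with $\mu\prec\lambda$, and ruling this out relies on the structural property of $!$-extensions of pure complexes cited above, the essential geometric input beyond what Lemma \ref{std-filt} already delivers.
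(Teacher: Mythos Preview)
Your overall strategy is sound, and when it applies it is the standard, cleaner route to the identification $F_{\ge 0}\mathbb C_{\lambda}\cong \mathsf{IC}_{\lambda}$. However, there is a genuine gap: you assert that ``$\mathbb C_{\lambda}$ is perverse, so the inductive construction in Lemma~\ref{weight-trunc} stays in $\mathsf{Perv}_G\,\mathfrak X$''. The object $\mathbb C_{\lambda}=(j_{\lambda})_!\underline{\xi}\,[\dim\mathbb O_{\lambda}]$ is perverse only when the locally closed immersion $j_{\lambda}$ is affine, which is \emph{not} part of the hypotheses $(\spadesuit)$ in \S1. In general $(j_{\lambda})_!$ is only right $t$-exact, so $\mathbb C_{\lambda}\in{}^pD^{\le 0}$ but need not lie in $\mathsf{Perv}_G\,\mathfrak X$. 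Once perversity fails, $F_{<0}\mathbb C_{\lambda}$ is no longer a subobject of $\mathbb C_{\lambda}$ in an abelian category, the ``kernel of $\mathbb C_{\lambda}\twoheadrightarrow\mathsf{IC}_{\lambda}$'' does not make sense, and your two-inclusion argument collapses. The same issue resurfaces in your proof of (2): the claim $j_{\lambda}^*F_{<a}\mathbb C_{\lambda}=0$ is justified by viewing $F_{<a}\mathbb C_{\lambda}$ as a perverse subobject with weight $<a$, which again presupposes perversity (weight truncation is not functorial, so one cannot simply say it commutes with $j_{\lambda}^*$).

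The paper's proof avoids this obstacle entirely. It never assumes $\mathbb C_{\lambda}$ is perverse; instead it argues on the target side. Since $F_{\ge 0}\mathbb C_{\lambda}$ is pure of weight zero, \cite{BBD} 5.4.5 and 5.3.9 decompose it as $\bigoplus_m {}^pH^m[-m]$ with each ${}^pH^m$ semisimple. An adjunction computation (\ref{vanCH}) shows $\mathrm{Hom}_G(\mathbb C_{\lambda},{}^pH^m[-m])$ is one-dimensional for $m=0$ and zero otherwise. Claim~\ref{nz} then uses the class identity of Lemma~\ref{weight-trunc}(3) (forcing the connecting maps in the long exact sequence of perverse cohomologies to vanish) to show that the canonical map $\psi:\mathbb C_{\lambda}\to F_{\ge 0}\mathbb C_{\lambda}$ is nonzero on every direct summand; combined with (\ref{vanCH}) this forces $F_{\ge 0}\mathbb C_{\lambda}\cong\mathsf{IC}_{\lambda}$. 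For (2), the paper again uses Lemma~\ref{std-filt}(4) on perverse classes rather than restriction to $\mathbb O_{\lambda}$. If you add the hypothesis that each $j_{\lambda}$ is affine (true in all the paper's applications), your argument becomes correct and is then essentially the classical weight-filtration description of $j_!$ of a pure perverse sheaf; but as written it does not cover the generality claimed.
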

\begin{proof}
We apply the construction of Lemma \ref{std-filt} for $\mathcal E := \mathbb C _{\lambda}$. The latter part of the first assertion is automatic and the third assertion is Lemma \ref{std-filt} 5).

The operation $( j_{\lambda} ) _!$ does not increase weights (cf. \cite{BBD} 5.1.14). In particular, we have $F _{> 0} \mathbb C _{\lambda} = \{ 0 \}$. Hence, $F _{\ge 0} \mathbb C _{\lambda}$ is pure of weight zero, and we have a canonical map $\psi : \mathbb C _{\lambda} \rightarrow F _{\ge 0} \mathbb C _{\lambda}$ (from Lemma \ref{weight-trunc}). By \cite{BBD} 5.4.5, we have $F _{\ge 0} \mathbb C _{\lambda} \cong \bigoplus _m {}^p H ^m$, where we set ${}^p H ^m := {}^p H ^m ( F _{\ge 0} \mathbb C _{\lambda} )[-m]$. Each ${}^p H ^m[m]$ further decomposes into a direct sum of simple perverse sheaves by \cite{BBD} 5.3.9 i). By Lemma \ref{std-filt} 4) and the construction of the minimal extension $\mathsf{IC} _{\lambda}$ of $\underline{\xi} \, [\dim \mathbb O_{\lambda}]$, we deduce that ${}^p H ^m$ contains a direct summand supported outside of $\overline{\mathbb O _{\lambda}} \setminus \mathbb O _{\lambda}$ only if $m = 0$. Moreover, such a direct summand is unique and is isomorphic to $\mathsf{IC} _{\lambda}$. It follows that
\begin{equation}
\mathrm{Hom} _G ( \mathbb C _{\lambda}, {}^p H ^m ) \cong \mathrm{Hom} _G ( \underline{\xi} \, [\dim \mathbb O_{\lambda}], j_{\lambda} ^! {}^p H ^m ) = \begin{cases} \mathbb C & (m=0)\\ \{0\} &(\text{otherwise})\end{cases}.\label{vanCH}
\end{equation}
\begin{claim}\label{nz}
The map $\psi$ is non-zero after projected to a direct factor of $F _{\ge 0} \mathbb C _{\lambda}$.
\end{claim}
\begin{proof}
Consider the long exact sequence of perverse cohomologies associated to the distinguished triangles $( F_{<0} \mathbb C _{\lambda}, \mathbb C _{\lambda}, F_{\ge 0}  \mathbb C _{\lambda} ):$
$$\cdots \rightarrow {} ^p H ^i ( \mathbb C _{\lambda} ) \stackrel{{}^p H ^i ( \psi )}{\longrightarrow }{} ^p H ^i ( F _{\ge 0} \mathbb C _{\lambda} ) \rightarrow {} ^p H ^{i+1} ( F _{< 0} \mathbb C _{\lambda} ) \rightarrow {} ^p H ^{i+1} ( \mathbb C _{\lambda} ) \rightarrow \cdots.$$
This is an exact sequence in the category of $G$-equivariant perverse sheaves. We have $[ \mathbb C _{\lambda} ] = [ F _{\ge 0} \mathbb C _{\lambda} ] + [ F _{< 0} \mathbb C _{\lambda} ]$ by Lemma \ref{weight-trunc} 3). These imply that the connecting homomorphism $\delta _i :{} ^p H ^i ( F _{\ge 0} \mathbb C _{\lambda} ) \rightarrow {} ^p H ^{i+1} ( F _{< 0} \mathbb C _{\lambda} )$ must be zero for every $i \in \mathbb Z$.

If a projection $\phi _{\mathcal E}$ of ${}^p H ^i ( F _{\ge 0} \mathbb C _{\lambda} ) \subset F _{\ge 0} \mathbb C _{\lambda}$ ($i \in \mathbb Z$) to its (pure) direct factor $\mathcal E$ satisfies $\phi _{\mathcal E} \circ \psi = 0$, then we have necessarily $\{ 0 \} \neq \delta _i ( \mathcal E ) \subset {}^p H ^{i+1} ( F _{< 0} \mathbb C _{\lambda} )$. This is a contradiction and we conclude that $\phi _{\mathcal E} \circ \psi \neq 0$ for every direct factor $\mathcal E$ as required.
\end{proof}

We return to the proof Corollary \ref{r-std-filt}. Thanks to (\ref{vanCH}) and Claim \ref{nz}, $F_{\ge 0} \mathbb C _{\lambda}$ has a unique direct factor that is isomorphic to $\mathsf{IC} _{\lambda}$. Hence, we conclude $F _{\ge 0} \mathbb C _{\lambda} = \mathsf{IC} _{\lambda}$, that is the first part of the first assertion (cf. \cite{K4} Claims A, B). This, together with Lemma \ref{std-filt} 4), 5), implies that $\mathrm{gr} _a \, \mathbb C _{\lambda}$ ($a < 0$) is a direct sum of shifted perverse sheaves supported on $\overline{\mathbb O _{\lambda}} \setminus \mathbb O _{\lambda}$. This proves the second assertion, which finishes the proof.
\end{proof}

In the below (in this section), we assume the setting of section one. In particular, we set $A := A_{(G,\mathfrak X)}$.

\begin{theorem}[Standard complexes]\label{transfer}
In the setting of Lemma \ref{std-filt}, we also assume $(\spadesuit)$ in Condition $\ref{spadesuit}$ and $(\clubsuit)_1$ in Condition $\ref{clubsuit}$. We normalize $\mathcal L$ to be pure of weight zero. Then, we have a complex of graded projective $A$-modules
$$(Q ( \mathcal E ), d) : \cdots \rightarrow Q ( \mathcal E )_{a-1} \stackrel{d_{a-1}}{\rightarrow} Q ( \mathcal E ) _a \stackrel{d_a}{\longrightarrow} Q ( \mathcal E ) _{a+1} \stackrel{d_{a+1}}{\longrightarrow} \cdots$$
with the following properties:
\begin{enumerate}
\item We have $Q ( \mathcal E ) _a \cong \mathrm{Ext} ^\bullet _G ( \mathrm{gr} _a \, \mathcal E, \mathcal L )$ as a graded $A$-module with pure weight $- a$;
\item Each differential has degree one and respects the weight structure;
\item We have an isomorphism of graded $A$-modules:
\begin{equation}
\mathrm{Ext} ^\bullet _G ( \mathcal E, \mathcal L ) \cong H ^{\bullet} ( Q ( \mathcal E ), d ).\label{isomH}
\end{equation}
\end{enumerate}
\end{theorem}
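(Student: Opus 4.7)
The plan is to apply the contravariant Ext-functor to the weight Postnikov system of Lemma~\ref{std-filt}, producing the complex term by term, and then to identify its cohomology by collapsing a purity-driven spectral sequence. We set $Q(\mathcal E)_a := \mathrm{Ext}^{\bullet}_G(\mathrm{gr}_a\mathcal E,\mathcal L)$. By Lemma~\ref{std-filt}~(5) together with $(\spadesuit)$, $\mathrm{gr}_a\mathcal E$ is a finite direct sum of cohomological shifts of simple perverse sheaves $\mathsf{IC}_{\mu}$ ($\mu\in\Lambda$); since the $\mathsf{IC}_{\mu}$ are normalized to weight zero and $\mathrm{gr}_a\mathcal E$ is pure of weight $a$, these shifts must all be precisely $[a]$. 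Hence $Q(\mathcal E)_a$ is a finite direct sum of grading shifts of the projectives $P_{\mu}$, in particular graded projective; and by $(\clubsuit)_1$ its degree-$i$ piece carries Frobenius weight $i-a$, so that $Q(\mathcal E)_a$ is pure of weight $-a$ as a graded $A$-module.

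For the differential we use the structure maps $\iota_a:\mathrm{gr}_a\mathcal E\to F_{\ge a}\mathcal E$ and $\delta_a:F_{\ge a+1}\mathcal E\to\mathrm{gr}_a\mathcal E[1]$ of the triangle in Lemma~\ref{std-filt}~(3), and we set
$$d^{\mathrm{geom}}_a := \delta_a\circ\iota_{a+1}\colon \mathrm{gr}_{a+1}\mathcal E \longrightarrow \mathrm{gr}_a\mathcal E[1], \qquad d_a := \mathrm{Ext}^{\bullet}_G(d^{\mathrm{geom}}_a,\mathcal L).$$
The resulting $d_a$ is $A$-linear (the Yoneda action by $A$ is postcomposition, which commutes with precomposition by $d^{\mathrm{geom}}_a$), has Ext-degree one, and preserves weights because both $Q(\mathcal E)_a^i$ and $Q(\mathcal E)_{a+1}^{i+1}$ carry weight $i-a$. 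The identity $d_{a+1}\circ d_a=0$ comes from the factorisation
$$d^{\mathrm{geom}}_a[1]\circ d^{\mathrm{geom}}_{a+1} \;=\; \delta_a[1]\circ\bigl(\iota_{a+1}[1]\circ\delta_{a+1}\bigr)\circ\iota_{a+2},$$
in which the parenthesised middle factor vanishes as the composition of two consecutive arrows in a rotation of a distinguished triangle.

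For the cohomology identification, the bounded Postnikov filtration of $\mathcal E$ produces, after applying $\mathrm{Ext}^{\bullet}_G(-,\mathcal L)$, a convergent spectral sequence of graded $A$-modules
$$E_1^{a,b} \;=\; \mathrm{Ext}^{a+b}_G(\mathrm{gr}_a\mathcal E,\mathcal L) \;\Longrightarrow\; \mathrm{Ext}^{a+b}_G(\mathcal E,\mathcal L),$$
whose $d_1$-differential is exactly the $d_a$ just constructed (the connecting map of the triangle is pullback along $\delta_a$, then precomposed with $\iota_{a+1}$). Each $E_1^{a,b}$ is pure of Frobenius weight $b$; for $r\ge 2$ the map $d_r:E_r^{a,b}\to E_r^{a+r,b-r+1}$ is a Frobenius-equivariant morphism between pure objects of distinct weights $b$ and $b-r+1$, hence vanishes, yielding $E_2=E_\infty=H^{\bullet}(Q(\mathcal E),d)$. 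The remaining and hardest step will be to upgrade this identification from associated graded to an isomorphism of graded $A$-modules: the induced weight filtration on $\mathrm{Ext}^{\bullet}_G(\mathcal E,\mathcal L)$ splits canonically via the Frobenius eigenspace decomposition (its successive quotients have pairwise distinct weights, hence Frobenius eigenvalues of pairwise distinct absolute values), and the splitting is compatible with the $A$-action thanks to $(\clubsuit)_1$. Purity of $A$ is thus used twice --- once to kill $d_r$ for $r\ge 2$, and once to promote the degeneration to an $A$-linear weight splitting --- which is precisely the main obstacle the proof has to overcome.
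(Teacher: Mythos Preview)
Your argument is correct and rests on the same purity mechanism as the paper, but the packaging differs. You define the differential $d_a$ directly as $\mathrm{Ext}^{\bullet}_G(\delta_a\circ\iota_{a+1},\mathcal L)$ and then read off the cohomology identification from the spectral sequence of the weight Postnikov tower, using purity to kill $d_r$ for $r\ge 2$ and to split the abutment filtration $A$-linearly. The paper instead builds the complex by downward induction on $a$: it takes the connecting map $\delta\colon Q(\mathcal E)_a\to\mathrm{Ext}^{\bullet+1}_G(F_{>a}\mathcal E,\mathcal L)$, uses the inductive identification $\mathrm{Ext}^{\bullet}_G(F_{>a}\mathcal E,\mathcal L)\cong H^{\bullet}(Q_{>a}(\mathcal E),d)$ together with weights to see that $\delta$ lands in $\ker d_{a+1}\subset Q(\mathcal E)_{a+1}$, and then \emph{lifts} $\delta$ to a chain-level map $\delta'$. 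Your differential is one canonical such lift (it factors as precomposition by $\delta_a$ followed by precomposition by $\iota_{a+1}$), so the two constructions agree, and your verification of $d^2=0$ via the vanishing of $\iota_{a+1}[1]\circ\delta_{a+1}$ replaces the paper's appeal to the inductive hypothesis. What you gain is a cleaner global statement (the complex is the $E_1$-page of the weight spectral sequence, degeneration at $E_2$ is a one-line purity argument); what the paper's inductive formulation buys is a more explicit description of $H^{\bullet}(Q_{\ge a}(\mathcal E),d)\cong\mathrm{Ext}^{\bullet}_G(F_{\ge a}\mathcal E,\mathcal L)$ at each stage, which is convenient for the subsequent Corollary~\ref{kf} (minimality of the resolution) where one argues with the individual truncation triangles.
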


\begin{proof}
For each $a \in \mathbb Z$, the sheaf $\mathrm{gr} _a \, \mathcal E$ is a direct sum of shifted simple ($G$-equivariant) perverse sheaves by Lemma \ref{std-filt} 5). In particular, $\mathrm{Ext}_G^{\bullet} ( \mathrm{gr} _a \, \mathcal E, \mathcal L )$ is a projective $A$-module. Since $A$ is pure of weight zero (see $(\clubsuit)_1$), each $P_{\lambda}$ is pure of some weight (depending on the weight of $\mathsf{IC} _{\lambda}$). Therefore, $\mathrm{Ext}_G^{\bullet} ( \mathrm{gr} _a \, \mathcal E, \mathcal L )$ is pure of weight $-a$ since $\mathrm{gr} _a \, \mathcal E$ is pure of weight $a$ and $\mathcal L$ is pure of weight zero.

We define a graded $A$-module complex
$$( Q _{\ge a} ( \mathcal E ), d ) : 0 \rightarrow Q ( \mathcal E ) _a \stackrel{d_a}{\longrightarrow} Q ( \mathcal E ) _{a+1} \stackrel{d_{a+1}}{\longrightarrow} Q ( \mathcal E ) _{a+2} \rightarrow \cdots$$
with degree one differentials and $Q ( \mathcal E ) _b := \mathrm{Ext} ^\bullet _G ( \mathrm{gr} _b \, \mathcal E, \mathcal L )$ ($b \ge a$) by induction on $a \in \mathbb Z$.

There exists $a_0 \in \mathbb Z$ so that $\mathrm{gr} _a \, \mathcal E \cong \{ 0 \}$ for $a \ge a_0$ by Lemma \ref{bei}, and so we define $( Q _{\ge a} ( \mathcal E ), d ) := ( \{ 0 \}, d)$ for $a \ge a_0$. We assume $( Q _{\ge (a+1)} ( \mathcal E ), d )$ is already defined and the following condition
\begin{itemize}
\item[$(\star)_b$] We have an isomorphism $H ^{\bullet} ( Q _{\ge b} ( \mathcal E ), d ) \cong \mathrm{Ext} _G ^{\bullet} ( F_{\ge b} \mathcal E, \mathcal L )$  as graded $A$-modules;
\end{itemize}
for each $b > a$. Note that $(\star)_a$ for $a \ll 0$ is equivalent to the assertion since $F_{\ge a} \mathcal E \cong \mathcal E$ and $( Q _{\ge a} ( \mathcal E ), d ) = ( Q ( \mathcal E ), d )$ for $a \ll 0$.

We only need to construct a differential $d_a : Q ( \mathcal E ) _a \rightarrow Q ( \mathcal E )_{a+1}$ with degree one in the complex $( Q _{\ge a} ( \mathcal E ), d )$ which satisfies $(\star)_a$ to proceed the induction.

The functor $\mathrm{Ext}_G^{\bullet} ( \bullet, \mathcal L)$ sends the distinguished triangle (\ref{distP}) to the distinguished triangle
\begin{equation}
\to \mathrm{Ext} _G ^{\bullet} ( F_{>a} \mathcal E, \mathcal L ) \to \mathrm{Ext} _G ^{\bullet} ( F _{\ge a} \mathcal E, \mathcal L ) \to \mathrm{Ext} _G ^{\bullet} ( \mathrm{gr} _a \, \mathcal E, \mathcal L ) \stackrel{+1}{\longrightarrow}. \label{distwt2}
\end{equation}
This is a distinguished triangle of (complexes of) $\mathbb C$-vector spaces. All the maps in (\ref{distwt2}) can be regarded as compositions on the first factor of $\mathrm{Ext} ^{\bullet} _G$. The $A$-module action on each term on (\ref{distwt2}) is induced from the composition on the second factor of $\mathrm{Ext} ^{\bullet} _G$. Therefore, all the maps in (\ref{distwt2}) must commute with the $A$-actions by the associativity of compositions (i.e. all the maps are graded $A$-module homomorphisms). By the induction hypothesis, we deduce that $H ^{\bullet} ( Q _{>a} ( \mathcal E), d ) \cong \mathrm{Ext} _G ^{\bullet} ( F_{>a} \mathcal E, \mathcal L )$ has weight $< -a$.

We denote the connecting map (the map with degree one) of (\ref{distwt2}) by $\delta$. The graded $A$-module $\ker \, \delta$ has pure weight $-a$, and $\mathrm{coker} \, \delta [-1]$ has weight $< -a$. Since $A$ is pure of weight zero (see $(\clubsuit)_1$), we conclude
\begin{align*}
 0 \to \ker \, \delta \rightarrow  &  Q ( \mathcal E ) _a \stackrel{\delta}{\longrightarrow} \mathrm{Ext} _G ^{\bullet} ( F_{>a} \mathcal E, \mathcal L ) \left< - 1 \right> \rightarrow \mathrm{coker} \, \delta \rightarrow 0 \hskip 3mm \text{(exact),} \hskip 2mm \text{ and}\\
& \mathrm{Ext} _G ^{\bullet} ( F_{\ge a} \mathcal E, \mathcal L ) \cong \ker \, \delta \oplus \mathrm{coker} \, \delta\left< 1 \right> \hskip 3mm \text{ as graded $A$-modules.}
\end{align*}
By $(\star)_{a+1}$, the graded $A$-module $\ker \, d_{a+1}$ is a direct factor of $H ^{\bullet} ( Q _{> a} ( \mathcal E ), d )$, and it is the weight $- (a+1)$-part of $H ^{\bullet} ( Q _{> a} ( \mathcal E ), d )$. Since $Q ( \mathcal E ) _a$ have weight $-a$ and $\delta$ lowers the weight by one, we conclude
$$\delta ( Q ( \mathcal E ) _a ) \subset \ker \, d_{a+1} \subset H ^{\bullet} ( Q _{> a} ( \mathcal E ), d ) \cong \mathrm{Ext} _G ^{\bullet} ( F_{>a} \mathcal E, \mathcal L ).$$
It follows that the map $\delta$ lifts to a graded map $\delta' : Q ( \mathcal E ) _a \rightarrow Q ( \mathcal E ) _{a+1}\left< -1 \right>$. If we set $d \mid _{Q ( \mathcal E ) _a} := \delta'$, then we have
\begin{align*}
&H ^{b} ( Q _{\ge a} ( \mathcal E ), d ) \cong H ^{b} ( Q _{\ge a+1} ( \mathcal E ), d ) \hskip 3mm \text{ for every } \hskip 3mm b \ge a+2;\\
&H ^{a+1} ( Q _{\ge a} ( \mathcal E ), d ) \cong \ker d_{a+1} / \delta ( Q ( \mathcal E ) _a ) \subset \mathrm{coker} \, \delta \left< 1 \right>;\\
&\bigoplus _{b > a}H ^{b} ( Q _{\ge a} ( \mathcal E ), d ) \cong \mathrm{coker} \, \delta \left< 1 \right>, \hskip 2mm \text{ and } \hskip 2mm H ^{a} ( Q _{\ge a} ( \mathcal E ), d ) \cong \ker \, \delta.
\end{align*}

Therefore, the complex $( Q _{\ge a} ( \mathcal E ), d )$ satisfies our requirement, and hence the induction proceeds as required.
\end{proof}

\begin{proposition}\label{P_to_K_finite}
Assume $(\spadesuit)$ and $(\clubsuit)_1$. Fix $\lambda = ( \underline{\lambda}, \xi ) \in \Lambda$. Let us normalize $\underline{\xi} \, [\dim \mathbb O_{\lambda}]$ to be weight zero. If each $\mathsf{IC} _{\gamma}$ $($whose weight is normalized to be zero$)$ is pointwise pure of weight zero along $\mathbb O_{\lambda}$, then $(Q ( \mathbb C _{\lambda} ), d)$ gives a $($finite length$)$ graded projective $A$-resolution of $\widetilde{K} _{\lambda}$.
\end{proposition}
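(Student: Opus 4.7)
The plan is to apply the standard-complex construction of Theorem~\ref{transfer} to $\mathcal{E} = \mathbb{C}_{\lambda}$ and then use a weight-concentration argument to conclude that the resulting complex is a resolution.

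First, I would invoke Corollary~\ref{r-std-filt} for $\lambda$: this produces a finite weight filtration $\{F_{\ge a}\mathbb{C}_{\lambda}\}_{a \in \mathbb{Z}}$ with $F_{\ge 0}\mathbb{C}_{\lambda} = \mathsf{IC}_{\lambda}$, each $\mathrm{gr}_a \mathbb{C}_{\lambda}$ ($a < 0$) a finite direct sum of shifts of $\mathsf{IC}_{\mu}$ for $\mu \prec \lambda$, and $\mathrm{gr}_a \mathbb{C}_{\lambda} = 0$ for all but finitely many $a$ (by Lemma~\ref{bei} applied to the bounded constructible sheaf $\mathbb{C}_{\lambda}$). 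Feeding this into Theorem~\ref{transfer} (which uses $(\clubsuit)_1$), I obtain a finite-length complex
\[
(Q(\mathbb{C}_{\lambda}), d):\quad 0 \to Q(\mathbb{C}_{\lambda})_{-N} \to \cdots \to Q(\mathbb{C}_{\lambda})_{-1} \to Q(\mathbb{C}_{\lambda})_{0} \to 0
\]
of graded projective $A$-modules, where each $Q(\mathbb{C}_{\lambda})_a = \mathrm{Ext}^{\bullet}_G(\mathrm{gr}_a \mathbb{C}_{\lambda}, \mathcal{L})$ is pure of weight $-a$, the differential $d$ respects the weight structure, and $H^{\bullet}(Q(\mathbb{C}_{\lambda}), d) \cong \widetilde{K}_{\lambda}$ as graded $A$-modules.

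Next, I would establish the purity of $\widetilde{K}_{\lambda}$ under the given pointwise purity hypothesis. By adjunction,
\[
\widetilde{K}_{\lambda} \cong \mathrm{Ext}^{\bullet}_G(\underline{\xi}[\dim\mathbb{O}_{\lambda}],\, j_{\lambda}^{!}\mathcal{L}).
\]
The hypothesis applied to $\gamma^{\vee}$ gives that $j_{\lambda}^{*}\mathsf{IC}_{\gamma^{\vee}}$ is pure of weight zero on $\mathbb{O}_{\lambda}$, and Verdier duality together with $\mathbb{D}\mathsf{IC}_{\gamma^{\vee}} \cong \mathsf{IC}_{\gamma}$ (Lemma~\ref{symm}) yields $j_{\lambda}^{!}\mathsf{IC}_{\gamma} \cong \mathbb{D}(j_{\lambda}^{*}\mathsf{IC}_{\gamma^{\vee}})$, which is therefore pure of weight zero as well. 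Summing over $\gamma$, the sheaf $j_{\lambda}^{!}\mathcal{L}$ is pure of weight zero on $\mathbb{O}_{\lambda}$. Combined with $\underline{\xi}[\dim\mathbb{O}_{\lambda}]$ being pure of weight zero and the purity of $H^{\bullet}_{G_{\lambda}^{\circ}}(\{\mathrm{pt}\})$ (equivalently, the degeneration by weight parity of the Serre spectral sequence~(\ref{leray})), I conclude that $\widetilde{K}_{\lambda}$ is pure of weight zero.

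Finally, combining the two steps: since $d$ respects the weight structure, each $H^a(Q(\mathbb{C}_{\lambda}), d)$ is a subquotient of $Q(\mathbb{C}_{\lambda})_a$ and is therefore pure of weight $-a$. But the direct sum $\bigoplus_a H^a(Q(\mathbb{C}_{\lambda}), d) \cong \widetilde{K}_{\lambda}$ is pure of weight zero, which forces $H^a = 0$ for $a \neq 0$ and $H^{0} \cong \widetilde{K}_{\lambda}$. Hence $(Q(\mathbb{C}_{\lambda}), d)$ is the desired finite-length graded projective $A$-resolution of $\widetilde{K}_{\lambda}$. The main obstacle is the purity of $\widetilde{K}_{\lambda}$ in the second step, which requires careful translation of the pointwise purity of $j_{\lambda}^{*}\mathsf{IC}_{\gamma^{\vee}}$ into a statement about $j_{\lambda}^{!}\mathcal{L}$ via Verdier duality, and verification that the induced weight structure on the equivariant Ext group is compatible with the purity of $A$ provided by $(\clubsuit)_1$.
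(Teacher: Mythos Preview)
Your argument is correct and follows essentially the same route as the paper: apply Theorem~\ref{transfer} and Corollary~\ref{r-std-filt} to $\mathbb{C}_{\lambda}$ to obtain the finite complex $(Q(\mathbb{C}_{\lambda}),d)$ with $H^a$ of weight $-a$, then use the pointwise purity hypothesis together with the spectral sequence~(\ref{leray}) and affineness of $G_{\lambda}$ to conclude that $\widetilde{K}_{\lambda}$ is pure of weight zero, forcing $H^a=0$ for $a\neq 0$. The only cosmetic difference is that the paper phrases the purity step as ``$K_{\mu}$ is pure of weight zero for $\mu\sim\lambda$, hence so is $\widetilde{K}_{\lambda}$ by~(\ref{leray})'', whereas you unpack this via Verdier duality on $j_{\lambda}^{!}\mathsf{IC}_{\gamma}$; these are equivalent.
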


\begin{proof}
By Theorem \ref{transfer}, we have $\widetilde{K} _{\lambda} \cong \bigoplus _{a \in \mathbb Z} H ^{a} ( Q ( \mathbb C _{\lambda} ), d )$ as graded $A$-modules. In addition, the direct factor $H ^{a} ( Q ( \mathbb C _{\lambda} ), d ) \subset \widetilde{K} _{\lambda}$ has weight $-a$.

By assumption, the graded $A$-module $K _{\mu}$ is pure of weight zero for every $\mu \sim \lambda$. Hence, so is $\widetilde{K} _{\lambda}$ by (\ref{leray}) since $G_{\lambda}$ is affine (note that $\{ 0 \} \neq L _{\lambda} = \mathrm{Hom} _G ( \mathbb C _{\lambda}, L _{\lambda} \boxtimes \mathsf{IC} _{\lambda} ) \subset \widetilde{K} _{\lambda} ^0$ belongs to the weight zero part). As a consequence, $H ^{\bullet} ( Q ( \mathbb C _{\lambda} ), d )$ is pure of weight zero, meaning that $H ^{i} ( Q ( \mathbb C _{\lambda} ), d ) = \{ 0 \}$ for every $i \neq 0$. By Corollary \ref{r-std-filt} 2), we have $Q ( \mathbb C _{\lambda} )_{a} = \{ 0 \}$ for $a > 0$. Therefore, the standard complex of graded $A$-modules
$$\cdots \longrightarrow Q ( \mathbb C _{\lambda} )_{-3} \stackrel{d_{-3}}{\longrightarrow} Q ( \mathbb C _{\lambda} ) _{-2} \stackrel{d_{-2}}{\longrightarrow} Q ( \mathbb C _{\lambda} ) _{-1} \stackrel{d_{-1}}{\longrightarrow} Q ( \mathbb C _{\lambda} ) _0 \rightarrow \widetilde{K} _{\lambda} \rightarrow 0$$
gives a projective resolution of $\widetilde{K} _{\lambda}$. It is of finite length by Lemma \ref{std-filt} 2).
\end{proof}

\begin{corollary}\label{kf}
Assume $(\spadesuit)$ and $(\clubsuit)$. For each $\lambda \in \Lambda$, we have:
\begin{enumerate}
\item $\widetilde{K} _{\lambda} \in A \mathchar`-\mathsf{gmod}^{pf}$;
\item $\widetilde{K} _{\lambda}$ is a quotient of $P_{\lambda}$;
\item The complex $(Q ( \mathbb C _{\lambda} ), d)$ is a minimal projective resolution of $\widetilde{K} _{\lambda}$;
\item We have $Q ( \mathbb C _{\lambda} ) _0 \cong P_{\lambda}$, and $\bigoplus _{a < 0} Q ( \mathbb C _{\lambda} ) _a$ is a direct sum of one copy of $P_{\lambda}$ and finitely many copies of $\{ P_{\mu} \left< k \right> \} _{\mu \prec \lambda, k \in \mathbb Z}$.
\end{enumerate}
\end{corollary}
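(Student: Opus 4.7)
The plan is to assemble all four parts from Proposition \ref{P_to_K_finite} and the weight-filtration description of $\mathbb{C}_{\lambda}$ recorded in Corollary \ref{r-std-filt}. Condition $(\clubsuit)_{2}$ provides the pointwise purity of every $\mathsf{IC}_{\gamma}$ along $\mathbb{O}_{\lambda}$ that is hypothesized in Proposition \ref{P_to_K_finite}, so that proposition immediately yields a finite-length graded projective resolution
\[
\cdots \to Q(\mathbb{C}_{\lambda})_{-2} \to Q(\mathbb{C}_{\lambda})_{-1} \to Q(\mathbb{C}_{\lambda})_{0} \to \widetilde{K}_{\lambda} \to 0.
\]
This proves (1) and reduces (3) to a minimality check.

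To obtain (4), I identify the terms through Corollary \ref{r-std-filt}: part (1) gives $F_{\ge 0}\mathbb{C}_{\lambda} = \mathsf{IC}_{\lambda}$, which is pure of weight $0$, so $\mathrm{gr}_{0}\mathbb{C}_{\lambda} \cong \mathsf{IC}_{\lambda}$ and $Q(\mathbb{C}_{\lambda})_{0} = \mathrm{Ext}^{\bullet}_{G}(\mathsf{IC}_{\lambda},\mathcal{L}) = P_{\lambda}$; part (2) expresses each $\mathrm{gr}_{a}\mathbb{C}_{\lambda}$ ($a<0$) as a direct sum of shifts of $\{\mathsf{IC}_{\mu}\}_{\mu \prec \lambda}$, so applying $\mathrm{Ext}^{\bullet}_{G}(-,\mathcal{L})$ converts $Q(\mathbb{C}_{\lambda})_{a}$ into a direct sum of grading shifts of $\{P_{\mu}\}_{\mu \prec \lambda}$, the total number of summands being finite by Lemma \ref{std-filt}(2). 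Part (2) of the present corollary is then automatic from the surjection $P_{\lambda}=Q(\mathbb{C}_{\lambda})_{0} \twoheadrightarrow \widetilde{K}_{\lambda}$ furnished by the resolution.

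The remaining point is minimality in (3). By Theorem \ref{transfer}(2), and visibly from the lift $\delta'$ constructed in its proof, each differential $d_{a}$ has internal degree exactly $+1$. Since $A$ is non-negatively graded with semisimple $A^{0}$, every indecomposable summand $P_{\mu}\langle k \rangle$ of $Q(\mathbb{C}_{\lambda})_{a+1}$ has its head $L_{\mu}\langle k \rangle$ concentrated in degree $k$, with its graded Jacobson radical living in degrees $>k$. A degree $+1$ map therefore has image in the radical, so the induced map on heads vanishes and the complex is minimal. The step I consider most delicate is keeping track of the weight structure throughout Proposition \ref{P_to_K_finite} in order to force $\widetilde{K}_{\lambda}$ to be pure of weight $0$ (so that $H^{\bullet}(Q(\mathbb{C}_{\lambda}),d)$ collapses into a single cohomological degree); once that purity is secured, the four parts of the corollary follow essentially formally.
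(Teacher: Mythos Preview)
Your treatment of parts (1), (2), and (4) matches the paper's: the finite projective resolution comes from Proposition \ref{P_to_K_finite} once $(\clubsuit)_2$ supplies pointwise purity, and the identification of the terms follows from Corollary \ref{r-std-filt}.

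For part (3) your degree-counting approach differs from the paper's, which argues geometrically: it analyzes the morphism $\mathrm{gr}_{a+1}\mathbb{C}_{\lambda} \to \mathrm{gr}_a\mathbb{C}_{\lambda}[1]$ via the long exact sequence of perverse cohomologies and uses Lemma \ref{std-filt}(4) to show the relevant connecting maps vanish. Your idea is potentially more elementary, but as written it has a gap. The assertion ``a degree $+1$ map therefore has image in the radical'' is not valid as a general principle: for example, a degree $+1$ map $P_{\mu}\langle k\rangle \to P_{\mu}\langle k+1\rangle$ can perfectly well be surjective. What makes the argument work here is the \emph{specific} shifts, which you have not identified. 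Since $\mathrm{gr}_a\mathbb{C}_{\lambda}$ is pure of weight $a$ and each $\mathsf{IC}_{\mu}$ is normalized to weight $0$, every summand of $\mathrm{gr}_a\mathbb{C}_{\lambda}$ is of the form $\mathsf{IC}_{\mu}[a]$; hence (after passing to the basic ring $B$, which is the one that is genuinely non-negatively graded with semisimple degree-zero part---note $A$ itself need not be) every indecomposable summand of $Q(\mathbb{C}_{\lambda})_a$ is $P_{\mu}\langle -a\rangle$, with head in degree $-a$. A degree $+1$ differential then carries generators into degree $-a+1$ of $Q(\mathbb{C}_{\lambda})_{a+1}$, whose heads sit in degree $-a-1$; since $-a+1>-a-1$ the image lands in the radical and minimality follows. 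Once you insert this observation (available from Corollary \ref{r-std-filt}(3) and Theorem \ref{transfer}(1)), your argument is complete and arguably cleaner than the paper's perverse-cohomology computation.
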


\begin{proof}
The first assertion follows from Proposition \ref{P_to_K_finite} and $(\clubsuit)_2$ in Condition \ref{clubsuit}. By Corollary \ref{r-std-filt} 1), we deduce that $\widetilde{K} _{\lambda}$ is a quotient of $P_{\lambda}$, that is the second assertion.

The third and the fourth assertions except for the minimality of the resolution follows by (additionally) using Theorem \ref{transfer} and Corollary \ref{r-std-filt} 2).

As we have a finite length finitely generated projective resolution, there exists a minimal projective resolution of $\widetilde{K} _{\lambda}$. The complex $(Q ( \mathbb C _{\lambda} ), d)$ is not a minimal resolution of $\widetilde{K} _{\lambda}$ if and only if there is a graded indecomposable projective $A$-direct factor $P \subset Q ( \mathbb C _{\lambda} ) _a$ ($a \le 0$) so that the quotient map $\psi : P \rightarrow \mathsf{hd} \, P$ represents zero in $\mathrm{ext} ^{-a} _A ( \widetilde{K} _{\lambda}, \mathsf{hd} \, P )$. For this, we need the following condition $(\dagger)_a$ for some $a \le 0$:
\begin{itemize}
\item[$(\dagger)_a$] $\mathrm{Im} \, d _{a} \subset Q ( \mathbb C _{\lambda} ) _{a+1}$ gives a non-zero module in $\mathsf{hd} \, Q ( \mathbb C _{\lambda} ) _{a+1}$. In other words, there exist a pair of indecomposable direct summands $P \subset Q ( \mathbb C _{\lambda} ) _{a+1}$ and $Q \subset Q ( \mathbb C _{\lambda} ) _{a}$ so that $P = d _a ( Q )$.
\end{itemize}

We prove that such direct summands are inexistent to deduce the minimality.

The differential $d_a$ of $(Q ( \mathbb C _{\lambda} ), d)$ is induced by a morphism $F _{> a} \mathbb C _{\lambda} \rightarrow \mathrm{gr} _a \, \mathbb C _{\lambda} [1]$ arising from the distinguished triangle $(\mathsf{gr} _{a} \, \mathbb C _{\lambda}, F _{\ge a} \mathbb C _{\lambda}, F _{> a} \mathbb C _{\lambda})$. In fact, we only need its pullback $\psi$ to $\mathrm{gr} _{a+1} \, \mathbb C _{\lambda}$ though the distinguished triangle $(\mathsf{gr} _{a+1} \, \mathbb C _{\lambda}, F _{> a} \mathbb C _{\lambda}, F _{> a+1} \mathbb C _{\lambda})$. Here the both of $\mathsf{gr} _{a} \, \mathbb C _{\lambda}$ and $\mathsf{gr} _{a+1} \, \mathbb C _{\lambda}$ are direct sums of shifted $G$-equivariant perverse sheaves by Lemma \ref{std-filt} 5). Taking account into this, $d_a$ ($= \mathrm{Ext} ^{\bullet} _G (\psi, \mathcal L)$) induces an isomorphism between indecomposable projective $A$-direct factors of $Q ( \mathbb C _{\lambda} ) _{a}$ and $Q ( \mathbb C _{\lambda} ) _{a+1}$ only if $\psi$ induces an isomorphism between direct factors of $\mathsf{gr} _{a} \, \mathbb C _{\lambda}$ and $\mathsf{gr} _{a+1} \, \mathbb C _{\lambda}[1]$ that are (shifted) irreducible perverse sheaves.

In addition, $\psi$ splits into direct sums of
$$\psi _j^i : {}^p H ^i ( \mathsf{gr} _{a+1} \, \mathbb C _{\lambda} ) [-i] \longrightarrow {}^p H ^{i+j} ( \mathsf{gr} _{a} \, \mathbb C _{\lambda} ) [1-i-j] \hskip 3mm \text{ and } \hskip 3mm \psi _j := \bigoplus _i \psi _j^i.$$
The morphism $\psi _j$ in $D _G ^b ( \mathfrak X )$ is a morphism of perverse sheaves if and only if $j = 1$ (and it is zero for $j > 1$). Therefore, the condition $(\dagger)_a$ implies $\psi _1 \neq 0$.

Consider the long exact sequence of perverse cohomologies associated to the distinguished triangle $(\mathsf{gr} _{a} \, \mathbb C _{\lambda}, F _{\ge a} \mathbb C _{\lambda}, F _{> a} \mathbb C _{\lambda})$:
$$\cdots \rightarrow {} ^p H ^{i} ( F _{\ge a} \mathbb C _{\lambda} ) \rightarrow {} ^p H ^{i} ( F _{> a} \mathbb C _{\lambda} ) \stackrel{\delta_{i}}{\longrightarrow} {} ^p H ^{i+1} ( \mathsf{gr} _{a} \, \mathbb C _{\lambda} ) \rightarrow {} ^p H ^{i+1} ( F _{\ge a} \mathbb C _{\lambda} ) \rightarrow \cdots.$$
Here all the connecting maps $\delta_i$ must be zero by Lemma \ref{std-filt} 4).

The condition $\psi_1^{i} \neq 0$ implies that the composition map
$${} ^p H ^{i} ( \mathsf{gr} _{a+1} \, \mathbb C _{\lambda} ) \longrightarrow {} ^p H ^{i} ( F _{> a} \mathbb C _{\lambda} ) \stackrel{\delta_{i-1}}{\longrightarrow} {} ^p H ^{i+1} ( \mathsf{gr} _{a} \, \mathbb C _{\lambda} )$$
is non-zero, which cannot happen. Therefore, we deduce $\psi _1 = 0$, and hence the condition $(\dagger)_a$ cannot hold.

This proves that the complex $(Q ( \mathbb C _{\lambda} ), d)$ is a minimal resolution of $\widetilde{K} _{\lambda}$, which completes the proof of Corollary \ref{kf}.
\end{proof}

\begin{proof}[Proof of Theorem \ref{KS}]
For $\mu \not\succsim \lambda = (\underline{\lambda},\xi)$, we have $j_{\lambda} ^! \mathsf{IC} _{\mu} = \{ 0 \}$ by the support condition. It follows that
\begin{align}
[\widetilde{K} _{\lambda} : L _{\mu} \left< i \right> ] _0 & = \dim \mathrm{Ext} ^i _{D^b_G (\mathfrak X)} ( \mathbb C _{\lambda}, \mathsf{IC} _{\mu} ) = \dim \mathrm{Ext} ^{i-\dim \mathbb O _{\lambda}} _{D^b_G (\mathbb O _{\lambda})} ( \underline{\xi}, j_{\lambda} ^! \mathsf{IC} _{\mu} ) = 0,\label{eq-mult}\\
[ K _{\lambda} : L _{\mu} \left< i \right> ] _0 & = \dim H ^{i+\dim \mathbb O _{\lambda}} i_{\lambda} ^! \mathsf{IC} _{\mu} = 0\label{mult}
\end{align}
for every $i \in \mathbb Z$.

For each $\lambda = (\underline{\lambda}, \xi), \mu = (\underline{\lambda}, \zeta) \in \Lambda$ (so that $\lambda \sim \mu$), we have
\begin{equation}
[ K _{\lambda} : L _{\mu} \left< i \right> ] _0 = \dim \mathrm{Hom} _{C_{\lambda}} ( \xi, H ^{i+\dim \mathbb O _{\lambda}} i_{\lambda} ^! \mathsf{IC} _{\mu} ) = \begin{cases} \delta _{\lambda,\mu} & (i=0) \\ 0 & (i \neq 0) \end{cases}.\label{BMone}
\end{equation}
This proves Theorem \ref{KS} 1). In addition, we have
\begin{equation}
[ \widetilde{K} _{\lambda} : L _{\mu} \left< i \right> ] _0 = \dim \mathrm{Ext} ^{i-\dim \mathbb O _{\lambda}} _{D^b_G (\mathbb O _{\lambda})} ( \underline{\xi}, j_{\lambda} ^! \mathsf{IC} _{\mu} ) = \dim \mathrm{Hom} _{C_{\lambda}} (\xi, \zeta \otimes _{\mathbb C} H ^i _{G _{\lambda} ^{\circ}} ( \{ \mathrm{pt} \} ) ).\label{BMeq}
\end{equation}
This proves the latter half of Theorem \ref{KS} 4).

Corollary \ref{kf} and (\ref{eq-mult}) implies
$$\mathrm{ext} ^\bullet _A ( \widetilde{K} _{\lambda}, \widetilde{K} _{\mu} ) = \{ 0 \} \hskip 1mm \text{ for each } \mu \not\precsim \lambda,$$
that is the first half of Theorem \ref{KS} 2). 

Similarly, Corollary \ref{kf}, (\ref{mult}), and (\ref{BMone}) implies
\begin{center}
\begin{minipage}{.45\textwidth}
\begin{align}\label{orthKS4}
\mathrm{ext} ^\bullet _A ( \widetilde{K} _{\lambda}, L _{\mu} ) & \neq \{ 0 \}
\end{align}
\end{minipage}
\hfill
\begin{minipage}{.45\textwidth}
\begin{align}\label{orthKS5}
\hskip -10mm \text{ or } \hskip 5mm \mathrm{ext} ^\bullet _A ( \widetilde{K} _{\lambda}, K _{\mu} ) &\neq \{ 0 \}
\end{align}
\end{minipage}
\end{center}
only if $\mu \preceq \lambda$. The equation (\ref{orthKS5}) is the latter half of Theorem \ref{KS} 2).

The equations (\ref{orthKS4}) and (\ref{eq-mult}) imply
\begin{equation}
\mathrm{hom} _A ( \widetilde{K}_{\lambda}, L _{\mu} ) = \{ 0 \} \hskip 3mm \text{ for every } \lambda \neq \mu \in \Lambda. \label{orthKS3}
\end{equation}
Thanks to Corollary \ref{kf} 4), we have
\begin{equation}
\sum _{i \ge 0} \dim \mathrm{ext}^i _{A} ( \widetilde{K}_{\lambda}, L _{\lambda} ) \le 1. \label{dimest}
\end{equation}

By Corollary \ref{kf} 2), we have a quotient map $P_{\lambda} \rightarrow \widetilde{K}_{\lambda}$. Taking (\ref{eq-mult}) into account, it induces a surjective morphism
$$\psi _{\lambda} : P _{\lambda} / \Bigl(  \sum _{\mu \prec \lambda} A e_{\mu} P _{\lambda} \Bigr) \longrightarrow \widetilde{K} _{\lambda}.$$
Let $\ker := \ker \psi _{\lambda}$. Each simple quotient of $\ker$ is of the form $L_{\gamma} \left< i \right>$ with $\gamma \not\prec \lambda$ and $i > 0$. If $\ker \neq \{ 0 \}$, then the Yoneda interpretation of $\mathrm{ext}^1$ implies
$$\mathrm{ext} ^1 _{A} ( \widetilde{K} _{\lambda}, L _{\gamma}) \neq \{ 0 \},$$
which contradicts with (\ref{orthKS4}) and (\ref{dimest}). Therefore, we conclude that $\ker = \{ 0 \}$. This proves Theorem \ref{KS} 3).

It remains to prove the former part of Theorem \ref{KS} 4). Fix a $\sim$-equivalence class $\mathcal O $ in $\Lambda$. Set $C := C_{\lambda}$ and $H ^{\bullet} _{G_{\lambda} ^{\circ}} := H ^{\bullet} _{G_{\lambda} ^{\circ}} ( \{ \mathrm{pt} \} )$ for a(ny) choice $\lambda \in \mathcal O$.

For every (\'etale) morphism $j : V \longrightarrow \mathfrak X$ onto a locally closed subset and every $\overline{\mathbb Q} _{\ell}$-sheaf $\mathcal F$ on $\mathfrak X$, we have the induced map:
$$\vartheta : \mathcal Hom _{\mathfrak X} ( \mathcal F, \mathcal F ) \rightarrow \mathcal Hom _{\mathfrak X} ( \mathcal F, j_* j^* \mathcal F ) \cong j_* \mathcal Hom _{V} ( j^* \mathcal F, j^* \mathcal F ).$$
By adjunction, this map is determined by
$$j^* \mathcal Hom _{\mathfrak X} ( \mathcal F, \mathcal F ) \longrightarrow \mathcal Hom _{V} ( j^* \mathcal F, j^* \mathcal F ),$$
that is a map of sheaves of algebras over $V$. By replacing $\mathcal F$ with a complex $\mathcal F^{\bullet}$, we obtain a map
$$j^* \mathcal Hom _{\mathfrak X} ( \mathcal F^{\bullet}, \mathcal F^{\bullet} ) \longrightarrow \mathcal Hom _{V} ( j^* \mathcal F^{\bullet}, j^* \mathcal F^{\bullet} )$$
of sheaves of differential graded algebras, whose cohomology yields a map of sheaves of graded algebras. Hence, the map $\vartheta$ defines a map of sheaves of (differential) graded algebras if we replace $\mathcal F$ by a complex $\mathcal F^{\bullet}$. It follows that
$$\mathrm{Ext} ^{\bullet} _G ( \mathcal L, \mathcal L ) \rightarrow \mathrm{Ext} ^{\bullet} _G ( \mathcal L, ( j_{\mu})  _* j _{\mu}^* \mathcal L ) \cong \mathrm{Ext} ^{\bullet} _G ( j_{\mu} ^* \mathcal L, j_{\mu} ^* \mathcal L )$$
is a map of algebras. Since we have $\mathcal L \cong \mathbb D \mathcal L$, the Verdier duality induces an algebra map
\begin{equation}
\mathrm{Ext} ^{\bullet} _G ( \mathcal L, \mathcal L ) \rightarrow \mathrm{Ext} ^{\bullet} _G ( ( j_{\mu} )_! j _{\mu}^! \mathcal L,  \mathcal L ) \cong \mathrm{Ext} ^{\bullet} _G ( j_{\mu} ^! \mathcal L, j_{\mu} ^! \mathcal L ).\label{!-alg}
\end{equation}

Consider a graded algebra
\begin{equation}
A _{\mathcal O} := H _{G_{\lambda} ^{\circ}} ^{\bullet} \otimes _{\mathbb C} \bigoplus _{\lambda = (\underline{\lambda}, \xi), \mu = ( \underline{\lambda}, \zeta ) \in \mathcal O} \mathrm{hom} _{\mathbb C} ( \xi \boxtimes K_{\lambda}, \zeta \boxtimes K_{\mu} )\label{AO}
\end{equation}
with the diagonal $C$-action. This algebra has a unique self-dual graded simple module $\oplus _{\lambda = (\underline{\lambda}, \xi) \in \mathcal O} \xi \boxtimes K_{\lambda}$ (that can be split-off if we take the $C$-action into account). Note that $\widetilde{K}_{\lambda}$ is a direct summand of the RHS of (\ref{!-alg}). It follows that the $A$-action on $\widetilde{K}_{\lambda}$ factors through the algebra $\mathrm{Ext} ^{\bullet} _G ( j_{\mu} ^! \mathcal L, j_{\mu} ^! \mathcal L )$, that is isomorphic to $( A _{\mathcal O} )^{C} \subset A _{\mathcal O}$.

For each $k \ge 0$, the ideal $H _{G_{\lambda} ^{\circ}} ^{\ge k} \subset H _{G_{\lambda} ^{\circ}} ^{\bullet}$ generates a two-sided ideal $J^k \subset A _{\mathcal O}$. Here $J^1$ is the graded Jacobson radical of $A_{\mathcal O}$. Consider the graded $A _{\mathcal O}$-module
\begin{equation}
\widetilde{K} := H _{G_{\lambda} ^{\circ}} ^{\bullet} \otimes _{\mathbb C} \bigoplus _{\mu = (\underline{\lambda}, \zeta) \in \mathcal O} \zeta \boxtimes K_{\mu}.\label{Lst}
\end{equation}
We have a natural identification $\widetilde{K} = \bigoplus _{\lambda = ( \underline{\lambda}, \xi ) \in \mathcal O} \xi \boxtimes \widetilde{K} _{\lambda}$ through the algebra map $A \rightarrow ( A_{\mathcal O} ) ^C$. For each $k$,
$$J ^k \widetilde{K} / J ^{k+1} \widetilde{K} \cong H _{G_{\lambda} ^{\circ}} ^{k} \otimes _{\mathbb C} \bigoplus _{\mu = (\underline{\lambda}, \zeta) \in \mathcal O} \zeta \boxtimes K_{\lambda}$$
is a semisimple graded $A _{\mathcal O}$-module. This induces a filtration of $\widetilde{K} _{\lambda}$ whose associated graded is a direct sum of the grading shifts of $K_{\mu}$ with $\mu \in \mathcal O$, which completes the proof of Theorem \ref{KS}.
\end{proof}

\section{Applications of Theorem \ref{KS}}
Keep the setting of section one. We assume the conditions $(\spadesuit)$ in Condition \ref{spadesuit}, $(\clubsuit)$ in Condition \ref{clubsuit}, and work over an algebraic closure of a finite field unless stated otherwise.

\begin{lemma}\label{fd}
For each $\lambda \in \Lambda$, we have $\dim K_{\lambda} < \infty$.
\end{lemma}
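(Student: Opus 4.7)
The proof is essentially a direct consequence of the boundedness and constructibility built into the definition of $\mathcal{L}$, so the plan is quite short. I would begin by unwinding the definition
$$K_\lambda = \mathrm{Hom}_{C_\lambda}\bigl( \xi, H^\bullet i_\lambda^! \mathcal L [ \dim \mathbb O_\lambda ] \bigr),$$
and observe that it suffices to show that the total cohomology of $i_\lambda^! \mathcal L$ at the point $x_\lambda$ is finite-dimensional. Indeed, once this is established, $K_\lambda$ is a subspace of a finite-dimensional graded vector space (the $\xi$-isotypic component under the action of the finite group $C_\lambda$), hence finite-dimensional.

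To verify the finiteness of $\bigoplus_n H^n i_\lambda^! \mathcal L$, I would use that by Condition $(\spadesuit)$ the index set $\Lambda$ is finite, so
$$\mathcal L = \bigoplus_{\mu \in \Lambda} L_\mu \boxtimes \mathsf{IC}_\mu$$
is a finite direct sum. Each $L_\mu \in D^b(\{\mathrm{pt}\})$ is a finite-dimensional graded vector space (this is needed for $A \in \mathsf{vec}$ via the Serre spectral sequence (\ref{SS1})), and each $\mathsf{IC}_\mu$ is a bounded constructible complex on $\mathfrak X$. Therefore $\mathcal L$ lies in $D^b_G(\mathfrak X)$ with bounded constructible underlying complex, and the costalk functor $i_\lambda^!$ sends such a complex to a bounded complex of finite-dimensional vector spaces at the point $\{x_\lambda\}$. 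Taking cohomology then yields a finite-dimensional graded vector space, which is what was needed.

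There is no real obstacle; the statement is recorded here precisely because it contrasts sharply with $\widetilde K_\lambda$, whose composition multiplicity formula in Theorem \ref{KS} 4) shows it has graded dimension given by the (infinite-dimensional) Poincar\'e series of $H^\bullet_{G_\lambda^\circ}(\{\mathrm{pt}\})$. The lemma isolates the fact that this infinite-dimensional behavior disappears when one passes from the dual standard module to the standard module $K_\lambda$, since the latter is literally a (shifted) isotypic component of a costalk.
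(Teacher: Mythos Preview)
Your proof is correct and follows essentially the same approach as the paper: reduce to showing $\dim H^{\bullet} i_{\lambda}^{!} \mathcal L < \infty$, and then invoke constructibility (and boundedness) of $\mathcal L$. The paper compresses this into a single sentence, while you spell out the reduction and the ingredients; the extra commentary on the contrast with $\widetilde K_{\lambda}$ is accurate but not part of the argument itself.
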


\begin{proof}
The assertion is equivalent to $\dim H ^{\bullet} i _{\lambda} ^! \mathcal L < \infty$, which follows from the fact that $\mathcal L$ is a constructible sheaf.
\end{proof}

\begin{corollary}\label{fc}
For each $\lambda \in \Lambda$, the $A$-module $K_{\lambda}$ has a finite composition series. \hfill $\Box$
\end{corollary}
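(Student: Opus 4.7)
The plan is to obtain Corollary \ref{fc} as an immediate consequence of Lemma \ref{fd}, using only the general principle that a finite-dimensional module over any graded algebra has finite length in its graded module category. Concretely, since $K_\lambda \in A\mathchar`-\mathsf{gmod}$ satisfies $\dim_{\mathbb C} K_\lambda < \infty$, any strictly descending chain of graded $A$-submodules
$$K_\lambda = M_0 \supsetneq M_1 \supsetneq M_2 \supsetneq \cdots$$
has strictly decreasing $\mathbb C$-dimension, and so terminates after at most $\dim_{\mathbb C} K_\lambda$ steps. Hence the lattice of graded $A$-submodules of $K_\lambda$ satisfies both ACC and DCC, and an application of Jordan--H\"older in $A\mathchar`-\mathsf{gmod}$ produces a finite composition series whose successive quotients are graded simple $A$-modules.

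Since $\{L_\mu\langle j\rangle\}_{\mu\in\Lambda,\,j\in\mathbb Z}$ is a complete list of graded simple $A$-modules, these composition factors are automatically of this form; Theorem \ref{KS} 1) moreover restricts the set of labels to $\{\mu\in\Lambda\mid \mu\succsim\lambda\}$, a finite subset of $\Lambda$. There is no real obstacle here: the only point worth noting is that all arguments are carried out internally to $A\mathchar`-\mathsf{gmod}$, so that one indeed obtains a composition series by \emph{graded} simple subquotients, but this is automatic since the grading on $K_\lambda$ is inherited from a grading on $A$ and all submodules appearing in the chain are graded.
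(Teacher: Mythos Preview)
Your proof is correct and matches the paper's approach exactly: the corollary is stated with an empty proof box immediately after Lemma \ref{fd}, indicating it is meant as the obvious consequence of finite-dimensionality that you spell out. The extra remarks about Theorem \ref{KS} 1) and the classification of graded simples are accurate but not needed for the bare statement.
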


\begin{proposition}\label{fgd-k}
We assume $(\spadesuit)$ and $(\clubsuit)$. For each $\lambda \in \Lambda$, $K_{\lambda}$ admits a finite resolution by the grading shifts of $\{\widetilde{K}_{\gamma}\} _{\gamma \sim \lambda}$. In addition, we have
$$\mathrm{ext} _A ^{\bullet} ( K_{\lambda}, K_{\mu} ) = \{ 0 \} = \mathrm{ext} _A ^{\bullet} ( K_{\lambda}, L_{\mu} ) \hskip 1mm \text{ for each } \mu \not\precsim \lambda.$$
\end{proposition}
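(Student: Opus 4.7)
The plan is to exploit the structure revealed in the proof of Theorem \ref{KS} 4). Fix the $\sim$-equivalence class $\mathcal{O} \ni \lambda$, set $C := C_{\lambda}$ and $H := H^{\bullet}_{G_{\lambda}^{\circ}}(\{\mathrm{pt}\})$, and recall that the $A$-action on each $\widetilde{K}_{\gamma}$ ($\gamma \in \mathcal{O}$) factors through $(A_{\mathcal{O}})^{C}$, while the module $\widetilde{K}$ of \eqref{Lst} is a free $H$-module of finite rank whose quotient by the graded maximal ideal $\mathfrak{m} \subset H$ is $\bigoplus_{\mu = (\underline{\lambda},\zeta) \in \mathcal{O}} \zeta \boxtimes K_{\mu}$. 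Since $G_{\lambda}^{\circ}$ is a connected affine group, its Levi decomposition together with the contractibility of the unipotent radical gives $H \cong H^{\bullet}(BL)$ for a reductive $L$, so $H$ is a polynomial ring on finitely many positive even-degree generators. Using semisimplicity of $C$-representations on $\overline{\mathbb{Q}}_{\ell}$-vector spaces, I choose a $C$-stable graded complement $V$ of $\mathfrak{m}^{2}$ in $\mathfrak{m}$, yielding a $C$-equivariant isomorphism of graded algebras $H \cong S(V)$.

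The Koszul complex $\Lambda^{\bullet} V \otimes_{\mathbb{C}} H$ is then a finite $C$-equivariant free resolution of the trivial $H$-module $\mathbb{C}$. Tensoring it over $\mathbb{C}$ with $\bigoplus_{\mu} \zeta \boxtimes K_{\mu}$ yields a finite $(A_{\mathcal{O}})^{C}$-equivariant resolution of $\bigoplus_{\mu} \zeta \boxtimes K_{\mu}$ whose $p$-th term is $\Lambda^{p} V \otimes \widetilde{K}$, a $C$-twisted sum of grading shifts of $\widetilde{K}$. Taking the $\xi$-isotypic component under the diagonal $C$-action converts this into a complex in $A\mathchar`-\mathsf{gmod}$ (since $A \subset (A_{\mathcal{O}})^{C}$ preserves isotypic parts), each of whose terms is a direct sum of grading shifts of $\{\widetilde{K}_{\gamma}\}_{\gamma \sim \lambda}$ (using the decomposition $\widetilde{K} = \bigoplus_{\gamma = (\underline{\lambda},\xi_{\gamma}) \in \mathcal{O}} \xi_{\gamma} \boxtimes \widetilde{K}_{\gamma}$). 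Its homology is the $\xi$-isotypic part of $\bigoplus_{\mu} \zeta \boxtimes K_{\mu}$, which is $K_{\lambda}$; this settles the first assertion.

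For the Ext vanishing, apply $\mathrm{hom}_{A}(-, M)$ to the resolution $\widetilde{K}_{\bullet} \to K_{\lambda}$ just constructed and consider the resulting spectral sequence $E_{1}^{p,q} = \mathrm{ext}_{A}^{q}(\widetilde{K}_{p}, M) \Rightarrow \mathrm{ext}_{A}^{p+q}(K_{\lambda}, M)$. For $M = K_{\mu}$ with $\mu \not\precsim \lambda$, each $\widetilde{K}_{p}$ is a sum of grading shifts of $\widetilde{K}_{\gamma}$ with $\gamma \sim \lambda$; since $\mathbb{O}_{\mu} \not\subset \overline{\mathbb{O}_{\gamma}} = \overline{\mathbb{O}_{\lambda}}$, the second half of Theorem \ref{KS} 2) forces $E_{1} \equiv 0$. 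For $M = L_{\mu}$ with $\mu \not\precsim \lambda$, Corollary \ref{kf} 4) states that the minimal projective resolution of each $\widetilde{K}_{\gamma}$ ($\gamma \sim \lambda$) involves only $\{P_{\nu}\}_{\nu \precsim \gamma}$; since $\mathrm{hom}_{A}(P_{\nu}, L_{\mu}) = 0$ for $\nu \neq \mu$ and no such $\nu \precsim \lambda$ equals $\mu$, we get $\mathrm{ext}_{A}^{\bullet}(\widetilde{K}_{\gamma}, L_{\mu}) = 0$, and again $E_{1} \equiv 0$.

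The step I expect to be delicate is the bookkeeping that converts the $C$-equivariant $(A_{\mathcal{O}})^{C}$-resolution into a genuine resolution in $A\mathchar`-\mathsf{gmod}$ by direct sums of grading shifts of the individual $\widetilde{K}_{\gamma}$'s; this hinges on the $C$-equivariant identification $H \cong S(V)$ and on the fact that isotypic projection commutes with the Koszul differential. Once this is in place, the remaining Ext vanishing is purely formal via the spectral sequence and the previously established results.
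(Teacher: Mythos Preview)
Your proof is correct and follows essentially the same strategy as the paper: both exploit the factorization of the $A$-action on $\widetilde{K}_{\gamma}$ ($\gamma \in \mathcal{O}$) through $(A_{\mathcal{O}})^{C}$ established in the proof of Theorem~\ref{KS}~4), together with the fact that $H = H^{\bullet}_{G_{\lambda}^{\circ}}(\{\mathrm{pt}\})$ is a polynomial ring, to produce a finite $\widetilde{K}$-resolution of $K_{\lambda}$; the Ext vanishing then follows from Theorem~\ref{KS}~2) and the content of Corollary~\ref{kf}~4) (equivalently~(\ref{orthKS4})) exactly as in the paper.

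The only stylistic difference is in how the resolution is produced. The paper proceeds abstractly: it shows $\mathrm{end}_{(A_{\mathcal{O}})^{C}}(\bigoplus_{\gamma}M_{\gamma}) \cong \mathrm{end}_{A}(\bigoplus_{\gamma}\widetilde{K}_{\gamma})$ by a dimension count, deduces a Morita equivalence between $(A_{\mathcal{O}})^{C}$ and $R_{C}=\mathbb{C}C\ltimes H$, and then invokes the finiteness of the global dimension of $R_{C}$ to transfer a projective resolution of the simple $R_{C}$-module to a $\widetilde{K}$-resolution of $K_{\lambda}$ over $A$. You instead build the resolution by hand via the $C$-equivariant Koszul complex on $H\cong S(V)$ and take $\xi$-isotypic parts. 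Your route is more explicit and bypasses the $\mathrm{hom}$-comparison step; the paper's route yields the same conclusion without choosing $V$ or writing down Koszul differentials. The ``delicate bookkeeping'' you flag is indeed harmless: the Koszul differential is $C$-equivariant by construction, and the map $A\to (A_{\mathcal{O}})^{C}$ (a homomorphism, not literally an inclusion) makes isotypic projection $A$-linear.
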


\begin{proof}
Fix a $\sim$-equivalence class $\mathcal O$ in $\Lambda$, and we borrow the notation (namely $A_{\mathcal O}, C, \widetilde{K}$, and $H ^{\bullet} _{G _{\lambda} ^{\circ}}$) and setting from the proof of the former half of Theorem \ref{KS} 4). Here we set $A' _{\mathcal O} := ( A _{\mathcal O} ) ^C$ (cf. (\ref{AO})) for simplicity during this proof.

For each $\lambda \in \mathcal O$, $\widetilde{K} _{\lambda}$ is a projective object in the full subcategory $A \mathchar`-\mathsf{gmod} _{\mathcal O}$ of $A \mathchar`-\mathsf{gmod}$ consisting of objects $M$ such that $[ M : L_{\gamma} ] = 0$ if $\gamma \not\succsim \lambda \in \mathcal O$ by Theorem \ref{KS} 3).

In particular, we have
$$\mathsf{gdim} \, \mathrm{hom} _A ( \widetilde{K} _{\lambda}, \widetilde{K} _{\mu} ) = \mathsf{gdim} \, \mathrm{Hom} _{C} ( \zeta, \xi \otimes _{\mathbb C} H ^{\bullet} _{G _{\lambda} ^{\circ}} )$$
for each $\lambda = ( \underline{\lambda}, \xi ), \mu = ( \underline{\lambda}, \zeta ) \in \mathcal O$.

Consider the graded $A' _{\mathcal O}$-module
$$M_{\lambda} := \mathrm{hom} _C ( \xi, H_{G _{\lambda} ^{\circ}} ^{\bullet} \otimes _{\mathbb C} \bigoplus _{\mu = (\underline{\lambda}, \zeta) \in \mathcal O} \zeta \boxtimes K_{\mu} )$$
for each $\lambda = ( \underline{\lambda}, \xi ) \in \mathcal O$, that is the $\xi$-isotypic component of $\widetilde{K}$ (cf. (\ref{AO}) and (\ref{Lst})).

The factorization of the $A$-action on $\widetilde{K} _{\lambda}$ ($\lambda \in \mathcal O$) through $A' _{\mathcal O}$ identifies $M_{\lambda}$ with $\widetilde{K} _{\lambda}$. This also identifies (grading shifts of) $K_{\lambda}$ (that are subquotients of $\widetilde{K}_{\lambda}$) with a simple graded $A' _{\mathcal O}$-module.

For each $\lambda \in \mathcal O$, we deduce that $M_{\lambda}$ is a projective $A _{\mathcal O}'$-module with a simple head $K_{\lambda}$ by replacing $L_{\lambda}$ with $K_{\lambda}$ and $(G, \mathfrak X)$ with $( G_{\lambda}, \{ \mathrm{pt} \} )$ in the discussion of section one. It follows that
$$\mathsf{gdim} \, \mathrm{hom} _{A' _{\mathcal O}} ( M _{\lambda}, M _{\mu} ) = \mathsf{gdim} \, \mathrm{hom} _{C} ( \zeta, \xi \otimes _{\mathbb C} H ^{\bullet} _{G _{\lambda} ^{\circ}} )$$
for each $\lambda = ( \underline{\lambda}, \xi ), \mu = ( \underline{\lambda}, \zeta ) \in \mathcal O$.

The map $\psi \in \mathrm{end} _{A' _{\mathcal O}} ( M _{\lambda}, M _{\mu} )$ is uniquely determined by choosing the image of $K_{\lambda}$, and we have a canonical quotient $K _{\lambda} \rightarrow L_{\lambda}$ (or its kernel) singled out by its restriction to the image of $A \to A' _{\mathcal O}$. Thus, we have a natural inclusion
$$\mathrm{hom} _{A' _{\mathcal O}} ( M _{\lambda}, M _{\mu} ) \hookrightarrow \mathrm{hom} _A ( \widetilde{K} _{\lambda}, \widetilde{K} _{\mu} ),$$
that is in fact an isomorphism by the comparison of graded dimensions.

This shows that
\begin{equation}
\mathrm{end} _{A' _{\mathcal O}} ( \bigoplus _{\lambda \in \mathcal O} M _{\lambda} ) \cong \mathrm{end} _{A} ( \bigoplus _{\lambda \in \mathcal O} \widetilde{K} _{\lambda} )\label{trfb}
\end{equation}
as graded algebras. Here the LHS must be Morita equivalent to $A' _{\mathcal O}$.

We set $R_C := \mathbb C C \ltimes H ^{\bullet} _{G _{\lambda} ^{\circ}}$, and regard it as a graded algebra by setting $\deg \, C = 0$ and $\deg \, H ^{k} _{G _{\lambda} ^{\circ}} = k$ for each $k \ge 0$. Each $\xi \in \mathsf{Irr} \, C$ gives rise to an indecomposable graded $R_C$-module $P_{\xi}^C := H ^{\bullet} _{G _{\lambda} ^{\circ}} \otimes _{\mathbb C} \xi$, and we have $R_C \cong \bigoplus _{\xi \in \mathsf{Irr} \, C} P^C _{\xi} \boxtimes \xi$ as graded left $R_C$-modules (cf. \cite{K4} 1.2).

If we further replace each $M _{\lambda}$ for $\lambda = ( \underline{\lambda}, \xi ) \in \Lambda$ with $M_{\lambda} \boxtimes \xi$ in (\ref{trfb}), then we find an isomorphism
$$
R_C = \mathrm{end} _{R_C} ( \bigoplus _{\lambda = ( \underline{\lambda}, \xi ) \in \mathcal O} P^C _{\xi} \boxtimes \xi ) \stackrel{\psi}{\longrightarrow} \mathrm{end} _{A' _{\mathcal O}} ( \bigoplus _{\lambda = ( \underline{\lambda}, \xi ) \in \mathcal O} M _{\lambda} \boxtimes \xi )
.$$
Here we have $\psi = \oplus _{\kappa \in \mathsf{Irr} \, C} \psi _{\kappa}$, where $\psi _{\kappa}$ sends $\zeta^{\vee} \boxtimes \zeta \ltimes \theta \subset \mathbb C C \ltimes H^{\bullet} _{G _{\lambda} ^{\circ}}$ to
$$
\mathrm{hom} _{\mathbb C} ( K_{\mu} \boxtimes \zeta, K _{\mu} \otimes _{\mathbb C} \mathrm{hom} _C ( \kappa, \zeta \otimes _{\mathbb C} \theta ) \boxtimes \kappa ) \subset \mathrm{hom} _{\mathbb C} ( M_{\mu} \boxtimes \zeta, M_{\gamma} \boxtimes \kappa ),\label{exinc}
$$
for each $\mu = (\underline{\lambda}, \zeta ), \gamma = (\underline{\lambda}, \kappa ) \in \mathcal O$ and an irreducible $C$-submodule $\theta \subset H^{\bullet}_{G _{\lambda} ^{\circ}}$. This shows that $R_C$ is Morita equivalent to $A' _{\mathcal O}$.

Here (\ref{trfb}) exhibits that every graded $A$-module morphism between grading shifts of $\{ \widetilde{K} _{\lambda} \} _{\lambda \in \mathcal O}$ is in fact a pullback of a graded $A' _{\mathcal O}$-module morphism. In particular, its kernel and cokernel admits a decreasing separable $\{ K_{\lambda}\left< j \right>\} _{\lambda \in \mathcal O, j \in \mathbb Z}$-filtration. Hence, so is the homology of a graded complex whose terms are direct sums of grading shifts of $\{ \widetilde{K} _{\lambda}  \} _{\lambda \in \mathcal O}$.

By \cite{MR} 7.5.6, the global dimension of the algebra $R_C$ is the same as that of $H ^{\bullet} _{G _{\lambda} ^{\circ}}$, that is a polynomial ring. In particular, every simple module of $R_C$ admits a finite length projective resolution.

As a consequence, each $K_{\lambda}$ admits a finite length graded projective resolution as a graded $A' _{\mathcal O}$-module, and hence also admits a finite length graded $\{\widetilde{K} _{\lambda}\left< j \right>\} _{\lambda \in \mathcal O, j \in \mathbb Z}$-resolution as a graded $A$-module. This is the first assertion.

For each $\lambda \in \mathcal O$, we replace $K_{\lambda}$ by its finite resolution whose terms are grading shifts of $\{\widetilde{K} _{\gamma}\} _{\gamma \in \mathcal O}$ to compute $\mathrm{ext} _A ^{\bullet} ( K_{\lambda}, K_{\mu} )$ and $\mathrm{ext} _A ^{\bullet} ( K_{\lambda}, L_{\mu} )$ for $\mu \not\precsim \lambda$ via double complexes. Since we know
$$\mathrm{ext} _A ^{\bullet} ( \widetilde{K}_{\gamma}, K_{\mu} )= \{ 0 \} =\mathrm{ext} _A ^{\bullet} ( \widetilde{K}_{\gamma}, L_{\mu} )$$
for $\gamma \sim \lambda$ by Theorem \ref{KS} 2) and (\ref{orthKS4}) in the proof of Theorem \ref{KS}, the double complexes must be entirely zero. This shows the second assertion.
\end{proof}

\begin{corollary}\label{fgd-simple}
For each $\lambda \in \Lambda$, the $A$-module $L_{\lambda}$ admits a $($graded$)$ projective resolution of finite length.
\end{corollary}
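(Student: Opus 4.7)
The plan is to use downward induction on the partial order $\prec$ on the (finite, by $(\spadesuit)$) set $\Lambda$, together with the 2-out-of-3 closure of the subcategory $A \mathchar`-\mathsf{gmod}^{pf} \subset A \mathchar`-\mathsf{gmod}$. First I would check that $K_\lambda \in A \mathchar`-\mathsf{gmod}^{pf}$ for every $\lambda$: by Proposition \ref{fgd-k}, $K_\lambda$ admits a finite resolution by grading shifts of $\{\widetilde{K}_\gamma\}_{\gamma \sim \lambda}$, and by Corollary \ref{kf} 1) each $\widetilde{K}_\gamma$ already lies in $A \mathchar`-\mathsf{gmod}^{pf}$; the 2-out-of-3 property (which holds for $A \mathchar`-\mathsf{gmod}^{pf}$ via the long exact sequence of $\mathrm{ext}^\bullet_A(-,N)$ and Corollary \ref{mphd}, using that $A$ is Noetherian by Lemma \ref{NA}) then propagates finite projective dimension through the resolution to $K_\lambda$.

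Next I would extract the composition-factor structure of $K_\lambda$ from Theorem \ref{KS} 1): $[K_\lambda : L_\mu] = 0$ unless $\lambda \precsim \mu$, and $[K_\lambda : L_\lambda] = 1$. Combined with Corollary \ref{fc} (finite composition length), this forces any composition series
$$0 = M_0 \subset M_1 \subset \cdots \subset M_n = K_\lambda$$
to have a unique index $i_0$ with $M_{i_0}/M_{i_0-1} \cong L_\lambda \left< k_0 \right>$, while for every $i \ne i_0$ the subquotient $M_i/M_{i-1}$ is of the form $L_\mu \left< k \right>$ with $\mu \succ \lambda$ strictly.

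The induction now runs cleanly. For the base case, $\lambda$ maximal in $\prec$: there is no $\mu \succ \lambda$, so $n = 1$ and $K_\lambda \cong L_\lambda \left< k_0 \right> \in A \mathchar`-\mathsf{gmod}^{pf}$. For the inductive step, assume $L_\mu \in A \mathchar`-\mathsf{gmod}^{pf}$ for every $\mu \succ \lambda$; then every subquotient $M_i/M_{i-1}$ with $i \ne i_0$ lies in $A \mathchar`-\mathsf{gmod}^{pf}$. Propagating 2-out-of-3 forward from $M_0 = 0$ places $M_{i_0-1}$ in $A \mathchar`-\mathsf{gmod}^{pf}$, and propagating it backward from $M_n = K_\lambda$ (which is in $A \mathchar`-\mathsf{gmod}^{pf}$ by the first step) places $M_{i_0}$ in $A \mathchar`-\mathsf{gmod}^{pf}$. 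A final application of 2-out-of-3 to $0 \to M_{i_0-1} \to M_{i_0} \to L_\lambda \left< k_0 \right> \to 0$ yields $L_\lambda \in A \mathchar`-\mathsf{gmod}^{pf}$, completing the induction. There is no serious obstacle, as every step is forced by the structural inputs; the only mildly delicate ingredient is the 2-out-of-3 closure of $A \mathchar`-\mathsf{gmod}^{pf}$, which is standard homological algebra over the Noetherian graded ring $A$.
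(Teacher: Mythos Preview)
Your proof is correct and follows essentially the same approach as the paper: both first establish $K_\lambda \in A\mathchar`-\mathsf{gmod}^{pf}$ from Proposition~\ref{fgd-k} and Corollary~\ref{kf}, then exploit the upper-triangular structure of $[K:L]$ from Theorem~\ref{KS}~1) together with Corollary~\ref{fc} to pass to the $L_\lambda$'s. The paper phrases the second step dually---asserting that each $L_\lambda$ is built from $\{K_\mu\langle i\rangle\}$ by successive short exact sequences and then concluding via $\sum_\mu \dim\,\mathrm{ext}^\bullet_A(L_\lambda,L_\mu)<\infty$ and the minimal resolution of Corollary~\ref{mphd}---but this hides exactly the downward induction on $\prec$ that you make explicit; your direct use of the 2-out-of-3 closure of $A\mathchar`-\mathsf{gmod}^{pf}$ is a cleaner way to organize the same argument.
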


\begin{proof}
Combining Proposition \ref{fgd-k} and Corollary \ref{kf} (through a double complex), we deduce that each $K_{\lambda}$ admits a finite length finitely generated graded projective resolution. By Corollary \ref{fc} and Theorem \ref{KS} 1), we see that each $L_{\lambda}$ is constructed by a finitely many successive short exact sequences from $\{K_{\mu} \left< i \right> \} _{\mu \in \Lambda, i \in \mathbb Z}$. Thus, we conclude
$$\sum _{\mu \in \Lambda} \dim \, \mathrm{ext} ^{\bullet} _A ( L_{\lambda}, L _{\mu} ) < \infty.$$
Therefore, a minimal projective resolution of $L_{\lambda}$ (that exists by Corollary \ref{mphd}) contains finitely many indecomposable projective $A$-modules as its direct summands (counted with multiplicities), which proves the assertion.
\end{proof}

\begin{theorem}\label{absKas} Assume the conditions $(\spadesuit)$ and $(\clubsuit)$. Then, the algebra $A$ has finite global dimension. In particular, we have $A \mathchar`-\mathsf{gmod} ^{pf} \stackrel{\cong}{\longrightarrow} A \mathchar`-\mathsf{gmod}$.
\end{theorem}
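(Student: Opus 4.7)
The plan is to bound the global dimension of $A$ by $N := \max_{\lambda \in \Lambda} \mathrm{pd}_A(L_\lambda)$, which is finite because Corollary \ref{fgd-simple} makes each summand finite and Condition \ref{spadesuit} forces $|\Lambda| < \infty$. Once that bound is established, the identification $A\mathchar`-\mathsf{gmod}^{pf} = A\mathchar`-\mathsf{gmod}$ is immediate: Corollary \ref{mphd} provides a minimal graded projective resolution for every object of $A\mathchar`-\mathsf{gmod}$; Noetherianness (Lemma \ref{NA}) ensures each term is finitely generated, and finite global dimension forces the resolution to have finite length.

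To reach the bound, I would fix $M \in A\mathchar`-\mathsf{gmod}$ together with its minimal resolution $P_\bullet \to M$ from Corollary \ref{mphd}. By minimality and Lemma \ref{hd}, the term $P_k$ vanishes if and only if $\mathrm{ext}^k_A(M, L_\lambda) = 0$ for every $\lambda \in \Lambda$ (in every grading), so the problem reduces to showing
\[
\mathrm{ext}^k_A(M, L_\lambda) = 0 \quad \text{for every } k > N \text{ and every } \lambda \in \Lambda.
\]
Here I would invoke the algebra isomorphism $\psi : A \cong A^{op}$ of Lemma \ref{symm}. Via $\psi$, the finite projective resolution of $L_{\lambda^\vee}$ as a left $A$-module (of length $\leq N$ by definition of $N$) transports to a projective resolution of the finite-dimensional right $A$-module $L_\lambda^*$ of the same length, using the identification $L_\lambda^* \cong L_{\lambda^\vee}$ supplied by Lemma \ref{symm}. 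The standard graded Ext-Tor adjunction
\[
\mathrm{ext}^k_A(M, L_\lambda) \cong \mathrm{tor}^A_k(L_\lambda^*, M)^*
\]
then yields the required vanishing in degrees $k > N$, since the right-hand side can be computed from the flat (right) resolution of $L_\lambda^*$ just produced.

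The principal obstacle is the left-right bookkeeping in the middle paragraph: one must verify that Lemma \ref{symm} identifies $L_\lambda^*$, with its natural right $A$-module structure, with $L_{\lambda^\vee}$ as a left $A$-module in a way compatible with transporting projective resolutions, and that the Ext-Tor adjunction admits a clean formulation in the $\mathsf{vec}$-graded setting with finite-dimensional second argument. Both steps are standard, but the grading conventions of section one demand some care.
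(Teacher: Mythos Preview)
Your argument is correct and takes a genuinely different route from the paper. The paper passes to the basic ring $B$ via graded Morita equivalence (Lemma \ref{gMe}); since $B$ is non-negatively graded, Noetherian, and has semisimple degree-zero part, it falls under the scope of Li's theorem \cite{Li} (cf.\ \cite{NO} D.VII), a general result asserting that for such a ring finite projective dimension of every simple forces finite global dimension. Corollary \ref{fgd-simple} then finishes the proof in one line. By contrast, you work directly with $A$ and avoid the external reference by exploiting the specific isomorphism $\psi: A \cong A^{op}$ of Lemma \ref{symm}: transporting the finite left-projective resolution of $L_{\lambda^\vee}$ through $\psi$ yields a finite right-projective resolution of $L_\lambda^*$, and the graded Ext--Tor adjunction $\mathrm{ext}^k_A(M, L_\lambda) \cong \mathrm{tor}^A_k(L_\lambda^*, M)^*$ then kills all higher Ext groups. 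Your approach is self-contained and makes the bound $\mathrm{gl.dim}\, A \le \max_\lambda \mathrm{pd}_A(L_\lambda)$ explicit; the paper's approach is shorter and, by invoking a result valid for arbitrary graded local rings, shows that the symmetry $A \cong A^{op}$ is not actually needed for this step. The bookkeeping you flag (that $\psi$ carries left projectives to right projectives and identifies $L_{\lambda^\vee}$ with the right module $L_\lambda^*$) is indeed routine once Lemma \ref{symm} is read carefully.
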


\begin{proof}
The graded algebra $A$ is (left and right) Noether, and is graded Morita equivalent to $B$. The graded algebra $B$ is non-negatively graded and $B^0$ is (canonically isomorphic to) the semi-simple quotient of the graded Jacobson radical $B^{> 0}$ of $B$. Therefore, we apply Li's result \cite{Li} (cf. \cite{NO} D.VII) to Corollary \ref{fgd-simple} to conclude the result.
\end{proof}

\begin{remark}
Applying the arguments of section two by an induction with respect to the closure relation (from open orbits), we can use the condition of Theorem \ref{critpure} a) instead $(\clubsuit)_2$ to deduce Theorem \ref{absKas}.
\end{remark}

For each $M \in A \mathchar`-\mathsf{gmod}$, we define its graded character as:
$$\mathsf{gch} \, M := \sum _{\lambda \in \Lambda} [ M : L _{\lambda} ] [ L _{\lambda} ] \in \bigoplus _{\lambda \in \Lambda} \mathbb Q (\!( t )\!) [ L _{\lambda} ].$$

\begin{lemma}\label{3b}
Each of the collections $\{ \mathsf{gch} \, K _{\lambda} \} _{\lambda \in \Lambda}$ and $\{ \mathsf{gch} \, \widetilde{K} _{\lambda} \} _{\lambda \in \Lambda}$ is a $\mathbb Q (\!( t )\!)$-basis of $\bigoplus _{\lambda \in \Lambda} \mathbb Q (\!( t )\!) [ L _{\lambda} ]$.
\end{lemma}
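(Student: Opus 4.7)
The plan is to derive this essentially as a direct consequence of Corollary \ref{utK}, treating the transitions between $\{[L_\lambda]\}_{\lambda \in \Lambda}$, $\{\mathsf{gch}\, K_\lambda\}_{\lambda \in \Lambda}$, and $\{\mathsf{gch}\, \widetilde{K}_\lambda\}_{\lambda \in \Lambda}$ as linear changes of basis over the ring $\mathbb{Q}(\!(t)\!)$. Since $\Lambda$ is finite by Condition $(\spadesuit)$, the ambient $\mathbb{Q}(\!(t)\!)$-module $\bigoplus_{\lambda \in \Lambda} \mathbb{Q}(\!(t)\!)[L_\lambda]$ is free of rank $\#\Lambda$, and the collection $\{[L_\lambda]\}_{\lambda \in \Lambda}$ is its basis by construction.

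Next I would unwind the definition of graded character. By definition,
$$\mathsf{gch}\, K_\lambda = \sum_{\mu \in \Lambda} [K_\lambda : L_\mu]\, [L_\mu] \quad \text{and} \quad \mathsf{gch}\, \widetilde{K}_\lambda = \sum_{\mu \in \Lambda} [\widetilde{K}_\lambda : L_\mu]\, [L_\mu],$$
so the transition matrix expressing $\{\mathsf{gch}\, K_\lambda\}$ (resp. $\{\mathsf{gch}\, \widetilde{K}_\lambda\}$) in terms of $\{[L_\mu]\}$ is exactly the matrix $K$ (resp. $\widetilde{K}$) from Corollary \ref{utK}. That corollary asserts that both matrices are invertible in $\mathbb{Q}(\!(t)\!)$, so multiplication by $K^{-1}$ (resp. $\widetilde{K}^{-1}$) exhibits $\{\mathsf{gch}\, K_\lambda\}_{\lambda \in \Lambda}$ (resp. $\{\mathsf{gch}\, \widetilde{K}_\lambda\}_{\lambda \in \Lambda}$) as a $\mathbb{Q}(\!(t)\!)$-basis of $\bigoplus_{\lambda \in \Lambda} \mathbb{Q}(\!(t)\!)[L_\lambda]$.

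There is essentially no obstacle at this stage: the real content has already been absorbed into Corollary \ref{utK}, whose proof used Theorem \ref{KS} 1) (blockwise upper-triangularity from the support condition $\lambda \not\precsim \mu$) and Theorem \ref{KS} 4) (the block-diagonal entries of $\widetilde{K}$ are the identity modulo $t\mathbb{Z}[\![t]\!]$, hence invertible in $\mathbb{Q}(\!(t)\!)$, together with $[K_\lambda : L_\mu] = \delta_{\lambda,\mu}$ for $\lambda \sim \mu$). Hence the proof of the present lemma is a one-line invocation of Corollary \ref{utK}.
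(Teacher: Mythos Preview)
Your proposal is correct and matches the paper's own proof, which simply states that this lemma is a rephrasement of Corollary \ref{utK}. You have merely spelled out the linear-algebra content of that invocation (the transition matrices $K$ and $\widetilde{K}$ are invertible over $\mathbb{Q}(\!(t)\!)$), which is exactly what the paper intends.
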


\begin{proof}
This is a rephrasement of Corollary \ref{utK}.
\end{proof}

For each $M \in A \mathchar`-\mathsf{gmod}$, we have collections of elements $[ M : P _{\lambda} ]$, $[ M : K _{\lambda} ]$, and $[ M : \widetilde{K} _{\lambda} ]$ in $\mathbb Q (\!( t )\!)$ so that
$$\mathsf{gch} M = \sum _{\lambda \in \Lambda} [ M : P _{\lambda} ] \, \mathsf{gch} \, P _{\lambda} = \sum _{\lambda \in \Lambda} [ M : K _{\lambda} ] \, \mathsf{gch} \, K _{\lambda} = \sum _{\lambda \in \Lambda} [ M : \widetilde{K} _{\lambda} ] \, \mathsf{gch} \, \widetilde{K} _{\lambda}.$$

Thanks to Theorem \ref{absKas}, every module in $A \mathchar`-\mathsf{gmod}$ can be a (first) variable of the graded Euler-Poincar\'e pairing (\ref{defnEP}), and $[ M : P _{\lambda} ] \in \mathbb Z [t^{\pm 1}]$.

\begin{lemma}\label{dualmult}
If $M \in A \mathchar`-\mathsf{gmod}$ is finite-dimensional, then its graded dual $M^*$ belongs to $A \mathchar`-\mathsf{gmod}$. In addition, we have
$$[M : L_{\mu}] = \overline{[M^* : L_{\mu ^{\vee}}]}.$$
\end{lemma}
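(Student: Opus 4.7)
The plan is to construct the left $A$-module structure on $M^*$ via the isomorphism $\psi : A \cong A^{op}$ of Lemma \ref{symm}, then track the multiplicities through graded dualization, and finally invoke the identification $L_\lambda^* \cong L_{\lambda^\vee}$ from the same lemma.

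First, I would set $M^* := \bigoplus_i (M^*)^i$ with $(M^*)^i := (M^{-i})^*$. Since $M$ is finite-dimensional, each graded piece $M^{-i}$ is finite-dimensional, so $M^*$ lives in $\mathsf{vec}$, and in fact $M^*$ is finite-dimensional. The pairing $M \times A \to M$ induces a natural right $A$-module structure on $M^*$, namely $(f \cdot a)(m) := f(a \cdot m)$; this respects the grading. Pulling back via the graded algebra isomorphism $\psi : A \to A^{op}$ of Lemma \ref{symm} converts this into a graded left $A$-module structure, and $M^*$ is finitely generated since it is finite-dimensional. Thus $M^* \in A\mathchar`-\mathsf{gmod}$.

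Next, I would compute composition multiplicities. The functor $(-)^*$ on finite-dimensional graded $A$-modules is exact and contravariant. By Lemma \ref{symm}, we have an isomorphism of graded $A$-modules $L_\lambda^* \cong L_{\lambda^\vee}$, and a straightforward check gives $(L_\mu\langle i\rangle)^* \cong L_\mu^* \langle -i\rangle \cong L_{\mu^\vee}\langle -i\rangle$ since $((L_\mu \langle i \rangle)^*)^k = (L_\mu^{k-i})^* = (L_\mu^*)^{i-k} = (L_\mu^* \langle -i \rangle)^k$. Combining exactness of $(-)^*$ with this identification on simples yields
\[
[M^* : L_{\mu^\vee} \langle -i \rangle]_0 = [M : L_\mu \langle i \rangle]_0 \quad \text{for all } i \in \mathbb Z.
\]

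Finally, I would translate this equality of integer multiplicities into the claimed Laurent series identity. Unpacking the definitions,
\[
[M^* : L_{\mu^\vee}] = \sum_{i \in \mathbb Z} [M^* : L_{\mu^\vee} \langle i \rangle]_0 \, t^i = \sum_{i \in \mathbb Z} [M : L_\mu \langle -i \rangle]_0 \, t^i,
\]
and applying the bar involution $\overline{Q(t)} = Q(t^{-1})$ gives
\[
\overline{[M^* : L_{\mu^\vee}]} = \sum_{i \in \mathbb Z} [M : L_\mu \langle -i \rangle]_0 \, t^{-i} = \sum_{j \in \mathbb Z} [M : L_\mu \langle j \rangle]_0 \, t^{j} = [M : L_\mu],
\]
as required. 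I do not expect any serious obstacle: the argument is essentially bookkeeping, with the only subtlety being the grading-shift sign and the use of Lemma \ref{symm} to identify $L_\lambda^*$ with $L_{\lambda^\vee}$ as a left $A$-module (rather than just as a right $A$-module).
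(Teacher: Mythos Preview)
Your proposal is correct and follows exactly the approach the paper indicates: the paper's own proof simply cites Lemma \ref{symm} (for the graded isomorphism $A \cong A^{op}$ and the identification $L_\lambda^* \cong L_{\lambda^\vee}$) together with \cite{K4} Lemma 2.5, and what you have written is precisely the bookkeeping behind those references. (One minor slip: in your intermediate chain you should have $((L_\mu\langle i\rangle)^*)^k = (L_\mu^{-k-i})^*$ rather than $(L_\mu^{k-i})^*$, but your final identification $(L_\mu\langle i\rangle)^* \cong L_{\mu^\vee}\langle -i\rangle$ and the ensuing multiplicity count are correct.)
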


\begin{proof}
See Lemma \ref{symm} and \cite{K4} Lemma 2.5.
\end{proof}

\begin{proposition}\label{absLS}
Assume the conditions $(\spadesuit)$ and $(\clubsuit)$. We have
$$\mathrm{ext}^{i}_A ( \widetilde{K} _{\lambda}, K _{\mu ^{\vee}} ^* ) \cong \begin{cases} \mathbb C & (\lambda = \mu, i = 0) \\ \{ 0 \} & (otherwise) \end{cases}, \text{ and } \left< \widetilde{K} _{\lambda}, K _{\mu ^{\vee}} ^* \right> _{\mathsf{gEP}} = \begin{cases} 1 & (\lambda = \mu) \\ 0 & (\lambda \neq \mu) \end{cases}.$$
\end{proposition}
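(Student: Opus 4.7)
The plan is to compute $\mathrm{ext}^{\bullet}_A(\widetilde{K}_{\lambda}, K_{\mu^{\vee}}^*)$ using the finite projective resolution $(Q(\mathbb{C}_{\lambda}), d)$ of $\widetilde{K}_{\lambda}$ from Corollary \ref{kf}: we have $Q(\mathbb{C}_{\lambda})_0 = P_{\lambda}$, and the remaining terms $Q(\mathbb{C}_{\lambda})_a$ for $a<0$ are built from grading shifts of $P_{\gamma}$ with $\gamma \prec \lambda$. Applying $\mathrm{hom}(-, K_{\mu^{\vee}}^*)$ reduces the problem to the composition multiplicities $[K_{\mu^{\vee}}^* : L_{\gamma}]$ for $\gamma \preceq \lambda$, which by Lemma \ref{dualmult} equal $\overline{[K_{\mu^{\vee}} : L_{\gamma^{\vee}}]}$; Theorem \ref{KS} 1) then shows this vanishes unless $\mu \precsim \gamma$, and reduces to the Kronecker $\delta_{\mu,\gamma}\delta_{j,0}$ when $\gamma \sim \mu$.

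A short case analysis on the relationship between $\lambda$ and $\mu$ then handles every case except $\mu \prec \lambda$ by termwise vanishing of the complex. If $\mu$ is incomparable to $\lambda$, or $\mu \succ \lambda$, or $\mu \sim \lambda$ with $\mu \neq \lambda$, the constraints $\gamma \preceq \lambda$ and $\mu \precsim \gamma$ are incompatible (directly by transitivity, or via the Kronecker reduction at $\gamma \sim \mu$), so every term in $\mathrm{hom}(Q(\mathbb{C}_{\lambda})_{\bullet}, K_{\mu^{\vee}}^*)$ vanishes and $\mathrm{ext}^{\bullet}_A(\widetilde{K}_{\lambda}, K_{\mu^{\vee}}^*) = \{0\}$. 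For $\mu = \lambda$, only the $Q_0$-term survives (since $\gamma \prec \lambda$ cannot satisfy $\lambda \precsim \gamma$), and the Kronecker reduction yields $\mathrm{hom}(P_{\lambda}, K_{\lambda^{\vee}}^*) = \mathbb{C}$ concentrated in degree $0$, so $\mathrm{ext}^0_A = \mathbb{C}$ and $\mathrm{ext}^{i \neq 0}_A = \{0\}$.

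The main obstacle is the case $\mu \prec \lambda$, where both $Q_0 = P_{\lambda}$ (via $\gamma = \lambda$) and some $Q(\mathbb{C}_{\lambda})_a$ with $a<0$ (via $\gamma$ with $\mu \precsim \gamma \prec \lambda$) contribute nonzero terms, so vanishing of the cohomology must be extracted from the differential. Here the plan is to use the $\mathrm{Tor}$-$\mathrm{ext}$ duality
\[
\mathrm{ext}^i_A(\widetilde{K}_{\lambda}, K_{\mu^{\vee}}^*) \cong \mathrm{Tor}^A_i(K_{\mu^{\vee}}, \widetilde{K}_{\lambda})^*,
\]
valid since $K_{\mu^{\vee}}$ is finite-dimensional by Lemma \ref{fd} and acquires the required right-module structure from the isomorphism $A \cong A^{op}$ of Lemma \ref{symm}. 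I would then apply the finite $\widetilde{K}$-resolution of $K_{\mu^{\vee}}$ from Proposition \ref{fgd-k}, whose terms are grading shifts of $\widetilde{K}_{\nu^{\vee}}$ with $\nu \sim \mu$; the resulting spectral sequence is supported on $\mathrm{Tor}^A(\widetilde{K}_{\nu^{\vee}}, \widetilde{K}_{\lambda})$, and dualizing back through the $\mathrm{Tor}$-$\mathrm{ext}$ duality identifies these with ext's between dual standard modules, which vanish for $\nu \sim \mu \prec \lambda$ by Theorem \ref{KS} 2) after the $A \cong A^{op}$ translation.

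Finally, $\langle \widetilde{K}_{\lambda}, K_{\mu^{\vee}}^* \rangle_{\mathsf{gEP}} = \delta_{\lambda, \mu}$ is immediate from the ext computation, since only the $\lambda = \mu$, $i = 0$ term survives in the alternating sum (\ref{defnEP}) defining the graded Euler-Poincar\'e characteristic.
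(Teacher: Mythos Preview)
Your argument is correct, and for the case $\mu \prec \lambda$ it follows a genuinely different route from the paper's. The easy cases (all $\mu$ with $\mu \not\prec \lambda$) match the paper's opening step via Corollary \ref{kf} 4), Lemma \ref{dualmult}, and Theorem \ref{KS} 1). For $\mu \prec \lambda$ the paper instead first proves $\mathrm{ext}^\bullet_A(K_\lambda, K_{\mu^\vee}^*)=0$ for $\lambda\not\sim\mu$ by applying the finite-dimensional duality $\mathrm{hom}_A(M,N)\cong\mathrm{hom}_A(N^*,M^*)$ to swap the two sides, and then runs a degree-by-degree truncation of the infinite $K$-filtration of $\widetilde{K}_\lambda$ from Theorem \ref{KS} 4), using the bounded-below grading and the finite global dimension to control each graded piece separately. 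Your Tor--ext route via Proposition \ref{fgd-k} is cleaner in that it avoids this truncation entirely: after passing to $\mathrm{Tor}^A_\bullet((K_{\mu^\vee})_\psi,\widetilde{K}_\lambda)$ and resolving $(K_{\mu^\vee})_\psi$ by the right modules $(\widetilde{K}_\nu)_\psi$ with $\nu\sim\mu$, everything collapses once you know $\mathrm{Tor}^A_\bullet((\widetilde{K}_\nu)_\psi,\widetilde{K}_\lambda)=0$ for $\nu\prec\lambda$.

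One point deserves to be made precise. Your final appeal to ``Theorem \ref{KS} 2) after the $A\cong A^{op}$ translation'' is not quite literally what you use: under the anti-isomorphism $\psi$ of Lemma \ref{symm} (induced by Verdier duality), the left module $\widetilde{K}_\nu=\mathrm{Ext}^\bullet_G(\mathbb{C}_\nu,\mathcal{L})$ corresponds to the right module $\mathrm{Ext}^\bullet_G(\mathcal{L},\mathbb{D}\mathbb{C}_\nu)$, and $\mathbb{D}\mathbb{C}_\nu$ involves the $*$-extension, so this is not another dual standard module. What actually makes the Tor vanish is the same mechanism that underlies Theorem \ref{KS} 2): applying $\psi$ to the resolution $Q(\mathbb{C}_\nu)$ of Corollary \ref{kf} gives a right projective resolution of $(\widetilde{K}_\nu)_\psi$ whose terms are $(P_\delta)_\psi\cong e_{\delta^\vee}A$ with $\delta\preceq\nu$, and then $(P_\delta)_\psi\otimes_A\widetilde{K}_\lambda\cong e_{\delta^\vee}\widetilde{K}_\lambda=0$ by Theorem \ref{KS} 1), since $\delta\preceq\nu\prec\lambda$ forces $\lambda\not\precsim\delta$. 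Stating this directly, rather than citing Theorem \ref{KS} 2), makes your argument watertight.
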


\begin{remark}\label{absLSrem}
Proposition \ref{absLS} can be seen as the ``dual picture" of Mirollo-Vilonen and Beilinson-Ginzburg-Soergel \cite{MV, BGS}, or the equivariant picture of Chriss-Ginzburg \cite{CG} \S 8.7.
\end{remark}

\begin{proof}[Proof of Proposition \ref{absLS}]
Since the second assertion follows from the first one, we prove only the first assertion.

By Corollary \ref{kf} 4), Lemma \ref{dualmult}, and Theorem \ref{KS} 1), we deduce the case $\mu \not\precsim \lambda$. In this case, we further deduce
\begin{equation}
\mathrm{ext}^{\bullet}_A ( K _{\lambda}, K _{\mu ^{\vee}} ^* ) \cong \{ 0 \} \hskip 3mm \text{ if } \hskip 3mm \lambda \not\sim \mu\label{KBH}
\end{equation}
by Proposition \ref{fgd-k}.

Thus, we assume $\mu \prec \lambda$ in the below. Consider the derived functors of
$$\mathrm{hom} _A ( M, N ) \cong \mathrm{hom} _A ( N^*, M^* ).$$
Since $*$ is an exact functor (from $A\mathchar`-\mathsf{gmod}$ to its dual category) and $\mathrm{ext} _A^*$ is an universal $\delta$-functor on them, we conclude that both of them define mutually isomorphic derived functors. In particular, (\ref{KBH}) also holds for $\mu \prec \lambda$.

Since the grading of $A$ is bounded from the below, so are the gradings of $K_{\gamma}$ for every $\gamma \in \Lambda$ (say $A ^i = \{ 0 \}$ and $K_{\gamma} ^{i} = \{ 0 \}$ for every $\gamma \in \Lambda$ and $i < c$). Hence, we have $( K _{\mu ^{\vee}} ^* ) ^i = \{ 0 \}$ for every $i > - c$. Let $\ell$ be the global dimension of $A$, that is finite by Theorem \ref{absKas}. For each $j \in \mathbb Z$, there exists a member $K_j^{\perp} \subset \widetilde{K} _{\lambda}$ of a decreasing separable $A$-module filtration in Theorem \ref{KS} 4) such that {\bf a)} $( K_j^{\perp} ) ^i = \{ 0 \}$ for $i \le - j - ( \ell + 2 ) c$, and {\bf b)} $\widetilde{K} _{\lambda} / K_j^{\perp}$ is a finite successive self-extension of (grading shifts) of $K _{\gamma}$ with $\gamma \sim \lambda$. Then, the minimal projective resolution of $K_j^{\perp}$ is concentrated in degree $i > - j - c$ by Corollary \ref{mphd}. In particular, (\ref{KBH}) implies
$$\mathrm{ext} _{A} ^{\bullet} ( K_j^{\perp}, K _{\mu ^{\vee}} ^* )^j = \{ 0 \} = \mathrm{ext} _{R_{\beta}} ^{\bullet} ( \widetilde{K} _{\lambda} / K_j^{\perp}, K _{\mu ^{\vee}} ^*).$$
This yields $\mathrm{ext} _{A} ^{\bullet} ( \widetilde{K} _{\lambda}, K _{\mu ^{\vee}} ^* )^j = \{ 0 \}$ (for each $j$) as required.
\end{proof}

\begin{corollary}\label{absLS2}
Keep the setting of Proposition \ref{absLS}. We have
$$\mathrm{ext}^{\bullet}_A ( K _{\lambda}, K _{\mu} ^* ) = \{ 0 \} \hskip 3mm \text{ and } \hskip 3mm \left< K _{\lambda}, K _{\mu} ^* \right> _{\mathsf{gEP}} = 0 \hskip 5mm (\lambda \not\sim \mu).$$
\end{corollary}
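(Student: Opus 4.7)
The plan is to reduce to Proposition \ref{absLS} by resolving $K_{\lambda}$. By Proposition \ref{fgd-k}, there is a finite projective-type resolution $P^{\bullet}\to K_{\lambda}$ whose terms are direct sums of grading shifts of $\{\widetilde{K}_{\gamma}\}_{\gamma\sim\lambda}$. Since $\mathrm{ext}^{\bullet}_A(K_{\lambda},K_{\mu}^{*})$ is the hyperhomology of $\mathrm{hom}_A(P^{\bullet},K_{\mu}^{*})$, it suffices to show that $\mathrm{ext}^{\bullet}_A(\widetilde{K}_{\gamma},K_{\mu}^{*})=\{0\}$ for every $\gamma\sim\lambda$.

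First I would observe that $\mu\sim\mu^{\vee}$ (since $\mu=(\underline{\mu},\xi)$ and $\mu^{\vee}=(\underline{\mu},\xi^{\vee})$ belong to the same orbit $\mathbb O_{\underline{\mu}}$). Consequently, from $\lambda\not\sim\mu$ we get $\gamma\not\sim\mu^{\vee}$ for every $\gamma\sim\lambda$; in particular $\gamma\neq\mu^{\vee}$. Next, rewriting $\mu=(\mu^{\vee})^{\vee}$ and applying Proposition \ref{absLS} with the pair $(\gamma,\mu^{\vee})$ in place of $(\lambda,\mu)$ gives
\[
\mathrm{ext}^{i}_{A}(\widetilde{K}_{\gamma},K_{\mu}^{*})
=\mathrm{ext}^{i}_{A}(\widetilde{K}_{\gamma},K_{(\mu^{\vee})^{\vee}}^{*})
=\begin{cases}\mathbb C & (\gamma=\mu^{\vee},\ i=0)\\ \{0\} & (\text{otherwise})\end{cases},
\]
which vanishes in all degrees because $\gamma\neq\mu^{\vee}$.

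Plugging this into the resolution (by means of the usual hyperext spectral sequence, which collapses because its $E_1$-page is zero) yields $\mathrm{ext}^{\bullet}_{A}(K_{\lambda},K_{\mu}^{*})=\{0\}$, which is the first assertion. The Euler--Poincar\'e statement then follows immediately from the definition of $\left<\cdot,\cdot\right>_{\mathsf{gEP}}$ as the alternating sum of graded dimensions of these now-vanishing $\mathrm{ext}$ groups.

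There is no essential obstacle here; the only point to watch is the duality bookkeeping $\mu\leftrightarrow\mu^{\vee}$, together with the observation $\mu\sim\mu^{\vee}$, which is what allows the hypothesis $\lambda\not\sim\mu$ to translate cleanly into the inequality $\gamma\neq\mu^{\vee}$ needed to invoke Proposition \ref{absLS}.
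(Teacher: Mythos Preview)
Your proof is correct and follows essentially the same idea as the paper: resolve $K_\lambda$ by grading shifts of $\{\widetilde{K}_\gamma\}_{\gamma\sim\lambda}$ via Proposition~\ref{fgd-k}, then kill each term using the orthogonality for $\widetilde{K}_\gamma$. The paper's proof simply points back to equation~(\ref{KBH}) and the third paragraph of the proof of Proposition~\ref{absLS}, where this statement was already obtained as an intermediate step; there the case $\mu\not\precsim\lambda$ is handled exactly as you do, while the remaining case $\mu\prec\lambda$ is deduced via the duality $\mathrm{ext}^i_A(M,N)\cong\mathrm{ext}^i_A(N^*,M^*)$ rather than by invoking the full statement of Proposition~\ref{absLS}. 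Your route is slightly cleaner in that it treats all cases uniformly and avoids the case split, at the cost of using Proposition~\ref{absLS} as a black box (which is fine, since its proof does not rely on Corollary~\ref{absLS2}).
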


\begin{proof}
See (\ref{KBH}) and the third paragraph of the proof of Proposition \ref{absLS}.
\end{proof}

For each $X, Y \in \{ P, \widetilde{K}, K, L \}$, we define a $\mathbb Q (\!(t)\!)$-valued $\# \Lambda$-square matrix as $[X:Y] = ( [X_{\lambda} : Y_{\mu}] ) _{\lambda,\mu \in \Lambda}$. In particular, we have $[P:\widetilde{K}] = ( [P_{\lambda} : \widetilde{K}_{\mu}] ) _{\lambda,\mu \in \Lambda}$ for $X = P, Y = L$. Similarly, we have $[P:\widetilde{K}],[\widetilde{K}:K]$, etc...

\begin{corollary}[Brauer-Humphreys type reciprocity]\label{absBH} Assume the conditions $(\spadesuit)$ and $(\clubsuit)$. Then, we have
$$[P _{\lambda} : \widetilde{K} _{\mu} ] = [K _{\mu} : L _{\lambda}] \hskip 3mm \text{ for each } \hskip 3mm \lambda, \mu \in \Lambda.$$
In addition, we have the matrix identity
\begin{equation}
[P:L] = [P:\widetilde{K}][\widetilde{K}:K][K:L] = {}^{\mathtt{t}} [K:L][\widetilde{K}:K][K:L].\label{BHrec}
\end{equation}
\end{corollary}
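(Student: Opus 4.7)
The plan is to establish the Brauer--Humphreys reciprocity $[P_\lambda : \widetilde{K}_\mu] = [K_\mu : L_\lambda]$ from the Ext-orthogonality of Proposition \ref{absLS} via the Euler form $\langle -, - \rangle_{\mathsf{gEP}}$, and then to derive the matrix identity by chaining three character change-of-basis expansions.

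First I would record two evaluations of the Euler form. On the one hand, projectivity of $P_\lambda$ gives $\mathrm{ext}_A^i(P_\lambda, N) = 0$ for $i > 0$ and $\mathsf{gdim}\,\mathrm{hom}_A(P_\lambda, N) = [N : L_\lambda]$, so $\langle P_\lambda, N\rangle_{\mathsf{gEP}} = [N : L_\lambda]$ for every $N \in A\mathchar`-\mathsf{gmod}$. On the other hand, by Lemma \ref{3b}, I expand $\mathsf{gch}\, P_\lambda = \sum_\mu [P_\lambda : \widetilde{K}_\mu]\,\mathsf{gch}\,\widetilde{K}_\mu$ with coefficients in $\mathbb Q (\!( t )\!)$. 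Since $A\mathchar`-\mathsf{gmod} = A\mathchar`-\mathsf{gmod}^{pf}$ by Theorem \ref{absKas}, the Euler pairing factors through the Grothendieck group and extends $\mathbb Z[t^{\pm 1}]$-sesquilinearly via $\langle M\langle j\rangle, N\rangle_{\mathsf{gEP}} = t^{-j}\langle M, N\rangle_{\mathsf{gEP}}$. Combined with the orthogonality $\langle \widetilde{K}_\gamma, K_{\mu^\vee}^*\rangle_{\mathsf{gEP}} = \delta_{\gamma \mu}$ coming from Proposition \ref{absLS}, this gives
$$
\overline{[P_\lambda : \widetilde{K}_\mu]} \;=\; \langle P_\lambda, K_{\mu^\vee}^*\rangle_{\mathsf{gEP}} \;=\; [K_{\mu^\vee}^* : L_\lambda].
$$
Applying Lemma \ref{dualmult} converts the right-hand side into $\overline{[K_{\mu^\vee} : L_{\lambda^\vee}]}$, hence $[P_\lambda : \widetilde{K}_\mu] = [K_{\mu^\vee} : L_{\lambda^\vee}]$. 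The stated form then follows from the $\vee$-symmetry $[K_{\mu^\vee} : L_{\lambda^\vee}] = [K_\mu : L_\lambda]$, which is a consequence of the Verdier self-duality $\mathbb D \mathcal L \cong \mathcal L$ (cf.\ Lemma \ref{symm}) applied to the stalk/costalk formula for $[K_\mu : L_\lambda]$ in (\ref{mult}) and (\ref{BMone}): $\mathbb D$ exchanges $i_\mu^*$ and $i_\mu^!$ while swapping $\xi \leftrightarrow \xi^\vee$ on $C_\mu$-representations, which preserves the multiplicity.

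Given the reciprocity, the matrix identity is obtained by successively expanding $\mathsf{gch}\, P_\lambda$: first via Lemma \ref{3b} in $\{\mathsf{gch}\,\widetilde{K}_\mu\}$; then via the separable $K$-filtration of $\widetilde{K}_\mu$ from Theorem \ref{KS} 4); and finally via the composition multiplicities of $K_\gamma$ into $L_\nu$. Composing the three transition matrices yields $[P:L] = [P:\widetilde{K}][\widetilde{K}:K][K:L]$, and substituting $[P:\widetilde{K}] = {}^{\mathtt t}[K:L]$ gives the second equality. The principal obstacle is the $\vee$-symmetry step: while conceptually a direct consequence of Verdier self-duality, it requires careful matching of gradings and the $C_\mu$-twist across the stalk/costalk comparison to avoid sign/shift errors.
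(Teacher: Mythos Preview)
Your approach is essentially the paper's: both compute via the Euler pairing and the orthogonality of Proposition~\ref{absLS}. The only cosmetic difference is that the paper pairs $P_\lambda$ against $L_\mu = L_{\mu^\vee}^*$ and expands both slots bilinearly, whereas you pair $P_\lambda$ against $K_{\mu^\vee}^*$ directly and evaluate two ways; the resulting matrix identity and its unwinding are the same.

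You are right to isolate the $\vee$-symmetry $[K_{\mu^\vee}:L_{\lambda^\vee}] = [K_\mu:L_\lambda]$ as the delicate point---the paper's displayed expansion uses it tacitly (the honest second-slot coefficient is $\overline{[L_{\mu^\vee}:K_{\delta^\vee}]}$, not $\overline{[L_\mu:K_\delta]}$). However, your Verdier-duality sketch does not close this gap. Since $\mathbb D\,\mathsf{IC}_\lambda = \mathsf{IC}_{\lambda^\vee}$, duality identifies $i_\mu^!\mathsf{IC}_{\lambda^\vee}$ with $\mathbb D\, i_\mu^*\mathsf{IC}_\lambda$; after the $\zeta\leftrightarrow\zeta^\vee$ swap you are left comparing the $\zeta$-multiplicity in $H^{i+d_\mu} i_\mu^!\mathsf{IC}_\lambda$ with that in $H^{-i-d_\mu} i_\mu^*\mathsf{IC}_\lambda$---a $!$-costalk versus $*$-stalk comparison for the \emph{same} sheaf, which is not a formal consequence of $\mathbb D\mathcal L\cong\mathcal L$. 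So more is needed than ``$\mathbb D$ exchanges $i_\mu^*$ and $i_\mu^!$''. Note that under $(\spadesuit)'$ (connected stabilizers, as in the introduction and in all the paper's applications in \S\ref{Sconj}) one has $\lambda^\vee=\lambda$ for all $\lambda$ and the issue disappears.
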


\begin{proof}
We have
\begin{align*}
\delta_{\lambda, \mu} & = \left< P_{\lambda}, L _{\mu} \right> _{\mathsf{gEP}} = \left< P_{\lambda}, L _{\mu^{\vee}} ^* \right> _{\mathsf{gEP}} = \sum _{\gamma, \delta} \overline{[P_{\lambda} : \widetilde{K} _{\gamma}]} \overline{[L_{\mu} : K _{\delta}]} \left< \widetilde{K}_{\gamma}, K _{\delta ^{\vee}} ^* \right> _{\mathsf{gEP}}\\
& = \sum _{\gamma} \overline{[P_{\lambda} : \widetilde{K} _{\gamma}]} \overline{[L_{\mu} : K _{\gamma}]}
\end{align*}
by Proposition \ref{absLS}. By applying the bar involution, this shows
$$( [P_{\lambda} : \widetilde{K} _{\gamma}] ) ( [K_{\delta} : L _{\mu}] )^{-1} = ( \delta_{\lambda, \mu}),$$
which is equivalent to the first assertion. The first equality of (\ref{BHrec}) is the definition. The second equality of (\ref{BHrec}) follows by $[P:\widetilde{K}] = {}^{\mathtt{t}} [K:L]$, that is the first assertion.
\end{proof}

\begin{corollary}[Cartan determinant formula]\label{cartan}
Assume that $C_{\lambda} = \{ 1 \}$ for every $\lambda \in \Lambda$. We have
$$\det \, [P:L] = \prod _{\lambda \in \Lambda} \mathsf{gdim} \, H ^{\bullet} _{G _{\lambda}} ( \{ \mathrm{pt} \}).$$
\end{corollary}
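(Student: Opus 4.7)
The plan is to reduce the computation of $\det[P:L]$ to the Brauer--Humphreys reciprocity matrix identity (\ref{BHrec}) and evaluate the three factors separately, using that the hypothesis $C_\lambda = \{1\}$ forces each $\sim$-equivalence class to be a singleton.

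First I would observe that when $C_\lambda = \{1\}$ for every $\lambda$, the only element of $\mathsf{Irr}\,C_\lambda$ is the trivial character, so the natural map $\Lambda \to \underline{\Lambda}$ sending $(\underline{\lambda},\xi)\mapsto\underline{\lambda}$ is a bijection. In particular $\lambda\sim\mu$ holds if and only if $\lambda=\mu$. This will make every ``blockwise upper-triangular'' matrix from Corollary \ref{utK} into a genuinely upper-triangular matrix once we fix a total order on $\Lambda$ refining $\prec$.

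Next I would extract the shapes of the three factors in the identity $[P:L]={}^{\mathtt t}[K:L]\,[\widetilde K:K]\,[K:L]$ of Corollary \ref{absBH}. Theorem \ref{KS} 1) gives $[K_\lambda:L_\mu]=0$ unless $\lambda\precsim\mu$ and $[K_\lambda:L_\lambda]=\delta_{\lambda\lambda}=1$, so $[K:L]$ is upper-unitriangular and $\det[K:L]=1$. Consequently $\det[P:L]=\det[\widetilde K:K]$. To compute the remaining determinant I would use the tautological matrix factorization $[\widetilde K:L]=[\widetilde K:K]\,[K:L]$, which (together with $\det[K:L]=1$) gives $\det[\widetilde K:K]=\det[\widetilde K:L]$.

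Finally I would evaluate $\det[\widetilde K:L]$ directly from Theorem \ref{KS} 1) and 4). The off-diagonal vanishing for $\lambda\not\precsim\mu$ makes $[\widetilde K:L]$ upper-triangular, while the explicit formula in Theorem \ref{KS} 4), specialized to $\lambda=\mu$ with $\xi=\zeta$ trivial and $G_\lambda=G_\lambda^\circ$, yields
\[
[\widetilde K_\lambda:L_\lambda]=\mathsf{gdim}\,\mathrm{hom}_{\{1\}}(\mathbf{1},\mathbf{1}\otimes_{\mathbb C}H^\bullet_{G_\lambda}(\{\mathrm{pt}\}))=\mathsf{gdim}\,H^\bullet_{G_\lambda}(\{\mathrm{pt}\}).
\]
Multiplying the diagonal entries gives $\det[\widetilde K:L]=\prod_{\lambda\in\Lambda}\mathsf{gdim}\,H^\bullet_{G_\lambda}(\{\mathrm{pt}\})$, and combining with the reductions above proves the claim.

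There is essentially no obstacle here: once the $C_\lambda=\{1\}$ hypothesis collapses $\sim$-classes to singletons, the argument is entirely a bookkeeping exercise combining Corollaries \ref{utK} and \ref{absBH} with the explicit diagonal formula in Theorem \ref{KS} 4). The only minor care needed is to pick a total order on $\Lambda$ refining the closure order $\prec$ so that ``blockwise upper-triangular'' really means triangular, which is harmless since determinants are invariant under simultaneous row/column permutation.
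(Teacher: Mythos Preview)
Your proof is correct and follows essentially the same approach as the paper: both use the identity $[P:L]={}^{\mathtt t}[K:L]\,[\widetilde K:K]\,[K:L]$ from Corollary \ref{absBH}, observe $\det[K:L]=1$ via Theorem \ref{KS} 1), and finish using Theorem \ref{KS} 4). The only cosmetic difference is that the paper reads off $\det[\widetilde K:K]$ directly---since under $C_\lambda=\{1\}$ Theorem \ref{KS} 4) makes $[\widetilde K:K]$ genuinely diagonal with $\lambda$-th entry $\mathsf{gdim}\,H^\bullet_{G_\lambda}(\{\mathrm{pt}\})$---whereas you pass through the equivalent computation $\det[\widetilde K:K]=\det[\widetilde K:L]$ and evaluate the latter as an upper-triangular determinant.
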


\begin{remark}\label{Rcartan}
{\bf 1)} It is straight-forward to formulate an analogue of Corollary \ref{cartan} without the extra assumption $C_{\lambda} = \{ 1 \}$ for every $\lambda \in \Lambda$. We choose the current formulation for the sake of simplicity; {\bf 2)} Corollary \ref{cartan} asserts $\det \, [P:L] \neq 0$. Hence the knowledge of $\Lambda$, $\prec$, and $[P:L]$ are enough to determine the matrix $( \mathsf{gdim} \, H ^{\bullet+\dim \mathbb O_{\lambda}} i_{\lambda}^! \mathsf{IC} _{\mu} )_{\lambda,\mu} = ( [K_{\lambda}:L_{\mu}] )_{\lambda,\mu}$ by Theorem \ref{KS} 1), 4) and (\ref{BHrec}) (that is a version of the Lusztig-Shoji algorithm; cf. \cite{K4} and Remark \ref{orSh} {\bf 1)}).
\end{remark}

\begin{proof}[Proof of Corollary \ref{cartan}]
We have $\det \, [P:L] = ( \det \, [K : L] )^2 \det \, [\widetilde{K}:K]$ by (\ref{BHrec}). By Theorem \ref{KS} 1), we have $\det \, [K : L] = 1$. Hence, the result follows from Theorem \ref{KS} 4).
\end{proof}

\section{An inheritance property of $(\spadesuit)$ and $(\clubsuit)$}
Keep the setting of the previous section. In particular, we assume the conditions $(\spadesuit)$ in Condition \ref{spadesuit}, and $(\clubsuit)$ in Condition \ref{clubsuit} (for the pair $(G,\mathfrak X)$), and work over an algebraic closure of a finite field unless stated otherwise.

For $\underline{\lambda} \in \underline{\Lambda}$, we set $e _{\underline{\lambda}} := \sum _{\mu = (\underline{\lambda}, \xi) \in \Lambda} e _{\mu}$. We denote by $\prec$ the partial order on $\underline{\Lambda}$ induced from $\prec$ on $\Lambda$ by simplicity.

The following proof of Theorem \ref{wfilt} (modulo Theorem \ref{absBH}) is rather well-understood (see e.g. Donkin \cite{Do}).

\begin{theorem}\label{wfilt}
Assume the conditions $(\spadesuit)$ and $(\clubsuit)$. Then, each $P_{\lambda}$ admits a decreasing $A$-module filtration so that its associated graded is a finite direct sum of $A$-modules of the form $\widetilde{K} _{\mu} \left< i \right>$ with $\mu \preceq \lambda$ and $i \ge 0$.
\end{theorem}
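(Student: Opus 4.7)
Proof proposal. The plan is to build the filtration from homological data: I will verify that the ``dual'' Ext-vanishing
$$\mathrm{ext}^{\ge 1}_A(P_\lambda, K_{\nu^\vee}^*) = 0 \qquad \text{for every } \nu \in \Lambda$$
together with the structural description of $\widetilde K_\mu$ in Theorem \ref{KS}~3) forces the existence of a $\widetilde K$-filtration, and then I will read off the range of admissible $\mu$ and $i$ from the Brauer--Humphreys reciprocity of Corollary \ref{absBH}.

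More precisely, I would first establish the following criterion: a finitely generated module $M \in A\mathchar`-\mathsf{gmod}^{pf}$ admits a finite decreasing filtration whose associated graded is a direct sum of grading shifts of $\{\widetilde K_\mu\}_{\mu \in \Lambda}$ if and only if $\mathrm{ext}^{\ge 1}_A(M, K_{\nu^\vee}^*) = 0$ for every $\nu \in \Lambda$. The ``only if'' direction is immediate from Proposition \ref{absLS} by devissage along the filtration. For the ``if'' direction, I induct on the finite quantity $\sum_\nu \dim \mathrm{hom}_A(M, K_{\nu^\vee}^*)$, which is finite by Lemma \ref{fd} and finite generation of $M$. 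At each step, one picks a $\precsim$-minimal $\mu$ in the support $\{\nu : e_\nu M \ne 0\}$ together with a nonzero homogeneous element $m \in e_\mu M^i$; the induced map $P_\mu\langle -i\rangle \to M$ annihilates $\sum_{\rho \prec \mu} A e_\rho P_\mu$ (because $e_\rho M = 0$ by minimality of $\mu$), so by Theorem \ref{KS}~3) it factors through a map $\widetilde K_\mu\langle -i\rangle \to M$. Assembling such maps over a homogeneous basis $\{v\}$ of $e_\mu M$ yields a comparison morphism
$$\bigoplus_v \widetilde K_\mu\langle -\deg v\rangle \longrightarrow A e_\mu M \subset M,$$
which I would argue is an isomorphism onto its image by using the Ext-vanishing hypothesis together with Proposition \ref{absLS}. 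The quotient $M/Ae_\mu M$ then inherits Ext-vanishing via the long exact sequence, has strictly smaller support, and the induction closes.

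Applying this criterion to $M = P_\lambda$ is immediate, since $P_\lambda$ is projective and hence $\mathrm{ext}^{\ge 1}_A(P_\lambda,-) \equiv 0$. To pin down the indices appearing in the filtration, I use Corollary \ref{absBH}: the multiplicity of $\widetilde K_\mu\langle i\rangle$ in any such filtration is the coefficient of $t^i$ in $[P_\lambda : \widetilde K_\mu] = [K_\mu : L_\lambda]$. By Theorem \ref{KS}~1) this vanishes unless $\mu \precsim \lambda$, so only $\mu \preceq \lambda$ contribute. The non-negativity $i \ge 0$ follows because $A$ is non-negatively graded with $A^0$ semisimple and $K_\mu$ is a subquotient of $\widetilde K_\mu$, whose grading is bounded below by zero (so $L_\lambda\langle i\rangle$ with $i < 0$ cannot occur as a composition factor of $K_\mu$).

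The main obstacle is the injectivity and direct-summand claim for the comparison map $\bigoplus_v \widetilde K_\mu\langle -\deg v\rangle \to A e_\mu M$ in the inductive step of the criterion. Any kernel there would, via Proposition \ref{absLS} and the long exact sequence associated with $0 \to \ker \to \bigoplus_v \widetilde K_\mu\langle -\deg v\rangle \to A e_\mu M \to 0$, produce a nonzero class in some $\mathrm{ext}^1_A(-, K_{\mu^\vee}^*)$, contradicting the Ext-vanishing hypothesis. Carrying this argument out with care, tracking grading shifts and composition multiplicities through the induction, is the delicate technical core of the proof.
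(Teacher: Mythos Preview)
Your high-level strategy---characterize $\widetilde K$-filtered modules by the vanishing $\mathrm{ext}^{\ge 1}_A(-, K_{\nu^\vee}^*) = 0$ and apply it to the projective $P_\lambda$---is correct and is essentially what the paper does. However, your implementation of the ``if'' direction has two genuine gaps.

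First, the homogeneous basis $\{v\}$ of $e_\mu M$ is typically infinite: already for $M = P_\lambda$ one has $\mathsf{gdim}\, e_\mu P_\lambda = [P_\lambda : L_\mu]$, a genuine power series in $t$ (cf.\ Example~\ref{fA3}). So your comparison map is from an infinite direct sum and can never be injective. The correct source is the \emph{minimal} cover, indexed by a basis of $\mathsf{hd}(A e_\mu M)$, which is finite since $A$ is Noetherian.

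Second, and more seriously, even with the minimal cover the injectivity argument is circular. To conclude $K' := \ker(K \twoheadrightarrow Ae_\mu M) = 0$ from a nonzero $\mathrm{hom}_A(K', K_{\nu^\vee}^*)$ via the long exact sequence, you need $\mathrm{ext}^1_A(Ae_\mu M, K_{\nu^\vee}^*) = 0$; but the hypothesis only gives $\mathrm{ext}^{\ge 1}_A(M, K_{\nu^\vee}^*) = 0$, and the sequence for $0 \to Ae_\mu M \to M \to M/Ae_\mu M \to 0$ merely yields $\mathrm{ext}^1_A(Ae_\mu M, K_{\nu^\vee}^*) \cong \mathrm{ext}^2_A(M/Ae_\mu M, K_{\nu^\vee}^*)$, over which you have no control. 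Conversely, showing that $M/Ae_\mu M$ inherits Ext-vanishing requires first knowing it for $Ae_\mu M$. The paper breaks this loop by running the induction in the opposite direction: fixing a total order on $\underline\Lambda$ refining $\prec$ and setting $P_\lambda^{(j)} := P_\lambda / A e_j P_\lambda$, one shows by \emph{downward} induction on $j$ that each successive kernel $\ker_\lambda^j := \ker(P_\lambda^{(j)} \to P_\lambda^{(j+1)})$ is a direct sum of $\widetilde K_\mu$'s. The point is that the quotient $P_\lambda^{(j+1)}$ is already $\widetilde K$-filtered by the induction hypothesis, so $\mathrm{ext}^{\ge 1}_A(P_\lambda^{(j+1)}, K_\mu^*) = 0$ by Proposition~\ref{absLS}; combined with the elementary $\mathrm{ext}^1_A(P_\lambda^{(j)}, K_\mu^*) = 0$ for $\mu$ in classes $\ge j$ (immediate from the trace-quotient description of $P_\lambda^{(j)}$), the four-term exact sequence delivers $\mathrm{ext}^1_A(\ker_\lambda^j, K_\mu^*) = 0$, which is exactly the input needed to run the injectivity argument on $\ker_\lambda^j$.
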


\begin{proof}
Let us introduce a total order $<$ on $\underline{\Lambda}$ which refines $\prec$. We name elements of $\underline{\Lambda}$ as $\{ \mathcal O _1, \mathcal O _2, \ldots \}$ so that $\mathcal O _i < \mathcal O _j$ if $i < j$. We set $e_i := \sum _{j < i} e_{\mathcal O_j}$. We also identify $\mathcal O_j$ with its preimage in $\Lambda$ by abuse of notation.

For each $1 \le i, j \le \# \Lambda$ and $\lambda \in \mathcal O _i$, we put $P_{\lambda} ^{(j)} := P _{\lambda} / A e_{j} P_{\lambda}$. We have $P_{\lambda} ^{(i)} \cong \widetilde{K} _{\lambda}$ by Theorem \ref{KS} 1) and 3). Since $P_{\lambda} ^{(j)}$ is the quotient of $P_{\lambda}$ obtained by annihilating $L _{\mu}$ with $\mu \in \mathcal O _k$ for $k < j$, the Yoneda interpretation of $\mathrm{ext}^1$ yields
\begin{equation}
\mathrm{ext} ^1 _A ( P_{\lambda} ^{(j)}, L _{\mu} ) = \{ 0 \} \hskip 5mm \text{ for every } \mu \in \mathcal O_k \text{ with } j \le k.\label{extone}
\end{equation}
Applying long exact sequences repeatedly, we deduce
\begin{equation}
\mathrm{ext} ^1 _A ( P_{\lambda} ^{(j)}, K ^* _{\mu} ) = \{ 0 \} \hskip 5mm \text{ for every } \mu \in \mathcal O_k \text{ with } j \le k.\label{K*extone}
\end{equation}
since we have $[K ^* _{\mu} : L_{\nu} ] \neq 0$ only if $\nu \in \mathcal O_{l}$ with $k \le l$ by Theorem \ref{KS} 1) and Lemma \ref{dualmult}. We have a short exact sequence
\begin{equation}
0 \to \ker _{\lambda}^j \rightarrow P_{\lambda} ^{(j)} \rightarrow P_{\lambda} ^{(j+1)} \rightarrow 0\label{P-seq}
\end{equation}
for each $1 \le i,j \le \# \Lambda$ and $\lambda \in \mathcal O_i$. By a comparison of multiplicities, we deduce
\begin{equation}
0 = [P_{\lambda} ^{(j)} : L _{\mu}] = [ \ker _{\lambda}^j : L _{\mu}]  \hskip 5mm \text{ if } \mu \in \mathcal O_k \text{ with } j > k.\label{kermult}
\end{equation}
This, together with Theorem \ref{KS} 3) and the construction, implies that each $\ker _{\lambda}^j$ is a quotient of a direct sum of grading shifts of $\widetilde{K} _{\mu}$ with $\mu \in \mathcal O_j$. 

To prove the claim, it suffices to show that $\ker _{\lambda}^j$ is a direct sum of grading shifts of $\widetilde{K} _{\mu}$ with $\mu \in \mathcal O_j$ for each $j \le i$ by a downward induction on $j$. The case $j=i$ is clear as $P_{\lambda} ^{(i)} = \ker _{\lambda}^i = \widetilde{K} _{\lambda}$. We assume that $\ker _{\lambda}^{j'}$ is a direct sum of grading shifts of $\widetilde{K} _{\mu}$ with $\mu \in \mathcal O_{j'}$ for each $j < j' \le i$ to deduce that $\ker _{\lambda}^j$ is a direct sum of grading shifts of $\widetilde{K} _{\mu}$ with $\mu \in \mathcal O_{j}$. For each $j \le k$ and $\mu \in \mathcal O_k$, (\ref{P-seq}) yields an exact sequence
\begin{equation}
\mathrm{ext} ^1 _A ( P_{\lambda} ^{(j+1)}, K ^* _{\mu} ) \to \mathrm{ext}^1 _A ( P_{\lambda} ^{(j)}, K ^* _{\mu} ) \longrightarrow \mathrm{ext} ^1 _A ( \ker _{\lambda}^j, K ^* _{\mu} ) \to \mathrm{ext} ^2 _A ( P_{\lambda} ^{(j+1)}, K ^* _{\mu} ).\label{4term}
\end{equation}
By induction hypothesis and Proposition \ref{absLS}, the most LHS/RHS of (\ref{4term}) are $\{ 0 \}$. Hence, applying (\ref{K*extone}) yields
\begin{equation}
\mathrm{ext} ^1 _A ( \ker _{\lambda}^j, K ^* _{\mu} ) = \{ 0 \} \hskip 5mm \text{ for every } \mu \in \mathcal O_k \text{ with } j \le k.\label{Kextone}
\end{equation}

Let $K$ be the minimal direct sum of (grading shifts of) $\{ \widetilde{K} _{\mu} \}_{\mu \in \mathcal O _j}$ which surjects onto $\ker _{\lambda}^j$. Let $K' := \ker \, ( K \to \ker _{\lambda} ^j )$. By the Noetherian hypothesis of $A$, we deduce that $\ker _{\lambda}^j$ is finitely generated. Thus, $K$ and $K'$ are also finitely generated. For each $j \le k$ and $\mu \in \mathcal O_k$, we apply $\mathrm{ext} ^{\bullet} _A ( \bullet, K^* _{\mu})$ to deduce
$$
0 \to \mathrm{hom} _A ( \ker _{\lambda}^j, K^* _{\mu} ) \stackrel{g}{\rightarrow} \mathrm{hom} _A ( K, K^* _{\mu} ) \to \mathrm{hom} _A ( K', K^* _{\mu} ) \to \mathrm{ext} ^1 _A ( \ker _{\lambda}^j, K^* _{\mu} ).$$
Since the image of a non-zero map $\widetilde{K} _{\gamma} \to K _{\gamma^{\vee}} ^*$ is $L_{\gamma}$ for each $\gamma \in \Lambda$ (cf. Proposition \ref{absLS}), the map $g$ is surjective by the construction of $K$. Hence, (\ref{Kextone}) implies
$$\mathrm{hom} _A ( K', K^* _{\mu} ) = \{ 0 \} \hskip 5mm \text{ for every } \mu \in \mathcal O_k\text{ with } j \le k.$$
We have $[K:L _{\nu}] \neq \{ 0 \}$ only if $\nu \in \mathcal O_k$ with $j \le k$ by Theorem \ref{KS} 1). Hence, the same is true for $[ K':L _{\nu} ]$. It follows that $K' = \{ 0 \}$ by Corollary \ref{hd}. Therefore, $K \cong \ker _{\lambda}^j$ is isomorphic to a finite direct sum of (grading shifts of) $\widetilde{K} _{\mu}$ with $\mu \in \mathcal O_j$. This proceeds the induction and we conclude the result.
\end{proof}

\begin{corollary}\label{etr}
Let $\lambda = (\underline{\lambda},\xi) \in \Lambda$ be a label of a closed $G$-orbit of $\mathfrak X$. Then, for each $\lambda \not\sim \gamma \in \Lambda$, we have
$$\mathrm{ext} ^{*}_{A} ( A, L_{\gamma} ) \cong \mathrm{ext} ^{*} _{A} ( A / ( A e _{\underline{\lambda}} A ), L_{\gamma} ).$$
\end{corollary}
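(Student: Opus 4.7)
The plan is to feed the short exact sequence
$$0 \longrightarrow I \longrightarrow A \longrightarrow A/I \longrightarrow 0$$
of left $A$-modules, with $I := A e_{\underline{\lambda}} A$, into $\mathrm{ext}^{\bullet}_A(-, L_{\gamma})$. Since $A$ is free as a left module over itself, $\mathrm{ext}^{\bullet}_A(A, L_{\gamma})$ is concentrated in degree zero, and the long exact sequence reduces the corollary to the vanishing $\mathrm{ext}^{i}_A(I, L_{\gamma}) = 0$ for all $i \ge 0$ and all $\gamma \not\sim \lambda$. I will deduce this from the stronger assertion that $I$ is itself a projective left $A$-module whose indecomposable summands lie in $\{ P_{\mu}\langle j\rangle : \mu \sim \lambda,\ j \in \mathbb Z \}$.

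The geometric input is that $\mathbb O_{\lambda}$ is closed, so $\lambda$ (and every $\mu \sim \lambda$) is minimal for $\prec$. By Theorem \ref{KS} 3) this forces $\widetilde K_{\mu} = P_{\mu}$ for all $\mu \sim \lambda$. I then revisit the proof of Theorem \ref{wfilt} after choosing a total order refinement of $\prec$ on $\underline{\Lambda}$ in which $\mathcal O_1$ is the equivalence class $[\lambda]$; this is permissible precisely because $\lambda$ is minimal. With this choice $e_1 = 0$ and $e_2 = e_{\underline{\lambda}}$, so for each $\gamma \not\sim \lambda$ the submodule
$$\ker^{1}_{\gamma} \;=\; A e_{\underline{\lambda}} P_{\gamma} \;=\; A e_{\underline{\lambda}} A e_{\gamma} \;\subset\; P_{\gamma}$$
is, by the downward induction in that proof, a direct sum of grading shifts of $\{\widetilde K_{\mu}\}_{\mu \sim \lambda}$, and hence, thanks to $\widetilde K_{\mu} = P_{\mu}$, a direct sum of grading shifts of $\{P_{\mu}\}_{\mu \sim \lambda}$. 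For $\gamma \sim \lambda$ the identity $A e_{\underline{\lambda}} A e_{\gamma} = A e_{\gamma} = P_{\gamma}$ is trivial, since $e_{\gamma}$ is a summand of $e_{\underline{\lambda}}$.

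Using the decomposition $A \cong \bigoplus_{\gamma} P_{\gamma} \boxtimes L_{\gamma}^*$ of left $A$-modules from Section \ref{general_Ext} together with the two-sidedness of $I$, we conclude that $I \cong \bigoplus_{\gamma} (A e_{\underline{\lambda}} A e_{\gamma}) \boxtimes L_{\gamma}^*$, which by the previous paragraph is a direct sum of grading shifts of $\{P_{\mu}\}_{\mu \sim \lambda}$, hence projective. For $\gamma \not\sim \lambda$ this immediately yields $\mathrm{ext}^{>0}_A(I, L_{\gamma}) = 0$ by projectivity and $\mathrm{hom}_A(I, L_{\gamma}) = 0$ because no $P_{\mu}\langle j\rangle$-summand has head $L_{\gamma}$ (since $\mu \sim \lambda \neq \gamma$). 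Feeding this into the long exact sequence finishes the proof.

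The delicate bookkeeping is the direct-sum (not merely filtered) structure of $\ker^{1}_{\gamma}$; this is handled cleanly at the base $j = 1$ of the downward induction in the proof of Theorem \ref{wfilt}, where the preceding filtration term $A e_{1} P_{\gamma} = 0$ forces $\ker^{1}_{\gamma}$ to be the full bottom filtration piece, and therefore literally a direct sum of $\widetilde K_{\mu}\langle j\rangle$'s with $\mu \in \mathcal O_1 = [\lambda]$.
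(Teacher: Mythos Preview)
Your proof is correct and follows essentially the same route as the paper's. Both argue that $I = A e_{\underline{\lambda}} A$ is a direct sum of grading shifts of $\{P_{\mu}\}_{\mu \sim \lambda}$ (using the $\widetilde K$-filtration of Theorem~\ref{wfilt} together with $\widetilde K_{\mu} = P_{\mu}$ for minimal $\mu$), so that the short exact sequence $0 \to I \to A \to A/I \to 0$ consists of projectives and $\mathrm{hom}_A(I, L_{\gamma}) = 0$; the paper phrases this as ``a projective resolution with $\mathrm{hom}_A(Ae_{\underline{\lambda}}A, L_\gamma) = \{0\}$'' while you run the long exact sequence explicitly and unpack the $j=1$ step of the induction in Theorem~\ref{wfilt}, but the content is the same.
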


\begin{proof}
By the construction in the proof of Theorem \ref{wfilt}, we know that $A e _{\underline{\lambda}} A$ is isomorphic to a direct sum of grading shifts of $\widetilde{K} _{\mu} \cong P _{\mu}$ with $\mu \sim \lambda$. It follows that
$$0 \rightarrow A e_{\underline{\lambda}} A \rightarrow A \rightarrow A / ( A e_{\underline{\lambda}} A ) \rightarrow 0 \hskip 5mm \text{(exact)}$$
is a projective resolution with $\mathrm{hom} _A  ( A e_{\underline{\lambda}} A, L _{\gamma} ) = \{ 0 \}$, which proves the assertion.
\end{proof}

Let $j : \mathfrak Y \hookrightarrow \mathfrak X$ be the inclusion of an open $G$-stable subvariety. We form a graded algebra
$$A_{(G, \mathfrak Y)} := \mathrm{Ext} ^{\bullet} _{G} ( j ^* \mathcal L, j ^* \mathcal L ).$$
For each $\lambda \in \Lambda$ so that $\mathbb O_{\lambda} \subset \mathfrak Y$, the module $\widetilde{K} _{\lambda}$ is in common between $A_{(G, \mathfrak X)}$ and $A_{(G,\mathfrak Y)}$ by construction.

\begin{proposition}\label{Kquotients}
Let $i : \mathbb O_{\lambda} \hookrightarrow \mathfrak X$ be the inclusion of a closed $G$-orbit $($with $\lambda = ( \underline{\lambda}, \xi ) \in \Lambda)$, and let $j : \mathfrak Y \hookrightarrow \mathfrak X$ be its complement. Then, we have an algebra map $\varpi : A_{(G, \mathfrak X)} \longrightarrow A _{(G, \mathfrak Y)}$ which induces an isomorphism
$$A_{(G, \mathfrak X)} / ( A_{(G, \mathfrak X)} e_{\underline{\lambda}} A_{(G, \mathfrak X)} ) \stackrel{\cong}{\longrightarrow} A _{(G, \mathfrak Y)}.$$
In addition, $\ker \, \varpi$ is a direct sum of $\{ \widetilde{K} _{\mu} \left< k \right> \} _{\mu \sim \lambda, k \in \mathbb Z}$.
\end{proposition}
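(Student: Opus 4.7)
My plan is to construct $\varpi$ from the restriction functor $j^*$, identify it with the middle arrow of a standard long exact sequence, and then use a weight argument based on $(\clubsuit)$ together with Theorem \ref{wfilt} and Corollary \ref{absBH} to pin down the image and kernel.

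Since $j$ is an open immersion, $j^* = j^!$ is exact, $G$-equivariant, and commutes with Yoneda composition, so it induces an algebra map $\varpi : A_{(G, \mathfrak X)} \to A_{(G, \mathfrak Y)}$. For each $\mu \sim \lambda$ the sheaf $\mathsf{IC}_\mu$ is supported on the closed orbit $\mathbb O_\lambda = \mathfrak X \setminus \mathfrak Y$, hence $j^* \mathsf{IC}_\mu = 0$, $\varpi(e_\mu) = 0$, and $\varpi$ descends to $\bar\varpi : A/(Ae_{\underline{\lambda}}A) \to A_{(G, \mathfrak Y)}$. Applying $\mathrm{Ext}^{\bullet}_G(-, \mathcal L)$ to the distinguished triangle $j_! j^* \mathcal L \to \mathcal L \to i_* i^* \mathcal L \stackrel{+1}{\to}$ and using the $(j_!, j^*)$-adjunction identifies $\varpi$ with the middle arrow in
\[
\mathrm{Ext}^{\bullet}_G(i^* \mathcal L, i^! \mathcal L) \longrightarrow A_{(G, \mathfrak X)} \stackrel{\varpi}{\longrightarrow} A_{(G, \mathfrak Y)} \stackrel{\delta}{\longrightarrow} \mathrm{Ext}^{\bullet+1}_G(i^* \mathcal L, i^! \mathcal L).
\]
Because $\mathcal L$ is pure of weight zero and $i^*$ (resp.\ $i^!$) does not raise (resp.\ lower) weights, the target of $\delta$ in degree $k$ has weight $\geq k+1$; by $(\clubsuit)_2$ combined with a standard weight bound, $A_{(G, \mathfrak Y)}$ in degree $k$ has weight $\leq k$, so comparison of weight ranges forces $\delta = 0$ and $\varpi$ is surjective.

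Surjectivity plus the purity of $A_{(G, \mathfrak X)}$ promotes $A_{(G, \mathfrak Y)}$ to a pure weight-zero quotient, so $(\clubsuit)_1$ holds for $(G, \mathfrak Y)$; $(\clubsuit)_2$ is automatic by restriction. Thus Theorem \ref{wfilt} and Corollary \ref{absBH} apply to $A_{(G, \mathfrak Y)}$ as well. The factorization $j_\mu = j \circ j_{\mu, \mathfrak Y}$ together with $j^* = j^!$ and the $(j_!, j^*)$-adjunction produces canonical identifications $\widetilde K_\mu^{\mathfrak X} \cong \widetilde K_\mu^{\mathfrak Y}$ and $K_\mu^{\mathfrak X} \cong K_\mu^{\mathfrak Y}$ for each $\mu$ with $\mathbb O_\mu \subset \mathfrak Y$, through which the $A_{(G, \mathfrak X)}$-action factors via $\varpi$. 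Brauer--Humphreys on both sides then gives
\[
[P_\gamma^{\mathfrak X} : \widetilde K_\mu^{\mathfrak X}] = [K_\mu^{\mathfrak X} : L_\gamma] = [K_\mu^{\mathfrak Y} : L_\gamma] = [P_\gamma^{\mathfrak Y} : \widetilde K_\mu^{\mathfrak Y}]
\]
for all $\mathbb O_\gamma, \mathbb O_\mu \subset \mathfrak Y$. On the other hand, by Theorem \ref{KS} 1) and closedness of $\mathbb O_\lambda$, the composition factors $L_\mu$ with $\mu \sim \lambda$ appear in $P_\gamma^{\mathfrak X}$ only inside the $\{\widetilde K_\nu^{\mathfrak X}\}_{\nu \sim \lambda}$-layer of its Theorem \ref{wfilt} filtration; since $\widetilde K_\nu^{\mathfrak X} = P_\nu^{\mathfrak X}$ is projective for $\nu \sim \lambda$, this layer equals $Ae_{\underline{\lambda}} P_\gamma^{\mathfrak X}$ and is itself a direct sum of grading shifts of $\{\widetilde K_\nu^{\mathfrak X}\}_{\nu \sim \lambda}$. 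Combining the two descriptions yields $\mathsf{gdim}\,(P_\gamma^{\mathfrak X}/Ae_{\underline{\lambda}} P_\gamma^{\mathfrak X}) = \mathsf{gdim}\, P_\gamma^{\mathfrak Y}$, so the surjection between them induced by $\varpi$ is an isomorphism; summing over $\gamma$ promotes $\bar\varpi$ to an isomorphism and simultaneously identifies $\ker \varpi = Ae_{\underline{\lambda}}A$ with the claimed direct sum of grading shifts of $\{\widetilde K_\mu^{\mathfrak X}\}_{\mu \sim \lambda}$.

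The main obstacle is the vanishing of $\delta$, which requires carefully combining the weight bounds on $\mathrm{Ext}^{\bullet}_G(i^* \mathcal L, i^! \mathcal L)$ with the purity hypothesis $(\clubsuit)$; once this is in hand, everything else is a clean character count through Brauer--Humphreys and a direct read-off of the filtration supplied by Theorem \ref{wfilt}.
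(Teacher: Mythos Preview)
Your weight argument for $\delta=0$ has a real gap. You assert that $A_{(G,\mathfrak Y)}$ in degree $k$ has weight $\le k$ ``by $(\clubsuit)_2$ combined with a standard weight bound,'' but no such bound exists: the general estimate for $\mathrm{Ext}^k_G$ between pure weight-zero complexes gives only weight $\ge k$, never an upper bound. Pointwise purity of the $\mathsf{IC}$'s on $\mathfrak Y$ does not control the Frobenius weights of their equivariant Ext groups from above. What you are invoking is exactly $(\clubsuit)_1$ for $(G,\mathfrak Y)$, and in this paper that is \emph{Corollary \ref{localKsc}}, deduced \emph{from} the surjectivity of $\varpi$ established in Proposition \ref{Kquotients}. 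So the argument is circular. Concretely, from the long exact sequence one only gets that $(A')^k$ is an extension of a pure weight-$(k{+}1)$ piece $\mathrm{Im}\,\delta_k\subset M^{k+1}$ by a pure weight-$k$ piece $A^k/\mathrm{Im}\,f_k$; nothing so far forces the top piece to vanish.

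The paper avoids this by working entirely inside $A=A_{(G,\mathfrak X)}$, where $(\clubsuit)$ is already in force. Using $(\clubsuit)_2$ it writes $i_*i^*\mathcal L$ as a direct sum of $\mathbb C_\mu[k]$ with $\mu\sim\lambda$, so the left term $K=\mathrm{Ext}^\bullet_G(i_*i^*\mathcal L,\mathcal L)$ is a sum of grading shifts of $\widetilde K_\mu=P_\mu$. An easy $\mathrm{ext}^1$ computation then identifies $\mathrm{Im}\,\varpi\cong A/(Ae_{\underline\lambda}A)$, and surjectivity ($\mathrm{coker}\,\varpi=0$) is reduced via the four-term sequence to $\mathrm{ext}^2_A(A/(Ae_{\underline\lambda}A),L_\gamma)=0$ for $\gamma\not\sim\lambda$, which is Corollary \ref{etr} (a consequence of Theorem \ref{wfilt} applied to $A_{(G,\mathfrak X)}$, not to $A_{(G,\mathfrak Y)}$).

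Your second half---the Brauer--Humphreys character count matching $P_\gamma^{\mathfrak X}/Ae_{\underline\lambda}P_\gamma^{\mathfrak X}$ with $P_\gamma^{\mathfrak Y}$---is a nice alternative once surjectivity is known, but it too needs Corollary \ref{absBH} for $A_{(G,\mathfrak Y)}$, hence $(\clubsuit)_1$ for $\mathfrak Y$, hence surjectivity. So the gap in the first half propagates. If you replace your weight argument by the paper's $\mathrm{ext}^2$-vanishing via Corollary \ref{etr}, the rest of your outline goes through.
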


\begin{proof}[Proof of Proposition \ref{Kquotients}]
We set $A' := A_{(G, \mathfrak Y)}$ and $A := A _{(G, \mathfrak X)}$. The algebra map $\varpi : A \to A'$ is the restriction map
$$\mathrm{Ext} ^* _{G} ( \mathcal L, \mathcal L ) \longrightarrow \mathrm{Ext} ^* _{G} ( j ^* \mathcal L, j ^* \mathcal L ) \cong \mathrm{Ext} ^* _{G} ( j ^! \mathcal L, j ^! \mathcal L ),$$
where we used $j ^* \cong j ^!$ for an open embedding $j$. It follows that this map, viewed as a left $A$-module, is given by
$$\mathrm{Ext} ^* _{G} ( \mathcal L, \mathcal L ) \longrightarrow  \mathrm{Ext} ^* _{G} ( j _!  j ^! \mathcal L, \mathcal L ).$$

We have a distinguished triangle
$$\to j _!  j ^! \mathcal L \to \mathcal L \to i _* i ^* \mathcal L \stackrel{+1}{\longrightarrow}.$$
It yields the following short exact sequence of graded $A$-modules:
$$0 \longrightarrow \mathrm{coker} \, \varpi \left< 1 \right> \rightarrow K \stackrel{f}{\longrightarrow} A \stackrel{\varpi}{\longrightarrow} A' \rightarrow \mathrm{coker} \, \varpi \rightarrow 0.$$
By the purity assumption $(\clubsuit)_2$, the sheaf $i _* i ^* \mathcal L$ is a direct sum of $\{ \mathbb C _{\mu} [k] \} _{\mu \sim \lambda, k \in \mathbb Z}$. It follows that $K$ is a direct sum of grading shifts of $\{ \widetilde{K} _{\mu} \} _{\mu \sim \lambda}$. Therefore, the second assertion modulo the surjectivity of $\varpi$ follows. We have an exact sequence:
\begin{equation}
0 \longrightarrow \mathrm{coker} \, \varpi \left< 1 \right> \rightarrow K \stackrel{f}{\longrightarrow} A \longrightarrow \mathrm{Im} \, \varpi \rightarrow 0.\label{ImgP}
\end{equation}
By Theorem \ref{KS} 3), we have $\widetilde{K} _{\mu} = P_{\mu}$ for $\mu \sim \lambda$. Regarding $(\ref{ImgP})$ as the first two terms of a projective resolution, we deduce
\begin{align}
& \mathrm{ext} ^1_A ( \mathrm{Im} \, \varpi, L _{\gamma} ) = \{ 0 \} \hskip 5mm \text{ for every } \gamma \not\sim \lambda, \hskip 3mm \text{ and } \label{Imgext}\\
& \mathrm{ext} ^{i}_A ( \mathrm{coker} \, \varpi, L _{\gamma} ) \left< - 1 \right> \cong \mathrm{ext} ^{i+2}_A ( \mathrm{Im} \, \varpi, L _{\gamma} ) \hskip 2mm \text{ for every } \hskip 1mm i \in \mathbb Z _{\ge 0} \hskip 1mm \text{ and } \hskip 1mm \gamma \in \Lambda.\label{connKC}
\end{align}
In addition, we have $[A' : L _{\mu}] _{A} = 0$ for $\mu \sim \lambda$ since $L_{\mu}$ is the multiplicity space of $\mathsf{IC} _{\mu}$ in $\mathcal L$ and $j ^! \mathsf{IC} _{\mu} = \{ 0 \}$. Moreover, we have
$$[\mathrm{Im} \, \varpi : L _{\mu}] _A \le [A' : L _{\mu}] _A = 0, \text{ and } [\mathrm{coker} \, \varpi : L _{\mu}] _{A} \le [A' : L _{\mu}] _{A} = 0$$
for each $\mu \sim \lambda$. This, together with (\ref{Imgext}) and the shape of $K$, implies that $\mathrm{Im} \, \varpi \cong A / ( A e _{\underline{\lambda}} A )$. The surjectivity of $\varpi$ is equivalent to $\mathrm{coker} \, \varpi = \{0\}$, and it is further equivalent to
\begin{equation*}
\mathrm{ext} ^2_A ( \mathrm{Im} \, \varpi, L _{\gamma} ) = \mathrm{ext} ^2_A ( A / ( A e _{\underline{\lambda}} A ), L _{\gamma} ) = \{ 0 \} \text{ for every } \gamma \not\sim \lambda
\end{equation*}
by (\ref{connKC}). This follows from Corollary \ref{etr} as desired.
\end{proof}

\begin{comment}
\begin{corollary}\label{isomKext}
Let $j : \mathfrak Y \hookrightarrow \mathfrak X$ be the inclusion of an open $G$-stable subvariety. We have
$$\mathrm{ext} ^{*}_{A} ( A, L_{\mu} ) \cong \mathrm{ext} ^{*} _{A} ( A_{(G, \mathfrak Y)}, L_{\mu} )\label{isomext}$$
for every $\mu \in \Lambda$ so that $\mathbb O_{\mu} \subset \mathfrak Y$.
\end{corollary}

\begin{proof}
By Corollary \ref{kf}, we have
$$\mathrm{ext} ^{\bullet}_{A} ( \widetilde{K} _{\lambda}, L_{\mu} ) = \{ 0 \} \hskip 3mm \text{ for every } \lambda \prec \mu.$$
This shows $\mathrm{ext} ^{\bullet}_{A} ( \ker \, \varpi, L_{\mu} ) = \{ 0 \}$ in the setting of Proposition \ref{Kquotients}. In particular, we deduce the assertion when $\mathfrak X \setminus \mathfrak Y$ is a single $G$-orbit by a long exact sequence. Since $\widetilde{K}_{\lambda}$ is in common with $A$ and $A_{(G, \mathfrak Y)}$ whenever $\mathbb O_{\lambda} \subset \mathfrak Y$, we repeat this argument by Lemma \ref{localKsc} to conclude the general case.
\end{proof}
\end{comment}

\begin{corollary}\label{localKsc}
Let $j : \mathfrak Y \hookrightarrow \mathfrak X$ be the inclusion of an open $G$-stable subvariety. Then, $\mathfrak Y$ satisfies the conditions $(\spadesuit)$ and $(\clubsuit)$ if $\mathfrak X$ does.
\end{corollary}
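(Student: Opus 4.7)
My plan is to verify the three sub-conditions $(\spadesuit)$, $(\clubsuit)_1$, $(\clubsuit)_2$ separately for $\mathfrak Y$, handling the first and third directly and reducing the second to an iterated application of Proposition \ref{Kquotients}. The condition $(\spadesuit)$ passes to $\mathfrak Y$ for free: the labelling set $\underline{\Lambda}_{\mathfrak Y} = \{\underline{\lambda} \in \underline{\Lambda} : \mathbb O_{\underline{\lambda}} \subset \mathfrak Y\}$ is a subset of the finite set $\underline{\Lambda}$, the stabilizers are unchanged, and we reuse the base points $x_{\lambda}$ supplied by Condition \ref{spadesuit} on $\mathfrak X$. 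Likewise $(\clubsuit)_2$ is inherited: for each $\lambda \in \Lambda$ with $\mathbb O_{\lambda} \subset \mathfrak Y$, the corresponding intersection cohomology sheaf on $\mathfrak Y$ is $j^{*} \mathsf{IC}_{\lambda}$ (since $(\bullet)_{!*}$ commutes with restriction along an open embedding), and pointwise purity is stable under open restriction.

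The substance is $(\clubsuit)_1$. My plan here is to induct on
$$N(\mathfrak X, \mathfrak Y) := \#\{\underline{\lambda} \in \underline{\Lambda} : \mathbb O_{\underline{\lambda}} \not\subset \mathfrak Y\},$$
the base case $N = 0$ being $\mathfrak Y = \mathfrak X$, where there is nothing to prove. For $N > 0$, I would pick an orbit $\mathbb O_{\underline{\lambda}}$ that is $\prec$-minimal among the finitely many $G$-orbits in the closed set $\mathfrak X \setminus \mathfrak Y$. Since $\mathfrak X \setminus \mathfrak Y$ is closed, every orbit in $\overline{\mathbb O_{\underline{\lambda}}}$ also lies in $\mathfrak X \setminus \mathfrak Y$; combined with $\prec$-minimality this forces $\overline{\mathbb O_{\underline{\lambda}}} = \mathbb O_{\underline{\lambda}}$, so $\mathbb O_{\underline{\lambda}}$ is closed in $\mathfrak X$. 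I then set $\mathfrak X_1 := \mathfrak X \setminus \mathbb O_{\underline{\lambda}}$, an open $G$-stable subvariety containing $\mathfrak Y$, and apply Proposition \ref{Kquotients} to the inclusion $\mathfrak X_1 \hookrightarrow \mathfrak X$: this yields a surjective algebra map $\varpi : A_{(G, \mathfrak X)} \twoheadrightarrow A_{(G, \mathfrak X_1)}$ whose kernel is a direct sum of grading shifts of dual standard modules $\widetilde K_{\mu}$ with $\mu \sim \lambda$.

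To propagate purity through $\varpi$, I would invoke the fact, established inside the proof of Proposition \ref{P_to_K_finite}, that under $(\spadesuit)$ and $(\clubsuit)$ each $\widetilde K_{\mu}$ is pure of weight zero. Since the embedding $\ker \varpi \hookrightarrow A_{(G, \mathfrak X)}$ arises from morphisms of mixed graded modules (the distinguished triangle $j_{!} j^{!} \mathcal L \to \mathcal L \to i_{*} i^{*} \mathcal L \xrightarrow{+1}$ used in the proof of Proposition \ref{Kquotients} is a triangle in the mixed derived category), it follows that $\ker \varpi$ is pure of weight zero; combined with $(\clubsuit)_1$ for $\mathfrak X$, this yields $(\clubsuit)_1$ for $\mathfrak X_1$. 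With the two preceding paragraphs, $\mathfrak X_1$ satisfies $(\spadesuit)$ and $(\clubsuit)$. Since $N(\mathfrak X_1, \mathfrak Y) = N(\mathfrak X, \mathfrak Y) - 1$, the inductive hypothesis applied to $(\mathfrak X_1, \mathfrak Y)$ completes the proof.

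The main obstacle is the weight-purity step: verifying that $\ker \varpi$ inherits weight-zero purity as a mixed submodule of $A_{(G, \mathfrak X)}$. This ultimately reduces to the purity of the $\widetilde K_{\mu}$, which was the content of the mixed-structure argument in Proposition \ref{P_to_K_finite}; once this is in hand, the remainder of the argument is a straightforward induction that peels off closed orbits from $\mathfrak X \setminus \mathfrak Y$ one at a time.
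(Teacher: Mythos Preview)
Your proof is correct and follows essentially the same route as the paper: $(\spadesuit)$ and $(\clubsuit)_2$ restrict trivially to open $G$-stable subsets, while $(\clubsuit)_1$ is obtained by peeling off one closed orbit at a time via Proposition \ref{Kquotients} and induction on the number of orbits in $\mathfrak X\setminus\mathfrak Y$. One simplification over your version: the purity of $\ker\varpi$ (hence of the $\widetilde K_\mu$) is not actually needed, since $\varpi$ is a Frobenius-equivariant graded surjection (it arises from the mixed distinguished triangle $j_!j^!\mathcal L\to\mathcal L\to i_*i^*\mathcal L\xrightarrow{+1}$), and any Frobenius-equivariant quotient of a weight-zero-pure graded module is again pure of weight zero; so what you flag as the main obstacle dissolves.
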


\begin{proof}
The conditions $(\spadesuit)$ and $(\clubsuit)_2$ become weaker by the restriction to any $G$-invariant locally closed subset.

Let $\mathbb O_{\lambda}$ be a closed $G$-orbit of $\mathfrak X$. We set $\mathbb O_{\lambda} = \mathfrak X \backslash \mathfrak Y$. By Proposition \ref{Kquotients}, we have a natural short exact sequence as graded left $A_{(G, \mathfrak X)}$-modules
\begin{equation}
0 \to K \to A_{(G, \mathfrak X )} \to A _{(G, \mathfrak Y)} \to 0.\label{se-sp}
\end{equation}
This particularly shows the condition $(\clubsuit)_1$ for $\mathfrak X$ implies that of $\mathfrak Y$ when $\mathfrak X \backslash \mathfrak Y$ is a single $G$-orbit. Hence, we deduce the result by induction.
\end{proof}

\begin{remark}
Thanks to Corollary \ref{localKsc}, analogues results of Corollary \ref{etr} and Proposition \ref{Kquotients} also hold for an arbitrary inclusion of closed $G$-stable subsets.
\end{remark}

\section{A proof of Shoji's conjecture for type $\mathsf B$}\label{Sconj}
In this section, we work with varieties over $\mathbb C$ unless stated otherwise.

Let $G = \mathop{Sp} ( 2n, \mathbb C )$ be a symplectic group, and let $V_1$ be its vector representation. We set $V_2 := \wedge ^2 V_1$ and $\mathbb V := V_1 \oplus V_2$. We fix a maximal torus $T \subset G$ and a Borel subgroup $B \subset G$ so that $T \subset B$. Let $W := N_G ( T )/ T$. Let $X ^* ( T )$ be the character lattice of $T$, which we may identify with the cocharacter lattice via a $W$-invariant perfect pairing. We fix a basis $\epsilon_1, \epsilon_2, \ldots, \epsilon_n$ of $X^* ( T )$ so that the set $R$ of coroots of $G$ and the set $R^+$ of positive coroots with respect to $B$ are presented as:
$$R := \{ \pm \epsilon _i \pm \epsilon _j \} _{i < j} \cup \{ \pm \epsilon _i \} _{i=1}^{n} \supset \{ \epsilon _i \pm \epsilon _j \} _{i < j} \cup \{ \epsilon _i \} _{i=1}^{n} =: R^+.$$

For each $\beta \in X ^* ( T )$, let $\mathbb V [\beta]$ be the weight $\beta$-part of $\mathbb V$. Note that we have $\dim \mathbb V [\beta] \le 1$ if $\beta \neq 0$. We set $\mathbb V ^+ := \bigoplus _{\beta \in R^+} \mathbb V [ \beta ] \subset \mathbb V$, that is a $B$-submodule. We consider a $G$-equivariant vector bundle $F := G \times ^B \mathbb V^+$ and a composition map
$$\mu : F \hookrightarrow G \times ^B \mathbb V \cong G / B \times \mathbb V \stackrel{\mathsf{pr}_2}{\longrightarrow} \mathbb V,$$
which is again $G$-equivariant. We denote the image of $\mu$ by $\mathfrak N$ and denote the resulting map $F \to \mathfrak N$ again by $\mu$.

For each $x \in \mathfrak N ( \mathbb C )$, the composition map $\mu^{-1} ( x ) \hookrightarrow F \to G/B$ is injective. It induces a map
$$\imath_x : H_{\bullet} ( \mu^{-1} ( x ), \mathbb C) \longrightarrow H_{\bullet} ( G / B, \mathbb C )$$
between the Borel-Moore homologies (see \cite{CG} \S 2 for example).

We put $\mathbb G = G \times ( \mathbb C ^{\times} ) ^2$. An element $a = ( s, q_1,q_2) \in \mathbb G$ acts on $v_1 \oplus v_2 \in \mathbb V$ as $a.v = q_1 s v_1 \oplus q_2 s v_2$.

\begin{theorem}[\cite{K1, K2} and Lusztig-Spaltenstein \cite{LS}]\label{eS} We have:
\begin{enumerate}
\item The variety $\mathfrak N$ has finitely many $G$-orbits, and a $G$-orbit is also a $\mathbb G$-orbit;
\item For each $x \in \mathfrak N ( \mathbb C )$, the groups $\mathsf{Stab} _G ( x )$ and $\mathsf{Stab} _{\mathbb G} ( x )$ are connected;
\item The map $\mu$ is strictly semi-small, and hence $\mu _* \mathbb C [ \dim F ]$ is a direct sum of $G$-equivariant simple perverse sheaves;
\item The sheaf $\mathcal L := \mu _* \mathbb C [ \dim F ]$ contains all $G$-equivariant simple perverse sheaves on $\mathfrak N$ as its direct summands;
\item We have
$$A : = \mathrm{Ext} ^{\bullet}_{D^b_{G}(\mathfrak N)} ( \mathcal L, \mathcal L ) \cong \mathbb C W \ltimes \mathbb C [\mathfrak t],$$
where $\mathbb C W$ is a group algebra of $W$ sitting in degree $0$ and $\mathbb C [\mathfrak t]$ is a polynomial algebra generated by $\mathfrak t^*$ in degree $2$;
\item The odd-part of the Borel-Moore homology $H_{odd} ( \mu ^{-1} ( x ), \mathbb C )$ vanishes;
\item Let $d_x = 2 \dim \mu^{-1} ( x )$. Then, $\imath_x H_{d_x} ( \mu^{-1} ( x ), \mathbb C)$ is an irreducible $W$-submodule of $H_{\bullet} ( G/ B, \mathbb C)$ that we denote by $\mathsf L_x$;
\item Let $H_{- \bullet} ( G / B, \mathbb C) \subset \mathbb C [ \mathfrak t^* ]$ be the harmonic polynomial realization $(\cite{CG}$ \S $6$ with the convention $\deg \, \mathfrak t = -2$; here $\mathfrak t^*$ acts on $H_{- \bullet} ( G / B, \mathbb C)$ via derivatives with degree two$)$. Then, the module $\mathsf L_x$ is the unique maximal degree realization $($of an irreducible $W$-module$)$;
\item There exists a reflection subgroup $W_x \subset W$ and a polynomial $p_x \in \mathsf L _x$ so that $\mathbb C p_x$ is the maximal degree realization of a one-dimensional representation of $W_x$ inside $\mathbb C [ \mathfrak t^* ]$.
\end{enumerate}
\end{theorem}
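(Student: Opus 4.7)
The plan is to assemble the nine assertions from the author's previous analyses of the exotic nilpotent cone in \cite{K1, K2} and from the Macdonald--Lusztig--Spaltenstein induction formalism \cite{LS}, since no single item is genuinely new in the present paper. First I would settle parts 1--4 directly from \cite{K1}: an explicit parametrization of $G$-orbits on $\mathfrak N$ by bipartitions of $n$, a dimension count for the fibres of $\mu$ showing strict semi-smallness, and an analysis of representatives verifying connectedness of $\mathsf{Stab}_G(x)$. The coincidence of $G$-orbits with $\mathbb G$-orbits (part 1) is a rescaling argument: the extra $(\mathbb C^{\times})^2$-action on $V_1\oplus V_2$ can be absorbed into cocharacters of $T$ along each orbit, which also gives the connectedness of $\mathsf{Stab}_{\mathbb G}(x)$. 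For part 4, strict semi-smallness together with the decomposition theorem forces $\mathcal L\cong\bigoplus_\lambda L_\lambda\boxtimes\mathsf{IC}_\lambda$, and non-vanishing of every multiplicity space $L_\lambda$ is the exotic Springer correspondence established in \cite{K1} by matching the component group analysis with the bipartition parametrization.

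For part 5, I would run the Ginzburg convolution algebra machinery of \cite{CG} \S 8 applied to the Steinberg-type variety $Z:=F\times_{\mathfrak N}F$: the graded algebra $A$ is identified with the $\mathbb G$-equivariant Borel--Moore homology of $Z$ under convolution. The group-algebra factor $\mathbb C W$ in degree $0$ arises from the irreducible components of $Z$ parametrized by $W$ (the exotic version of the Steinberg variety has the same components as the classical one, as seen from the projection to $G/B\times G/B$), while the polynomial factor $\mathbb C[\mathfrak t]$ comes from $H^{\bullet}_{\mathbb G}(\{\mathrm{pt}\})$. The smash-product relation and the grading $\deg\,\mathfrak t^*=2$ are forced by the equivariant structure exactly as in the classical nil-Hecke computation, the verification of relations being reduced to the generic (semisimple) locus of $\mathfrak N$.

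The remaining assertions 6--9 are extracted from \cite{K2} and \cite{LS}. Part 6 is the parity-vanishing statement of \cite{K2}. Given this, part 7 follows because strict semi-smallness places $\imath_x H_{d_x}(\mu^{-1}(x),\mathbb C)$ inside a single $W$-isotypic component of $H_{\bullet}(G/B,\mathbb C)$, with irreducibility ensured by the multiplicity-one pairing between top-degree fibre homology and simple perverse summands of $\mathcal L$ (again \cite{K1}). Part 8 is the identification of $\mathsf L_x$ with the top-degree realization in harmonic polynomials, which is the content of the Lusztig--Spaltenstein theorem \cite{LS}. The real obstacle is part 9: I would locate $W_x$ by running the $j$-induction recipe of \cite{LS} on the bipartition labelling $x$, producing a Young-type reflection subgroup whose appropriate one-dimensional character $j$-induces to $\mathsf L_x$, and then take $p_x$ to be the unique (up to scalar) top-degree harmonic polynomial realizing that character of $W_x$ inside $\mathsf L_x$. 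The delicate bookkeeping there is making the choice of $W_x$ and the sign-or-trivial character canonical enough to be used later in the paper's applications.
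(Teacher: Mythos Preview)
Your proposal is essentially correct and matches the paper's own proof, which is likewise a compilation of citations to \cite{K1}, \cite{K2}, \cite{LS}, and \cite{CG}. Two small corrections: in part 5 the algebra $A$ is the $G$-equivariant (not $\mathbb G$-equivariant) convolution algebra of $Z=F\times_{\mathfrak N}F$, so $\mathbb C[\mathfrak t]$ arises from $H^{\bullet}_G(G/B)$ rather than $H^{\bullet}_{\mathbb G}(\{\mathrm{pt}\})$; and in part 9 the paper is explicit that $W_x$ is a product of smaller type $\mathsf{BC}$ Weyl groups whose ranks are the column lengths of the bipartition $(\mu,\nu)$ labeling $G.x$, with $p_x=\mathsf D(\mu,\nu)$ as in \cite{K2} 10.4 --- this precise form (not merely ``Young-type'') is what the induction in the proof of Theorem \ref{Shoji} actually uses.
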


\begin{proof}
The first two assertions for $G$ are in \cite{K1} 1.14. The coincidence of $G$-orbits and $\mathbb G$-orbits follows by the form of a representative of each orbit {\it loc. cit.} 1.13. The second assertion for $\mathbb G$ can be deduced from {\it loc. cit.} 4.10 and its proof. The third assertion follows by \cite{K1} 1.2 and \cite{K2} 3.4, the fourth is \cite{K1} 8.3, the fifth is \cite{K1} 8.1, and the sixth is \cite{K1} 6.2. The seventh assertion follows from \cite{K2} 7.6, 10.7 and \cite{CG} 6.5.2 (or rather its proof). The eighth and the ninth assertions follow from \cite{LS} (cf. \cite{K2} 10.5). Here if we denote the bipartition parametrizing the $G$-orbit $G.x$ by $(\mu,\nu)$ by \cite{K2} 5.1, then we have $p_x = \mathsf{D} ( \mu, \nu )$ in {\it loc. cit.} 10.4. In addition, $W_x$ is the product of smaller Weyl groups of type $\mathsf{BC}$ whose sizes are the entries of ${} ^{\mathtt t} \mu$ and ${} ^{\mathtt t} \nu$.
\end{proof}

Since $\mathfrak t^* \subset A$ in Theorem \ref{eS} 5) is the space of (virtual) hyperplane sections (cf. \cite{K1} \S 8), we deduce that each $\iota_x$ is an $A$-module map.

The $G$-variety $\mathfrak N$ is a specialization of a flat $\mathbb Z$-scheme $\mathfrak N _{\mathbb Z}$ defined by the same defining equations as $\mathfrak N$ (namely $G$-invariant polynomials on $\mathbb V$ with integer coefficients and constant terms zero; cf. \cite{K2} \S 2) to $\mathbb C$. Hence, for an algebraic closure $\Bbbk$ of a field with $p$-element, we have a variety $\mathfrak N _{\Bbbk}$ ($:= \mathfrak N_{\mathbb Z} \otimes _{\mathbb Z} \Bbbk$) over $\Bbbk$ with an action of $G _{\Bbbk}$ ($:= \mathop{Sp} ( 2n, \Bbbk )$).

\begin{proposition}\label{eNc}
For $p \gg 0$, the conditions $(\spadesuit)$ in Condition $\ref{spadesuit}$ and $(\clubsuit)$ in Condition $\ref{clubsuit}$ holds for $\mathfrak N _{\Bbbk}$. In addition, we have $A \cong A _{(G _{\Bbbk}, \mathfrak N _{\Bbbk})}$ in the notation of section one.\\
Moreover, the algebra $A$ and its standard/dual standard modules $K _{\lambda} / \widetilde{K} _{\lambda}$ defined from $\mathfrak N$ $($over $\mathbb C)$ satisfies the conclusions of Theorem \ref{KS}, Proposition \ref{fgd-k}, and Corollaries \ref{absLS2} and \ref{absBH}.
\end{proposition}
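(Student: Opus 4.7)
My plan is to first establish the conditions $(\spadesuit)$ and $(\clubsuit)$ for $\mathfrak N_\Bbbk$ by specializing Theorem \ref{eS} to positive characteristic, next to identify $A$ with $A_{(G_\Bbbk, \mathfrak N_\Bbbk)}$ via their common description $\mathbb C W \ltimes \mathbb C[\mathfrak t]$, and finally to transport the conclusions of Sections 2 and 3 back to the $\mathbb C$-setting through this identification.

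Since $\mathfrak N$, $G$, and $\mu$ are defined over some localization $\mathbb Z_S$, for $p \notin S$ we obtain $\mathfrak N_\Bbbk$, $G_\Bbbk$, and the proper morphism $\mu_\Bbbk : F_\Bbbk \to \mathfrak N_\Bbbk$ by base change. The orbit structure and the connectedness of stabilizers are constructible properties, so Theorem \ref{eS} 1)--2) propagates to $\Bbbk$ for $p \gg 0$, yielding $(\spadesuit)$. Semi-smallness of $\mu_\Bbbk$ specializes from Theorem \ref{eS} 3), so the BBD Decomposition Theorem presents $\mathcal L_\Bbbk := (\mu_\Bbbk)_* \overline{\mathbb Q}_\ell[\dim F]$ as a pure direct sum of simple perverse sheaves containing every $\mathsf{IC}_\lambda$ as a summand (Theorem \ref{eS} 4)). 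Proper base change identifies the stalks of $\mathcal L_\Bbbk$ with the \'etale cohomologies of the fibers $\mu_\Bbbk^{-1}(x)$, which vanish in odd degree by Theorem \ref{eS} 6) for $p \gg 0$. The combination of purity and odd vanishing forces pointwise purity of $\mathcal L_\Bbbk$, hence of each $\mathsf{IC}_\lambda$, giving $(\clubsuit)_2$; and $(\clubsuit)_1$ follows from the spectral sequence $(\ref{SS1})$, whose $E_2$-page is a tensor product of two pure terms (both $H^\bullet_{G_\Bbbk}(\{\mathrm{pt}\})$ and $\mathrm{Ext}^\bullet(\mathcal L_\Bbbk, \mathcal L_\Bbbk)$ are pure of weight zero in the chosen normalization).

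Theorem \ref{eS} 5) computes $A$ via the Springer-type $W$-action together with $\mathfrak t^*\hookrightarrow A$ as virtual hyperplane sections, both of which are intrinsic to the root datum and the semi-small structure of $\mu$; the same argument applies verbatim in the $\ell$-adic setting to yield $A_{(G_\Bbbk, \mathfrak N_\Bbbk)} \cong \overline{\mathbb Q}_\ell W \ltimes \overline{\mathbb Q}_\ell[\mathfrak t]$, and the fixed identification $\overline{\mathbb Q}_\ell \cong \mathbb C$ of the Convention then gives a graded $\mathbb C$-algebra isomorphism $A \cong A_{(G_\Bbbk, \mathfrak N_\Bbbk)}$. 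This isomorphism respects the modules $K_\lambda$ and $\widetilde K_\lambda$ because they are built functorially from $\mathcal L$ via $i_\lambda^!$ and $(j_\lambda)_!$, and the $C_\lambda$-actions entering $(\ref{leray})$ are constructible. Therefore Theorem \ref{KS}, Proposition \ref{fgd-k}, and Corollaries \ref{absLS2} and \ref{absBH}, applied to $A_{(G_\Bbbk, \mathfrak N_\Bbbk)}$ (which satisfies both $(\spadesuit)$ and $(\clubsuit)$), transport across this isomorphism to yield the claimed conclusions for $A$ and its standard/dual standard modules over $\mathbb C$.

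The main obstacle in this approach is the careful transfer between the Betti and $\ell$-adic realizations: one must verify that the Yoneda multiplication, the $C_\lambda$-actions of $(\ref{leray})$, and the module structures on $K_\lambda$ and $\widetilde K_\lambda$ all commute with the chosen comparison isomorphism. Granted the standard compatibilities of the BBD formalism with base change, this is essentially formal, but it requires being explicit about how each algebraic structure of Section \ref{general_Ext} arises from the underlying geometry.
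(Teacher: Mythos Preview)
Your argument for $(\clubsuit)_2$ contains a genuine gap. You assert that ``the combination of purity and odd vanishing forces pointwise purity of $\mathcal L_\Bbbk$,'' but this is not a general fact: purity of a complex (i.e., purity of its perverse cohomology sheaves) together with parity vanishing of its stalks does \emph{not} imply that the stalks themselves are pure. Indeed, the purity of the exotic Springer fibers $\mu^{-1}(x)$ is precisely Corollary~\ref{espure}, which the paper deduces \emph{from} Proposition~\ref{eNc}, not the other way around; if your implication held, Corollary~\ref{espure} would be a triviality rather than one of the main results (see also Remark~\ref{orSh}~2), which notes an independent proof by Shoji--Sorlin). What you are missing is exactly the content of Theorem~\ref{critpure}: the paper verifies its condition~$a)'$ via the odd vanishing you cite, but then must also verify the nontrivial condition~$b)$ that $\ker\psi_\lambda \not\subset \ker\psi_\gamma$ for $\gamma \prec \lambda$. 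This is done by a concrete geometric construction (using the enlarged group $\mathbb G = G \times (\mathbb C^\times)^2$ rather than $G$): for each $\mu$ one exhibits a semisimple element $A_\mu \in (\mathrm{Lie}\,\mathbb G)(\mathbb R)$ whose fixed locus meets $\mathbb O_\mu$ in an open dense set but misses $\mathbb O_\lambda$ entirely, and then compares adjoint quotients. Without this, $(\clubsuit)_2$ is simply unproven.

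A secondary gap: your argument for $(\clubsuit)_1$ via the spectral sequence~(\ref{SS1}) presupposes that the non-equivariant $\mathrm{Ext}^\bullet(\mathcal L_\Bbbk, \mathcal L_\Bbbk)$ is pure, which you do not justify. The paper obtains $(\clubsuit)_1$ differently, by exhibiting an $\alpha$-partition of $F \times_{\mathfrak N} F$ into $G$-equivariant vector bundles over $G/B$ (citing \cite{K1}~1.5, 2.3), which shows that $A \cong H^G_\bullet(F \times_{\mathfrak N} F)$ is generated over $\mathbb C[\mathfrak t]$ by its top degree, hence by weight-zero elements. Your transfer of the algebra and module structures to characteristic zero is fine in spirit, and the paper does the same via \cite{BBD}~\S 6.
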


\begin{comment}
\begin{remark}
Since the structure of $G$-orbits does not change across characteristic, the only essential constraint in Proposition \ref{eNc} might be $p \neq \ell$, that is required for good behavior of $\ell$-adic sheaves.
\end{remark}
\end{comment}

\begin{proof}[Proof of Proposition \ref{eNc}]
As in \cite{BBD} \S 6 and \cite{BL}, we transplant the notion of (equivariant) derived category of mixed constructible/perverse sheaves from the sufficiently large characteristic case to the characteristic zero case by (the scalar extension of) \cite{BBD} 6.1.9 (note that our sheaves have finite monodromy by its $G$-equivariancy and $\# ( G \backslash \mathfrak N ) < \infty$, and such constructible $\overline{\mathbb Q} _{\ell}$-sheaves are in common over $\Bbbk$ and over $\mathbb C$; cf. \cite{BBD} 6.1.2A").

The condition that $(\spadesuit)$ and $(\clubsuit)$ holds for sufficiently large characteristic is equivalent to that to be hold in characteristic zero. In addition, the latter part of the assertion follows by the identification of their $\mathrm{Hom}$-spaces. Therefore, it suffices to verify the conditions $(\spadesuit)$ and $(\clubsuit)$ in characteristic zero.

Theorem \ref{eS} 1) and 2) asserts $(\spadesuit)$, 3) and 4) imply that the algebra $A$ (in Theorem \ref{eS} 5)) is the one we described in section one. The existence of an $\alpha$-partition (cf. \cite{DLP} 1.3) of the variety $F \times _{\mathfrak N} F$ into $G$-equivariant vector bundles on $G / B$ (corresponding to equi-dimensional irreducible components; all of them are $\dim \, F = 2 \dim G / B$ dimensional) can be read off from \cite{K1} 1.5 and 2.3. It follows that the $G$-equivariant Borel-Moore homology
$$H_{\bullet} ^G ( F \times _{\mathfrak N} F ) := H ^{- \bullet} ( BG, p_* p^! \mathbb C )$$
(where $p : EG \times _G ( F \times _{\mathfrak N} F ) \rightarrow BG$) is generated by its top term by the $\mathbb C [ \mathfrak t ]$-action (arising from $H ^{\bullet} _G ( G / B, \mathbb C )$ with the convention $\deg \, \mathfrak t^* = -2$) by \cite{CG} \S 5. By the isomorphism $A \cong H_{\bullet} ^G ( F \times _{\mathfrak N} F )$ as an algebra (cf. \cite{CG} \S 8), we conclude that $A$ is generated by $\mathfrak t^*$ and $\mathrm{Hom} _G ( \mathcal L, \mathcal L )$. Here the both of $\mathfrak t^*$ and $\mathrm{Hom} _G ( \mathcal L, \mathcal L )$ are weight zero (actually we count this on $\mathfrak N _{\Bbbk}$ for $p \gg 0$ to verify it on $\mathfrak N$), and hence $(\clubsuit)_1$.

We show $(\clubsuit)_2$ by Theorem \ref{critpure}. Since $(\spadesuit)$ and $(\clubsuit)_1$ also holds for $G$ replaced with $\mathbb G$, we can use the $\mathbb G$-action instead of the $G$-action. By Theorem \ref{eS} 6), the condition $a)'$ of Theorem \ref{critpure} follows.

We verify Theorem \ref{critpure} $b)$. Let $\mu \prec \lambda \in \Lambda$ (we borrow the notation from section one as we have $(\spadesuit)$). There exists a semisimple element $A _{\mu} \in ( \mathrm{Lie} \, \mathbb G ) ( \mathbb R )$ which gives $a _{\mu} := \exp A _\mu \in \mathbb G$ so that $\mathbb O _{\mu} ^{a_{\mu}} \subset \mathbb V ^{a_{\mu}}$ is open dense\footnote{By \cite{K1} \S 1.3, each $x_{\mu}$ is presented by a signed partition $\mathbf J = \{ J_1, J_2,\ldots \}$ dividing $[1,n]$ and a foot function $\delta_1 : [-n,n] \setminus \{ 0 \} \to \{ 0, 1 \}$ corresponding to a strict normal form {\it loc. cit.} 1.14. Fix $A_{\mu} := (S,r_1,r_2) \in ( \mathrm{Lie} \, T ) ( \mathbb R ) \oplus \mathbb R ^{\oplus 2}$ so that $a_{\mu} x_{\mu} = x_{\mu}$, $r_1, r_2 > 0, 2 r_1 \not\in \mathbb Z r_2$, and
$$\{ \pm \epsilon_i (S) \} _{i \in J_k^+} \cap \left\{ ( \{ \pm \epsilon_i (S) \} _{i \in J_{k'}^+} + \mathbb Z r_2) \cup \frac{1}{2} \mathbb Z r_2 \right\} = \emptyset \hskip 2mm \text{ for each } \hskip 2mm k,k' \hskip 2mm \text{ so that } \hskip 2mm \delta _1 (J_k) = \{ 0 \}.$$
Let $\mathbf J^0 := \{ J _k \mid \delta _1 ( J _k ) = \{ 0 \} \}$. Then, the centralizer $G^{\mu}$ of $A_{\mu}$ in $G$ is naturally contained in $\mathop{Sp} ( 2 n_0 ) \times \prod _{J_k \in \mathbf J^0} \mathop{GL} ( |J_k| )$, where $n_0:= n - \sum _{J_k \in \mathbf J^0} |J_k|$. Moreover, $\mathfrak N^{a_{\mu}}$ is contained in the product of nilpotent cones of $\mathop{GL} ( |J_k| )$ and smaller $\mathfrak N$ for $\mathop{Sp} ( 2n_0 )$ (say $\mathfrak N_{n_0}$), and hence we have a product decomposition of the situation. Here $x_{\mu}$ is decomposed into the product of regular nilpotent elements of $\mathop{GL} ( |J_k| )$ and an element $x_{\mu}' \in \mathfrak N_{n_0}$ (cf. {\it loc. cit.} 1.11). Thus, each $\mathop{GL} ( |J_k| )$-part defines a dense open subset.

Hence, we only need to consider the case of strict marked partition with $\delta_1 ( J_k ) \neq \{ 0 \}$ for all $J_k$ (i.e. the case $n = n_0$). Decompose $x_{\mu} = x_{\mu}^1 \oplus x_{\mu}^2 \in V_1 \oplus V_2$. The condition $2 r_1 \not\in \mathbb Z r_2$ guarantees that $G^{\mu} x_{\mu}^2 \subset V_2 ^{a_{\mu}}$ is open dense. Now $G^{\mu} x_{\mu}$ gives an open dense subset of a rank $\dim V_1 ^{a_{\mu}}$ vector bundle over $G^{\mu} x_{\mu}^2$, and hence $G^{\mu} x_{\mu} \subset \mathfrak N ^{a_{\mu}}$ is open dense.}. Then, we have $\mathbb O _{\lambda} ^{a _{\mu}} = \emptyset$. It follows that $A_\mu$ is $\mathbb G$-conjugate to an element of $\mathrm{Lie} \, \mathsf{Stab} _{\mathbb G} ( x _{\mu} )$, but is not $\mathbb G$-conjugate to an element of $\mathrm{Lie} \, \mathsf{Stab} _{\mathbb G} ( x _{\lambda} )$. Passing to the adjoint quotients, we have maps
$$\mathrm{Lie} \, \mathsf{Stab} _{\mathbb G} ( x _{\mu} ) /\!\!/ \mathsf{Stab} _{\mathbb G} ( x _{\mu} ) \stackrel{\vartheta_{\mu}}{\longrightarrow} \mathrm{Lie} \, \mathbb G /\!\!/ \mathbb G \stackrel{\vartheta_{\lambda}}{\longleftarrow} \mathrm{Lie} \, \mathsf{Stab} _{\mathbb G} ( x _{\lambda} ) /\!\!/ \mathsf{Stab} _{\mathbb G} ( x _{\lambda} ).$$
The point $A_{\mu} /\!\!/ \mathbb G$ lies in the image of $\vartheta _{\mu}$, but not lie on the image of $\vartheta _{\lambda}$. In particular, evaluation at $A_\mu$ defines proper ideals of $H^{\bullet} _{\mathbb G} ( \{ \mathrm{pt} \} )$ and $H^{\bullet}_{\mathbb G} ( \mathbb{O} _{\mu} )$, but it does not define a proper ideal of $H^{\bullet}_{\mathbb G} ( \mathbb{O} _{\lambda} )$. Therefore, the natural map $\psi _{\mu} : H^{\bullet}_{\mathbb G} ( \{ \mathrm{pt} \} ) \rightarrow H^{\bullet}_{\mathbb G} ( \mathbb{O} _{\mu} )$ does not factor through $\mathrm{Im} \, \psi _{\lambda}$, which verifies the condition of Theorem \ref{critpure} $b)$ as required.
\end{proof}

\begin{corollary}\label{espure}
For each $x \in \mathfrak N ( \mathbb C )$, the variety $\mu^{-1} (x)$ has a pure homology. \hfill $\Box$
\end{corollary}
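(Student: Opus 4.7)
The plan is to deduce the purity of $H^{\bullet}(\mu^{-1}(x),\mathbb{C})$ from the pointwise purity of $\mathsf{IC}_{\lambda}$ established in Proposition \ref{eNc}, via the fact that the fiber cohomology appears as a stalk of $\mu_*\underline{\mathbb{C}}$.

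First I would observe that $\mu:F\to\mathfrak{N}$ is a proper morphism, since it factors through the closed embedding $F\hookrightarrow G/B\times\mathbb{V}$ followed by the second projection, and $G/B$ is complete. By proper base change, for each closed point $x\in\mathfrak{N}(\mathbb{C})$ we have a natural identification
\begin{equation*}
i_x^*(\mu_*\underline{\mathbb{C}})^{\bullet}\;\cong\;H^{\bullet}(\mu^{-1}(x),\mathbb{C}),
\end{equation*}
so the purity of the fiber cohomology is equivalent to the pointwise purity of the stalk of $\mathcal{L}=\mu_*\underline{\mathbb{C}}[\dim F]$ at $x$ (up to the degree shift by $\dim F$).

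Next I would combine the strict semi-smallness of $\mu$ (Theorem \ref{eS} 3)) with Theorem \ref{eS} 4): these say that $\mathcal{L}$ decomposes as a direct sum of $G$-equivariant simple perverse sheaves, and that every $\mathsf{IC}_{\lambda}$ appears (with some multiplicity space $L_{\lambda}\neq 0$) as a direct summand. Hence the stalk $i_x^*\mathcal{L}$ is, up to grading shifts and the multiplicity spaces, a direct sum of stalks of the various $\mathsf{IC}_{\lambda}$'s along the orbit through $x$. Therefore, once we know each $\mathsf{IC}_{\lambda}$ is pointwise pure, purity of $H^{\bullet}(\mu^{-1}(x),\mathbb{C})$ follows.

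The last ingredient is Proposition \ref{eNc}, which establishes $(\clubsuit)_2$ for $\mathfrak{N}_{\Bbbk}$ over an algebraic closure of a sufficiently large finite field: each $\mathsf{IC}_{\lambda}$ (normalized to weight zero) is pointwise pure of weight zero. The step which requires the most care is to transfer this pointwise purity statement from positive characteristic back to the characteristic zero situation over $\mathbb{C}$, where ``pure'' has to be interpreted in the Hodge-theoretic sense. However, this transfer is precisely the mechanism already used inside the proof of Proposition \ref{eNc}: the equivariant constructible $\overline{\mathbb{Q}}_{\ell}$-sheaves arising here have finite monodromy, so the comparison results of \cite{BBD} 6.1.9 (suitably scalar-extended) match mixed structures in the two settings. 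Applying this to the stalk at $x$ gives that $i_x^*\mathcal{L}$ is pure, so by the displayed base-change identity $H^{\bullet}(\mu^{-1}(x),\mathbb{C})$ is pure, as desired. The main obstacle, in practice, is not any new geometric input but the care required in this last transfer step, which is why it is essential to have already packaged it in the earlier proofs.
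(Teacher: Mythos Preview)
Your proposal is correct and matches the paper's intended argument: the corollary is stated with only a $\Box$, as it follows immediately from Proposition~\ref{eNc} (which supplies $(\clubsuit)_2$, i.e.\ pointwise purity of each $\mathsf{IC}_\lambda$) together with the decomposition of $\mathcal L=\mu_*\underline{\mathbb C}[\dim F]$ from Theorem~\ref{eS} 3)--4) and proper base change. One small remark: you compute the ordinary cohomology via the stalk $i_x^*\mathcal L$, whereas the paper's ``homology'' is Borel--Moore homology, i.e.\ the costalk $i_x^!\mathcal L$ (cf.\ the identification $K_\lambda\cong H_{-\bullet}(\mu^{-1}(x_\lambda))\langle d_\lambda\rangle$ just after Proposition~\ref{eorb}); since $C_\lambda=\{1\}$ here and each $\mathsf{IC}_\lambda$ is self-dual, pointwise purity of stalks is equivalent to that of costalks, so your version and the paper's are interchangeable.
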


Thanks to Proposition \ref{eNc}, we can apply the machinery and notation of section one. In particular, we have $\underline{\Lambda} = G \backslash \mathfrak N$ and its closure relation $\prec$. We set $A_n := A$ and replace $x$ with $\lambda \in \Lambda$ such that $x$ is $G$-conjugate to $x_{\lambda} \in \mathbb O_{\lambda}$ in the below.

\begin{lemma}\label{As} Under the above setting, the followings hold:
\begin{enumerate}
\item We have $C _{\lambda} = \{1\}$ for every $\lambda \in \Lambda$. In particular, we have $\Lambda = \underline{\Lambda}$;
\item Each of the graded simple $A$-module $L_{\lambda}$ is concentrated in degree zero;
\item For each $\lambda \in \Lambda$, the $A$-module $L_{\lambda}$ is identified with the irreducible $W$-module $\mathsf{L} _{\lambda} \left< d_{\lambda} \right>$ through the embedding $\mathbb C W = A ^0 \subset A$.
\end{enumerate}
\end{lemma}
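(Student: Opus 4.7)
The plan is to treat the three parts in order, the first two being essentially bookkeeping from Theorem \ref{eS}, while the third requires carefully matching the Weyl group action.

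For part (1), Theorem \ref{eS} 2) asserts that every stabilizer $\mathsf{Stab}_G(x_{\underline{\lambda}})$ is connected, so each component group $C_{\underline{\lambda}}$ is trivial, and consequently $\mathsf{Irr}\,C_{\underline{\lambda}} = \{\mathbf{1}\}$. The labelling set of pairs $(\underline{\lambda},\xi)$ then collapses to $\underline{\Lambda}$.

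For part (2), the key input is the strict semi-smallness of $\mu$ from Theorem \ref{eS} 3), which says that $\mathcal{L} := \mu_*\mathbb{C}[\dim F]$ lies in perverse degree zero. Combined with Theorem \ref{eS} 4), the decomposition theorem yields an isomorphism
$$\mathcal{L} \;\cong\; \bigoplus_{\lambda \in \Lambda} M_\lambda \boxtimes \mathsf{IC}_\lambda$$
inside the abelian category of $G$-equivariant perverse sheaves, where each multiplicity space $M_\lambda$ is a non-zero vector space concentrated in cohomological degree zero. Comparing this with the convention of \S 1, we may take $L_\lambda = M_\lambda$ (the self-duality condition $L_{\lambda^\vee}\cong L_\lambda^*$ is automatic since $\mathcal{L}$ is self-dual, $\lambda^\vee=\lambda$ by part (1), and the $M_\lambda$ carry the natural pairing from $\mathcal{L}\cong\mathbb{D}\mathcal{L}$). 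So $L_\lambda$ is concentrated in degree zero. This is consistent with the observation that $A^0=\mathbb{C}W$ is semisimple by Theorem \ref{eS} 5), so $A^{>0}$ is the graded Jacobson radical of $A$ and every graded simple $A$-module is a simple $\mathbb{C}W$-module placed in a single grade.

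For part (3), the isomorphism $\mathbb{C}W \cong A^0 = \mathrm{End}_{\mathsf{Perv}_G}(\mathcal{L})$ in Theorem \ref{eS} 5) is the Springer realization, so the $\mathbb{C}W$-action on $L_\lambda$ is exactly the action on the $\mathsf{IC}_\lambda$-multiplicity space of $\mathcal{L}$. Taking the costalk $i_{x_\lambda}^!\mathcal{L}$ and using the fact that for $\mu\neq\lambda$ the costalk of $\mathsf{IC}_\mu$ at $x_\lambda$ is supported in strictly positive degrees (by the minimal-extension support condition, since there are no non-trivial local systems to worry about by part (1)), the top degree of $i_{x_\lambda}^!\mathcal{L}$ isolates the $\mathsf{IC}_\lambda$-isotypic summand. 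Via Poincar\'e duality this top degree is identified with $H_{d_\lambda}(\mu^{-1}(x_\lambda),\mathbb{C})$ equipped with the Springer $W$-action, and by Theorem \ref{eS} 7) the map $\iota_{x_\lambda}$ carries it isomorphically onto $\mathsf{L}_\lambda\subset H_\bullet(G/B,\mathbb{C})$. Finally, the harmonic realization $H_{-\bullet}(G/B,\mathbb{C})\subset\mathbb{C}[\mathfrak{t}^*]$ in Theorem \ref{eS} 8) uses the convention $\deg\mathfrak{t}=-2$, so $\mathsf{L}_\lambda$ there sits in polynomial degree $-d_\lambda$; to match the cohomological degree zero where $L_\lambda$ lives, we shift by $\langle d_\lambda\rangle$, giving the desired identification $L_\lambda\cong\mathsf{L}_\lambda\langle d_\lambda\rangle$ as graded $\mathbb{C}W$-modules.

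The only non-formal step is the compatibility of actions in part (3): one must verify that the endomorphism-algebra realization of $\mathbb{C}W$ inside $A$ coincides with the classical Springer action on fibers, and then carefully track the grading shift between the cohomological grading on $\mathcal{L}$ and the polynomial grading used in Theorem \ref{eS} 8). Both are by now standard in this setting (the former is the content of the algebra isomorphism of \cite{K1} 8.1 recalled in Theorem \ref{eS} 5)), so no new ingredient beyond Theorem \ref{eS} is needed.
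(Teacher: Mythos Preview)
Your proof is correct and follows essentially the same approach as the paper, which simply cites Theorem \ref{eS} 2) for part (1), Theorem \ref{eS} 3) for part (2), and Theorem \ref{eS} 5), 7), 8) for part (3). You have filled in the details behind these citations (the strict semi-smallness giving perverse degree zero multiplicity spaces, and the degree-matching between the harmonic realization and the cohomological grading), but the underlying argument is identical.
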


\begin{proof}
The first assertion follows by Theorem \ref{eS} 2). The second assertion follows by Theorem \ref{eS} 3). The third assertion follows by the presentation of $A$ in Theorem \ref{eS} 5), and conventions in Theorem \ref{eS} 7), 8).
\end{proof}

Let $\mathsf{triv}$ and $\mathsf{sgn}$ be the trivial and sign representations of $W$, respectively. We define $\mathsf{Ssgn}$ to be the unique one-dimensional representation of $W$ so that $\mathsf{Ssgn} \not\cong \mathsf{sgn}$ as $W$-representations and $\mathsf{Ssgn} \cong \mathsf{sgn}$ as $\mathfrak S_n$-representations.

\begin{proposition}[\cite{K4} 10.7, cf. sign twisted from \cite{K1} 8.3]\label{eorb}
Let $\lambda _0, \lambda_1 \in \Lambda$ be the labels corresponding to $W$-representations $\mathsf{sgn}$ and $\mathsf{Ssgn}$, respectively. We have $\mathbb O _{\lambda _0} = \{0\}$, and $\mathbb O _{\lambda _1}$ is the open dense $G$-orbit in $V_1 \subset \mathfrak N$ $($here $\mathsf{triv}$ corresponds to the open dense $G$-orbit of $\mathfrak N)$. \hfill $\Box$
\end{proposition}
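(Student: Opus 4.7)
The plan is to match the invariant $d_{\lambda} := 2\dim \mu^{-1}(x_{\lambda})$ coming from Theorem \ref{eS} 7)--8) with the polynomial degree at which each $W$-irreducible has its unique maximum-degree realization inside the coinvariant algebra $\mathbb{C}[\mathfrak{t}^*]/\langle \mathbb{C}[\mathfrak{t}^*]^{W}_+\rangle \cong H_{-\bullet}(G/B,\mathbb{C})$. By Lemma \ref{As}, $\lambda\mapsto L_{\lambda}$ is a bijection between $\Lambda$ and $\mathsf{Irr}\, W$. The four one-dimensional representations of $W(\mathsf{C}_n)$ are realized uniquely by $1$, $\prod_i\epsilon_i$, $\prod_{i<j}(\epsilon_i^2-\epsilon_j^2)$, and $\prod_i\epsilon_i\cdot\prod_{i<j}(\epsilon_i^2-\epsilon_j^2)$, with fake degrees $1$, $q^n$, $q^{n(n-1)}$, and $q^{n^2}$ respectively, so once we know $d_{\lambda}$ we pin down $\lambda$ unambiguously.

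I would first treat the two extremes. For $\mathsf{triv}$, the realization is by constants, so $d_{\mathsf{triv}}=0$; hence $\mu^{-1}(x_{\mathsf{triv}})$ is finite, and by strict semi-smallness and surjectivity of $\mu$ (Theorem \ref{eS} 3)--4)) this forces $\mathbb{O}_{\mathsf{triv}}$ to be the unique open dense $G$-orbit of $\mathfrak{N}$. For $\mathsf{sgn}$, the realization is by the product of the positive coroots (up to the $\prod_i\epsilon_i$ factor) of polynomial degree $|R^+|=n^2$, so $d_{\lambda_0}=2n^2=2\dim G/B$. Since $g\in G$ is invertible, the projection $\mu^{-1}(x)\hookrightarrow F\to G/B$, $(gB,u)\mapsto gB$, is a closed immersion; so $\dim\mu^{-1}(x_{\lambda_0})=n^2$ forces $\mu^{-1}(x_{\lambda_0})=G/B$. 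As $\mu(G/B\times\{0\})=\{0\}$ in $\mathbb{V}$, we conclude $x_{\lambda_0}=0$.

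For $\mathsf{Ssgn}$, the unique maximum-degree realization is the squared Vandermonde $\prod_{i<j}(\epsilon_i^2-\epsilon_j^2)$ of polynomial degree $n(n-1)$, so $d_{\lambda_1}=2n(n-1)$. I would then directly compute $\mu^{-1}(v\oplus 0)$ for a nonzero $v\in V_1$: writing $\mathbb{V}^+=V_1^+\oplus V_2^+$ with $V_1^+=\bigoplus_i\mathbb{C} v_{\epsilon_i}$ a Lagrangian of $V_1$ (this falls out of the shape of $R^+$), the invertibility of $g$ forces the $V_2^+$-component of any $u\in\mathbb{V}^+$ with $gu=v\oplus 0$ to vanish; the fiber becomes $\{gB: v\in g\cdot V_1^+\}$, i.e., complete symplectic flags whose Lagrangian part contains $v$. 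This is a $\mathrm{Flag}(n)$-bundle (dimension $\binom{n}{2}$) over the sub-Lagrangian Grassmannian $\mathrm{LG}(n-1,2n-2)$ of Lagrangians through $v$ (dimension $\binom{n}{2}$), of total dimension $n(n-1)=d_{\lambda_1}/2$. This identifies $\mathbb{O}_{\lambda_1}$ with the open dense $G$-orbit in $V_1\subset\mathfrak{N}$. The main obstacle is the $\mathsf{Ssgn}$ step, which requires both the explicit identification of $V_1^+$ as a Lagrangian (from the coroot structure) and the Lagrangian-Grassmannian fibration computation; the trivial and sign cases are essentially formal from semi-smallness and the extreme values of $d_{\lambda}$.
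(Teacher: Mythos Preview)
The paper does not prove this proposition; it is cited from \cite{K4} and \cite{K1} with no argument given. So the comparison is only to your proposal itself.

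Your treatment of $\mathsf{triv}$ and $\mathsf{sgn}$ is correct, and is cleaner if you invoke strict semi-smallness (Theorem~\ref{eS}~3)) in the form $d_\lambda=\dim F-\dim\mathbb O_\lambda=2n^2-\dim\mathbb O_\lambda$: then $d_\lambda=0$ forces $\mathbb O_\lambda$ to be open dense, and $d_\lambda=2n^2$ forces $\dim\mathbb O_\lambda=0$, hence $\mathbb O_\lambda=\{0\}$. Your fibre computation for $v\oplus 0$ with $v\in V_1\setminus\{0\}$ is also correct.

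There is, however, a genuine gap in the $\mathsf{Ssgn}$ step. Knowing $d_{\lambda_1}=2n(n-1)$ and exhibiting \emph{one} orbit with fibre dimension $n(n-1)$ does not identify $\mathbb O_{\lambda_1}$: several orbits can share the same dimension, and the map $\lambda\mapsto d_\lambda$ is not injective. Concretely, for $n=2$ the two non-$\mathsf{triv}$, non-$\mathsf{sgn}$ one-dimensional characters both have $b$-invariant $n=n(n-1)=2$ (realized by $\epsilon_1\epsilon_2$ and by $\epsilon_1^2-\epsilon_2^2$ respectively), so there are two distinct $4$-dimensional orbits and your argument cannot tell them apart. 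For $n\ge 3$ the gap is easy to close: your description of $\mu^{-1}(v\oplus 0)$ as a $\mathrm{Flag}(n)$-bundle over $\mathrm{LG}(n-1,2n-2)$ shows the fibre is irreducible, hence $\dim\mathsf L_\lambda=1$; since $\mathsf{triv}$ and $\mathsf{sgn}$ are already taken and the four one-dimensional characters have distinct $b$-invariants $0<n<n(n-1)<n^2$, the value $n(n-1)$ pins down $\mathsf{Ssgn}$. For $n=2$ you must supply something more --- either compute the $W$-action on $H_{\mathrm{top}}(\mu^{-1}(v\oplus 0))$ directly (e.g.\ check that the sign-change generator $s_n$ acts trivially), or appeal to the bipartition parametrization and the explicit exotic Springer correspondence in \cite{K1,K2} as the paper does.
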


For each $\lambda \in \Lambda$, we have
\begin{align*}
K_{\lambda} & = H ^{\bullet} i_{\lambda} ^! \mathcal L[ \dim \mathbb O _{\lambda}] \cong \bigoplus _{i \in \mathbb Z} H ^{i} i_{\lambda} ^! \mathcal L[ \dim \mathbb O _{\lambda}]\\
& = \bigoplus _{i \in \mathbb Z} H ^{i} i_{\lambda} ^! \mu _* \underline{\mathbb C} [ \dim F + \dim \mathbb O _{\lambda}] \cong \bigoplus _{i \in \mathbb Z} H ^{i} ( \mu^{-1} ( x _{\lambda} ), \omega [ - d _{\lambda} ] ),
\end{align*}
where $\omega$ is the dualizing complex of $\mu^{-1} ( x _{\lambda} )$ and the last equation follows from the base change (and strict semi-smallness for the grading). This implies that $K_{\lambda} \cong H_{- \bullet} ( \mu^{-1} ( x _{\lambda} ), \mathbb C ) \left< d_{\lambda} \right>$ as graded $A$-modules.

Recall that each $K_{\lambda}$ is a graded $A$-module presented as:
\begin{equation}
K_{\lambda} = P _{\lambda} / \sum _{f \in \mathrm{hom} _{A} ( P_{\mu}, P _{\lambda}) ^{> 0}, \mu \preceq \lambda} \mathrm{Im} \, f\label{defPtr}
\end{equation}
by Theorem \ref{KS} 3) and 4). The main result in this section is:

\begin{theorem}\label{Shoji}
For each distinct $\lambda, \mu \in \Lambda$, we have
\begin{equation}
\mathrm{ext} ^{\bullet} _A ( K _{\lambda}, K _{\mu} ^* ) = \{ 0 \},  \hskip 3mm \text{ and } \hskip 3mm \left< K _{\lambda}, K _{\mu} ^* \right> = 0.\label{eLSorth}
\end{equation}
In addition, each $\imath _{\lambda}$ is an inclusion of $A$-modules.
\end{theorem}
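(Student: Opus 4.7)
The vanishings in (\ref{eLSorth}) come essentially for free. By Lemma \ref{As}(1), every component group $C_\lambda$ is trivial, so $\Lambda=\underline{\Lambda}$ and distinct labels cannot lie in the same $\sim$-class; since Proposition \ref{eNc} has already verified $(\spadesuit)$ and $(\clubsuit)$ for $(G,\mathfrak N)$, Corollary \ref{absLS2} applies verbatim and gives both $\mathrm{ext}^{\bullet}_A(K_\lambda,K_\mu^*)=\{0\}$ and $\left<K_\lambda,K_\mu^*\right>_{\mathsf{gEP}}=0$ for every $\lambda\neq\mu$.

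For the injectivity of $\imath_\lambda$, first note that (by the remark immediately after Theorem \ref{eS}) $\imath_\lambda$ is an $A$-module map once $K_\lambda$ is identified with $H_{-\bullet}(\mu^{-1}(x_\lambda),\mathbb C)\left<d_\lambda\right>$ via base change and strict semi-smallness (Theorem \ref{eS}(3)). The top Borel-Moore degree $d_\lambda$ corresponds to grading degree zero of $K_\lambda$, and by Theorem \ref{eS}(7), $\imath_\lambda$ sends this piece isomorphically onto $\mathsf{L}_\lambda\subset H_{-\bullet}(G/B,\mathbb C)\left<d_\lambda\right>$. Since $L_\lambda$ lives in degree zero (Lemma \ref{As}(2)) and generates $K_\lambda$ as an $A$-module by Lemma \ref{hd}, the image of $\imath_\lambda$ is exactly the cyclic submodule $M_\lambda:=A\cdot\mathsf{L}_\lambda\subset H_{-\bullet}(G/B,\mathbb C)\left<d_\lambda\right>$, so we obtain a surjection $\pi_\lambda:K_\lambda\twoheadrightarrow M_\lambda$ whose kernel $N_\lambda$ has composition factors only among $\{L_\nu\left<i\right>:\nu\preceq\lambda,\ (\nu,i)\neq(\lambda,0)\}$ by Theorem \ref{KS}(1).

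The heart of the matter is showing $\pi_\lambda$ is an isomorphism. My plan is to match graded characters: on the one hand, by the Brauer-Humphreys reciprocity (Corollary \ref{absBH}), Theorem \ref{KS}(4), and the Cartan determinant formula (Corollary \ref{cartan}), the matrix $[K:L]$ is determined recursively from $(\Lambda,\prec)$ and the graded Cartan invariants of $A$ via a Lusztig-Shoji type algorithm (Remark \ref{Rcartan}(2)); on the other hand, by Theorem \ref{eS}(8)-(9), the cyclic harmonic submodule $M_\lambda=\mathbb C[\mathfrak t]\cdot\mathbb C W\cdot p_\lambda$ has its graded character computable directly from the Lusztig-Spaltenstein datum $(p_\lambda,W_\lambda)$. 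The main obstacle is the precise matching of these two computations; equivalently, one can verify that $M_\lambda$ itself satisfies the recursive presentation (\ref{defPtr}) of $K_\lambda$ (using the pointwise purity $(\clubsuit)_2$ secured by Proposition \ref{eNc}). Once this matching is achieved, surjectivity of $\pi_\lambda$ together with the matching graded dimensions upgrades it to an isomorphism, whence $\imath_\lambda$ is an inclusion.
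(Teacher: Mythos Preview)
Your treatment of the orthogonality (\ref{eLSorth}) is fine and matches the paper exactly: since $C_\lambda=\{1\}$ for all $\lambda$ (Lemma \ref{As}(1)), distinct labels are never $\sim$-equivalent, and Corollary \ref{absLS2} gives both vanishings at once.

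The injectivity argument, however, has a genuine gap. You correctly reduce to showing that the surjection $\pi_\lambda:K_\lambda\twoheadrightarrow M_\lambda$ is an isomorphism, but then your ``plan'' --- to compute $\mathsf{gch}\,M_\lambda$ directly from the Lusztig--Spaltenstein datum $(p_\lambda,W_\lambda)$ and match it against the Lusztig--Shoji output for $[K:L]$ --- is never carried out, and it is not clear that it \emph{can} be without essentially assuming what you want to prove. Knowing that $p_\lambda$ generates a one-dimensional $W_\lambda$-representation at maximal degree (Theorem \ref{eS}(9)) does not by itself determine the graded $W$-character of the $A$-saturation $A\cdot\mathsf L_\lambda$ inside $\mathbb C[\mathfrak t^*]$; that computation is precisely the content of Shoji's conjecture (Corollary \ref{SC}), which the theorem is meant to establish. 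So the proposed route is, at best, circular.

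The paper's argument is quite different in spirit. It reformulates injectivity as the Ext-vanishing
\[
\mathrm{ext}^1_A(M_\lambda,L_\mu)=\{0\}\qquad\text{for all }\lambda\prec\mu,
\]
and proves this by induction on the rank $n$. The base cases $W_\lambda=W$ (i.e.\ $L_\lambda\cong\mathsf{sgn}$ or $\mathsf{Ssgn}$) are handled by direct geometric arguments (the coinvariant algebra for $x_\lambda=0$, and Poincar\'e duality of the smooth fiber $\mu^{-1}(x_\lambda)$ in the other case). For $W_\lambda\neq W$, one restricts to the smaller algebra $A_\lambda=\mathbb C W_\lambda\ltimes\mathbb C[\mathfrak t]\cong\boxtimes_i A_{n_i}$, applies the induction hypothesis to identify the $A_\lambda$-submodule $M_\lambda^\downarrow$ generated by $p_\lambda$ with a box product of standard modules in smaller ranks, and then uses the minimal projective resolution (Corollary \ref{kf} and Proposition \ref{fgd-k}) together with the degree bound from Theorem \ref{eS}(9) to rule out the offending $\mathrm{ext}^1$. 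This inductive mechanism, and in particular the passage through $W_\lambda$, is the missing idea in your proposal.
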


Before proving Theorem \ref{Shoji}, let us summarize its consequences. For each $\lambda \in \Lambda$, we define an ideal
$$I_{\lambda} := \{ f \in \mathbb C [ \mathfrak t ] \mid f \mathsf{L}_{\lambda} = \{ 0 \} \}.$$

\begin{corollary}[Shoji's conjecture \cite{Sh4} 3.13]\label{SC}
For each $\lambda \in \Lambda$, the module $K_{\lambda} \left< - d_{\lambda} \right>$ is identified with the span of derivatives of $\mathsf{L} _{\lambda}$ in the polynomial ring $\mathbb C [ \mathfrak t^* ]$. In addition, we have $K_{\lambda}^* \left< d_{\lambda} \right> \cong \mathbb C [\mathfrak t] / I_{\lambda}$ as graded $A$-modules.
\end{corollary}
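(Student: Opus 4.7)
The plan is to deduce both claims from Theorem \ref{Shoji}: the injectivity and $A$-linearity of $\imath_\lambda$ is the key geometric input, and once in hand, the first assertion becomes a matter of identifying the image of $K_\lambda$, and the second assertion is a formal duality computation.

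First, I combine $\imath_\lambda$ with the identification $K_\lambda \cong H_{-\bullet}(\mu^{-1}(x_\lambda), \mathbb C)\langle d_\lambda\rangle$ (established in the paragraph preceding (\ref{defPtr})) to obtain a graded $A$-module embedding
$$K_\lambda\langle -d_\lambda\rangle \hookrightarrow H_{-\bullet}(G/B,\mathbb C) \subset \mathbb C[\mathfrak t^*].$$
Since $L_\lambda$ is the head of $K_\lambda$, it generates $K_\lambda$ over $A$ by Lemma \ref{hd}; under the above embedding $L_\lambda$ is identified with $\mathsf L_\lambda$ by Lemma \ref{As} 3) together with the defining relation $\imath_\lambda(H_{d_\lambda}(\mu^{-1}(x_\lambda)))=\mathsf L_\lambda$ of Theorem \ref{eS} 7). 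As $\mathsf L_\lambda$ is $W$-stable one has $A\cdot \mathsf L_\lambda = \mathbb C[\mathfrak t]\cdot \mathsf L_\lambda$, and therefore the image of $K_\lambda\langle -d_\lambda\rangle$ is exactly the span of derivatives of $\mathsf L_\lambda$ in $\mathbb C[\mathfrak t^*]$. This proves the first assertion.

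For the second assertion, I dualize using the classical perfect pairing
$$\langle p, f\rangle := (f(\partial) p)(0),\qquad p \in \mathbb C[\mathfrak t^*]_k,\ f \in \mathbb C[\mathfrak t]_k,$$
where the subscripts denote polynomial degree. This pairing is $W$-equivariant (as $W$ acts dually on $\mathfrak t$ and $\mathfrak t^*$), and for $\xi \in \mathfrak t^*$ the operator identity $f(\partial)\partial_\xi = (\xi f)(\partial)$ yields $\langle \partial_\xi p, f\rangle = \langle p, \xi f\rangle$; hence the pairing $A$-equivariantly identifies $\mathbb C[\mathfrak t^*]$ (with the derivative $A$-action) with the graded dual of $\mathbb C[\mathfrak t]$ (with the multiplication $A$-action). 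Setting $M := \mathbb C[\mathfrak t]\cdot \mathsf L_\lambda$, the graded dual $M^*$ is the quotient $\mathbb C[\mathfrak t]/M^\perp$. For $f \in \mathbb C[\mathfrak t]$ one has $f \in M^\perp$ iff $g(\partial)(f(\partial) q)|_0 = 0$ for all $g \in \mathbb C[\mathfrak t]$ and $q \in \mathsf L_\lambda$; since a polynomial vanishes iff all its derivatives at the origin do, this collapses to $f(\partial) \mathsf L_\lambda = 0$, i.e. $M^\perp = I_\lambda$. Combined with the shift identity $(K_\lambda\langle -d_\lambda\rangle)^* = K_\lambda^*\langle d_\lambda\rangle$, this yields the second assertion.

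The main difficulty is already absorbed into Theorem \ref{Shoji} via the orthogonality statement (\ref{eLSorth}); the remaining steps---generation by the head, $A$-equivariance of the derivative--multiplication pairing, and the computation of $M^\perp$---are routine once the geometric input is in place. The only care required is tracking the $\langle d_\lambda\rangle$ shift and the conversion between polynomial and $A$-degree.
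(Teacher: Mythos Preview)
Your proof is correct and follows essentially the same route as the paper's: both deduce the first assertion from the injectivity of $\imath_\lambda$ in Theorem \ref{Shoji} together with the fact that $K_\lambda$ is generated by its head $L_\lambda$ (the paper cites (\ref{defPtr}) directly; you use Lemma \ref{hd}), and then identify the image as the $\mathbb C[\mathfrak t]$-span of $\mathsf L_\lambda$ by $W$-stability. For the second assertion the paper simply declares it a standard consequence and points to \cite{Ens} B.1, whereas you spell out the perfect-pairing argument and the computation $M^\perp = I_\lambda$; your extra detail is correct and perhaps more self-contained, but there is no substantive difference in strategy.
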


\begin{proof}[Proof of Corollary \ref{SC} modulo Theorem \ref{Shoji}]
The module $K_{\lambda}$ is spanned by $L_{\lambda}$ by (\ref{defPtr}). Since $A \cong \mathbb C [ \mathfrak t ] \otimes _{\mathbb C} \mathbb C W$ and $L_{\lambda}$ is $W$-stable, the image of $\imath _{\lambda}$ is exactly the span of derivatives of $\mathsf{L}_{\lambda} = \imath _{\lambda} L _{\lambda} \left< - d _{\lambda} \right>$ in the space of (harmonic) polynomials. Therefore, the latter half of Theorem \ref{Shoji} implies the former part of the assertion. The latter part of the assertion is a standard consequence of the former part of the assertion (see e.g. \cite{Ens} B.1).
\end{proof}

For each $\lambda, \mu \in \Lambda$, we define $K _{\mu, \lambda} (t) \in \mathbb Q (t)$ by
$$[K _{\lambda} ^*] = \sum _{\mu \in \Lambda} \overline{[K_{\lambda} : L_{\mu}]} [L_{\mu}] = \sum _{\mu \in \Lambda} t ^{- d_{\lambda}} K _{\mu, \lambda} (t^{2}) [L_{\mu}] \in K ( A \mathchar`-\mathsf{gmod} ).$$

\begin{corollary}[Numerical part of Shoji's conjecture \cite{Sh4} 3.13]\label{SS}
For each $\lambda, \mu \in \Lambda$, $K _{\mu, \lambda} (t)$ is a polynomial with non-negative integer coefficients. In addition, $\{ K _{\mu, \lambda} (t) \} _{\lambda, \mu \in \Lambda}$ are the modified Kostka polynomials of type $\mathsf{B}$ attached to the limit symbols.
\end{corollary}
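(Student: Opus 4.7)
The plan is to separate the two assertions: polynomiality and non-negativity are essentially formal, while the identification with Shoji's modified Kostka polynomials is the substantive claim and rests on recognising $\{K_{\mu,\lambda}(t)\}$ as the output of the Lusztig--Shoji algorithm, together with the combinatorial input of Achar--Henderson \cite{AH}.

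For the first claim, Lemma \ref{fd} gives $\dim K_\lambda < \infty$, so $[K_\lambda : L_\mu] \in \mathbb Z_{\geq 0}[t]$ is a polynomial in $t$; its degree is at most $d_\lambda$ because, through the identification $K_\lambda \cong H_{-\bullet}(\mu^{-1}(x_\lambda),\mathbb C)\langle d_\lambda\rangle$, the grading of $K_\lambda$ ranges over $[0,d_\lambda]$. Via Lemma \ref{dualmult} the definition rearranges as $K_{\mu,\lambda}(t^2) = t^{d_\lambda}\overline{[K_\lambda:L_\mu]}$, which is therefore a polynomial in $t$ with non-negative integer coefficients. The vanishing of $H_{\mathrm{odd}}(\mu^{-1}(x_\lambda),\mathbb C)$ in Theorem \ref{eS} 6) forces only even powers of $t$ to appear, so $K_{\mu,\lambda}(t) \in \mathbb Z_{\geq 0}[t]$ as required.

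For the identification, I would show that the matrix $K(t) := ([K_\lambda:L_\mu])_{\lambda,\mu}$ is uniquely pinned down by the following data:
(i) the labelling set $\Lambda$ with its closure order $\prec$, matched to $W$-representations by Lemma \ref{As} and Proposition \ref{eorb};
(ii) the blockwise upper-unitriangularity $K_{\lambda,\mu}(t) = \delta_{\lambda,\mu}$ on the diagonal and $K_{\mu,\lambda}(t) = 0$ unless $\mu \preceq \lambda$, from Theorem \ref{KS} 1);
(iii) the explicit diagonal input $[\widetilde K_\lambda : L_\mu] = \mathsf{gdim}\,H^{\bullet}_{G_\lambda^\circ}(\{\mathrm{pt}\})$ from Theorem \ref{KS} 4);
(iv) the orthogonality $\langle K_\lambda, K_\mu^*\rangle_{\mathsf{gEP}} = 0$ for $\lambda \neq \mu$ from Theorem \ref{Shoji}, which, combined with Corollary \ref{absBH}, translates into a Gram--Schmidt-style relation $K^{\mathtt t}(t)\,D(t)\,K(t) = [P:L]$ where $D(t)$ is the diagonal input in (iii).
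These axioms are precisely those defining the Lusztig--Shoji algorithm, as noted in Remark \ref{Rcartan} 2), and their solution is unique.

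It remains to match this algorithm with Shoji's construction of modified Kostka polynomials attached to limit symbols. By definition, Shoji's polynomials are the output of the Lusztig--Shoji algorithm applied to Weyl group representations of type $\mathsf B$ indexed and ordered by the limit symbols (cf.\ \cite{Sh4} 3.13). Achar--Henderson \cite{AH} identify this combinatorial datum with the set $\Lambda$ of exotic nilpotent orbits equipped with the closure order $\prec$, matching the assignment $\lambda \mapsto \mathsf L_\lambda$ from Theorem \ref{eS} 8)--9) with the limit symbol parametrisation. Since both collections of polynomials satisfy the same characterising axioms with matching input data, they coincide. The main obstacle is establishing the orthogonality (\ref{eLSorth}), which was already accomplished in Theorem \ref{Shoji} via the purity results of Proposition \ref{eNc} and the general machinery of Section \ref{wf}; once orthogonality is in hand, the remainder of the proof is a formal consequence of the Lusztig--Shoji algorithm together with the \cite{AH} combinatorial identification.
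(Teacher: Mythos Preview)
Your approach is correct and essentially the same as the paper's: both establish polynomiality and non-negativity formally from the identification $K_\lambda \cong H_{-\bullet}(\mu^{-1}(x_\lambda),\mathbb C)\langle d_\lambda\rangle$ and the odd-vanishing of Theorem \ref{eS} 6), and both deduce the identification with Shoji's modified Kostka polynomials by recognising that the orthogonality (\ref{eLSorth}) from Theorem \ref{Shoji} together with the triangularity of Theorem \ref{KS} 1) characterise $\{K_{\mu,\lambda}(t)\}$ as the output of the Lusztig--Shoji algorithm, then invoking Achar--Henderson \cite{AH} to match the combinatorial input with that of the limit symbols. The paper packages the algorithmic step by citing \cite{K4} 2.17 directly, whereas you spell out the Gram--Schmidt relation $[P:L] = {}^{\mathtt t}[K:L]\,D\,[K:L]$ from Corollary \ref{absBH}; these are the same argument.

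Two small points to tighten. First, in your item (ii) the diagonal value is not $\delta_{\lambda,\mu}$: from the definition one has $K_{\lambda,\lambda}(t) = t^{d_\lambda/2}$ (it is the matrix $[K:L]$ that is unitriangular, not $(K_{\mu,\lambda})$). Second, the matching via \cite{AH} requires more than identifying the preorder: one must also check that $d_\lambda/2$ agrees with Shoji's $a$-function $a(\bm\Lambda(\bm\beta)) = n(\bm\beta)$, since the Lusztig--Shoji algorithm takes both the preorder and these degree normalisations as input. The paper makes this explicit by citing \cite{AH} 6.3 together with \cite{Sh4} 1.4 and the equality of $d_\lambda/2$ with the $b$-function of \cite{AH} \S 5; you should name this second ingredient rather than leaving it inside ``matching input data''.
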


\begin{remark}\label{orSh}
{\bf 1)} Corollary \ref{SC}, Theorem 1.8 1), 4), and Corollary \ref{absBH} (cf. Proposition \ref{eNc}) implies that the graded $W$-character $t^{d_{\lambda}} [K_{\lambda}^*]$ (cf. Lemma \ref{As} 3)) of the ring $\mathbb C [\mathfrak t] /I_{\lambda}$ is computable from
\begin{equation}
K_{\mu,\lambda} ( t ) = \begin{cases} 0 & (\lambda \not\preceq \mu) \\ t^{d_{\lambda}/2} & (\lambda = \mu) \end{cases}, \hskip 3mm \text{ and } \hskip 3mm  [P:L] = {}^{\mathtt{t}} [K:L] [\widetilde{K} : K] [K:L],\label{exLS}
\end{equation}
where $[P:L] = ( [P_{\lambda} : L_{\mu}] ) _{\lambda,\mu \in \Lambda}$, $[K:L] = ( t ^{d_{\lambda}} K _{\mu, \lambda} (t^{-2}) ) _{\lambda,\mu \in \Lambda}$ are square matrices, and $[\widetilde{K} : K]$ is a diagonal matrix in Corollary \ref{absBH}. In fact, solving the equation (\ref{exLS}) determines $[K:L]$ and $[\widetilde{K} : K]$ simultaneously from $[P:L]$ in a unique fashion (the {\it Lusztig-Shoji algorithm}). We have
$$[P_{\lambda} : L_{\mu}] = \mathsf{gdim} \, \mathrm{hom} _W ( L_{\mu}, L_{\lambda} \otimes _{\mathbb C} \mathbb C [ \mathfrak t ]),$$
that is commonly called the {\it fake degree} in the literature (with $t$ replaced with $t^{1/2}$; cf. \cite{K4} \S 2). This is the original form of Shoji's conjecture \cite{Sh4} 3.13.\\
{\bf 2)} Corollary \ref{SS} is independently obtained by Shoji-Sorlin \cite{SS} (cf. Achar-Henderson \cite{AH} for a closely related result). Also, \cite{SS} contains another proofs of Corollaries \ref{espure} and \ref{parity}.
\end{remark}

\begin{proof}[Proof of Corollary \ref{SS} modulo Theorem \ref{Shoji}]
Each $K _{\mu, \lambda} (t)$ is a polynomial by Theorem \ref{eS} 6) and $K_{\lambda} ^i = H_{- i + d_{\lambda}} ( \mu^{-1} ( x _{\lambda} ) ) = \{ 0 \}$ for $i > d_{\lambda}$. It has non-negative integer coefficients as it counts the graded dimension of a module.

By \cite{K4} 2.17 and (\ref{eLSorth}), we deduce that $t ^{d_{\lambda} / 2} K _{\mu, \lambda} (t^{-1})$ is the Kostka polynomial attached to the preorder $\prec$ on $\Lambda = \mathsf{Irr} \, W$ in the sense of Shoji \cite{Sh4} 1.3. Hence, in order to identify $\{ K _{\mu, \lambda} (t) \} _{\lambda, \mu \in \Lambda}$ with the Kostka polynomials attached to the limit symbol, we need to identify the preorder relations on $\mathsf{Irr} \, W$ arising from the exotic Springer correspondence and the limit symbols, and to identify the dimensions of the exotic Springer fibers and the $a$-function arising from the latter context. These identifications follow from Achar-Henderson \cite{AH} 6.3 with \cite{Sh4} 1.4, and the fact that the $b$-function in \cite{AH} \S 5, our $d_{\lambda} / 2$, and $n ( {\bm \beta} ) = a ( {\bm \Lambda} ( {\bm \beta} ) )$ in \cite{Sh4} \S 3 are in common.
\end{proof}

\begin{proof}[Proof of Theorem \ref{Shoji}]
We apply Corollary \ref{absLS2} to deduce the first two equation.

We prove the remaining assertion. Let $M _{\lambda} := \imath _{\lambda} K _{\lambda} \left< - d _{\lambda} \right>$. Since $K_{\lambda}$ has simple head as a graded $A$-module, the $A$-module $M_{\lambda}$ is obtained by the $A$-module saturation of $\mathsf L _{\lambda}$. By (\ref{defPtr}) and Theorem \ref{KS} 1), the injectivity of $\imath _{\lambda}$ is equivalent to
\begin{equation}
\mathrm{ext} ^1 _A ( M _{\lambda}, L _{\mu} ) = \{ 0 \} \hskip 3mm \text{ for every } \hskip 3mm \lambda \prec \mu. \label{B-vanish}
\end{equation}
If $W_{\lambda} = W$ (recall that $W_{\lambda} = W_{x_{\lambda}}$ in Theorem \ref{eS} 9) by convention), then we have $L _{\lambda} \cong \mathsf{sgn}$ or $\mathsf{Ssgn}$ by \cite{K4} Fact 4.1. We prove the assertion (\ref{B-vanish}) in the both cases. These cases correspond to {\bf a)} $x_{\lambda} = 0$, and {\bf b)} $\mathbb O_{\lambda} = ( V_1 \setminus \{ 0 \} ) \oplus \{ 0 \} \subset \mathbb V$, respectively. For the case {\bf a)}, the assertion $M_{\lambda} = K _{\lambda} \left< - d_{\lambda} \right>$ is standard since the both sides are $\left( \mathbb C [ \mathfrak t ] / \left< \mathbb C [ \mathfrak t ] ^W _+ \right> \right)^*$ and $H _{- \bullet} ( G/B )$, respectively (cf. \cite{CG} \S 6.4). For the case {\bf b)}, the ($G$-equivariant) composition map $F \rightarrow \mathfrak N \rightarrow V_1$ is surjective, and is regular along $x_{\lambda}$. It follows that $\mu ^{-1} ( x _{\lambda} )$ is a $( d_{\lambda} / 2 )$-dimensional smooth projective variety. In particular, we have the Poincar\'e duality $K_{\lambda} ^* \cong K _{\lambda} \left< - d _{\lambda}\right>$, which intertwines the action of the hyperplane sections. Since $\dim \, \mathsf{L} _{\lambda} = 1$, the module $K _{\lambda}$ has simple head as a $\mathbb C [\mathfrak t]$-module. Therefore, we conclude that $K_{\lambda} \left< - d_{\lambda} \right>$ must have simple socle of degree $0$. By construction, we also have $M_{\lambda} ^0 \neq \{ 0 \}$. This forces $K _{\lambda} \left< - d _{\lambda}\right> \cong M _{\lambda}$.

In the below, we examine the case $W _{\lambda} \neq W$. If we have $\lambda \prec \mu$, then $d_{\lambda} > d _{\mu}$ (see Theorem \ref{eS} 3)) and Theorem \ref{eS} 9) implies $\mathbb C p _{\lambda} \not \subset L _{\mu}$ as $W_{\lambda}$-modules.

Let $A _{\lambda} := \mathbb C W_{\lambda} \ltimes \mathbb C [\mathfrak t] \subset A$. We have $A _{\lambda} \neq A$ by $W _{\lambda} \neq W$. We define $M_{\lambda} ^{\downarrow}$ as the $A _{\lambda}$-submodule of $M_{\lambda}$ generated by $\mathbb C p_{\lambda}$. We have
\begin{equation}
\mathbb C W M _{\lambda} ^{\downarrow} = M _{\lambda} \label{W-sat}
\end{equation}
by $\mathbb C W A _{\lambda} = A$. Since the both of $\mathbb C W$ and $\mathbb C W_{\lambda}$ are semi-simple algebras, a non-trivial element of $\mathrm{ext} ^1 _A ( M _{\lambda}, L _{\mu} )$ induces a non-trivial extension as $\mathbb C [\mathfrak t ]$-modules, and hence that as $A _{\lambda}$-modules. In particular, the non-vanishing of the LHS of (\ref{B-vanish}) implies
$$\mathrm{ext} ^1 _{A _{\lambda}} ( M _{\lambda} ^{\downarrow}, L _{\mu} ) \neq \{ 0 \} \hskip 3mm \text{ for some } \hskip 3mm \lambda \prec \mu$$
from (\ref{W-sat}). If we decompose $W_{\lambda} \cong W_{n_1} \times W _{n_2} \times \cdots \times W _{n_m}$ by the smaller type $\mathsf{BC}$ Weyl groups, then we have $A _{\lambda} \cong \boxtimes _{i=1} ^m A _{n_i}$ with $\sum _{i=1}^m n_i = n$. For each $1 \le i \le m$, we have a standard module $K_{\lambda_i}$ of $A_{n_i}$ corresponding to $L_{\lambda_i} \cong \mathsf{sgn}$ or $\mathsf{Ssgn}$ (as $W _{n_i}$-modules) so that $\boxtimes _{i = 1} ^m K_{\lambda_i}$ is generated by a $W _{\lambda}$-module isomorphic to $\mathbb C p _{\lambda}$.

Now we use induction on $n$ and assume that the assertion holds for strictly smaller ranks. Then, we deduce $M _{\lambda} ^{\downarrow} \cong \boxtimes _{i = 1} ^m K_{\lambda_i}$ as a graded $A _{\lambda}$-module (up to grading shifts). By Proposition \ref{fgd-k} (cf. Corollary \ref{kf}), each direct summand of the minimal projective resolution of $M _{\lambda} ^{\downarrow}$ as a graded $A _{\lambda}$-module is of the form $\boxtimes _{i = 1} ^m P _{\gamma _i}$ with $\gamma _i \preceq \lambda _i$ for every $i = 1,\ldots, m$ up to a grading shift. Since the head of $P _{\gamma_i}$ is $L_{\gamma _i}$ as a $\mathbb C W _{n_i}$-module, we conclude that
\begin{equation}
\mathrm{hom} _{W_{\lambda}} ( \boxtimes _{i = 1} ^m L _{\gamma _i}, L _{\mu} ) \neq \{ 0 \}\label{nv}
\end{equation}
for some $\lambda \prec \mu$ if (\ref{B-vanish}) fails. By Theorem \ref{eS} 9), the $W _{\lambda}$-module $\boxtimes _{i = 1} ^m L _{\gamma _i}$ is realized inside $\mathbb C [ \mathfrak t ^* ]$ only at degree
$$- \sum _{i=1} ^m d _{\gamma _i} \le - \sum _{i=1} ^m d _{\lambda _i} = - d_{\lambda} < - d _{\mu}.$$
This shows that (\ref{nv}) cannot happen. Therefore, we conclude $\mathrm{ext} ^1 _A ( M _{\lambda}, L _{\mu} ) = \{ 0 \}$ also in the case $W _{\lambda} \neq W$. This proceeds the induction and finishes the proof. 
\end{proof}

\begin{corollary}[Achar-Henderson \cite{AH} Conjecture 6.4 (1)]\label{parity}
For each $\lambda, \mu \in \Lambda$, we have $[K_{\lambda} : L _{\mu}] = t^k Q ( t^4 )$ for some $k \ge 0$ and $Q \in \mathbb N [t]$.
\end{corollary}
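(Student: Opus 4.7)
The plan is to reduce the statement to a parity property of Shoji's limit-symbol Kostka polynomials, for which Corollary \ref{SS} has already done most of the work.

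First, I would derive the basic translation formula. The defining identity $[K_\lambda^*] = \sum_{\mu} t^{-d_\lambda} K_{\mu,\lambda}(t^2)[L_\mu]$ (displayed just before Corollary \ref{SS}) combined with Lemma \ref{dualmult} gives
\[
[K_\lambda : L_\mu](t) \;=\; t^{d_\lambda}\, K_{\mu, \lambda}(t^{-2}).
\]
Since $d_\lambda = 2 \dim \mu^{-1}(x_\lambda)$ is an even integer, the whole problem reduces to showing that the modified Kostka polynomial $K_{\mu, \lambda}(t)$ lies in $\mathbb{N}[t^2]$, i.e.\ contains only even-degree monomials in $t$.

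Granted this $\mathbb{N}[t^2]$-property, the substitution $t \mapsto t^{-2}$ yields $K_{\mu, \lambda}(t^{-2}) \in \mathbb{N}[t^{-4}]$, and multiplication by $t^{d_\lambda}$ with $d_\lambda$ even then gives a polynomial in $t$ whose support is contained in a single residue class modulo $4$. Reading off the minimal exponent as $k \ge 0$ (nonnegativity is available from Corollary \ref{SS}) and collecting the remaining terms in the variable $t^4$ produces the decomposition $[K_\lambda : L_\mu] = t^k Q(t^4)$ with $Q \in \mathbb{N}[t]$.

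The main obstacle is therefore the $\mathbb{N}[t^2]$-property itself, which is a genuinely type-$\mathsf{B}$ feature of the limit-symbol Kostka polynomials (in contrast with the type-$\mathsf{A}$ situation, where analogous polynomials can contain both even and odd powers of $t$). Since Corollary \ref{SS} identifies $K_{\mu,\lambda}(t)$ with Shoji's limit-symbol Kostka polynomials, one may invoke the combinatorial construction of the limit symbols of type $\mathsf{B}$ from \cite{Sh4}, where the shifts $a(\bm{\Lambda}(\bm{\beta}))$ attached to the symbols are manifestly even integers, thereby forcing the polynomial expansion to occur in $t^2$. An alternative, more conceptual route would be to upgrade to the full $\mathbb{G} = G \times (\mathbb{C}^\times)^2$-equivariance (available by Theorem \ref{eS}(2)), refine the grading on $K_\lambda$ to the bigrading arising from the distinct $(\mathbb{C}^\times)^2$-weights of $V_1$ and $V_2$, and then track how the single grading sits inside this bigrading; the extra parity constraint on the bigraded multiplicities transfers to the required parity mod~$4$ of the total grading. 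Either route completes the proof.
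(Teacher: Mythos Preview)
Your reduction contains a genuine error. You claim the problem is equivalent to showing $K_{\mu,\lambda}(t)\in\mathbb N[t^2]$, i.e.\ that only even powers of $t$ occur. But by Remark \ref{orSh} {\bf 1)} we have $K_{\lambda,\lambda}(t)=t^{d_\lambda/2}$ with $d_\lambda/2=\dim\mu^{-1}(x_\lambda)$; for the closed orbit $\{0\}$ this equals $\dim G/B=n^2$, which is odd whenever $n$ is odd. So $K_{\mu,\lambda}(t)\in\mathbb N[t^2]$ is simply false, and your route (a) assertion that the $a$-values are ``manifestly even'' is false for the same reason. The correct reduction is the weaker statement that for each fixed $(\lambda,\mu)$ the polynomial $K_{\mu,\lambda}(t)$ is supported in a \emph{single} parity class (which may depend on $\lambda,\mu$); this is what actually yields $[K_\lambda:L_\mu]=t^kQ(t^4)$.

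The paper does not go through the Kostka polynomials at all. It argues directly on the projective module $P_\lambda\cong\mathbb C[\mathfrak t]\otimes L_\lambda$ (as graded $W$-modules) using the following elementary representation-theoretic parity: if $L_\lambda$ corresponds to the bipartition $(\lambda^{(0)},\lambda^{(1)})$, then any irreducible $L_\mu\subset(\mathfrak t^*)^{\otimes l}\otimes L_\lambda$ satisfies $|\mu^{(0)}|-|\lambda^{(0)}|\equiv l\pmod 2$ (this is \cite{K4} Fact 4.1). Since the degree-$2l$ piece of $\mathbb C[\mathfrak t]$ sits inside $(\mathfrak t^*)^{\otimes l}$, the parity of $l$ is fixed by $(\lambda,\mu)$, giving $[P_\lambda:L_\mu]=t^{k'}Q'(t^4)$. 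Passing to the quotient $K_\lambda$ of $P_\lambda$ finishes the proof. Your route (b) gestures toward the right kind of extra $\mathbb Z/2$-grading, but the paper's implementation via the bipartition invariant $|\mu^{(0)}|\bmod 2$ is both concrete and immediate.
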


\begin{proof}
By \cite{K4} Fact 4.1 (1) and (6), if $L_{\lambda}$ corresponds to a bi-partition $( \lambda ^{(0)}, \lambda^{(1)} )$, then each $L_{\mu} \subset ( \mathfrak t ^* ) ^{\otimes l} \otimes _{\mathbb C} L_{\lambda}$ $(l \ge 0)$ corresponds to a bi-partition $( \mu ^{(0)}, \mu ^{(1)} )$ with $|\mu^{(0)}| - |\lambda^{(0)}| \equiv l \mod 2$. Since $P_{\lambda} \cong \mathbb C [\mathfrak t] \otimes L _{\lambda}$ as graded $W$-modules (cf. \cite{K4} \S 1), we deduce $[ P_{\lambda} : L _\mu ] = t ^{k'} Q' ( t ^4 )$ for some $k' \ge 0$ and $Q' \in \mathbb N [\![t]\!]$. As $K_{\lambda}$ is a quotient of $P_{\lambda}$, we conclude the result.
\end{proof}

\section*{{\normalsize Appendix A: Coinvariants and the Lieb-McGuire systems}}
\renewcommand{\thesection}{\Alph{section}}
\setcounter{section}{1}
\setcounter{theorem}{0}

We work in the setting of section five. We set $\alpha ^{\vee} _i := \epsilon _i - \epsilon_{i+1}$ for each $1 \le i \le n$ (with $\epsilon _{n+1} := 0$). Let $s_1,\ldots,s_n \in W$ be the corresponding simple reflections. We fix two parameters $m, r \in \mathbb R$. Let $\mathcal H_{r,m}$ be the algebra that contains the group ring $\mathbb C W$ and the polynomial ring $\mathbb C [ \mathfrak t ] = \mathbb C [\epsilon _1,\ldots, \epsilon_n]$ with the following properties:
\begin{enumerate}
\item $\mathcal H_{r,m} \cong \mathbb C W \otimes _{\mathbb C} \mathbb C [ \mathfrak t ]$ as vector spaces;
\item For each $1 \le i \le n$ and each $f \in \mathbb C [ \mathfrak t ]$, we have
$$s_i \cdot f - {}^{s_i} f \cdot s_i = \begin{cases} r \frac{f-{}^{s_i}f}{\alpha^{\vee}_i} & (i \neq n)\\ m r \frac{f-{}^{s_n}f}{\alpha^{\vee}_n} & (i = n) \end{cases},$$
where $f \mapsto {}^w f$ ($w \in W$) is the natural $w$-action on $\mathbb C [ \mathfrak t ]$.
\end{enumerate}

Let $\mathsf e$ be the idempotent of $\mathbb C W$ corresponding to the trivial representation. We have an isomorphism $\mathbb C [ \mathfrak t ] \ni f \mapsto f \mathsf e \in \mathcal H _{r,m} \mathsf e$, by which we regard $\mathbb C [ \mathfrak t ]$ as a representation of $\mathcal H _{r,m}$.

We define an anti-isomorphism $\dagger : \mathcal H _{r,m} \rightarrow \mathcal H _{-r,m}$ as:
$$s _i ^{\dagger} := s_i, \text{ and } \epsilon _j ^{\dagger} = - \epsilon _j \text{ for every } 1 \le i,j \le n.$$

The center of $\mathcal H_{r,m}$ is isomorphic to $\mathbb C [ \mathfrak t ] ^W$ via the natural inclusion (see \cite{Lu-aff}), and hence each $W$-orbit $W \gamma$ (that we may simply denote by $\gamma$) of $\mathfrak t$ defines a central character $\mathbb C [ \mathfrak t ] ^W \to \mathbb C$ ($=: \mathbb C _{\gamma}$). Let $\mathsf{Irr} _{\gamma} \, \mathcal H _{r,m}$ be the set of isomorphism classes of irreducible $\mathcal H _{r,m}$-modules with a central character $\gamma$.

\begin{theorem}[Heckman-Opdam \cite{HO}]\label{HO}
Let $\mathcal R$ be the space of $C ^{\infty}$-functions with respect to $\xi_1,\ldots,\xi_n$ for which $\epsilon _i$ acts by $\frac{\partial}{\partial \xi _i}$. We set $\mathbb C [ \mathfrak t ^* ] := \mathbb C [\xi_1,\ldots, \xi_n] \subset \mathcal R$.
\begin{enumerate}
\item There exists a $\mathcal H _{-r,m}$-action on $\mathcal R$ so that the pairing
\begin{equation}\label{ndpair}
\mathbb C [ \mathfrak t ] \times \mathcal R \ni ( P, f ) \mapsto (Pf) (0) \in \mathbb C
\end{equation}
interchanges the $\mathcal H_{r,m}$-module structures on $\mathbb C [ \mathfrak t ]$ with the $\mathcal H_{- r,m}$-module structures of $\mathbb C [ \mathfrak t^* ]$ and $\mathcal R$ as:
$$( T P, f ) = ( P, T ^{\dagger} f) \text{ for every } T \in \mathcal H _{r,m}, P \in \mathbb C [ \mathfrak t ], \text{ and } f \in \mathcal R;$$
\item For each $\gamma \in \mathfrak t$, there exists a unique non-zero function $\phi _{\gamma} \in \mathcal R $ up to scalar such that: 
$$
\mathbb C [ \mathfrak t ] ^W \phi _{\gamma} \cong \mathbb C _{\gamma} \text{ as } \mathbb C [ \mathfrak t ] ^W \text{-modules; and } W \text{ acts on } \mathbb C \phi_{\gamma} \text{ by } \mathsf{triv};$$
\item If we set $M_{\gamma, m} := \mathcal H _{-r,m} \phi _{\gamma}$, then it is irreducible as a $\mathcal H _{-r,m}$-module.
\end{enumerate}
\end{theorem}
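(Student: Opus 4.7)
My plan is to follow the Dunkl--Cherednik strategy, adapted to the BC-type degenerate Hecke algebra with two parameters, combined with the asymptotic theory of the Heckman--Opdam hypergeometric function.

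\textbf{Step 1 (the $\mathcal H_{-r,m}$-action on $\mathcal R$).} First I would define, for each $1\le i\le n$, a rational BC-type Dunkl operator $D_i \colon \mathcal R \to \mathcal R$, built from $\partial/\partial\xi_i$ together with reflection correction terms of the shape $\langle\epsilon_i,\alpha^\vee\rangle(\xi\cdot\alpha^\vee)^{-1}(1-s_\alpha)$, coupled by $r$ for long positive coroots $\alpha^\vee=\epsilon_j\pm\epsilon_k$ and by $mr$ for the short ones $\alpha^\vee=\epsilon_j$. The standard (though lengthy) Heckman commutator calculation verifies that $\epsilon_i\mapsto D_i$, $w\mapsto w$ extends to an action of $\mathcal H_{-r,m}$ on $\mathcal R$. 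For compatibility with the pairing, $W$-equivariance is immediate from the $W$-invariance of evaluation at $0$; the identity $(\epsilon_i P, f) = (P, -D_i f)$ is a ``reflection integration-by-parts'' at the origin, in which the reflection tails of $D_i$ absorb precisely the Hecke commutator $[s_i,\epsilon_i]$ that appears on the $\mathbb C[\mathfrak t]$-side. Induction on $\deg P$ and on word-length in $\mathcal H_{r,m}$ propagates the identity to arbitrary elements.

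\textbf{Step 2 (construction and uniqueness of $\phi_\gamma$).} On $\mathcal R^W$ the operators $p(D)$ for $p\in\mathbb C[\mathfrak t]^W$ descend to commuting $W$-invariant differential operators which together form the BC-type Calogero--Moser (Lieb--McGuire) quantum integrable system with parameters $(r,mr)$. The stated conditions on $\phi_\gamma$ translate to the joint eigenvalue problem $p(D)\phi_\gamma = p(\gamma)\phi_\gamma$ for $p\in\mathbb C[\mathfrak t]^W$, restricted to $\mathcal R^W$. Existence is provided by the (rational) Heckman--Opdam hypergeometric function. For uniqueness I would carry out a Frobenius analysis of this Fuchsian system at $\xi=0$: the full non-invariant system admits a $|W|$-dimensional space of locally regular solutions, and an indicial $W$-character computation cuts this down to a one-dimensional $W$-invariant subspace.

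\textbf{Step 3 and the main obstacle.} Any nonzero $\mathcal H_{-r,m}$-submodule $N \subseteq M_{\gamma,m}$ lies in $\mathcal R$ and inherits the central character $\gamma$, so $N^W$ consists of $W$-invariant smooth joint eigenfunctions and, by Step 2, satisfies $\dim N^W \le 1$; either $N^W = \mathbb C\phi_\gamma$ (whence $N = M_{\gamma,m}$) or $N^W = 0$, in which latter case $M_{\gamma,m}/N$ is a nonzero spherical quotient whose image of $\phi_\gamma$, pulled back through a Hecke-equivariant smooth-function model of the principal series, would supply a second $W$-invariant solution distinct from $\phi_\gamma$, contradicting the uniqueness in Step 2. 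The genuine difficulty is precisely that uniqueness: controlling $W$-invariant \emph{smooth} solutions on all of $\mathfrak t$, rather than merely as formal germs at the origin, demands a delicate indicial analysis along every root hyperplane $\alpha^\vee=0$, showing that $W$-invariance rules out all non-trivial Frobenius exponents and hence identifies any smooth invariant solution with the hypergeometric function. This is the technical heart of \cite{HO}, which is why the theorem is imported rather than proved from scratch.
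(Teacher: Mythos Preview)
The paper does not prove this theorem at all: it is stated with attribution to Heckman--Opdam \cite{HO} and used as a black box in Appendix~A, with no argument supplied. So there is no ``paper's own proof'' to compare your proposal against.

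Your sketch is a reasonable outline of the Heckman--Opdam strategy (Dunkl--Cherednik operators, the hypergeometric system, Frobenius/indicial analysis for uniqueness), and you yourself correctly note at the end that the result is ``imported rather than proved from scratch.'' That is exactly what the paper does. If your goal was to reconstruct the cited proof, your outline is in the right spirit, though of course the actual analytic details (especially the smooth uniqueness across all reflection hyperplanes, which you flag as the main obstacle) are substantial and would require the full machinery of \cite{HO}. If your goal was to match the paper, then the correct ``proof'' here is simply the citation.
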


The algebra $\mathcal H_{r,m}$ specializes to $A$ (in section five) by setting $r = 0$.

We consider the following condition $(\star)$ on $a = (s, \vec{q}) \in \mathbb G = G \times ( \mathbb C ^{\times} )^2$:
\begin{itemize}
\item[$(\star)_0$] $\vec{q} = ( q^m, q )$, where $q = e^r \in \mathbb R _{> 1}$ and $m \in \mathbb R$;
\item[$(\star)_1$] $s = \exp ( \gamma )$ with $\gamma \in \mathfrak t ( \mathbb R )$.
\end{itemize}

\begin{theorem}[Standard modules, \cite{K1} \S 7, \S 9]\label{eDLst}
Assume that $a \in \mathbb G$ satisfies $(\star)$. Let $v = v_1 \oplus v_2 \in \mathbb V$ such that $a . v = v$. We have a $\mathcal H_{-r,m}$-module
$$M _{(a,v)} := H _{\bullet} ( \mu^{-1} ( v ) ^a ),$$
that is isomorphic to $K_{\lambda}$ for some $\lambda \in \Lambda$ as $W$-modules.
\end{theorem}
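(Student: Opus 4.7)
The plan splits into two independent parts: constructing the $\mathcal H_{-r,m}$-action on $M_{(a,v)}$ by specializing an equivariant convolution algebra, and identifying the $W$-module structure via Thomason localization combined with the purity of Corollary \ref{espure}. The natural framework for the first step is the $\mathbb G$-equivariant convolution algebra $\mathsf A^{\mathbb G} := H^{\mathbb G}_\bullet(F \times_{\mathfrak N} F)$, which by an equivariant enhancement of Theorem \ref{eS} 5) is a $H^{\bullet}_{\mathbb G}(\{\mathrm{pt}\}) = \mathbb C[\mathfrak t][\hbar_1,\hbar_2]$-algebra deforming $A = \mathbb C W \ltimes \mathbb C[\mathfrak t]$, whose simple reflections satisfy Demazure--Lusztig-type relations with coefficients involving $\hbar_1$ and $\hbar_2$. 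Specializing the equivariant parameters at $a = (\exp \gamma, q^m, q)$ with $q = e^r$ matches the defining relations of $\mathcal H_{-r,m}$ after tracking the sign convention implicit in $\dagger$, and the Thomason localization map from $\mathsf A^{\mathbb G}$ to the endomorphism algebra of $H_\bullet(\mu^{-1}(v)^a)$ then supplies the desired action on $M_{(a,v)}$.

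For the $W$-module identification, choose $\lambda \in \Lambda$ as the Deligne--Langlands parameter attached to $(a,v)$: the $\mathbb G$-orbit $\mathbb O_\lambda$ is determined by requiring that its intersection with $\mathbb V^a$ contain the open dense $\mathsf{Stab}_{\mathbb G}(a)^\circ$-orbit of $v$. By Theorem \ref{eS} 6) and Corollary \ref{espure}, $\mu^{-1}(v)$ has pure homology with vanishing odd part. Standard consequences of Thomason's localization theorem for pure varieties with torus action then yield an equality of $W$-characters
$$\mathrm{ch}_W\, H_\bullet(\mu^{-1}(v)^a) = \mathrm{ch}_W\, H_\bullet(\mu^{-1}(x_\lambda)) = \mathrm{ch}_W\, K_\lambda \langle -d_\lambda \rangle,$$
invoking the identification $K_\lambda \cong H_{-\bullet}(\mu^{-1}(x_\lambda))\langle d_\lambda \rangle$ recalled just before Theorem \ref{Shoji}.

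The main obstacle is the $W$-equivariance of the specialization: one must verify that $\mathbb C W \subset \mathsf A^{\mathbb G}$ descends to the standard geometric $W$-action on $H_\bullet(\mu^{-1}(v)^a)$, so that the convolution specialization and the fixed-point restriction commute as $W$-module maps. The subsidiary technical points are the precise form of the Demazure--Lusztig relations in the exotic setting and the identification of the resulting parameters with $-r$ and $m$; both were verified in \cite{K1} \S 7, 9, which I would invoke directly. Once $W$-equivariance is established, purity rules out any contribution from higher weights that could shift the $W$-character, and the identification $M_{(a,v)} \cong K_\lambda$ as $W$-modules follows.
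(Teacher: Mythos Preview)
The paper does not supply its own proof here: Theorem \ref{eDLst} is recorded with attribution to \cite{K1} \S 7, \S 9 and used as a black box in Appendix A. Your sketch is a reasonable reconstruction of the argument in that reference. The $\mathcal H_{-r,m}$-action is indeed produced by specializing the $\mathbb G$-equivariant convolution algebra on $F \times_{\mathfrak N} F$ at the central character corresponding to $a$, and the $W$-module identification proceeds by a localization argument comparing $H_\bullet(\mu^{-1}(v)^a)$ with $H_\bullet(\mu^{-1}(v)) \cong K_\lambda\langle -d_\lambda\rangle$, where $\lambda$ is simply the label of the $G$-orbit of $v$ in $\mathfrak N$ (your description via the open dense $\mathsf{Stab}_{\mathbb G}(a)^\circ$-orbit in $\mathbb V^a$ is more elaborate than what is needed here).

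One remark on your use of Corollary \ref{espure}: it is not circular, since that corollary is established in \S \ref{Sconj} before the appendix and independently of it, but it is also not needed. The odd-vanishing of Theorem \ref{eS} 6) already forces the relevant Leray--Serre spectral sequence to degenerate, so that the $\langle a\rangle$-equivariant Borel--Moore homology of $\mu^{-1}(v)$ is free over $H^\bullet_{\langle a\rangle}(\{\mathrm{pt}\})$, and specialization at $a$ versus at the identity yields isomorphic $W$-modules. This is how \cite{K1} proceeds, since purity of the exotic Springer fibers was not available at that time. The $W$-equivariance you flag as the main obstacle is handled in \cite{K1} by constructing $\mathbb C W$ directly as a subalgebra of the equivariant convolution algebra, so compatibility with the $H^\bullet_{\mathbb G}(\{\mathrm{pt}\})$-action and with specialization is automatic.
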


\begin{theorem}[eDL correspondence, \cite{K1}, \S 10]\label{eDL}
Assume that $a \in \mathbb G$ satisfies $(\star)$. Then, we have a one-to-one correspondence
\begin{align*}
\mathsf{Irr} _{\gamma} \mathcal H _{-r,m} \ni L _{(a,v)} \longleftrightarrow v \in Z _{G} (s) \backslash \mathbb V^{a}.
\end{align*}
Moreover, $L _{(a,v)}$ is a quotient of $M_{(a,v)}$ as a $\mathcal H _{-r,m}$-module.
\end{theorem}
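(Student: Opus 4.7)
The plan is to adapt the Kazhdan--Lusztig approach to the Deligne--Langlands correspondence (\cite{CG} Chapter 8) to the exotic setting. The first step is to upgrade the isomorphism of Theorem \ref{eS}(5) to a one-parameter deformation: the $\mathbb{G}$-equivariant convolution algebra $\mathcal{A} := H_\bullet^{\mathbb{G}}(F \times_{\mathfrak{N}} F)$ is a flat module over $H_\bullet^{(\mathbb{C}^\times)^2}(\{\mathrm{pt}\})$, and its specialization at the character $(q^m, q)$ should be identified with $\mathcal{H}_{-r,m}$. Concretely, one must compute the commutator of a simple reflection with a polynomial element inside $\mathcal{A}$ and match it against the defining relations of $\mathcal{H}_{r,m}$: the $V_2$-factor of $\mathbb{V}$ is expected to produce the standard divided difference with coefficient $r$ for $s_1, \ldots, s_{n-1}$, while the $V_1$-factor yields the coefficient $mr$ for $s_n$.

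Having this identification, Thomason localization at the semisimple element $a \in \mathbb{G}$ would furnish an $\mathcal{H}_{-r,m}$-action on $M_{(a,v)} = H_\bullet(\mu^{-1}(v)^a)$ with the required central character $\gamma$. To prove that $M_{(a,v)}$ has a unique simple quotient, I would invoke the decomposition theorem for $\mu^a : F^a \to \mathfrak{N}^a$. A fixed-point analogue of the semi-smallness in Theorem \ref{eS}(3) is needed, but this should follow from Corollary \ref{espure} (purity of $\mu^{-1}(v)$ is inherited by its $a$-fixed subvariety through a standard Bialynicki-Birula type argument) together with a direct dimension count in the spirit of \cite{K1}. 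Among the intersection cohomology summands of $\mu^a_* \mathbb{C}[\dim F^a]$, exactly one has support $\overline{Z_G(s) \cdot v}$; the module it cuts out via convolution is the simple head $L_{(a,v)}$ of $M_{(a,v)}$, in analogy with Theorem \ref{KS}(3), which establishes the ``quotient'' statement.

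For the bijection, injectivity is immediate because distinct $Z_G(s)$-orbits yield $\mathsf{IC}$-sheaves with distinct supports, and hence non-isomorphic simple heads. Surjectivity follows by counting: the localized convolution algebra $\mathcal{A}|_a$ admits $\bigoplus_{v} M_{(a,v)}$ as a progenerator (by a fixed-point analogue of Lemma \ref{gMe}), so every irreducible of $\mathcal{H}_{-r,m}$ with central character $\gamma$ arises as $L_{(a,v)}$ for some $v \in \mathbb{V}^a$; equivalently, by Jacobson density, the combined $\mathcal{A}|_a$-action on $\bigoplus_v M_{(a,v)}$ exhausts the $\gamma$-block of $\mathcal{H}_{-r,m}$.

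The main obstacle is the first step: the identification of $\mathcal{A}$ with $\mathcal{H}_{-r,m}$ as deformed convolution algebras. In the pure affine-Hecke setting of Chriss--Ginzburg this is classical, but the presence of the $V_1$-summand and the independent second parameter $m$ make the computation genuinely exotic and require an explicit determination of the structure constants (this is the content of \cite{K1} Sections 7--9). A subsidiary obstacle is controlling the geometry of $F^a$ and $\mathfrak{N}^a$: one must verify enough dimension estimates on $a$-fixed exotic Springer fibers to apply the decomposition theorem in a parity-vanishing regime. Once these two steps are in place, localization at $a$ combined with the decomposition theorem yields the bijection essentially for free, mirroring the proof of the original Deligne--Langlands conjecture.
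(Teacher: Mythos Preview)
The paper does not prove this theorem; it is stated as a citation of \cite{K1} \S 10 and used as a black box in Appendix~A. There is therefore no ``paper's own proof'' to compare against.

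Your outline is nonetheless the correct template: the exotic Deligne--Langlands correspondence in \cite{K1} is established precisely by transplanting the Kazhdan--Lusztig/Chriss--Ginzburg machinery of \cite{CG} Chapter~8 to the exotic Steinberg variety $F \times_{\mathfrak N} F$, and you have correctly isolated the genuine work as the identification of the $\mathbb G$-equivariant convolution algebra with the two-parameter graded Hecke algebra (this is \cite{K1} \S 7--9). Once that is in hand, localization at $a$ plus the decomposition theorem for $\mu^a$ yields both the bijection and the ``simple quotient of a standard module'' statement, exactly as you sketch.

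One caution: your appeal to Corollary~\ref{espure} for purity of fixed-point fibers is anachronistic. That corollary is a result of the \emph{present} paper, whereas Theorem~\ref{eDL} is logically prior (it is quoted from \cite{K1}). There is no circularity---the proof of Corollary~\ref{espure} via Proposition~\ref{eNc} and Theorem~\ref{critpure} does not invoke Theorem~\ref{eDL}---but if you are reconstructing the argument of \cite{K1} you should instead use the direct odd-vanishing and dimension estimates for $\mu^{-1}(v)^a$ established there (cf.\ \cite{K1} \S 6), rather than importing purity from the present paper.
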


\begin{lemma}\label{lower}
Let $L \subset \mathbb C [ \mathfrak t^* ]$ be a homogeneous $W$-submodule isomorphic to $L _{\lambda}$ for some $\lambda \in \Lambda$. Then, we have
$$K_{\lambda} \left< - d _{\lambda} \right> \subset A L \hskip 4mm \text{inside} \hskip 4mm \mathbb C [ \mathfrak t ^* ].$$
\end{lemma}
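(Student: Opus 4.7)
The plan is to combine Corollary \ref{SC}, which identifies $K_\lambda \langle -d_\lambda\rangle$ with the $A$-submodule $A\cdot\mathsf{L}_\lambda \subset \mathbb{C}[\mathfrak{t}^*]$, with the uniqueness of the maximal-grade realization (Theorem \ref{eS} (8)). Under this identification, the desired inclusion reduces to showing $\mathsf{L}_\lambda \subset AL$ as subspaces of $\mathbb{C}[\mathfrak{t}^*]$, for any homogeneous $L\cong L_\lambda$ at some polynomial degree $d$.

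First I would use the Chevalley decomposition $\mathbb{C}[\mathfrak{t}^*] \cong \mathbb{C}[\mathfrak{t}^*]^W \otimes H$ together with Theorem \ref{eS} (8) to establish two facts. (a) $L_\lambda$ cannot appear in $\mathbb{C}[\mathfrak{t}^*]$ at polynomial degree strictly less than $d_\lambda/2$: in the harmonic polynomials $H$ it first appears at $d_\lambda/2$, and multiplication by $W$-invariants only raises the degree; in particular $d \ge d_\lambda/2$. (b) At polynomial degree exactly $d_\lambda/2$, the $L_\lambda$-isotypic component of $\mathbb{C}[\mathfrak{t}^*]_{d_\lambda/2}$ is precisely $\mathsf{L}_\lambda$, because any contribution from the Chevalley decomposition with $W$-invariant factor of positive degree would require a harmonic $L_\lambda$-copy at degree $<d_\lambda/2$, which does not exist. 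Consequently, any nonzero $L_\lambda$-isotypic subspace of $AL$ at polynomial degree $d_\lambda/2$ must coincide with $\mathsf{L}_\lambda$.

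Next I would exhibit a nonzero $L_\lambda$-isotypic subspace inside $AL_{d_\lambda/2}$. I would examine the natural $W$-equivariant map $\mathbb{C}[\mathfrak{t}]_{d-d_\lambda/2}\otimes L \to \mathbb{C}[\mathfrak{t}^*]_{d_\lambda/2}$, $p\otimes l\mapsto p(\partial)l$, whose image equals $AL_{d_\lambda/2}$. For $L\neq 0$, some order-$(d-d_\lambda/2)$ derivative of a nonzero element of $L$ is nonzero; decomposing the source $W$-equivariantly and tracking the copies of $L_\lambda$ via the Hom space $\mathrm{Hom}_W(L_\lambda, \mathbb{C}[\mathfrak{t}]_{d-d_\lambda/2}\otimes L_\lambda)$, I would argue that at least one $L_\lambda$-equivariant contribution produces a nonzero image in $\mathbb{C}[\mathfrak{t}^*]_{d_\lambda/2}$. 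By observation (b) this image equals $\mathsf{L}_\lambda$, and Corollary \ref{SC} then gives $K_\lambda\langle -d_\lambda\rangle = A\cdot\mathsf{L}_\lambda \subset AL$ in $\mathbb{C}[\mathfrak{t}^*]$, as required.

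The main obstacle will be verifying that some $W$-equivariant projection yields a nonzero image in $\mathsf{L}_\lambda$. For example, when $L$ is harmonic at polynomial degree strictly greater than $d_\lambda/2$, the purely $W$-invariant differential-operator projections (corresponding to the trivial component of $L_\lambda\otimes L_\lambda^*$) annihilate $L$, so one must use non-$W$-invariant derivative projections combined with a careful decomposition of $L_\lambda\otimes L_\lambda^*\otimes \mathbb{C}[\mathfrak{t}]_{d-d_\lambda/2}$ into $L_\lambda$-isotypic pieces, checking that at least one is realized non-trivially via the derivative action on $L$.
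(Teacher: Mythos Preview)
Your reduction is sound: once you invoke Corollary~\ref{SC} to identify $K_\lambda\langle -d_\lambda\rangle$ with $A\,\mathsf L_\lambda$, the lemma becomes $\mathsf L_\lambda \subset AL$, and your observations (a) and (b) via the Chevalley decomposition are correct. But the step you yourself flag as the ``main obstacle'' is a genuine gap, not a technicality. You need that the $L_\lambda$-isotypic part of $(\mathbb C[\mathfrak t]_{d-d_\lambda/2})\cdot L$ is nonzero, and your sketch (``some derivative is nonzero, track via $\mathrm{Hom}_W(L_\lambda,\mathbb C[\mathfrak t]_{d-d_\lambda/2}\otimes L_\lambda)$'') does not produce this: a nonzero image in degree $d_\lambda/2$ could a priori lie entirely in $W$-isotypes other than $L_\lambda$, and nothing in your argument rules this out. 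Your closing paragraph concedes exactly this.

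The paper sidesteps the problem entirely by passing from $W$ to the reflection subgroup $W_\lambda$ of Theorem~\ref{eS}~9). The point is that $p_\lambda\in\mathsf L_\lambda$ spans a one-dimensional $W_\lambda$-character and is its \emph{unique} minimal-degree realization in $\mathbb C[\mathfrak t^*]$; hence the entire $W_\lambda$-isotypic component of that character in $\mathbb C[\mathfrak t^*]$ is the free rank-one $\mathbb C[\mathfrak t^*]^{W_\lambda}$-module $p_\lambda\cdot\mathbb C[\mathfrak t^*]^{W_\lambda}$. Picking $q\in L$ in this isotype forces a factorization $q=p_\lambda\cdot r$ with $r$ $W_\lambda$-invariant, and then one differentiates $q$ down to $\mathbb C p_\lambda$ by an operator $Q\in\mathbb C[\mathfrak t]^{W_\lambda}$ of the right degree (obtained by factoring any $Q^+$ with $Q^+q=1$ through the minimal-degree dual realization of $\mathbb C p_\lambda$). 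Since $p_\lambda\in AL$ and $AL$ is $W$-stable, $\mathsf L_\lambda=\mathbb C W p_\lambda\subset AL$. The reduction to a one-dimensional $W_\lambda$-character is what makes the ``nonzero image'' step trivial; working with the full irreducible $L_\lambda$ under $W$, as you propose, gives no such leverage.
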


\begin{proof}
By Theorem \ref{eS} 9), $\mathbb C p_{\lambda} \subset K _{\lambda} \left< - d_{\lambda} \right> \subset \mathbb C [ \mathfrak t^* ]$ is a one-dimensional representation of $W_{\lambda}$ that is its maximal degree realization. Let $\mathbb C [ \mathfrak t ^* ] ^{W _{\lambda}}$ denote the $W _{\lambda}$-invariant part of $\mathbb C [ \mathfrak t ^* ]$.

There exists $q \in L$ for which $\mathbb C W _{\lambda} q \cong \mathbb C W _{\lambda} p _{\lambda}$ as (one-dimensional) $W _{\lambda}$-modules. It follows that we have a factorization
$$q = p _{\lambda} \cdot r \hskip 3mm \text{ with } \hskip 3mm r \in \mathbb C [ \mathfrak t ^* ] ^{W _{\lambda}}.$$
There exists a homogeneous element $Q^+ \in \mathbb C [ \mathfrak t ]$ such that $Q ^+ q = 1 \in \mathbb C [ \mathfrak t ]$ by the non-degeneracy of the pairing (\ref{ndpair}) restricted to $\mathbb C [\mathfrak t^*] \subset \mathcal R$. Here we can rearrange $Q^+$ if necessary to assume that $\mathbb C Q ^+$ is isomorphic to $\mathbb C p _{\lambda}$ as a $W _{\lambda}$-module. It follows that $Q ^+$ admits a factorization $Q ^+ = P Q$, where $P \in \mathbb C [ \mathfrak t ]$ is the minimal degree realization of $\mathbb C p _{\lambda}$ inside non-negatively graded ring $\mathbb C [ \mathfrak t ]$, and $Q$ is $W _{\lambda}$-invariant. By the comparison of degrees, we conclude that $0 \neq Q q \in \mathbb C p _{\lambda}$, which implies the result.
\end{proof}

\begin{theorem}\label{vanish}
Let $\gamma \in \mathfrak t$. Then, there exists $\lambda \in \Lambda$ so that the vanishing order of $M _{m, \gamma}$ along $0$ induces a graded $W$-module structure equal to $K _{\lambda} \left< - d_{\lambda} \right>$.
\end{theorem}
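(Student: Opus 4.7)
The plan is to filter $M_{\gamma, m}$ by vanishing order at the origin, realize the associated graded inside $\mathbb C[\mathfrak t^*]$ as a graded $A$-submodule, and identify it with $K_\lambda \left<-d_\lambda\right>$ using Lemma~\ref{lower} combined with the exotic Deligne--Langlands correspondence.

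For each $k \geq 0$ set $F^k M_{\gamma, m} := \{f \in M_{\gamma, m} : f \text{ vanishes to order} \geq k \text{ at } 0\}$. Since $\phi_\gamma$ is real-analytic and the $\mathcal H_{-r, m}$-action preserves real-analyticity, every element of $M_{\gamma, m}$ is real-analytic, so $\bigcap_k F^k = \{0\}$; the finite-dimensionality of $M_{\gamma, m}$ (it is irreducible with central character $\gamma$, and $\mathcal H_{-r,m}$ is finite over its center) forces the decreasing filtration to terminate. The leading-Taylor-coefficient map embeds $\mathrm{gr}^\bullet M_{\gamma, m} := \bigoplus_k F^k/F^{k+1}$ into $\mathbb C[\mathfrak t^*]$ as a graded subspace with the same $W$-character as $M_{\gamma, m}$; we place $F^k/F^{k+1}$ in algebra grading $-2k$ to match $\deg \mathfrak t^* = 2$. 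In the Cherednik relation $s_i \cdot f = {}^{s_i}f + r(f - {}^{s_i}f)/\alpha_i^\vee$, the $r$-correction has strictly larger vanishing-order shift than the leading term ${}^{s_i}f$ and thus dies in the associated graded; combined with $\epsilon_i = \partial/\partial\xi_i$ inducing the standard derivation on leading terms, this makes $\mathrm{gr}^\bullet M_{\gamma, m}$ a graded $A = \mathbb C W \ltimes \mathbb C[\mathfrak t]$-submodule of $\mathbb C[\mathfrak t^*]$.

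Let $d$ be the maximal vanishing order attained on $M_{\gamma, m} \setminus \{0\}$ and set $L := \mathrm{gr}^{-2d} M_{\gamma, m}$, a $W$-submodule of homogeneous polynomials of degree $d$. By Theorem~\ref{eDL}, $M_{\gamma, m} \cong L_{(a, v)}$ for a suitable parameter $(a, v)$ with $a = (\exp\gamma, q^m, q)$, and by Theorem~\ref{eDLst}, $M_{(a, v)} \cong K_\lambda$ as $W$-modules for a unique $\lambda \in \Lambda$ determined by $(a, v)$. Using the uniqueness of the maximal-degree polynomial realization of $L_\lambda$ (Theorem~\ref{eS}, parts 8 and 9) and matching it with the bottom grading of $\mathrm{gr}^\bullet M_{\gamma, m}$, one concludes $L \cong L_\lambda$ and $d = d_\lambda / 2$. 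Lemma~\ref{lower} then gives $K_\lambda \left<-d_\lambda\right> \subset A \cdot L \subset \mathrm{gr}^\bullet M_{\gamma, m}$. Since $\dim \mathrm{gr}^\bullet M_{\gamma, m} = \dim M_{\gamma, m} \leq \dim M_{(a, v)} = \dim K_\lambda$, the chain collapses into equalities, yielding $\mathrm{gr}^\bullet M_{\gamma, m} = K_\lambda \left<-d_\lambda\right>$ as graded $A$-modules, and hence as graded $W$-modules.

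The main obstacle is the precise identification of the bottom-grading piece $L$ with the specific $L_\lambda$ prescribed by the eDL datum $(a, v)$. The module $L$ is extracted from the polynomial behavior of $\phi_\gamma$ under the Dunkl--Cherednik operators, while $(a, v)$ encodes only the spectral data of $\phi_\gamma$; matching the two relies on the maximal-degree characterization of $L_\lambda$ (Theorem~\ref{eS}, part 9), and simultaneously forces the spherical irreducible quotient $L_{(a, v)}$ to exhaust the dimension of $M_{(a, v)}$, i.e.\ $M_{(a, v)}$ is irreducible for the $(a, v)$ corresponding to $M_{\gamma, m}$.
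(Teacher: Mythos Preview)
Your overall architecture matches the paper's: pass to the associated graded of the vanishing-order filtration, show it is an $A$-submodule of $\mathbb C[\mathfrak t^*]$, and use Lemma~\ref{lower} together with a dimension comparison to force it to equal $K_\lambda\langle -d_\lambda\rangle$. The setup steps (finite-dimensionality, the $r$-correction being of lower order so that the $W$-action degenerates to the natural one) are fine.

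The genuine gap is exactly the one you flag in your last paragraph: the identification of the bottom piece $L$ with $L_\lambda$ for the $\lambda$ coming from the eDL datum. You only invoke Theorem~\ref{eDL} to get $M_{\gamma,m}\cong L_{(a,v)}$ as a \emph{quotient} of the standard module $M_{(a,v)}$, so a priori $\dim M_{\gamma,m}\le \dim K_\lambda$ and the $W$-structure of $M_{\gamma,m}$ is unknown; Theorem~\ref{eS} 8)--9) only tells you where each individual irreducible $W$-type can live in $\mathbb C[\mathfrak t^*]$, not which one occurs at the bottom of your particular module. So your sentence ``one concludes $L\cong L_\lambda$ and $d=d_\lambda/2$'' is unsupported, and without it the chain $K_\lambda\langle -d_\lambda\rangle\subset A\cdot L\subset \mathrm{gr}\, M_{\gamma,m}$ cannot be started. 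Your attempted bootstrap (``simultaneously forces $L_{(a,v)}$ to exhaust $M_{(a,v)}$'') is circular.

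The paper closes this gap by a different route: since $M_{m,\gamma}$ contains $\mathsf{triv}$ as a $W$-type (it is generated by the spherical vector $\phi_\gamma$) and is irreducible, it is in fact isomorphic to a \emph{standard} module, not merely a quotient of one (this is \cite{CK}~1.20). Hence $M_{m,\gamma}\cong K_\lambda$ as $W$-modules from the start. Then one knows $[K_\lambda:L_\lambda]_W=1$, so the unique copy of $L_\lambda$ in $\mathrm{gr}\,M_{m,\gamma}$ is homogeneous; Theorem~\ref{eS}~8) bounds its degree by $-d_\lambda$, and if it sat strictly deeper than $-d_\lambda$ then Lemma~\ref{lower} would produce a second copy of $L_\lambda$ (the one inside $K_\lambda\langle -d_\lambda\rangle$ at degree $-d_\lambda$), contradicting multiplicity one. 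This dichotomy replaces your unjustified step and also makes the dimension comparison an equality rather than an inequality.
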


\begin{remark}
In Theorem \ref{HO}, the function $\phi _{\gamma}$ gives rise to a unique (up to scalars) solution of the Lieb-McGuire system so that $\mathbb C [\mathfrak h] ^W$ acts by $\gamma$ along the set of regular points of $\mathfrak h$ (see \cite{HO} for detail). In this sense, the vanishing order filtration of $M_{m,\gamma}$ measures the structure of the Taylor series of $\phi _{\gamma}$.
\end{remark}

\begin{proof}[Proof of Theorem \ref{vanish}]
For each $f \in \mathcal R$, let $\mathsf{lt} \, f$ denote the maximal degree non-zero homogeneous component of the Taylor expansion of $f$ along $0$ (remember that our degree counting on $\mathcal R$ is nonpositive). Then, the vanishing-order filtration of $M_{m, \gamma}$ is transformed into the graded structure of the module
$$\mathrm{gr} \, M _{m,\gamma} := \{ \mathsf{lt} \, f \mid f \in M _{m,\gamma}\}.$$
Since $M_{m,\gamma}$ is $\mathbb C [\mathfrak t]$-stable, so is the space $\mathrm{gr} \, M _{m, \gamma}$. Here \cite{HO} formula (2.1) implies that $s_i \in W$ acts on $\mathbb C [ \mathfrak t ^* ]$ by letting $s_i$ acts by the natural homogeneous action and adds some lower order terms. It follows that $\mathrm{gr} \, M _{m,\gamma} \subset \mathbb C [ \mathfrak t ^* ]$ is an $A$-submodule. Here $M _{m, \gamma}$ contains $\mathsf{triv}$ as a $W$-module. It implies that the irreducible module $M_{m,\gamma}$ is isomorphic to some standard module in the sense of Theorem \ref{eDLst} (cf. \cite{CK} 1.20). Hence, Theorem \ref{eDL} asserts that $\mathrm{gr} \, M _{m, \gamma} \cong K _{\lambda}$ as a $W$-module for some $\lambda \in \Lambda$ (note that Theorem \ref{KS} 1) determines such $\lambda$ uniquely only from the $W$-module structure of $\mathrm{gr} \, M _{m, \gamma}$). By Theorem \ref{eS} 8), we deduce that the image of the natural composition map
$$\varphi : L _{\lambda} \hookrightarrow \mathrm{gr} \, M _{m,\gamma} \subset \mathbb C [ \mathfrak t ^* ]$$
lands in degree $\le - d _{\lambda}$. Since $\dim \, \mathrm{Hom}_W ( L _{\lambda}, M_{m, \gamma} ) = 1$, we conclude that $\mathrm{Im} \, \varphi$ is homogeneous. If $\mathrm{Im} \, \varphi = \mathsf L _{\lambda}$ (i.e. the degree is $- d _{\lambda}$ by Theorem \ref{eS} 8)), then we have $\mathrm{gr} \, M _{m, \gamma} = K _{\lambda} \left< - d_{\lambda} \right>$ by the inclusion as $A$-modules and the comparison of dimensions.

If $\mathrm{Im} \, \varphi$ is of degree $< - d _{\lambda}$, then Lemma \ref{lower} implies that $\dim \, \mathrm{Hom}_W ( L _{\lambda}, M_{m, \gamma} ) \ge 2$. Hence, this case cannot occur as desired.
\end{proof}

\section*{{\normalsize Appendix B: Proof of Theorem \ref{critpure}}}
\renewcommand{\thesection}{\Alph{section}}
\setcounter{section}{2}
\setcounter{theorem}{0}

This appendix is entirely devoted to the proof of Theorem \ref{critpure}. We assume the setting of section one. In particular, we assume $(\spadesuit)$ in Condition \ref{spadesuit}, $(\clubsuit)_1$ in Condition \ref{clubsuit}, and work over an algebraic closure of a finite field. For each $\lambda \in \Lambda$, we have a natural morphism $\psi_{\lambda} : H ^{\bullet} _{G} ( \{ \mathrm{pt} \} ) \rightarrow H ^{\bullet} _{G} ( \mathbb{O} _{\lambda} )$ of graded algebras. Let us restate our statement to be proved:

\begin{theorem}[Theorem \ref{critpure}]\label{acritpure}
The sheaf $\mathsf{IC} _{\lambda}$ $($normalized as weight zero$)$ is pointwise pure of weight zero for every $\lambda \in \Lambda$ if the following two conditions hold:
\begin{enumerate}
\item[$a)$] The following spectral sequence is $E_2$-degenerate for each $\lambda = ( \underline{\lambda}, \xi ) \in \Lambda$:
$$E_2 = \bigoplus _{\mu = ( \underline{\lambda}, \zeta ) \in \Lambda} \mathrm{Hom} _{C_{\lambda}} ( \xi, H _{\mathsf{Stab}_G (x_{\lambda})^{\circ}} ^{\bullet} ( \{ x_{\lambda} \} ) \otimes _{\mathbb C} \zeta) \boxtimes K _{\mu} \Rightarrow \widetilde{K} _{\lambda}. \hskip 20mm (\ref{leray})$$
\item[$b)$] We have $\ker \, \psi _{\lambda} \not\subset \ker \, \psi _{\gamma}$ for every $\gamma \prec \lambda \in \Lambda$;
\end{enumerate}
Here the condition $a)$ can be replaced by its variant:
\begin{enumerate}
\item[$a)'$] For each $\lambda, \mu \in \Lambda$, the stalk of $\mathsf{IC}_{\lambda}$ along $\mathbb O_{\mu}$ satisfies the parity vanishing.
\end{enumerate}
\end{theorem}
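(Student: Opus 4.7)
The plan is to deduce $(\clubsuit)_2$ from the single statement that every dual standard module $\widetilde{K}_\lambda$ is pure of weight zero. Once this is in hand, assumption (a) (the $E_2$-degeneration of (\ref{leray})) exhibits each $K_\mu$ for $\mu \sim \lambda$ as a weight-zero graded piece in the filtration of $\widetilde{K}_\lambda$; reading off the decomposition $K_\mu = \bigoplus_\nu L_\nu \otimes \mathrm{Hom}_{C_\mu}(\zeta, H^\bullet i_{x_\mu}^! \mathsf{IC}_\nu[\dim \mathbb{O}_\mu])$ forces every costalk $i_{x_\mu}^! \mathsf{IC}_\nu$ to be pure, and Verdier self-duality of the simple perverse sheaves then translates this into pointwise purity of each $\mathsf{IC}_\nu$. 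Under the variant (a'), the same conclusion is accessed via the parity argument described below.

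To obtain purity of $\widetilde{K}_\lambda$, the plan is to invoke Theorem \ref{transfer} applied to $\mathcal{E} = \mathbb{C}_\lambda$. This produces a standard complex $(Q(\mathbb{C}_\lambda), d)$ of graded projective $A$-modules whose $a$-th term is pure of weight $-a$ (an essential use of $(\clubsuit)_1$) and whose total cohomology recovers $\widetilde{K}_\lambda$. By Corollary \ref{r-std-filt}, $Q_0 = P_\lambda$ while for $a < 0$ each $Q_a$ is a direct sum of grading shifts of $\{P_\mu\}_{\mu \prec \lambda}$. Since mixed cohomology of such a pure complex is pure of weight zero only when the complex is concentrated in cohomological degree zero, purity of $\widetilde{K}_\lambda$ is equivalent to showing that $(Q(\mathbb{C}_\lambda), d)$ is a resolution: $H^a(Q, d) = 0$ for every $a \neq 0$.

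This vanishing is the point at which conditions (a)/(a') and (b) are brought in. Under (a'), parity vanishing of the stalks of every $\mathsf{IC}_\nu$ along every $\mathbb{O}_\mu$ translates, via the Yoneda description of the differentials $d_a$, into graded-dimension matchings between the shifts of $P_\mu$ appearing in consecutive terms $Q_a$ and $Q_{a+1}$ that force $d_a$ to vanish on parity grounds; the complex then splits as a direct sum and its cohomology is concentrated in degree zero. Under (a), the $E_2$-degeneration of (\ref{leray}) gives an explicit graded-character identity for $\widetilde{K}_\lambda$ in terms of the $K_\mu$ with $\mu \sim \lambda$, which combined with the weight purity of each $Q_a$ rules out the cancellations that would correspond to $H^a \neq 0$ for $a \neq 0$.

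Condition (b) is the indispensable separator. It says that the center $H^\bullet_G(\{\mathrm{pt}\})$ acts through strictly different quotients on $\widetilde{K}_\lambda$ than on any $\widetilde{K}_\gamma$ with $\gamma \prec \lambda$: there is always some $c \in \ker \psi_\lambda \setminus \ker \psi_\gamma$, so the grading shifts of $P_\gamma$ appearing in the terms $Q_a$ ($a<0$) lie in a different central-character block than the $P_\lambda$ summand in $Q_0$, ruling out higher cohomology carrying the central character of $\widetilde{K}_\lambda$. The main obstacle will be converting this informal central-character comparison into precise obstructions for the differentials $d_a$, that is, verifying that the classes detected by (b) genuinely prevent mixing between distinct central-character blocks at the level of the cohomology of $(Q(\mathbb{C}_\lambda), d)$; this is where I expect the technical core of Appendix B to lie.
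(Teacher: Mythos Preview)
Your overall strategy --- reduce $(\clubsuit)_2$ to purity of every $\widetilde K_\lambda$, then attack this via the standard complex $(Q(\mathbb C_\lambda),d)$ --- is reasonable in outline, but the execution contains genuine gaps and diverges from the paper at the decisive point.

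First, your treatment of $(a)'$ is internally inconsistent. You claim parity forces the differentials $d_a$ to vanish, and then that ``the complex splits and its cohomology is concentrated in degree zero''. These cannot both hold: if all $d_a=0$ then $H^a=Q_a$ for every $a$, and for most $\lambda$ the terms $Q_a$ with $a<0$ are nonzero. In the paper, $(a)'$ is used only to deduce $(a)$: since $H^\bullet_{G_\lambda^\circ}(\{\mathrm{pt}\})$ is concentrated in even degrees, parity of the stalks kills all differentials in the spectral sequence (\ref{leray}), and nothing more.

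Second, and more seriously, you are analysing the wrong complex. The paper does \emph{not} try to show directly that $(Q(\mathbb C_\lambda),d)$ is acyclic in nonzero degrees for every $\lambda$. Instead it argues by contradiction: suppose $\mathsf{IC}_\lambda$ fails to be pointwise pure along some $\mathbb O_\gamma$ with $\gamma\prec\lambda$; after restricting to $\overline{\mathbb O_\lambda}$ (Lemma \ref{cc}) and choosing a minimal such pair, one may assume $\mathbb O_\lambda$ is open dense in $\mathfrak X$ and every intermediate $\mathsf{IC}_\mu$ ($\gamma\prec\mu\prec\lambda$) is already pure along $\mathbb O_\gamma$. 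One then studies $(Q(\mathbb C_\gamma),d)$ --- the complex for the \emph{smaller} orbit. The impurity of $\mathsf{IC}_\lambda$ along $\mathbb O_\gamma$ produces some $H^i(Q(\mathbb C_\gamma),d)\neq 0$ with $i\neq 0$; the minimality hypotheses force every composition factor of this $H^i$ to be $L_\mu$ with $\mu\sim\lambda$.

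This is where $(b)$ enters, and its role is different from what you describe. Since $\mathbb O_\lambda$ is open, $\widetilde K_\lambda$ is projective among $A$-modules whose composition factors lie in $\{L_\mu\}_{\mu\sim\lambda}$, so $H^i$ is a quotient of a sum of shifts of $\widetilde K_\mu$ ($\mu\sim\lambda$). This equips $H^i$ with an $H^\bullet_{G_\lambda}(\{\mathrm{pt}\})$-action. On the other hand, condition $(a)$ makes $H^i$ a \emph{torsion-free} $H^\bullet_{G_\gamma}(\{\mathrm{pt}\})$-module. Both actions are compatible with the central $H^\bullet_G(\{\mathrm{pt}\})$-action through $\psi_\lambda$ and $\psi_\gamma$, so any $\xi\in\ker\psi_\lambda$ acts by zero, and torsion-freeness then forces $\psi_\gamma(\xi)=0$; hence $\ker\psi_\lambda\subset\ker\psi_\gamma$, contradicting $(b)$. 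Your proposal never isolates the torsion-freeness step, and your ``central-character block'' heuristic for separating $P_\lambda$ from the $P_\gamma$ inside $Q(\mathbb C_\lambda)$ does not yield this contradiction: the bad cohomology lives over the orbit $\gamma$, not $\lambda$, and it is the \emph{coexistence} of the two equivariant-cohomology actions on a single nonzero torsion-free module that condition $(b)$ is designed to exclude.
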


We first prove that $a)'$ implies $a)$.

We assume $a)'$. Since $G_{\lambda} ^{\circ}$ is an affine algebraic group, we deduce that the odd degree part of $H _{G_{\lambda} ^{\circ}} ^{\bullet} ( \{ \mathrm{pt} \} )$ is zero. It follows that we cannot have a non-zero differential among distinct terms of the spectral sequence
$$E_2 (\gamma) := H _{\mathsf{Stab}_G (x_{\lambda})^{\circ}} ^{\bullet} ( \{ x_{\lambda} \} ) \otimes _{\mathbb C} i^! _{\lambda} \mathsf{IC} _{\gamma} \Rightarrow H ^{\bullet} _{\mathsf{Stab}_G (x_{\lambda}) ^{\circ}} ( \{ x _{\lambda} \}, i^! _{\lambda} \mathsf{IC} _{\gamma} )$$
for each $\gamma \in \Lambda$. Since $E_2 = \mathrm{Hom} _{C_{\lambda}} ( \xi, \bigoplus _{\gamma \in \Lambda} L_{\gamma} \boxtimes E_2 ( \gamma ))$ as spectral sequences of vector spaces, we conclude $a)$.

Now we assume $a)$ and $b)$ to deduce that $\mathsf{IC} _{\lambda}$ (normalized as weight zero) is pointwise pure of weight zero along $\mathbb O _{\gamma}$ for each $\gamma \in \Lambda$.

\begin{lemma}\label{cc}
The conditions $(\spadesuit)$ and $(\clubsuit)_1$ are stable under the restriction to a $G$-stable closed subvariety $\mathfrak Y \subset \mathfrak X$.
\end{lemma}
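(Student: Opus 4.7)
The plan is to handle the two conditions separately, using the easily-verified observation that $A_{(G,\mathfrak Y)}$ sits inside $A_{(G,\mathfrak X)}$ as an idempotent corner.

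First, condition $(\spadesuit)$ descends to $\mathfrak Y$ essentially for free: since $\mathfrak Y \subset \mathfrak X$ is $G$-stable (and closed), its set of $G$-orbits $\underline{\Lambda}_{\mathfrak Y}$ is a subset of $\underline{\Lambda}$, hence finite, and the base points $\{x_{\lambda}\}_{\lambda \in \Lambda_{\mathfrak Y}}$ already fixed for $\mathfrak X$ continue to satisfy the required compatibility $x_{\lambda} = x_{\mu}$ whenever $\lambda \sim \mu$. The labeling by pairs $(\underline{\lambda}, \xi)$ likewise restricts with no further choices.

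To establish $(\clubsuit)_1$, let $i : \mathfrak Y \hookrightarrow \mathfrak X$ denote the closed inclusion. For each $\lambda \in \Lambda_{\mathfrak Y}$ the sheaf $\mathsf{IC}^{\mathfrak X}_{\lambda}$ is supported on $\overline{\mathbb O_{\lambda}} \subset \mathfrak Y$, so the canonical isomorphism $\mathsf{IC}^{\mathfrak X}_{\lambda} \cong i_{*} \mathsf{IC}^{\mathfrak Y}_{\lambda}$ together with the full faithfulness of $i_{*}$ (equivalently, $i^{!} i_{*} \cong \mathrm{id}$ for closed embeddings) yields Yoneda-compatible isomorphisms
$$\mathrm{Ext}^{\bullet}_{G}(\mathsf{IC}^{\mathfrak X}_{\lambda}, \mathsf{IC}^{\mathfrak X}_{\mu}) \cong \mathrm{Ext}^{\bullet}_{G}(\mathsf{IC}^{\mathfrak Y}_{\lambda}, \mathsf{IC}^{\mathfrak Y}_{\mu}) \qquad (\lambda, \mu \in \Lambda_{\mathfrak Y}).$$
Summing over $\Lambda_{\mathfrak Y}$ and applying Lemma \ref{gMe}, one obtains a graded algebra identification $A_{(G,\mathfrak Y)} \cong e A_{(G,\mathfrak X)} e$, where $e := \sum_{\lambda \in \Lambda_{\mathfrak Y}} e_{\lambda} \in A^0_{(G,\mathfrak X)}$ is a degree zero idempotent.

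Since the closed push-forward $i_{*}$ is weight-exact (for finite morphisms one has $i_{*} = i_{!}$, so the separate inequalities for $i_{*}$ and $i_{!}$ collapse to strict weight preservation, cf.\ \cite{BBD} 5.1.14), the Ext identification above is in fact an isomorphism of \emph{mixed} graded algebras, and $e$ lies in the pure weight zero piece $A^0_{(G,\mathfrak X)}$. Taking an $(e,e)$-corner of a pure graded algebra evidently preserves purity, so $A_{(G,\mathfrak Y)}$ is pure of weight zero, as required. I expect no substantive obstacle; the only step requiring any care is the weight-compatibility of the Ext identification, which is a standard consequence of the BBD weight formalism for closed embeddings.
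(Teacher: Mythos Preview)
Your proof is correct and follows essentially the same route as the paper: identify $A_{(G,\mathfrak Y)}$ with the idempotent corner $eA_{(G,\mathfrak X)}e$ via adjunction for the closed embedding $i$, then observe that purity of weight zero passes to such a corner. The paper's version is slightly terser (it uses the $(i^*,i_*)$ adjunction and does not spell out the weight-exactness of $i_*$ or invoke Lemma~\ref{gMe}), but the substance is identical.
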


\begin{proof}
The condition $(\spadesuit)$ is clear. For the condition $(\clubsuit)_1$, let us name the closed embedding $i : \mathfrak Y \hookrightarrow \mathfrak X$. For every $\mathcal E, \mathcal F \in D ^b_G (\mathfrak Y)$, we have
\begin{align*}
\mathrm{Ext} ^{\bullet} _{D ^b _G ( \mathfrak Y )} ( \mathcal E, \mathcal F ) \cong \mathrm{Ext} ^{\bullet} _{D ^b _G ( \mathfrak Y )} ( i^* i_* \mathcal E, \mathcal F ) \cong \mathrm{Ext} ^{\bullet} _{D ^b _G ( \mathfrak X )} ( i_* \mathcal E, i_* \mathcal F )
\end{align*}
by adjunction. Hence, if we consider the idempotent $e \in \mathrm{End} _{G} ( \mathcal L )$ so that $e \mathcal L = \bigoplus _{\lambda \in \Lambda, \mathbb O _{\lambda} \subset \mathfrak Y} L _{\lambda} \boxtimes \mathsf{IC} _{\lambda}$, then we have $A_{(G,\mathfrak Y)} = e A_{(G,\mathfrak X)} e$. It follows that the condition $(\clubsuit)_1$ for $\mathfrak X$ implies that of $\mathfrak Y$ as required.
\end{proof}

We assume that $\mathsf{IC} _{\lambda}$ is not pointwise pure of weight zero along $\mathbb O _{\gamma}$ to deduce contradiction (we have $\lambda \succ \gamma$ by the support condition). By rearranging the choice of $\lambda$ and $\gamma$ and replacing $\mathfrak X$ with $\overline{\mathbb O _{\lambda}}$ if necessary, we can assume:

\begin{itemize}
\item[$c)$] The orbit $\mathbb O _{\lambda}$ is open dense in $\mathfrak X$;
\item[$d)$] For every $\gamma \prec \mu \prec \lambda$, the sheaf $\mathsf{IC} _{\mu}$ is pointwise pure of weight zero along $\mathbb O _{\gamma}$.
\end{itemize}

\begin{theorem}[Theorem \ref{transfer} + Corollary \ref{r-std-filt}]\label{at}
For each $\delta \in \Lambda$, we have a complex of graded projective $A$-modules
$$(Q ( \mathbb C _{\delta} ), d) : \cdots \rightarrow Q ( \mathbb C _{\delta} ) _{-2} \stackrel{d_{-2}}{\rightarrow} Q ( \mathbb C _{\delta} ) _{-1} \stackrel{d_{-1}}{\longrightarrow} Q ( \mathbb C _{\delta} ) _{0} \longrightarrow 0$$
with the following properties:
\begin{enumerate}
\item We have $Q ( \mathbb C _{\delta} ) _a \cong \mathrm{Ext} ^\bullet _G ( \mathrm{gr} _a \, \mathbb C _{\delta}, \mathcal L )$ as a graded $A$-module with pure weight $- a$;
\item We have an isomorphism of graded $A$-modules:
$$\widetilde{K} _{\delta} \cong \mathrm{Ext} ^\bullet _G ( \mathbb C _{\delta}, \mathcal L ) \cong H ^{\bullet} ( Q ( \mathbb C _{\delta} ), d );$$
\item For each $a < 0$, the projective $A$-module $Q ( \mathbb C _{\delta} ) _{a}$ does not contain grading shifts of $P_{\epsilon}$ with $\epsilon \sim \delta$ as its direct factor.
\end{enumerate}
\end{theorem}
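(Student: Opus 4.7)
The plan is to assemble the complex $(Q(\mathbb{C}_\delta),d)$ by applying Theorem \ref{transfer} to the object $\mathcal{E}=\mathbb{C}_\delta\in D^b_G(\mathfrak{X})$, using the specific structure of its weight filtration supplied by Corollary \ref{r-std-filt}. First I would normalize $\mathbb{C}_\delta=(j_\delta)_!\underline{\xi}[\dim\mathbb{O}_\delta]$ so that $\underline{\xi}[\dim\mathbb{O}_\delta]$ is pure of weight zero, and invoke Corollary \ref{r-std-filt} to obtain the weight filtration $\{F_{\ge a}\mathbb{C}_\delta\}_{a\in\mathbb{Z}}$ together with the graded pieces $\mathrm{gr}_a\mathbb{C}_\delta$. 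The key features I will use are: $F_{\ge 0}\mathbb{C}_\delta\cong\mathsf{IC}_\delta$; $F_{\ge a}\mathbb{C}_\delta=0$ for $a\ge 1$ (since $(j_\delta)_!$ does not increase weights, as recorded in the proof of Corollary \ref{r-std-filt}); and each $\mathrm{gr}_a\mathbb{C}_\delta$ with $a<0$ is a direct sum of shifts of $\{\mathsf{IC}_\mu\}_{\mu\prec\delta}$.

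Next I would feed $\mathcal{E}=\mathbb{C}_\delta$ into Theorem \ref{transfer}. This directly produces a complex of graded projective $A$-modules whose $a$-th term satisfies
$$Q(\mathbb{C}_\delta)_a\cong\mathrm{Ext}^{\bullet}_G(\mathrm{gr}_a\mathbb{C}_\delta,\mathcal{L}),$$
pure of weight $-a$, giving assertion (1). Because $\mathrm{gr}_a\mathbb{C}_\delta=0$ for all $a\ge 1$, the complex terminates at $Q(\mathbb{C}_\delta)_0$ as displayed in the statement. For assertion (2), I combine the definition $\widetilde{K}_\delta:=\mathrm{Ext}^{\bullet}_G(\mathbb{C}_\delta,\mathcal{L})$ with the isomorphism $\mathrm{Ext}^{\bullet}_G(\mathbb{C}_\delta,\mathcal{L})\cong H^{\bullet}(Q(\mathbb{C}_\delta),d)$ furnished by Theorem \ref{transfer}(3).

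Assertion (3) is then read off the shape of the graded pieces. Corollary \ref{r-std-filt}(2) gives, for each $a<0$,
$$Q(\mathbb{C}_\delta)_a\cong\bigoplus_{\mu\prec\delta,\,i\in\mathbb{Z}}\mathrm{Ext}^{\bullet}_G(\mathsf{IC}_\mu[i],\mathcal{L})^{\oplus m_{\mu,i}},$$
and each summand $\mathrm{Ext}^{\bullet}_G(\mathsf{IC}_\mu[i],\mathcal{L})$ is a grading shift of $P_\mu$. The strict inequality $\mu\prec\delta$ means $\mathbb{O}_\mu\subsetneq\overline{\mathbb{O}_\delta}$, so in particular $\mu\not\sim\delta$; consequently no $P_\epsilon$ with $\epsilon\sim\delta$ appears as a direct factor of $Q(\mathbb{C}_\delta)_a$ for $a<0$. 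The substantive content has already been carried by Theorem \ref{transfer} and Corollary \ref{r-std-filt}; the remaining work is purely bookkeeping, and the only point needing care is the verification that $F_{\ge 1}\mathbb{C}_\delta=0$, which pins down the right-hand end of the complex.
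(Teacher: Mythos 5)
Your proposal is correct and follows exactly the paper's own argument: apply Theorem \ref{transfer} to $\mathcal E = \mathbb C_{\delta}$, use Corollary \ref{r-std-filt} 1) (via $F_{>0}\mathbb C_{\delta} = \{0\}$) to kill the positive-degree terms, and read assertion 3) off Corollary \ref{r-std-filt} 2), since $\mathrm{gr}_a\,\mathbb C_{\delta}$ for $a<0$ only involves $\mathsf{IC}_{\mu}$ with $\mu \prec \delta$, hence $\mu \not\sim \delta$. No gaps; the verification that the complex terminates at degree zero is handled just as in the paper.
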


\begin{proof}
We explain the diffusion from Theorem \ref{transfer}. The complex $(Q ( \mathbb C _{\delta} ), d)$ vanishes in positive degree by Corollary \ref{r-std-filt} 1). The last property follows from Corollary \ref{r-std-filt} 2).
\end{proof}

For each $\mu \in \Lambda$, we have
$$K _{\gamma} \supset L_{\mu} \boxtimes i_{\gamma} ^! \mathsf{IC_{\mu}}.$$
By the purity of $H _{G_{\gamma} ^{\circ}} ^{\bullet} ( \{ \mathrm{pt} \} )$ and the condition $a)$, we deduce that $[ H ^{i} ( Q ( \mathbb C _{\gamma} ), d ) : L _{\mu}] \neq 0$ holds if and only if the vector space $i_{\gamma} ^! \mathsf{IC_{\mu}}$ contains nonzero weight $(-i)$-part. By the condition $d)$ and the definition of $K _{\gamma}$, we have
$$[H ^{i} ( Q ( \mathbb C _{\gamma} ), d ) : L _{\mu}] = 0 \hskip 5mm \text{ if } \mu \not\sim \lambda$$
for each $0 \neq i \in \mathbb Z$. In addition, there exists $i \neq 0$ so that $H ^{i} ( Q ( \mathbb C _{\gamma} ), d ) \neq \{ 0 \}$ since $i_{\gamma} ^! \mathsf{IC} _{\lambda}$ is not pure of weight zero by assumption. Henceforth we fix $i \neq 0$ with $H ^{i} ( Q ( \mathbb C _{\gamma} ), d ) \neq \{ 0 \}$. By the condition $a)$, we deduce that $H ^{i} ( Q ( \mathbb C _{\gamma} ), d )$ is a torsion-free $H _{G_{\gamma}} ^{\bullet} ( \{ \mathrm{pt} \} )$-module.

The graded $A$-module $\widetilde{K}_{\lambda}$ admits a finite length projective resolution by Theorem \ref{at} 1)--2) (cf. Proposition \ref{P_to_K_finite}). In addition, every irreducible constituent of $\widetilde{K}_{\lambda}$ is a grading shift of $L_{\mu}$ with $\mu \sim \lambda$ by the condition c). Moreover, Theorem \ref{at} 3) implies that
$$\mathrm{ext}^1 _A ( \widetilde{K}_{\lambda}, L _{\mu} ) = \{ 0 \} \hskip 5mm \text{ for every }  \mu \sim \lambda.$$

This shows that $\widetilde{K}_{\lambda}$ is an indecomposable projective cover of $L_{\lambda}$ in the category of graded $A$-modules with its composition factors in $\{ L_{\mu} \left< j \right> \} _{\mu \sim \lambda, j \in \mathbb Z}$. Therefore, the graded $A$-module $H ^{i} ( Q ( \mathbb C _{\gamma} ), d )$ is a quotient of a direct sum of grading shifts of $\widetilde{K}_{\mu}$ with $\mu \sim \lambda$. By the condition $a)$, the graded $A$-module $\widetilde{K}_{\mu}$ with $\mu \sim \lambda$ is a torsion-free $H _{G_{\lambda}} ^{\bullet} ( \{ \mathrm{pt} \} )$-module, and we have $\mathrm{End}_A ( \widetilde{K} _{\mu} ) \cong H _{G_{\lambda}} ^{\bullet} ( \{ \mathrm{pt} \} )$ as graded algebras. This induces a $H _{G_{\lambda}} ^{\bullet} ( \{ \mathrm{pt} \} )$-action on $H ^{i} ( Q ( \mathbb C _{\gamma} ), d )$, that is non-trivial (but not necessarily faithful).

We have a $H _{G} ^{\bullet} ( \{ \mathrm{pt} \} )$-action on $H ^{i} ( Q ( \mathbb C _{\gamma} ), d )$ defined through the natural map $H _G ^{\bullet} ( \{ \mathrm{pt} \} ) \rightarrow A$ which lands on the center of $A$ (see (\ref{SS1})). These three actions must be compatible through $\psi_{\lambda}$ and $\psi _{\gamma}$ since the $H _G ^{\bullet} ( \{ \mathrm{pt} \} )$-action represents the action of (some part of) the center of $A$. In particular, if $\xi \in H ^{\bullet} _G ( \{ \mathrm{pt} \} )$ belongs to $\ker \psi _{\lambda}$, then the action of $\psi _{\lambda} ( \xi )$ must be zero, and hence $\psi _{\gamma} ( \xi )$ annihilates $H ^{i} ( Q ( \mathbb C _{\gamma} ), d )$. Since $\psi _{\gamma} ( \xi )$ cannot have torsion (as seen above), we conclude $\psi _{\gamma} ( \xi ) = 0$. It follows that $\ker \, \psi _{\lambda} \subset \ker \, \psi _{\gamma}$. This violates the condition $b)$, and we deduce a contradiction. Therefore, we conclude that every $\mathsf{IC} _{\lambda}$ must be pointwise pure as required.

{\footnotesize
}
\end{document}